\documentclass[12pt]{amsart}
\usepackage{amsmath, amssymb, amsthm}
\usepackage{tikz-cd}
\usepackage{adjustbox}
\usepackage{thmtools}
\usepackage{mathabx}
\usepackage{geometry}
\usepackage{mathrsfs}
\usepackage{thm-restate}
\begin{document}
\newtheorem{theorem}{Theorem}[section]
\newtheorem{proposition}[theorem]{Proposition}
\newtheorem{lemma}[theorem]{Lemma}
\newtheorem{corollary}[theorem]{Corollary}
\newtheorem{definition}[theorem]{Definition}
\newtheorem*{theorem*}{Theorem}
\newtheorem{conjecture}[theorem]{Conjecture}

\newcommand{\la}{\longrightarrow}
\newcommand{\bt}{\bullet}
\newcommand{\HF}{\mathit{HF}}
\newcommand{\HFI}{\mathit{HFI}}
\newcommand{\CF}{\mathit{CF}}
\newcommand{\CFI}{\mathit{CFI}}
\newcommand{\HM}{\mathit{HM}}
\newcommand{\HMI}{\mathit{HMI}}
\newcommand{\CM}{\mathit{CM}}
\newcommand{\CMI}{\mathit{CMI}}
\newcommand{\CZ}{\mathit{CZ}}
\newcommand{\HZ}{\mathit{HZ}}
\newcommand\xleftrightarrow[1]{\mathbin{\ooalign{$\,\xrightarrow{#1}$\cr$\xleftarrow{\hphantom{#1}}\,$}}
}

\begin{abstract} 

Inspired by the Ozsváth-Szabó mixed invariant in ordinary Heegaard Floer theory, we define a mixed invariant $\Phi_{X, \mathfrak{s}}^{I}$ for closed, spin four-manifolds $(X, \mathfrak{s})$ using the cobordism maps on involutive Heegaard Floer homology. The invariant is well-defined whenever $b_{2}^{+}(X) > 4$. We furthermore construct an involutive Seiberg-Witten invariant that is well-defined whenever $b_{2}^{+}(X) > 3$. We show that these involutive invariants obstruct the existence of disjoint pairs of embedded surfaces which both violate the adjunction inequality. As an application, we find that $K3\#(S^2 \times S^2)$ contains no such pair.

\end{abstract}
\title[Involutive Floer Invariants]{Involutive Floer Invariants for Closed Four-Manifolds}
\author{Owen Brass}
\date{}
\maketitle
\section{Introduction}

In \cite{MR3649355}, Hendricks and Manolescu introduced involutive Heegaard Floer homology, a variant of Heegaard Floer homology which accounts for the natural conjugation symmetry on Heegaard diagrams given by swapping $\alpha$ and $\beta$ curves and reversing the orientation of the splitting surface.  They showed that, as in the original theory, their invariant fits into the framework of a (suitably modified) TQFT—namely, a spin cobordism between spin three-manifolds induces a map between their involutive Heegaard Floer homologies.  It was shown in \cite{MR5032324} that these cobordism maps are well-defined. (Throughout this paper, all Floer theories have $\mathbb{F} := \mathbb{Z}_{2}$ coefficients unless indicated otherwise, and all Floer invariants—including Seiberg-Witten invariants—are taken mod 2.)

Many of the significant four-dimensional applications of Ozsváth and Szabó's original theory came through their \textit{mixed invariant} $\Phi_{X, \mathfrak{s}}$ (defined in \cite{MR2113019}) which, like the Seiberg-Witten invariant $SW_{X, \mathfrak{s}}$, is associated to a four-manifold $X$ with $b_2^{+} > 1$ and equipped with a $\text{spin}^{c}$ structure $\mathfrak{s}$ (in fact, $\Phi_{X, \mathfrak{s}}$ and $SW_{X, \mathfrak{s}}$ are conjecturally equal). Such applications include re-proofs of the indecomposability of symplectic four-manifolds and the symplectic Thom conjecture (see \cite{MR2031164}), as well as detection of new exotic four-manifolds (see e.g., \cite{MR4577958}, \cite{levine2023newconstructionsinvariantsclosed}).

In this paper, we construct an analogue of the mixed invariant in the setting of involutive Heegaard Floer theory—an \textit{involutive mixed invariant} $\Phi_{X, \mathfrak{s}}^{I}$—defined for spin four-manifolds $(X, \mathfrak{s})$ with $b_{2}^{+} > 3$.  We show that it is independent of the choices made in its construction when $b_{2}^{+}(X) > 4$ and $X$ is closed (the closed condition eliminates dependence on a choice of framed path between basepoints; see Theorem 1.3 of \cite{MR5032324}).  When $b_{2}^{+} = 4$, $\Phi_{X, \mathfrak{s}}^{I}$ a priori depends on a choice of an \textit{involutively admissible cut} $N \subset X$ (see Definition~\ref{invcut}), and we write $\Phi_{X, \mathfrak{s}, N}^{I}$. We conjecture that it does not actually depend on $N$.

The involutive mixed invariant is closely related to the Ozsváth-Szabó mixed invariant.  In the following theorem, we treat elements $a_{0}U^m + a_{1}QU^n \in \mathbb{F}[U, Q]/Q^2 \cong \HFI^{-}(S^3)$ as vectors of the form $(a_{0}U^m, a_{1}U^n)$. 
\begin{theorem}\label{matrix} Let $(X, \mathfrak{s})$ be a closed spin four-manifold with $b_{2}^{+} > 4$.  Then
\[\Phi_{X, \mathfrak{s}}^{I} = \begin{pmatrix} \Phi_{X, \mathfrak{s}} & 0 \\ \Psi_{X, \mathfrak{s}} & \Phi_{X, \mathfrak{s}} \end{pmatrix} : \HFI^{-}(S^3) \to \HFI^{+}(S^3),\]
where $\Phi_{X, \mathfrak{s}}$ is the Ozsváth-Szabó mixed invariant, and $\Psi_{X, \mathfrak{s}}$ is a $U$-equivariant homomorphism.  By grading considerations, $\Psi_{X, \mathfrak{s}}$ is nonzero only if $\Phi_{X, \mathfrak{s}}$ is zero—in fact, only if the virtual dimension of the Seiberg-Witten moduli space $d(\mathfrak{s})$ is odd.

If $b_{2}^{+} = 4$, then the same formula holds for $\Phi_{X, \mathfrak{s}, N}^{I}$, where $\Psi_{X, \mathfrak{s}}$ now a priori depends on $N$.
\end{theorem}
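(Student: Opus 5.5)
We will unwind the definition of $\Phi^{I}_{X,\mathfrak{s}}$, analogous to the Ozsváth--Szabó mixed invariant. Remove two balls from $X$ and cut along an involutively admissible $N$, writing $X = W_1 \cup_N W_2$. We then compose the involutive cobordism map $F^{I,-}_{W_1}\colon \HFI^{-}(S^3)\to \HFI^{-}(N)$ with the identification of $\HFI^{-}_{\mathrm{red}}(N)$ with $\HFI^{+}_{\mathrm{red}}(N)$ (via $\delta^{-1}$ of the long exact sequence), followed by $F^{I,+}_{W_2}$.

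The structural input is the mapping cone description of involutive Floer chain complexes, $\CFI = \mathrm{Cone}(Q(1+\iota))$, and of the involutive cobordism maps. On $\CF\otimes\mathbb{F}[Q]/Q^2$ these maps have the form $F^{I}_W = \begin{pmatrix} F_W & 0 \\ H_W & F_W\end{pmatrix}$, where $F_W$ is the ordinary cobordism map and $H_W$ is a homotopy between $\iota F_W$ and $F_W\iota$. The lower-triangularity comes from the filtration by powers of $Q$: multiplication by $Q$ sends the first copy to the second, and the cobordism maps are $\mathbb{F}[Q]$-linear. The $Q^0$-graded pieces of $\HFI$ then sit in the short exact sequence $0\to Q\cdot \HF\to\HFI\to \HF\to 0$ at the chain level.

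We first check that the connecting-map step respects this triangular form, using naturality of $\delta$ with respect to the $Q$-filtration and the admissibility condition on $N$. This step is where $b_2^+>3$ enters: it guarantees that $F_{W_i}$ and its involutive version land in reduced parts. Composing three triangular maps gives a triangular map. Its diagonal entries are the Ozsváth--Szabó composite defining $\Phi_{X,\mathfrak{s}}$ on both $Q^0$ and $Q^1$ components, since $\HFI(S^3)\cong\HF(S^3)\otimes\mathbb{F}[Q]/Q^2$ splits with $\iota=\mathrm{id}$ on $S^3$. The off-diagonal entry, which we call $\Psi_{X,\mathfrak{s}}$, is $U$-equivariant because every map involved is $\mathbb{F}[U,Q]$-linear. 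Independence of choices for $b_2^+>4$ is the paper's well-definedness result, and with $b_2^+=4$ the same computation goes through with $N$ fixed.

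The grading claim is then a short calculation. $\Phi_{X,\mathfrak{s}}$ shifts grading by $d(\mathfrak{s})$-type quantities with the usual parity constraint: $\Phi$ is nonzero only if the degree shift is even, i.e.\ $d(\mathfrak{s})$ is even. The element $Q$ has degree $-1$. Since $\Psi$ maps the $1$-component to the $Q$-component, or equivalently its domain and target entries differ by a $Q$, its degree differs from that of $\Phi$ by one. So $\Psi\neq 0$ forces $d(\mathfrak{s})$ odd, and then $\Phi=0$.

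The \textbf{main obstacle} is the first step: showing that the involutive identification through $\HFI_{\mathrm{red}}(N)$ is compatible with the triangular decomposition. Here the involutive long exact sequence does not split, and the non-splitting is the source of $\Psi$. We would try to handle this by working at chain level with the cone and using the $Q$-filtration spectral sequence.
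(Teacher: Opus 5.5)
Your overall architecture matches the paper's. The paper establishes a commutative ladder from $Q\HF^{-}(S^3)[-1]\to\HFI^{-}(S^3)\to\HF^{-}(S^3)$ to $Q\HF^{+}(S^3)[-1]\to\HFI^{+}(S^3)\to\HF^{+}(S^3)$, with vertical maps $\Phi_{X,\mathfrak{s}}$, $\Phi^{I}_{X,\mathfrak{s},N}$, $\Phi_{X,\mathfrak{s}}$. It builds the ladder in three steps: the cobordism map of $W_1$, then $\overline{\delta}^{-1}$, then that of $W_2$. It then reads off the matrix at $S^3$, where $\iota_*=\mathrm{id}$ splits the sequence. Your parity argument is also the paper's.

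The gap is the step you label the main obstacle. That step is the actual content of the proof. You mention naturality of $\delta$, but then leave the step open and fall back on a $Q$-filtration spectral sequence, which is neither needed nor obviously sufficient. The framing is also off: ``composing three triangular maps'' has no meaning at $N$. On homology, $\HFI^{\pm}(N)$ does not split as $\HF^{\pm}(N)\otimes\mathbb{F}[Q]/Q^2$. Moreover, $g_*\colon Q\HF(N)[-1]\to\HFI(N)$ need not be injective, since its kernel is the image of $Q(1+\iota_*)$. So there is no triangular form of $\overline{\delta}^{-1}$ to check. The right replacement for ``triangular'' is ``intertwines $g_*$ and $h_*$''. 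This property is preserved under composition, and at $S^3$ it is exactly the claimed matrix shape.

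The step closes with elementary homological algebra, and no $b_2^+$ hypothesis enters here. Consider the short exact sequences $0\to Q\CF^{\circ}[-1]\xrightarrow{g}\CFI^{\circ}\xrightarrow{h}\CF^{\circ}\to 0$ for $\circ\in\{-,\infty,+\}$. Together with the column sequences $0\to C^{-}\to C^{\infty}\to C^{+}\to 0$, they form a commuting $3\times 3$ grid. Naturality of connecting homomorphisms then gives $\delta^{I}g^{+}_{*}=g^{-}_{*}\delta$ and $h^{-}_{*}\delta^{I}=\delta h^{+}_{*}$.

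Recall that $\HF^{-}_{\mathrm{red}}=\mathrm{im}\,\delta$ and $\HF^{+}_{\mathrm{red}}=\HF^{+}/\ker\delta$, and likewise in the involutive setting. The two identities therefore show that $g^{-}_*,h^{-}_*$ restrict to the minus reduced groups and $g^{+}_*,h^{+}_*$ descend to the plus reduced groups. Since the maps $\overline{\delta}$ are isomorphisms, it follows that $\overline{\delta}^{-1}$ intertwines the induced maps.

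Two further inputs finish the argument. First, the Hendricks--Manolescu compatibility of $F^{I}_{W_i}$ with $g_*,h_*$, which your chain-level matrix form already encodes. Second, the observation that an involutively admissible cut is also an ordinary admissible cut, so the non-involutive composite really is $\Phi_{X,\mathfrak{s}}$. These give the ladder, and the theorem follows.
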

Using the calculation of the involutive cobordism map induced by $(S^2 \times S^2, \mathfrak{s}_{0})$ due to Hendricks, Hom, Stoffregen, and Zemke (see \cite{MR5032324}, Proposition 14.1), where $\mathfrak{s}_{0}$ is the unique spin structure on $S^2 \times S^2$, we obtain the following stabilization formula:
\begin{theorem}\label{stabintro} Let $(X, \mathfrak{s})$ be a closed spin four-manifold with $b_{2}^{+}(X) > 3$.  Then:
\[\Phi_{X \# (S^2 \times S^2), \mathfrak{s}\#\mathfrak{s}_{0}}^{I} = Q\Phi_{X, \mathfrak{s}}.\]
If $b_{2}^{+}(X) = 3$ and $N$ is any involutively admissible cut of $X\#(S^2 \times S^2)$ which is contained entirely in $X$, then the same formula holds for $\Phi_{X\#(S^2 \times S^2), \mathfrak{s}\#\mathfrak{s}_{0}, N}^{I}$.
\end{theorem}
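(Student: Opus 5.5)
We will prove the stabilization formula by choosing the cut so that the $S^2 \times S^2$ summand lies in one half, and then applying the composition law for involutive cobordism maps. The mixed invariant $\Phi^{I}_{X,\mathfrak{s}}$ is built like the Ozsváth-Szabó one. One takes an involutively admissible cut $N \subset X$ splitting $X \setminus (B^4 \sqcup B^4) = W_1 \cup_N W_2$, with $b_2^+(W_i)$ large enough that the relevant involutive maps vanish on the appropriate flavors. The invariant is then $F^{I,+}_{W_2} \circ \tau^{-1} \circ F^{I,-}_{W_1}$, where $\tau$ is the connecting identification $\HFI_{\mathrm{red}}$ between the $-$ and $+$ flavors. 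Given a cut $N$ of $X$ (or, when $b_2^+(X)=3$, a cut of $X\#(S^2\times S^2)$ contained in $X$), we perform the connected sum with $S^2 \times S^2$ in the interior of $W_2$, away from $N$. Then $N$ is still a cut of $X\#(S^2\times S^2)$, with $W_1$ unchanged and $W_2' = W_2 \# (S^2\times S^2)$.

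\textbf{Step 1 (admissibility).} Check that $N$ remains involutively admissible for the stabilized manifold. Stabilization only increases $b_2^+(W_2)$ by one and leaves $W_1$ alone, so the hypotheses of Definition~\ref{invcut} persist. This also explains the shift from $b_2^+>4$ to $b_2^+>3$: any cut used for $X$ with $b_2^+(X)\geq 4$ gives a cut of the sum with $b_2^+\geq 5$. By well-definedness in that range we may compute with this particular cut. When $b_2^+(X)=3$, the cut $N$ is given in $X$ by hypothesis, and we use it directly.

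\textbf{Step 2 (factoring the stabilized map).} Split $W_2'$ as $W_2 \circ (S^2\times S^2 \setminus (B^4\sqcup B^4))$, with the punctured $S^2\times S^2$ placed next to the outgoing $S^3$. By the composition law for involutive cobordism maps \cite{MR5032324}, we get
\[F^{I,+}_{W_2'} = F^{I,+}_{(S^2\times S^2)^\circ,\mathfrak{s}_0} \circ F^{I,+}_{W_2}.\]
By \cite{MR5032324}, Proposition 14.1, the punctured $(S^2\times S^2,\mathfrak{s}_0)$ induces multiplication by $Q$ on $\HFI(S^3)$. Hence
\[\Phi^{I}_{X\#(S^2\times S^2)} = Q\cdot \Phi^{I}_{X,\mathfrak{s},N}.\]

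\textbf{Step 3 (identifying the product).} By Theorem~\ref{matrix}, which we may also run with the cut $N$ when $b_2^+(X)=4$, the invariant $\Phi^I_{X,\mathfrak{s},N}$ has the lower-triangular form with diagonal entries $\Phi_{X,\mathfrak{s}}$. Since $Q^2=0$, multiplication by $Q$ kills the $Q$-component, including the $\Psi$ term. It sends the $1$-component to the $Q$-component, so $Q\Phi^I_{X,\mathfrak{s},N} = Q\Phi_{X,\mathfrak{s}}$ as maps $\HFI^-(S^3)\to\HFI^+(S^3)$. In the case $b_2^+(X)=3$, Theorem~\ref{matrix} does not apply directly to $X$. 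Instead we verify the triangular shape by hand from the cut: on the $1$-component, the involutive maps restrict to the ordinary maps modulo $Q$. This gives $\Phi_{X,\mathfrak{s}}$ in the top-left entry, and the $Q$ kills the remaining entries as before.

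\textbf{Main obstacle.} The hardest point is making the composition in Step 2 legitimate across the mixed construction. The $S^2\times S^2$ factor must commute past the $\tau^{-1}$ identification, or equivalently be placed entirely within the $+$ flavor part. We must also confirm that Proposition 14.1 holds for the $+$ flavor with the same $Q$-multiplication, compatibly with the $Q$-module structures. If placing the summand in $W_2$ causes trouble, the first alternative is to put it in $W_1$ and use the minus-flavor version of Proposition 14.1 together with $Q$-linearity of $\tau$.
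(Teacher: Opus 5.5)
\textbf{Case $b_2^+(X)\ge 4$.} Here your argument works, by a slightly more direct route than the paper's. Proposition~\ref{exist} gives a cut $N$ that is involutively admissible for $X$ itself, so $\Phi^I_{X,\mathfrak{s},N}$ is defined. Your Step 2 then gives $\Phi^I_{X\#(S^2\times S^2),N}=Q\cdot\Phi^I_{X,\mathfrak{s},N}$. The $+$-flavor version of Theorem~\ref{S2}, which you flagged, follows from the $-$ version: the map on $\HFI^\infty(S^3)$ is $U^{-1}$-equivariant, and $\HFI^\infty(S^3)\to\HFI^+(S^3)$ is surjective. Theorem~\ref{mixed}, applied with the cut $N$, then gives $Q\cdot\Phi^I_{X,\mathfrak{s},N}=Q\Phi_{X,\mathfrak{s}}$. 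The paper instead routes every case through auxiliary maps $\Phi^I_{X,\mathfrak{s},N,\mathfrak{h}}$ and Propositions~\ref{Qinv} and \ref{Qinv2}, which this case does not need.

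\textbf{Case $b_2^+(X)=3$: a genuine gap.} Here $X$ admits no involutively admissible cut at all, since that would need $2+2\le 3$. Because $N$ is involutively admissible for $X\#(S^2\times S^2)$, which has $b_2^+=4$, the half of $X$ receiving the summand has $b_2^+=1$. So $N$ is only an ordinary admissible cut of $X$, and Lemma~\ref{vanish} fails on that half. Consequently the map $\Phi^I_{X,\mathfrak{s},N}$ that your Steps 2 and 3 manipulate does not exist:
\begin{itemize}
\item if the summand lies in $W_2$, then $F^{I,+}_{W_2}$ need not kill $\text{im}\,\pi_*$, so it does not descend to $\HFI^+_{\text{red}}(N)$;
\item if it lies in $W_1$, then $F^{I,-}_{W_1}$ need not land in $\HFI^-_{\text{red}}(N)$.
\end{itemize}
Your phrase ``verify the triangular shape by hand'' presupposes this map, and ``$Q$ kills the remaining entries as before'' presupposes its $Q$-linearity. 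You also cannot choose where the summand goes, since $N$ is given; both placements must be treated, not one as a fallback. For the same reason, commuting the $S^2\times S^2$ factor past $\overline{\delta}^{-1}$ is not the real obstacle.

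\textbf{What is missing.} First choose a $U$-equivariant lift $\mathfrak{h}^+:\HFI^+_{\text{red}}(N)\to\HFI^+(N)$, built from a splitting. Then observe that $\Phi^I_{X\#(S^2\times S^2),N}=Q\cdot F^{I,+}_{W_2}\circ\mathfrak{h}^+\circ\overline{\delta}^{-1}\circ F^{I,-}_{W_1}$ when the summand is in $W_2$. When it is in $W_1$, the formula is instead $\overline{F}^{I,+}_{W_2}\circ\overline{\delta}^{-1}\circ F^{I,-}_{W_1}\circ Q$, which makes sense because $F^{I,-}_{W_1}\circ Q$ does land in $\HFI^-_{\text{red}}(N)$. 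Since the left-hand side is a genuine $\mathbb{F}[U,Q]/Q^2$-module map, the right-hand side is independent of $\mathfrak{h}^+$ and $Q$-linear. So it suffices to compute the image of $1$. In the $W_2$ case this is done by projecting to $\HF^+$ via Lemma~\ref{descend}: $h_N(\mathfrak{h}^+(A))$ represents the ordinary class, and $F^+_{W_2}$ kills $\text{im}\,\pi_*$ because $N$ is an ordinary admissible cut. In the $W_1$ case, write $F^{I,-}_{W_1}(Q)=g_*(F^-_{W_1}(1))$ and push $g_*$ through $\overline{\delta}^{-1}$ using Lemma~\ref{boundarycommute}. This is precisely Lemma~\ref{stab} together with Propositions~\ref{Qinv} and \ref{Qinv2}.
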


Many useful applications of the Seiberg-Witten invariant and the Ozsváth-Szabó mixed invariant come through adjunction results, such as the following result for embedded spheres due to Fintushel and Stern:

\begin{theorem} [\textup{\cite{MR1349567}, \cite[Remark 4.4]{MR4800943}}] Let $X$ be a closed four-manifold with $b_{2}^{+} > 1$.  Suppose that there exists an embedded sphere $S \subset X$ with $[S]^{2} \geq 0$ and $[S] \neq 0$.  Then $SW_{X, \mathfrak{s}} = \Phi_{X, \mathfrak{s}} = 0$ for all $\text{spin}^{c}$ structures $\mathfrak{s}$ on $X$.
\end{theorem}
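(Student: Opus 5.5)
The approach is to split $X$ along the boundary of a neighborhood of the sphere. The resulting piece containing $S$ has a cobordism map that vanishes (or that contributes nothing after the relevant projection), and we then invoke the composition law for the mixed invariant, or the gluing/surgery formula for $SW$.

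\emph{Reduction to self-intersection zero.} First reduce to the case $[S]^2 = 0$. If $[S]^2 = n > 0$, blow up $n$ times at points of $S$ and take the proper transform $S' = S - E_1 - \cdots - E_n$. This is an embedded sphere of square $0$ in $X \# n\overline{\mathbb{CP}}^2$, and its class is nonzero. The blowup formula (for $SW$, and Ozsváth--Szabó's blowup formula for $\Phi$) expresses $SW_{X,\mathfrak{s}}$ and $\Phi_{X,\mathfrak{s}}$ as invariants of the blown-up manifold for the structures $\mathfrak{s} \pm E_i$. Hence vanishing on $X\#n\overline{\mathbb{CP}}^2$ implies vanishing on $X$. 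Also, $b_2^+$ is unchanged by blowing up.

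\emph{The case $[S]^2 = 0$, with $[S]$ non-torsion.} In this case $S$ has a tubular neighborhood $S^2 \times D^2$ with boundary $S^2 \times S^1$. Since $[S]\neq 0$ and $[S]^2 = 0$, there is a class pairing nontrivially with $[S]$, so the complement still has $b_2^+ \geq 1$ or $b_2^+ = b_2^+(X) - 1$. We may need to arrange the cut so that both sides have $b_2^+ \ge 1$. To do this, use $[S]$ together with a dual class to form a hyperbolic pair, and take the cut to be a neighborhood of $S$ union a dual surface. Alternatively, when $b_2^+(X) > 1$ one can still find an admissible cut $N$ separating $X = X_1 \cup_N X_2$ with $S \subset X_1$ and $b_2^+(X_i) \geq 1$.

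We then write $\Phi_{X,\mathfrak{s}}$ as the composition $F^+_{X_2}\circ \tau^{-1}\circ F^-_{X_1}$. The map $F^-_{X_1}$ factors through the cobordism from $S^3$ to $S^2\times S^1$ given by $S^2\times D^2$ minus a ball, followed by the rest. The key fact is that the image of $\HF^-(S^2\times S^1)$ lands in the reduced part in a way killed by the projection to $\HF_{\mathrm{red}}$ ($U$-torsion elements vanish there), because $\HF^-(S^2\times S^1, \mathfrak{t}_0)$ is a free module with no reduced part. Alternatively, the complementary cobordism from $S^2\times S^1$ has $b_2^+\ge 1$, so its $\HF^\infty$ map vanishes, which forces the mixed map to factor through $\HF^\infty$ and therefore vanish. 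For $SW$, the standard neck-stretching argument applies: $S^2\times S^1$ carries a positive scalar curvature metric, so there are no irreducible solutions on the neck, and since $b_2^+\ge 1$ on each side, the glued invariant vanishes. This is the Fintushel--Stern argument.

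\emph{Main obstacle.} I expect the hardest part to be ensuring that the cut is admissible, that is, that both sides have $b_2^+\ge 1$. The neighborhood of $S$ alone has $b_2^+=0$ when $[S]^2=0$. My first attempt would be to stretch along $S^2\times S^1$ directly, using $b_2^+(X\setminus \nu S)\ge 1$, and to show that the cobordism map for $S^2\times D^2 \setminus B^4$ lands in elements that map to zero after composition, since the dual torus class in the complement pairs with $S$. If that fails, the fallback is the torsion case, where $[S]$ is torsion but nonzero. There $[S]^2=0$ automatically, and the invariants are insensitive to the class, so I expect this case to be excluded or trivially handled via rational classes.
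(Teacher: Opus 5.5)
Your blow-up reduction to $[S]^2=0$ is fine. For $[S]^2=n>0$ you could even skip it: $\partial\nu S=L(n,1)$ is itself an admissible cut with vanishing reduced Floer homology, which is exactly the mechanism of Theorem~\ref{adjintro}.

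The gap is the square-zero case, where neither of your vanishing mechanisms works.

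\textbf{The mixed-invariant argument.} $\HF^-_{\mathrm{red}}(N)=\ker(\HF^-(N)\to\HF^\infty(N))$ is the $U$-torsion \emph{submodule}. The image of $F^-_{X_1}$ already lies there, and there is no projection that kills $U$-torsion. Factoring $F^-_{X_1}$ through the free module $\HF^-(S^2\times S^1)$ therefore tells you nothing: the remaining piece from $S^2\times S^1$ to $N$ can send free classes to nonzero elements of $\HF^-_{\mathrm{red}}(N)$. The same reasoning with $S^3$ in place of $S^2\times S^1$ would kill every mixed invariant. Similarly, vanishing of $F^\infty$ on the complementary piece is what makes $\Phi$ well defined, not what makes it zero. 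An ``$\HF_{\mathrm{red}}=0$'' argument needs the three-manifold to be the cut itself. But $S^2\times S^1=\partial\nu S$ is not admissible: $b_2^+(\nu S)=0$, and $\delta H^1(S^2\times S^1)$ is generated by $PD[S]\neq 0$.

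\textbf{The Seiberg--Witten argument.} It has the same flaw: ``$b_2^+\geq 1$ on each side'' is false for $S^2\times D^2$. When $\langle c_1(\mathfrak{s}),[S]\rangle\neq0$ there are indeed no solutions on the neck. Otherwise, stretching the positive-scalar-curvature neck only expresses the invariants through fibers of the boundary-value map $r\colon M(X\setminus\nu S)\to\mathbb{T}(S^2\times S^1)\cong S^1$, and these need not be empty.

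\textbf{What is missing} is a genuine use of the hypothesis that $[S]$ has infinite order. Two ways to supply it:

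\emph{Gauge theory.} Some $\Sigma$ has $\Sigma\cdot S\neq 0$, so the meridian of $S$ is torsion in $H_1(X\setminus\nu S)$. Hence $r$ lifts to $H^1(S^2\times S^1;\mathbb{R})\cong\mathbb{R}$. The structures $\mathfrak{s}+kPD[S]$, $k\in\mathbb{Z}$, restrict identically to both pieces, and gluing computes their invariants as counts over integer translates of one point. These fibers are cobordant, and they are empty for $|k|\gg 0$ by compactness, so all the invariants vanish.

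\emph{Adjunction.} This route works verbatim for both $SW$ and $\Phi$, using the adjunction inequality of Kronheimer--Mrowka, respectively Ozsv\'ath--Szab\'o.
\begin{enumerate}
\item Pipe each of the $q$ negative intersection points of $\Sigma$ with $S$ to a positive one along an arc in $S$. Now $\Sigma$ has genus $g+q$ and meets $S$ in $m>0$ positive points.
\item Resolve $\Sigma$ together with $n$ parallel copies of $S$. This gives a surface in the class $[\Sigma]+n[S]$ of genus $g+q+n(m-1)$ and square $[\Sigma]^2+2nm$.
\item This surface violates the adjunction inequality for $n\gg 0$.
\end{enumerate}

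\textbf{The torsion case cannot be rescued.} Surger $S^1\times S^3$ along a circle representing twice a generator. The result is a rational homology sphere $R$, and the sphere $\{0\}\times S^2$ of the glued-in $D^2\times S^2$ generates $H_2(R;\mathbb{Z})\cong\mathbb{Z}/2$. The manifold $K3\#R$ has $b_2^+=3$ and nonvanishing Seiberg--Witten invariants by Kotschick--Morgan--Taubes. So the hypothesis $[S]\neq 0$ must be read in rational homology.
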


The analogous statement for the involutive mixed invariant is demonstrably false, as Theorem~\ref{stabintro} implies the existence of four-manifolds with an $S^2 \times S^2$ summand for which $\Phi_{X, \mathfrak{s}}^{I}$ is nonvanishing, and $S^2 \times S^2$ contains embedded spheres of self-intersection $2n$ for any $n \geq 0$.  However, one may still ask whether Floer theoretic techniques can recover any adjunction information after stabilization. We prove the following weaker adjunction result:

\begin{theorem}\label{adjintro} Let $X$ be a smooth four-manifold with $b_{2}^{+}(X) > 4$.  Suppose that there exists a disjoint pair of smoothly embedded spheres $S_1, S_2 \subset X$ such that $[S_1]^2 > 0$ and $[S_2]^{2}> 0$.  Then $\Phi_{X, \mathfrak{s}}^{I} = 0$ for all spin structures $\mathfrak{s}$ on $X$.
\end{theorem}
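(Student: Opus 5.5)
The strategy is to use the structure of the involutive mixed invariant from Theorem~\ref{matrix}: the diagonal entries are $\Phi_{X,\mathfrak{s}}$ and the off-diagonal entry is $\Psi_{X,\mathfrak{s}}$. It therefore suffices to show two things: that $\Phi_{X,\mathfrak{s}}=0$, and that $\Psi_{X,\mathfrak{s}}=0$.

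\emph{Vanishing of the diagonal.} The first statement comes from the adjunction theorem for spheres quoted above. Here $S_1$ is an embedded sphere with $[S_1]^2>0$, so in particular $[S_1]\neq 0$, and that theorem gives $\Phi_{X,\mathfrak{s}}=0$.

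\emph{Vanishing of $\Psi_{X,\mathfrak{s}}$.} The remaining work concerns $\Psi_{X,\mathfrak{s}}$, which is where the second sphere is needed.
\begin{enumerate}
\item Take closed tubular neighborhoods $\nu(S_1)$ and $\nu(S_2)$. These are disjoint disk bundles over $S^2$ of Euler numbers $n_i=[S_i]^2>0$, and each boundary is a lens space $L(n_i,1)$. Since $\mathfrak{s}$ is spin, $n_i$ must be even and $\mathfrak{s}$ restricts to a spin structure on each $\nu(S_i)$.
\item Choose an involutively admissible cut $N$ (Definition~\ref{invcut}) so that $X$ decomposes as $X_1\cup_N X_2$ with $\nu(S_1)\subset X_1$ and $\nu(S_2)\subset X_2$. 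This should be possible because $b_2^+(X)>4$ leaves enough positive directions on both sides even after removing one positive class into each piece. If needed, first isotope things and tube in a path so that each piece keeps the required $b_2^+$.
\item Each side's mixed map then factors through the cobordism map of a punctured $\nu(S_i)$, viewed as a spin cobordism from $S^3$ to $L(n_i,1)$ (or its reverse). In ordinary Heegaard Floer theory a positive-square sphere with $b_2^+=1$ forces the relevant map into $\HF^{\infty}$ or $\HF^-$ to vanish in the mixed composite; this is the mechanism behind Fintushel--Stern.
\item In the involutive setting, the map $F^I$ of each piece has the same lower-triangular form: diagonal entries equal to the ordinary maps, plus an off-diagonal term. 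So each side-map is strictly lower triangular (nilpotent) with respect to the $Q$-filtration.
\item The mixed invariant is the composite of the $X_1$-map, the mixed map through $N$, and the $X_2$-map. Composing two strictly lower triangular $2\times 2$ maps over $\mathbb{F}[U,Q]/Q^2$ gives zero, since $Q^2=0$. Hence $\Phi^I_{X,\mathfrak{s}}=0$.
\end{enumerate}
One sphere alone yields only a single factor of $Q$, which is exactly why Theorem~\ref{stabintro} permits a nonzero answer.

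\emph{Main obstacle.} I expect two difficulties. The first is making precise that the involutive map of the positive-sphere piece has vanishing diagonal at the level needed to pass through the mixed construction, not merely on homology. This requires showing, via the chain-level ordinary argument (the map factors through $\HF^-$ of $S^2\times S^2$-like pieces and has image killed by $U$-torsion in $\HF^\infty$), that the ordinary component is null-homotopic, so that the involutive map is chain homotopic to $Q$ times something. The second is arranging the admissible cut so that each $S_i$ lies on a different side. That is where the hypothesis $b_2^+>4$, rather than $>3$, should enter.
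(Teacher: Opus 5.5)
\textbf{There is a genuine gap at step 5.} Your reduction to $\Psi_{X,\mathfrak{s}}=0$, using Theorem~\ref{matrix} and the Fintushel--Stern vanishing of $\Phi_{X,\mathfrak{s}}$, is fine. The problem is the argument for $\Psi_{X,\mathfrak{s}}=0$. The mixed map is $\overline{F}^{I,+}_{X_2}\circ\overline{\delta}^{-1}\circ F^{I,-}_{X_1}$, and the middle factor acts on $\HFI_{\text{red}}(N)$. That module need not be free over $\mathbb{F}[U,Q]/Q^2$, nor split as $\HF_{\text{red}}(N)\oplus Q\HF_{\text{red}}(N)$, so ``composing two strictly lower triangular $2\times 2$ maps'' has no meaning there.

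Here is what you can actually prove. This works on homology, via the compatibility of involutive cobordism maps with the exact triangle (Hendricks--Manolescu, Proposition 4.9), so no chain-level null-homotopy is needed. Factoring through the punctured disk bundles of $S_1$ and $S_2$ gives $F^-_{X_1}=0$ and $F^+_{X_2}=0$. Hence $\text{im}\,F^{I,-}_{X_1}\subset \text{im}\,g^-_*\cap\HFI^-_{\text{red}}(N)$, and $F^{I,+}_{X_2}$ kills both $\text{im}\,g^+_*$ and $\text{im}\,\pi^I_*$.

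To conclude, you would need $\overline{\delta}^{-1}$ to send $\text{im}\,g^-_*\cap\HFI^-_{\text{red}}(N)$ to classes represented in $\text{im}\,g^+_*$. By Lemma~\ref{boundarycommute}, $\delta^I(\text{im}\,g^+_*)=g^-_*(\HF^-_{\text{red}}(N))$. However, $\text{im}\,g^-_*\cap\HFI^-_{\text{red}}(N)$ can a priori be larger: it contains $g^-_*(y)$ for any $y$ with $i_*(y)\neq 0$ but $i_*(y)\in\ker g^\infty_*=\text{im}\,Q(1+\iota^\infty_*)$. Ruling this out requires knowing that $\iota_*$ is the identity on $\HF^\infty(N)$. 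That is automatic if $b_1(N)=0$, but your $N$ does not have this property in general. Each side needs a second positive class besides $S_i$, and that class is in general a positive-genus surface, so $b_1(N)>0$. The paper proves the analogous identity-on-the-bar-flavor statement only on the monopole side (Proposition~\ref{tech}), so this is input you do not have.

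\textbf{The fix is not to separate the spheres.} The paper puts both spheres on the same side of the cut:
\begin{itemize}
\item Tubing the two lens-space boundaries together gives $N=Q_1\#Q_2$. It bounds $X_1=\nu(S_1)\cup\nu(S_2)\cup\text{tube}$, with $b_2^+(X_1)=2$.
\item Since $H^1(N;\mathbb{Z})=0$, we get $b_2^+(X_2)=b_2^+(X)-2>2$, and the $\delta H^1$ condition holds vacuously. So $N$ is involutively admissible.
\item A connected sum of lens spaces is an $L$-space, so $\HFI_{\text{red}}(N)=0$ by Hendricks--Manolescu, Corollary 4.8. The mixed map therefore factors through zero.
\item Proposition~\ref{inv} then gives independence of the cut.
\end{itemize}
That last step is where $b_2^+>4$ enters, not the existence of your cut, which only needs $b_2^+\ge 4$. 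This route avoids Fintushel--Stern and Theorem~\ref{matrix} entirely.
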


As a corollary of Theorems~\ref{stabintro} and~\ref{adjintro}, we obtain:

\begin{theorem}\label{stabadjintro} Let $X$ be a smooth, closed four-manifold with $b_{2}^{+}(X) > 3$.  Suppose that there exists a disjoint pair of smoothly embedded spheres $S_1, S_2 \subset X\#(S^2 \times S^2)$ such that $[S_{1}]^2 > 0$ and $[S_{2}]^2 > 0$.  Then $\Phi_{X, \mathfrak{s}} = 0$ for all spin structures $\mathfrak{s}$ on $X$, where $\Phi_{X, \mathfrak{s}}$ is the Ozsváth-Szabó mixed invariant of the pair $(X, \mathfrak{s})$.
    
\end{theorem}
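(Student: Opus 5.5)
The argument combines the two preceding results, so nothing new needs to be built. Fix a spin structure $\mathfrak{s}$ on $X$ and put $X' = X \# (S^2 \times S^2)$ with spin structure $\mathfrak{s}' = \mathfrak{s} \# \mathfrak{s}_0$. Since $S^2 \times S^2$ has $b_2^+ = 1$, we get $b_2^+(X') = b_2^+(X) + 1 > 4$. In particular $\Phi^{I}_{X',\mathfrak{s}'}$ is well defined and independent of choices, with no cut needed. Spin structures on a connected sum are determined by their restrictions to the summands, and $\mathfrak{s}_0$ is the unique spin structure on $S^2 \times S^2$. So $\mathfrak{s}'$ is indeed a spin structure on $X'$, and every spin structure on $X'$ arises this way.

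First apply Theorem~\ref{adjintro} to $X'$. By hypothesis it contains disjoint embedded spheres $S_1, S_2$ with positive self-intersection, and $b_2^+(X') > 4$. Hence $\Phi^{I}_{X',\mathfrak{s}'} = 0$.

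Next apply Theorem~\ref{stabintro}, which needs only $b_2^+(X) > 3$. It gives
\[0 = \Phi^{I}_{X',\mathfrak{s}'} = Q\,\Phi_{X,\mathfrak{s}}.\]
Here $Q\Phi_{X,\mathfrak{s}}$ is the map $\HFI^{-}(S^3) \to \HFI^{+}(S^3)$ obtained by composing $\Phi_{X,\mathfrak{s}}$, viewed via the inclusion $\HF \to \HFI$ as the $Q$-free part, with multiplication by $Q$. In the vector notation of Theorem~\ref{matrix}, it sends $(a_0 U^m, a_1 U^n)$ to $(0, \Phi_{X,\mathfrak{s}}(a_0U^m))$.

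The remaining step, and the only one with content, is to check that multiplication by $Q$ is injective on the image of $\Phi_{X,\mathfrak{s}}$ inside $\HFI^{+}(S^3)$. As a module, $\HFI^{+}(S^3) \cong \mathcal{T}^+ \oplus Q\mathcal{T}^+$, where $\mathcal{T}^+ = \mathbb{F}[U,U^{-1}]/U\mathbb{F}[U]$. Multiplication by $Q$ maps the first summand isomorphically onto the second, with a degree shift of $-1$. Therefore the vanishing of the $Q$-component forces $\Phi_{X,\mathfrak{s}}(U^m) = 0$ for every $m$, so $\Phi_{X,\mathfrak{s}} = 0$.

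The main point to watch is that the stabilization formula really is the statement "$\Phi^I$ of the stabilization equals $\Phi$ shifted into the $Q$-summand," rather than something that could collapse under $Q$. If the formula is read in $\HFI^{+}$ with its $Q$-module structure, this is exactly the injectivity just described. Since $\mathfrak{s}$ was an arbitrary spin structure on $X$, this proves the theorem.
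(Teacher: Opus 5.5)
Your proposal is correct and follows the paper's proof. Like the paper, you apply Theorem~\ref{adjintro} to $X\#(S^2\times S^2)$, which has $b_2^+>4$, to get vanishing of its involutive mixed invariant, and then use Theorem~\ref{stabintro} to identify that invariant with $Q\Phi_{X,\mathfrak{s}}$. Your explicit check that multiplication by $Q$ carries the $Q^0$-tower of $\HFI^{+}(S^3)$ isomorphically onto the $Q$-tower just spells out the step the paper summarizes as ``it follows.''
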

Unfortunately, in light of Baraglia's result that $SW_{X, \mathfrak{s}} = 0$ mod 2 whenever $b_{2}^{+}(X) > 3$ and $\mathfrak{s}$ is spin \cite{baraglia2023mod}, there is good reason to believe that Theorem~\ref{stabadjintro} would only be interesting if we could extend the hypothesis to the case $b_{2}^{+}(X) = 3$.  As it turns out, Baraglia's Pin(2)-equivariant Seiberg-Witten invariant is the correct tool for this job.

We show that a suitable truncation of Baraglia's invariant can be obtained through a similar construction to the involutive mixed invariant, using cobordism maps on involutive monopole Floer homology (due to Lin \cite{MR3968878}). We call this truncation the \textit{involutive Seiberg-Witten invariant} $SW_{X, \mathfrak{s}}^{I}$. Working on the monopole side has the virtue that invariance can be extended to lower values of $b_{2}^{+}$.

The invariant $SW_{X, \mathfrak{s}}^{I}$ satisfies the same formal properties as $\Phi_{X, \mathfrak{s}}^{I}$:

\begin{theorem}\label{matrixSW} Let $(X, \mathfrak{s})$ be a closed spin four-manifold with $b_{2}^{+} \geq 4$.  Then
\[SW_{X, \mathfrak{s}}^{I} = \begin{pmatrix} SW_{X, \mathfrak{s}} & 0 \\ h_{X, \mathfrak{s}} & SW_{X, \mathfrak{s}} \end{pmatrix} : \widehat{\HMI}(S^3) \to \widecheck{\HMI}(S^3),\]
where $h_{X, \mathfrak{s}}$ is a $U$-equivariant homomorphism.  By grading considerations, $h_{X, \mathfrak{s}}$ is nonzero only if $SW_{X, \mathfrak{s}}$ is zero—in fact, only if the dimension of the Seiberg-Witten moduli space $d(\mathfrak{s})$ is odd.
\end{theorem}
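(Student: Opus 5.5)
We will prove this the same way as Theorem~\ref{matrix}, with Lin's involutive monopole cobordism maps \cite{MR3968878} in place of the Heegaard Floer ones. First recall the construction of $SW^{I}_{X,\mathfrak{s}}$. Cut $X$ along an admissible cut $N$ so that $X = W_1 \cup_N W_2$, where $W_1 = X \setminus (B^4 \sqcup \nu N)$ side and $W_2$ are spin cobordisms $S^3 \to N$ and $N \to S^3$, each with $b_2^+ \geq 2$ (this is where $b_2^+ \ge 4$ enters). Define $SW^I_{X,\mathfrak s}$ as the composite
\[
\widehat{\HMI}(S^3) \xrightarrow{\widehat{F}^I_{W_1}} \widehat{\HMI}(N) \cong \overline{\HMI}(N)\text{-part} \xrightarrow{\ \widecheck{F}^I_{W_2}\ } \widecheck{\HMI}(S^3).
\]
More precisely, $\widehat{F}^I_{W_1}$ lands in the kernel of $\widehat{\HMI}\to\overline{\HMI}$, which we lift through the connecting map to $\widecheck{\HMI}(N)$ and then push forward by $W_2$.

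\textbf{Step 1 (mapping cone description).} Lin's involutive Floer complex is the mapping cone $\mathrm{Cone}(Q(1+\jmath))$. The cobordism map $F^I_W$ is therefore given in block form by
\[
F^I_W = \begin{pmatrix} F_W & 0 \\ H_W & F_W\end{pmatrix},
\]
where $F_W$ is the ordinary monopole cobordism map and $H_W$ is a homotopy satisfying $\jmath F_W + F_W\jmath = \partial H_W + H_W\partial$. Composing such lower-triangular maps, including the connecting maps between the hat, check and bar flavours (which are also lower triangular, with diagonal equal to the ordinary connecting map), produces a lower-triangular matrix. Its diagonal entries are exactly the composite defining the ordinary mixed Seiberg-Witten invariant, and the Kronheimer--Mrowka identification gives $SW_{X,\mathfrak s}$. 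Call the off-diagonal entry $h_{X,\mathfrak s}$; it is $U$-equivariant because every map in the composite is.

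\textbf{Step 2 (well-definedness).} Independence from the choices (the cut $N$, the metric and perturbation data, and the homotopy $H$) is part of the definition of the invariant. We take it as established just before the theorem, which is why the hypothesis is $b_2^+\ge 4$ rather than $>4$. The matrix form only uses the triangularity, which holds for any choice.

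\textbf{Step 3 (grading).} The diagonal maps shift grading by $d(\mathfrak s)$, up to the standard normalization. The entry $h$ has an extra $Q$, so it shifts grading by $d(\mathfrak s)+1$. In $\widehat{\HM}(S^3)\to\widecheck{\HM}(S^3)$, both sides are supported in even gradings. Hence a $U$-equivariant map of odd degree vanishes, so $SW\ne0$ forces $d$ even, and $h\neq0$ forces $d+1$ even, i.e. $d$ odd. In particular, $h$ and $SW$ cannot both be nonzero.

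\textbf{Main obstacle.} The delicate point is Step 1 for the connecting map. We must check that the lift through $\widecheck{\HMI}(N)\to\widehat{\HMI}(N)$ can be chosen compatibly with the block structure, so that the ambiguity in the lift is killed by $W_2$ (this uses $b_2^+(W_2)\ge1$, giving vanishing bar maps). We must also check that the diagonal of the lifted composite is an honest lift of the ordinary map. I would handle this at chain level: the bar-flavoured involutive maps are themselves triangular with vanishing diagonal, so they vanish as needed when $b_2^+(W_i)\geq 1$.
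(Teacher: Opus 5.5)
Your overall route is the paper's: Theorem~\ref{matrixSW} is the monopole analogue of Theorem~\ref{mixed}. Involutive maps are compatible with the inclusion $g$ of the $Q$-subcomplex and the projection $h$, the diagonal is the ordinary mixed map (which Kronheimer--Mrowka identify with $SW_{X,\mathfrak{s}}$), and your Step 3 is exactly the paper's parity argument.

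The problem is the step you flag as the main obstacle: the mechanism you propose for it is false. A map that is triangular with vanishing diagonal, i.e.\ of the form $\begin{pmatrix}0&0\\ H&0\end{pmatrix}$, need not vanish. The twice-punctured $S^2\times S^2$ has $b_2^+=1$ and vanishing ordinary $\infty$ map, yet its involutive $\infty$ map is multiplication by $Q$ (Theorem~\ref{S2} and the remark after it). Only a \emph{composite} of two such maps is forced to vanish (Lemma~\ref{vanish1}), which is why Lemma~\ref{vanish} needs $b_2^+>1$. So involutive bar maps do not vanish when $b_2^+(W_i)\ge 1$. The ambiguity in an involutive lift through $\widecheck{\HMI}(N)\to\widehat{\HMI}(N)$ is killed by $W_2$ only because $b_2^+(W_2)\ge 2$, and that belongs to well-definedness, which you already take as given.

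More to the point, the matrix form needs no vanishing statement at all. Composing ``lower-triangular matrices'' through $N$ is not meaningful: the homology of the mapping cone does not split as $\widecheck{\HM}(N)\oplus Q\widecheck{\HM}(N)$, and the lift (the inverse of the connecting map on reduced groups) exists only on homology. So a chain-level argument is not available for this step.

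The paper instead works on homology with $g_*$ and $h_*$:
\begin{itemize}
\item The connecting maps commute with $g_*$ and $h_*$ (Lemma~\ref{boundarycommute}, from the $3\times 3$ diagram of chain maps).
\item Hence $g_*$ restricts and $h_*$ descends to the reduced groups (Lemma~\ref{descend}).
\item Since the connecting map is an isomorphism on reduced groups in each column, its inverse commutes with $g_*$ and $h_*$ as well.
\end{itemize}
Concretely, if $b$ lifts $\widehat{F}^{I}_{W_1}(x)$, then $h_*(b)$ lifts $\widehat{F}_{W_1}(h_*x)$. If $b'$ lifts $\widehat{F}_{W_1}(x')$, then $g_*(b')$ lifts $\widehat{F}^{I}_{W_1}(g_*x')$. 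Combine this with independence of the lift: ordinary bar-map vanishing for $b_2^+\ge 1$ handles the diagonal, and involutive vanishing for $b_2^+\ge 2$ handles the involutive map. This gives $h_*\circ\mathfrak{m}^I=\mathfrak{m}\circ h_*$ and $\mathfrak{m}^I\circ g_*=g_*\circ\mathfrak{m}$. The matrix form then follows, since on $S^3$ the map $g_*$ is the inclusion of the $Q$-tower and $h_*$ sets $Q=0$.
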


\begin{theorem}\label{stabintroSW} Let $(X, \mathfrak{s})$ be a closed spin four-manifold with $b_{2}^{+}(X) \geq 3$.  Then:
\[SW_{X \# (S^2 \times S^2), \mathfrak{s}\#\mathfrak{s}_{0}}^{I} = Q\cdot SW_{X, \mathfrak{s}}.\]
\end{theorem}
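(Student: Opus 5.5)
The plan mirrors the Heegaard Floer stabilization argument (Theorem~\ref{stabintro}), with Lin's involutive monopole cobordism maps in place of the involutive Heegaard Floer maps. First I would fix the construction of $SW^{I}_{X,\mathfrak{s}}$ on the closed manifold $X' := X\#(S^2\times S^2)$. Remove two balls and choose an admissible cut $N \subset X$, a separating three-manifold with $b_2^{+}\ge 1$ on each side, so that $X'\setminus(B^4\sqcup B^4) = W_1\cup_N W_2$. Put the $S^2\times S^2$ summand inside $W_2$ and keep it away from the cut. Then $W_2 = W_2^{X}\natural (S^2\times S^2)$, where $W_2^{X}$ is the corresponding piece for $X$. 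Since $b_2^{+}(X)\ge 3$, the cut can be chosen so that both $W_1$ and $W_2^{X}$ satisfy whatever involutive admissibility condition the definition requires. The invariant is the composite
\[
\widehat{\HMI}(S^3)\xrightarrow{\widehat{F}^I_{W_1}} \widehat{\HMI}(N) \to \widecheck{\HMI}(N) \xrightarrow{\widecheck{F}^I_{W_2}} \widecheck{\HMI}(S^3),
\]
where the middle arrow is the inverse of the isomorphism $j$ on the reduced part, or its involutive analogue. By composition functoriality of Lin's maps, $\widecheck{F}^I_{W_2}$ factors as $\widecheck{F}^I_{W_2^{X}}$ followed by the map of the punctured $(S^2\times S^2,\mathfrak{s}_0)$, viewed as a cobordism $S^3\to S^3$ and glued in near the outgoing boundary.

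The key input is the monopole analogue of Hendricks--Hom--Stoffregen--Zemke, Proposition 14.1: the involutive map of $(S^2\times S^2)\setminus(B^4\sqcup B^4)$ with its spin structure is multiplication by $Q$ on $\widecheck{\HMI}(S^3)$, up to terms that vanish here. Lin's involutive monopole theory is isomorphic to involutive Heegaard Floer homology, compatibly with cobordism maps, so I would transport the computation across that isomorphism. If that compatibility is not available, I would instead prove the formula directly in the monopole setting. There, $S^2\times S^2$ minus two balls is built from a $2$-handle pair, and the $\mathrm{Pin}(2)$-equivariant computation from Baraglia's or Lin's work shows the map is $Q$ times the identity on the tower. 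I expect this step to be the main obstacle.

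Given that, $SW^{I}_{X',\mathfrak{s}\#\mathfrak{s}_0} = Q\cdot \mathcal{F}$, where $\mathcal{F}$ is the involutive composite for $X$ built from $W_1$, $N$ and $W_2^{X}$. That composite is $SW^{I}_{X,\mathfrak{s}}$, although it is only invariant when $b_2^{+}(X)\ge 4$. By Theorem~\ref{matrixSW}, or just its structural part (the block lower-triangular form coming from $\iota$-equivariance), $\mathcal{F}$ has the form $\begin{pmatrix} SW & 0\\ h & SW\end{pmatrix}$. Since $Q^2=0$, multiplying by $Q$ kills the $h$ entry and leaves $Q\cdot SW_{X,\mathfrak{s}}$. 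This holds even when $b_2^{+}(X)=3$: the matrix form comes only from the $\iota$-equivariance of the maps, not from invariance, so the possibly cut-dependent $h$ drops out anyway. Finally, since $b_2^{+}(X')\ge 4$, the left side is independent of the cut, so the formula holds for the invariant itself.
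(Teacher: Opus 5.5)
Your architecture matches the paper's route: run the Section 6 argument with monopole groups, then apply Theorem~\ref{SWequiv1} to $X'=X\#(S^2\times S^2)$, where $b_2^+(X')\ge 4$. For $b_2^+(X)\ge 4$ your argument goes through as written.

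\textbf{The gap: the case $b_2^+(X)=3$.} This is the case the theorem is really about, and there $N$ cannot be involutively admissible for $X$, since that needs $b_2^+>1$ on both sides and hence $b_2^+(X)\ge 4$. With the summand in $W_2$ you are forced to $b_2^+(W_1)=2$ and $b_2^+(W_2^X)=1$. So $N$ is involutively admissible for $X'$ but only ordinarily admissible for $X$.

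The monopole version of Lemma~\ref{vanish} then says nothing about $W_2^X$. The map $\overline{F}^I_{W_2^X}$ need not vanish, so $\widecheck{F}^I_{W_2^X}$ does not descend to $\widecheck{\HMI}(N)/\mathrm{im}\, i_*$. To factor $SW^I_{X'}=Q\cdot\mathcal{F}$ at all, you must choose a lift $\widecheck{\HMI}(N)/\mathrm{im}\, i_*\to\widecheck{\HMI}(N)$; this is the paper's $\mathfrak{h}^+$. The resulting $\mathcal{F}$ depends on that choice, is in general not $Q$-linear, and is not identified with $SW^I_{X,\mathfrak{s}}$ by anything available, since Theorem~\ref{SWequiv1} needs $b_2^+\ge 4$.

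Consequently your block form is not justified by either source you cite. Theorem~\ref{matrixSW} assumes $b_2^+\ge 4$. The proof of its structural part (Theorem~\ref{mixed}) needs both pieces to have $b_2^+>1$ so that it can work on reduced groups. The real issue is well-definedness of $\mathcal{F}$, not cut dependence of $h$ or $\iota$-equivariance.

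\textbf{What is missing} is the argument of Proposition~\ref{Qinv}, in two steps.
\begin{enumerate}
\item $Q\cdot\mathcal{F}$ equals the honest mixed map of $X'$ along $N$. Hence it is independent of the lift and $\mathbb{F}[U,Q]/Q^2$-linear, so it suffices to evaluate at $1$.
\item Project to the non-involutive group $\widecheck{\HM}$. The projection commutes with the cobordism maps and descends to reduced groups. So the lifted element maps to a lift of the ordinary class $\bar{j}^{-1}\widehat{F}_{W_1}(1)$. Because $b_2^+(W_2^X)\ge 1$, the ordinary map $\widecheck{F}_{W_2^X}$ kills $\mathrm{im}\, i_*$. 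Hence the non-$Q$ component of $\mathcal{F}(1)$ is the ordinary mixed invariant of $X$ along $N$, whatever lift was chosen. By Kronheimer--Mrowka, Chapter 27, this is $SW_{X,\mathfrak{s}}$, and multiplication by $Q$ discards everything else.
\end{enumerate}
This is exactly where ordinary admissibility of $N$ in $X$ enters, and your proposal never uses it.

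Your handling of the $S^2\times S^2$ map is at the same level of detail as the paper's, which simply asserts that the Section 6 arguments carry over to monopoles. That step is not where your proof breaks.
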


As a result, we conclude that Theorem~\ref{stabadjintro} holds for $b_{2}^{+}(X) = 3$ when $\Phi_{X, \mathfrak{s}}$ is replaced by $SW_{X, \mathfrak{s}}$. In fact, analysis of the bar flavor of the involutive monopole Floer chain complex allows for a generalization of the result to higher genus surfaces:
\begin{theorem}\label{SWstabadjintro} Let $X$ be a smooth, closed four-manifold with $b_{2}^{+}(X) > 2$.  Suppose that there exists a disjoint pair of smoothly embedded surfaces $S_1, S_2 \subset X\#(S^2 \times S^2)$ such that
\[[S_{1}]^2 > \max\{2g(S_{1}) - 2, 0\}\]
and
\[[S_{2}]^2 > \max\{2g(S_{2}) - 2, 0\}.\]
Then $SW_{X, \mathfrak{s}} = 0$ mod 2 for all spin structures $\mathfrak{s}$ on $X$. 
\end{theorem}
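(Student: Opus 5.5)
We argue by contraposition. Suppose $\mathfrak{s}$ is a spin structure on $X$ with $SW_{X,\mathfrak{s}} \neq 0$ mod 2, and let $X' = X\#(S^2\times S^2)$ with spin structure $\mathfrak{s}' = \mathfrak{s}\#\mathfrak{s}_0$. Then $b_2^+(X') = b_2^+(X)+1 \geq 4$, so $SW^I_{X',\mathfrak{s}'}$ is defined. By Theorem~\ref{stabintroSW}, which requires $b_2^+(X)\geq 3$ and is therefore exactly our hypothesis $b_2^+(X)>2$, we have $SW^I_{X',\mathfrak{s}'} = Q\cdot SW_{X,\mathfrak{s}} \neq 0$. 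It therefore suffices to prove a monopole analogue of Theorem~\ref{adjintro} for surfaces: if a closed spin $X'$ with $b_2^+(X')\geq 4$ contains disjoint surfaces $S_1,S_2$ with $[S_i]^2>\max\{2g(S_i)-2,0\}$, then $SW^I_{X',\mathfrak{s}'}=0$.

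To prove this, I would build $SW^I$ from a decomposition of $X'$ adapted to the surfaces. Let $\nu(S_i)$ be a tubular neighborhood of $S_i$, a disk bundle of Euler number $n_i=[S_i]^2>0$ over a genus $g_i$ surface. Each $\nu(S_i)$ has $b_2^+=1$. I would place $\nu(S_1)$ on the ``$U$-side'' of the cut and $\nu(S_2)$ on the other side. By the paper's construction, $SW^I$ factors through the involutive cobordism maps of $X'\setminus(\nu(S_1)\cup\nu(S_2)\cup B^4\cup B^4)$, which has $b_2^+\geq 2$. So I would choose an involutively admissible cut $N$ separating the two surface neighborhoods, with $b_2^+\geq 1$ on each side, together with its spin$^c$ splitting. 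Independence of the cut, valid since $b_2^+(X')\geq 4$, lets us use this particular $N$.

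The key local input is a statement about the bar flavor. For the circle bundle $Y_i=\partial\nu(S_i)$, the map induced by the disk bundle $\nu(S_i)\setminus B^4$, in every spin$^c$ structure restricting to $\mathfrak{s}'$, should vanish on $\overline{\HMI}$, or at least be nilpotent under $U$ in the relevant range. Non-involutively this is the standard adjunction argument. When $n_i>2g_i-2$, the circle bundle $Y_i$ admits a positive scalar curvature-type stretching, or more precisely the reducible-only behaviour of the Gysin-type computation. The map on $\overline{\HM}$ then equals multiplication by a characteristic class that vanishes because the surface class has positive square and is representable in a neck. I would lift this to $\overline{\HMI}$ using the mapping-cone description $\overline{\CMI}=\mathrm{Cone}(Q(1+\iota))$. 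The bar involutive map becomes a $2\times2$ triangular matrix whose diagonal entries are the ordinary bar maps, which vanish. Its off-diagonal entry is then a chain homotopy between zero maps, hence a chain map. Applying the same reasoning to both surfaces, the composite through $\overline{\HMI}$ of both ends is a product of two strictly lower-triangular matrices, and such a product is zero. This is exactly why two disjoint surfaces are needed rather than one, consistent with the failure of the one-sphere statement after stabilization. Finally, since $SW^I$ factors through $\overline{\HMI}$ on both sides (via the $\widehat{\ }\to\overline{\ }\to\widecheck{\ }$ maps used in the definition, as for the mixed invariant), we get $SW^I_{X',\mathfrak{s}'}=0$, a contradiction.

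I expect the main obstacle to be the local vanishing of the diagonal bar maps for higher genus, and arranging that the involutive correction terms really are strictly off-diagonal. This requires the $\iota$-homotopy coherence data for the disk bundle cobordism to be compatible with the spin structure, which forces $n_i$ even and the conjugation-invariant spin$^c$ structures to be handled together. I would first try to reduce to the genus $0$ case by a neck-stretching/tubing argument. If that fails, I would compute directly on $\overline{\HM}(Y_i)$, which is determined by the reducibles, using Kronheimer–Mrowka's description of bar maps in terms of $b_1$ and cup product data.
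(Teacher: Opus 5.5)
\textbf{There is a genuine gap in the vanishing step.} Your first step, contraposition plus Theorem~\ref{stabintroSW} giving $SW^I_{X',\mathfrak{s}'}=Q\cdot SW_{X,\mathfrak{s}}$, matches the paper's implicit reduction. The failure comes after that.

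The bar-flavor vanishing you establish has nothing to do with the surfaces. Any cobordism with $b_2^+>0$ induces zero on $\overline{\HM}$ (Kronheimer--Mrowka, Prop.~3.5.2). So your diagonal entries vanish simply because $b_2^+(\nu(S_i))=1$. And ``a product of two strictly lower-triangular matrices is zero'' is just the monopole version of Lemma~\ref{vanish}. That fact is what makes the involutive mixed map \emph{definable}; it cannot make it vanish, because the invariant does not factor through $\overline{\HMI}$. It is the composite $\widecheck{F}^{I}_{X_2}\circ\overline{\delta}^{-1}\circ\widehat{F}^{I}_{X_1}$, routed through $\HMI_{\text{red}}(N)$ via the inverse of the connecting map. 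The route through $\overline{\HMI}$ that you describe is identically zero for every $X'$ with $b_2^+\ge 2$.

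Consequently the hypothesis $[S_i]^2>2g(S_i)-2$ never really enters, and your argument proves too much. Every $X'$ with $b_2^+\ge 2$ contains two disjoint surfaces of positive square. Your argument would therefore give $SW^I_{X',\mathfrak{s}'}=0$ for $X'=K3\#(S^2\times S^2)$, contradicting $SW^I=Q\cdot SW_{K3}=Q$.

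\textbf{The missing idea is a cut whose reduced involutive group you can compute.} The paper takes $N=Y_1\#Y_2$, with \emph{both} disk bundles, joined by a tube, on one side; that side has $b_2^+=2$. The complement has $b_2^+\ge 2$ because $b_2^+(X')\ge 4$. One checks $\delta H^1(N)=0$, since $H^1(S_i)\to H^1(Y_i)$ is onto. The paper then shows $\HMI_{\text{red}}(N)=0$, so the invariant vanishes.

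The adjunction hypothesis enters exactly here. The condition $[S_i]^2>\max\{2g_i-2,0\}$ gives $\HF_{\text{red}}(Y_i)=0$ by Ozsv\'ath--Szab\'o's circle-bundle computation. Hence $\HF_{\text{red}}(N)=0$ by the connected sum formula, and $\HM_{\text{red}}(N)=0$ by Kutluhan--Lee--Taubes.

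But $\HM_{\text{red}}(N)=0$ alone does not give $\HMI_{\text{red}}(N)=0$. By Proposition~\ref{vanishred} you also need $\overline{\iota}_*=\mathrm{Id}$ on $\overline{\HM}(N)$, and for positive genus this is the real technical input, Proposition~\ref{tech}. Since $N$ has vanishing triple cup product, $\overline{\CM}(N)$ is a coupled Morse complex with zero differential over $\mathbb{Z}$. A $\jmath$-invariant Morse function makes the leading term of $\overline{\iota}$ the identity on chains. Then $\overline{\iota}^2=\mathrm{Id}$ on chains, together with the absence of $2$-torsion, kills every higher $U^{-k}$ term. Nothing in your proposal plays the role of this step.
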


Henceforth, we will use the terminology that a smoothly embedded surface $S$ of genus $g$ in a spin four-manifold $(X, \mathfrak{s})$ \textit{violates the adjunction inequality} if
\[[S]^2 > \max\{2g(S) - 2, 0\}.\]
As a corollary of Theorem~\ref{SWstabadjintro}, we obtain:
\begin{corollary} If $X$ is a homotopy $K3$ surface, then there exists no disjoint pair of smoothly embedded surfaces which both violate the adjunction inequality in $X\#(S^2 \times S^2)$.
\end{corollary}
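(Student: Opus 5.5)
The plan is to apply Theorem~\ref{SWstabadjintro} with $X$ a homotopy $K3$ surface. A homotopy $K3$ surface is homotopy equivalent to $K3$. It therefore has intersection form $2(-E_8)\oplus 3H$, so $b_2^+(X)=3>2$. The form is even and $X$ is simply connected, so $X$ is spin, and $H^1(X;\mathbb{Z}_2)=0$ means the spin structure $\mathfrak{s}$ is unique. Note that $X\#(S^2\times S^2)$ is also spin, so the adjunction terminology makes sense there.

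Suppose for contradiction that $X\#(S^2\times S^2)$ contains a disjoint pair of smoothly embedded surfaces that both violate the adjunction inequality. Theorem~\ref{SWstabadjintro} then gives $SW_{X,\mathfrak{s}}\equiv 0 \pmod 2$, so it suffices to show that $SW_{X,\mathfrak{s}}$ is odd.

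This is the main step. For the genuine $K3$ surface, the spin structure has $c_1(\mathfrak{s})=0=c_1(K)$ and is the canonical structure of a Kähler, hence symplectic, surface. Taubes' theorem gives $SW=\pm1$, which is odd. For an arbitrary homotopy $K3$, I would invoke Morgan--Szabó, who show that every homotopy $K3$ surface has Seiberg--Witten invariant $\equiv 1 \pmod 2$ for the spin structure. One could also try to deduce this from Bauer--Furuta and Pin(2) considerations, but citing Morgan--Szabó is cleanest.

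We should also check that the dimension is compatible. Here $d(\mathfrak{s}) = (c_1^2 - 2\chi - 3\sigma)/4 = (0-48+48)/4 = 0$. So the Seiberg--Witten invariant is a number, and it is consistent with the nonvanishing of the top-left entry in Theorem~\ref{matrixSW}. This contradiction proves the corollary.
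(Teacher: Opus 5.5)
Your proposal is correct and is the same argument the paper gives: you check that $b_2^+(X)=3>2$ and that the spin structure is unique, cite Morgan--Szab\'o for $SW_{X,\mathfrak{s}}\equiv 1 \pmod 2$, and conclude from Theorem~\ref{SWstabadjintro}. The extra checks (evenness of the form, $d(\mathfrak{s})=0$) are harmless but not needed. Your closing remark about Theorem~\ref{matrixSW} does not literally apply to $X$, since that theorem assumes $b_2^+\geq 4$.
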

\begin{proof} By work of Morgan-Szabó \cite{MR1432806}, $SW_{X, \mathfrak{s}} = 1$ mod 2, where $\mathfrak{s}$ is the unique self-conjugate $\text{spin}^{c}$ structure on $X$.  The result then follows from Theorem~\ref{SWstabadjintro}.\end{proof}

We remark that other adjunction results for configurations of embedded surfaces in four-manifolds with vanishing Seiberg-Witten invariants have been obtained by Strle in \cite{MR2064429} and Konno in \cite{MR4407492}.

\subsection{Outline of the Paper}
In sections 2 and 3, we provide an overview of the Seiberg-Witten and Heegaard Floer theory background that will be used throughout the paper. In section 4, we define the involutive mixed map for cobordisms, and we show that it is an invariant when $b_{2}^{+} > 4$.  In section 5, we prove some basic properties of the invariant, including its relationship to the Ozsváth-Szabó invariant (Theorem~\ref{matrix}) and its ability to obstruct the existence of certain embedded pairs of spheres (Theorem~\ref{adjintro}).  In section 6, we prove the stabilization formula (Theorem~\ref{stabintro}) and deduce weak sphere adjunction for stabilized four-manifolds with high $b_{2}^{+}$ (Theorem~\ref{stabadjintro}). Sections 7 and 8 turn at last to Seiberg-Witten theory. In section 7, we define the involutive monopole invariant and show that it is equivalent to several other naturally arising invariants (from which we deduce that it is, in fact, an invariant). In section 8, we prove our main Seiberg-Witten adjunction result (Theorem~\ref{SWstabadjintro}).
\subsection{Acknowledgements}
I am immensely grateful to my undergraduate advisor, Ciprian Manolescu, for suggesting this idea and providing invaluable guidance, as well as for his patience during the writing process; to Irving Dai for teaching me most of what I know about Heegaard Floer theory; to Gary Guth for many helpful discussions concerning the involutive mixed invariant (which he knew about long before I did); to Lernik Asserian for organizing the summer research program during which this paper began to take form; and to my current advisor Peter Kronheimer for his helpful suggestions and for his encouragement. I would also like to thank Kristen Hendricks and Robert Lipshitz for helpful comments. The author was partially supported by a National Science Foundation Graduate Research Fellowship.

\section{Background on Seiberg-Witten Theory}

\subsection{Seiberg-Witten Invariants}

The Seiberg-Witten invariants $SW_{X, \mathfrak{s}}$ are $\mathbb{Z}$-valued invariants associated to a pair $(X, \mathfrak{s})$, where $X$ is a closed, oriented, Riemannian four-manifold with $b_{2}^{+}(X) > 1$ and $\mathfrak{s}$ is a $\text{spin}^{c}$ structure on $X$. Throughout this paper, we will only use the $\mathbb{Z}_2$-valued Seiberg-Witten invariants—so we may ignore issues of orientation—with the sole exception of our proof of Proposition~\ref{tech}, where we rely on certain known results about the monopole Floer chain complex with $\mathbb{Z}$-coefficients.
\subsubsection{The Seiberg-Witten Equations}

The group $\text{Spin}^{c}(n)$ is defined to be $(\text{Spin}(n) \times S^1)/\{\pm (1, 1)\}$. By a $\text{spin}^{c}$ (resp. spin) structure $\mathfrak{s}$ on $X$, we mean a choice of lift of the structure group of the tangent bundle $TX$ from $SO(4)$ to $\text{Spin}^{c}(4)$ (resp. $\text{Spin}(4)$). Note that a spin structure induces a $\text{spin}^{c}$ structure via the natural map $\text{Spin}(4) \to \text{Spin}^{c}(4)$.

There is a natural conjugation action $\mathfrak{s} \mapsto \overline{\mathfrak{s}}$ given by conjugating in the $S^1$ component of $\text{Spin}^{c}(4)$. We will repeatedly use the following fact: \textit{A $\text{spin}^{c}$ structure $\mathfrak{s}$ is isomorphic to its conjugate $\overline{\mathfrak{s}}$ if and only if it is induced by a spin structure $\sigma$.} We will refer to such $\text{spin}^{c}$ structures as \textit{self-conjugate}, or occasionally (in an abuse of terminology) simply as spin structures.

Given a $\text{spin}^{c}$ structure $\mathfrak{s}$ on a Riemannian four-manifold $X$, we denote by $S = S^{+} \oplus S^{-}$ the bundle of spinors associated to $\mathfrak{s}$ and by $\rho: T^{*}X \to \text{End}(S)$ the Clifford multiplication. The latter may be extended to forms using a Leibniz rule. The extension identifies the bundle of self-dual 2-forms $\Omega^{+}(X) \subset \Omega^2(X)$ isometrically with the bundle of traceless skew-adjoint endomorphisms of the positive spinor bundle $\mathfrak{su}(S^{+})$, and similarly identifies the bundle of anti-self-dual 2-forms $\Omega^{-}(X)$ with $\mathfrak{su}(S^{-})$.

The (perturbed) Seiberg-Witten equations are defined on the space $\mathcal{C}(X, \mathfrak{s}) = \mathcal{A} \times \Gamma(S^{+})$ of pairs $(A, \phi)$, where $A$ is a $\text{spin}^{c}$-connection on $S$ (that is, a connection for which parallel transport preserves the splitting $S = S^{+} \oplus S^{-}$) and $\phi$ is a section of the positive spinor bundle $S^{+}$. The space of $\text{spin}^{c}$-connections on $S$ is an affine space modeled on $\Omega^1(X; i\mathbb{R})$. 

A $\text{spin}^{c}$-connection $A$ naturally induces a connection $A^t$ on the complex line bundle $\Lambda^2S^{+}$. Moreover, $A$ gives rise to a Dirac operator $D_A: \Gamma(S) \to \Gamma(S)$ via the composition
\[\Gamma(S) \overset{A}{\la} \Gamma(T^*X \otimes S) \overset{\rho}{\la} \Gamma(S).\]
This operator exchanges $\Gamma(S^{+})$ and $\Gamma(S^{-})$, and we denote by $D_{A}^{+}: \Gamma(S^{+}) \to \Gamma(S^{-})$ the restriction to $\Gamma(S^{+})$.

Following \cite{Kronheimer_Mrowka_2007}, we write the Seiberg-Witten equations perturbed by an imaginary 2-form $\omega \in \Omega^2(X; i\mathbb{R})$ as follows:
\[\mathfrak{F}_{\omega}(A, \phi) = 0\]
where
\[\mathfrak{F}_{\omega}: \mathcal{C}(X, \mathfrak{s}) \to i\mathfrak{su}(S^{+}) \oplus \Gamma(S^{-})\]
is defined by
\[\mathfrak{F}_{\omega}(A, \phi) = \left(\frac{1}{2}\rho(F_{A^t}^{+} - 4\omega^{+}) - (\phi\otimes\phi^{*})_{0}, D_{A}^{+}(\phi)\right).\]
Here, $F_{A^t}^{+}$ denotes the self-dual part of the curvature 2-form $F_{A^t}$ of the connection $A^{t}$, $\omega^{+}$ denotes the self-dual part of the perturbing 2-form $\omega$, and $(\phi\otimes\phi^{*})_{0} = \phi\otimes\phi^{*} - \frac{1}{2}|\phi|^2\text{\textbf{1}}_{S^{+}}$ is the traceless part of the endomorphism $\phi\otimes\phi^{*}: S^{+} \to S^{+}$. 

There is an action of the gauge group $\mathcal{G} := \text{Aut}(\mathfrak{s}) = \text{Map}(X, S^1)$ on $\mathcal{C}(X, \mathfrak{s})$, given by:
\[u\cdot (A, \phi) = (A - u^{-1}du, u\phi).\]
The Seiberg-Witten map $\mathfrak{F}_{\omega}$ descends to the quotient $\mathcal{B}(X, \mathfrak{s}) := \mathcal{C}(X, \mathfrak{s})/\mathcal{G}$.  The action is free on the space of irreducible configurations $\mathcal{C}^{*}(X, \mathfrak{s}) := \{(A, \phi) \in \mathcal{C}(X, \mathfrak{s}) | \phi \neq 0\}$, and the corresponding free quotient is denoted by $\mathcal{B}^{*}(X, \mathfrak{s})$.

Let $b_{2}^{+}(X)$ be the dimension of the maximal subspace $H^{+}(X; \mathbb{R}) \subset H^{2}(X; \mathbb{R})$ on which the intersection form $Q_{X}$ is positive-definite. The space of perturbing 2-forms for which there exist reducible solutions to the Seiberg-Witten equations is contained in a subspace of codimension $b_{2}^{+}(X)$. In particular, the space of 2-forms $\omega$ for which there are no reducible solutions is dense whenever $b_{2}^{+}(X) > 0$, and is path-connected whenever $b_{2}^{+}(X) > 1$. 

\subsubsection{The Moduli Space of Solutions and the Invariants}
Assuming that $b_{2}^{+}(X) > 0$, the moduli space of solutions to the Seiberg-Witten equations perturbed by a generic choice of $\omega \in \Omega^2(X; i\mathbb{R})$ is a closed, smooth, finite-dimensional manifold $\mathcal{M}(X, \mathfrak{s}, \omega)$ inside of $\mathcal{B}^{*}(X, \mathfrak{s})$.  An application of the Atiyah-Singer index theorem yields that its dimension is given by:
\[d(\mathfrak{s}) = \frac{c_1(\mathfrak{s})^2 - 2\chi(X) - 3\sigma(X)}{4}\]
where $c_1(\mathfrak{s})$ is the first Chern class of $S^{+}$.

Note that the quotient map $\mathcal{C}^{*}(X, \mathfrak{s}) \to \mathcal{B}^{*}(X, \mathfrak{s})$ is a principal $\mathcal{G}$-bundle. If we choose a basepoint $x_0 \in X$, we get an evaluation homomorphism $\mathcal{G} \to S^1$ defined by $h \mapsto h(x_0)$, which induces a principal $S^1$-bundle $P \to \mathcal{B}^{*}(X, \mathfrak{s})$. This gives us a natural sequence of cohomology classes $\mu_n \in \mathcal{B}^{*}(X, \mathfrak{s})$ to integrate over the submanifold $\mathcal{M}(X, \mathfrak{s}, \omega)$:
\[\mu_n := \begin{cases} c_1(P)^{n/2} \quad \text{ if } n \equiv 0 \mod 2 \\ 0 \quad \text{ if } n \equiv 1 \mod 2.
\end{cases}\]
where $c_{1}(P)$ denotes the first Chern class of $P$. We define the Seiberg-Witten invariant of the pair $(X, \mathfrak{s})$ to be:
\[SW_{X, \mathfrak{s}} := \langle \mu_{d(\mathfrak{s})}, [\mathcal{M}(X, \mathfrak{s}, \omega)] \rangle \in \mathbb{Z}_{2}.\]
Note that the invariant lies in $\mathbb{Z}_{2}$ since we have not oriented $\mathcal{M}(X, \mathfrak{s}, \omega)$, though it can be upgraded to a $\mathbb{Z}$-valued invariant with the appropriate care. 

When we want to emphasize the role of the auxiliary data $(g, \omega)$, we will write $SW_{X, \mathfrak{s}, g, \omega}$.  The invariants are independent of $g$ and $\omega$ when $b_{2}^{+}(X) > 1$. Indeed, when $b_{2}^{+}(X) > 1$, the space of perturbing 2-forms for which all solutions to the perturbed Seiberg-Witten invariants are irreducible is path-connected, and a path $\gamma_t$ of good perturbing 2-forms induces a cobordism $\mathcal{M}(X, \mathfrak{s}, \gamma_t)$ inside $\mathcal{B}^{*}(X)$ between $\mathcal{M}(X, \mathfrak{s}, \gamma_0)$ and $\mathcal{M}(X, \mathfrak{s}, \gamma_1)$. Thus:
\[\langle \mu_{d(\mathfrak{s})}, [\mathcal{M}(X, \mathfrak{s}, \gamma_0)]\rangle = \langle \mu_{d(\mathfrak{s})}, [\mathcal{M}(X, \mathfrak{s}, \gamma_1)]\rangle.\]

\subsubsection{Bauer-Furuta Stable Cohomotopy Refinement}

The content of this subsection and the next are needed only for the statement and proof of Theorem~\ref{baraginvol}. 

When $b_{2}^{+}(X) > 1$, the Seiberg-Witten moduli space defines not only a numerical invariant of $(X, \mathfrak{s})$ after pairing with a cohomology class, but a well-defined cobordism class inside $\mathcal{B}^{*}(X)$. If we were working inside of a finite-dimensional smooth manifold $M$ rather than the infinite-dimensional space $\mathcal{B}^{*}(X)$, the Pontrjagin-Thom construction would give a bijection between the group of codimension $n$ (framed) cobordism classes of submanifolds and the $n$th cohomotopy group of $M$:
\[\Omega^{\text{framed}}_{n}(M) \xleftrightarrow{} [M, S^n].\]
In \cite{MR2025298}, Bauer and Furuta use finite-dimensional approximation to show that the Seiberg-Witten map $\mathfrak{F}_{\omega}: \mathcal{C}(X, \mathfrak{s}) \to i\mathfrak{su}(S^{+}) \oplus \Gamma(S^{-})$ defines a class in a certain (stable) cohomotopy group. We summarize the construction here. For a detailed account of the Bauer-Furuta construction, we refer the reader to the original paper \cite{MR2025298} and to work of Baraglia-Konno \cite{MR4441598}.

Let $A_0$ be a fixed $\text{spin}^{c}$ connection, and let $\mathcal{G}_{0}$ denote the \text{based} gauge group, defined by choosing a basepoint $x_0 \in X$ and taking the kernel of the evaluation map $\mathcal{G} = \text{Map}(X, S^1) \to S^1$. Note that $A_0 + \ker d \subset \mathcal{A}$ is invariant under the action of $\mathcal{G}_{0}$, with quotient isomorphic to the Picard torus $\mathbb{T}(X) := H^1(X; i \mathbb{R})/2\pi iH^1(X; \mathbb{Z})$. In this section, we will denote $\mathbb{T}(X)$ by $B$ as it is the ``base" of the construction; in more general settings $B$ may be another finite-dimensional manifold (see \cite{MR4441598}).

Following \cite{MR2025298}, we define the \textit{monopole map} as a particular nonlinear bundle map between two infinite-rank vector bundles over $B$. Define $\mathbb{U} :=B \times \Omega^1(X)$, $\mathbb{V} := (A + \ker d) \times_{\mathcal{G}_{0}} \Gamma(S^{+})$, $\mathbb{U}' := B \times (H^1(X; \mathbb{R}) \oplus \Omega^{0}(X)/\mathbb{R}\oplus i\mathfrak{su}(S^{+}))$, and $\mathbb{V}' := (A_{0} +\ker d) \times_{\mathcal{G}_{0}}\Gamma(S^{-})$. We remark that $\mathbb{U}, \mathbb{U}'$ are real vector bundles over $B$ and $\mathbb{V}, \mathbb{V}'$ are complex vector bundles over $B$. The notation $\Omega^0(X)/\mathbb{R}$ means equivalence classes of smooth functions on $X$ up to addition of constants. We define the monopole map $f: \mathbb{U} \oplus \mathbb{V} \to \mathbb{U}' \oplus \mathbb{V}'$ as follows:
\[f(A, a, \phi) = (A, a_{\text{harm}}, d^{*}a, \mathfrak{F}(A+a, \phi) - (\rho(F_{A^t}^{+}), 0)),\]
where $\mathfrak{F} = \mathfrak{F}_{0}$, the unperturbed Seiberg-Witten map. 

One can show that the monopole map $f$ has the following properties:
\begin{itemize}
\item[(1)] $f$ is of the form $f = \ell + c$, where $\ell$ is fiberwise linear Fredholm, and $c$ is fiberwise compact;
\item[(2)] $c$ vanishes on $\mathbb{U}$ (so $f|_{\mathbb{U}} = \ell|_{\mathbb{U}}$ is linear Fredholm) and $\ell(\mathbb{U}) \subset \mathbb{U}'$;
\item[(3)] $\mathbb{U}'/\ell(\mathbb{U}) \cong B \times H^{+}(X)$.
\end{itemize}
For appropriate choices of finite-rank subbundles $U' \subset \mathbb{U}'$ and $V' \subset \mathbb{V}'$, the restriction of $f$ gives rise to a map $\tilde{f}$ between $W:=\ell^{-1}(U' \oplus V')$ and $U' \oplus V'$. We write $U:= W \cap \mathbb{U}$ and $V := W \cap \mathbb{V}$. Letting $S^{V, U}$ (resp. $S^{V', U'}$) denote the fiberwise one-point compactification of $U \oplus V$ (resp. $U' \oplus V'$), boundedness properties of the monopole map show that $\tilde{f}$ extends to a map $\hat{f}: S^{V, U} \to S^{V', U'}$. Bauer and Furuta show:
\begin{theorem} The finite-dimensional approximation $\hat{f}: S^{V, U} \to S^{V', U'}$ to the monopole map $f$ defines a class in the stable cohomotopy group $\pi^{b_{2}^{+}(X)}_{S^1, H}(B; \text{ind}(D))$, where $H$ is a Sobolev completion of $\Gamma(S^{-}) \oplus \Omega^{+}(X)$ and $\text{ind}(D)$ is the virtual index bundle of the Dirac operator $D$ (given by $D_{A}$ when restricted to the fiber over $A \in A_{0} + \ker d$). We denote this class by $BF_{X, \mathfrak{s}}$. \end{theorem}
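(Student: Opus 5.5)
The plan follows Bauer--Furuta \cite{MR2025298} and Baraglia--Konno \cite{MR4441598}. The statement has three parts. First, the finite-dimensional approximations $\hat f$ exist and are well-defined $S^1$-equivariant maps of sphere bundles over $B$. Second, they are compatible under enlarging $U'\oplus V'$, which is the stabilization. Third, the resulting stable class depends neither on the choices of finite-rank subbundles nor on the metric. I expect the first point to be the main obstacle.

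\textbf{Existence of the approximations.} The properness-type estimate behind the existence of $\hat f$ is the standard a priori bound for the unperturbed Seiberg--Witten equations, namely the Weitzenb\"ock formula and the maximum principle. I would use it to show that preimages under $f$ of bounded sets are bounded. This uses property (1): $f=\ell+c$ with $c$ compact. Choose $U'\oplus V'$ to contain a complement of the image of $\ell$; its cokernel is finite rank by the Fredholm property. Then, on a large ball in $W=\ell^{-1}(U'\oplus V')$, compose $f$ with the orthogonal projection onto $U'\oplus V'$. Because $c$ is compact, its image on bounded sets lies close to any sufficiently large finite-rank subbundle. A standard interpolation argument then shows that the projected map $\tilde f$ has no zeros near the boundary of the large ball, and in fact stays bounded away from zero outside a compact set. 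Collapsing the complement of the ball therefore gives an equivariant map $\hat f:S^{V,U}\to S^{V',U'}$. The delicate point is the uniform bound as $U'\oplus V'$ varies: the projected map must remain nonvanishing on a sphere of fixed radius for every sufficiently large subbundle. Over the compact torus $B$ this follows from compactness of $c$ on bounded sets together with the a priori bound on genuine solutions.

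\textbf{Stability, independence, and the index.} Enlarging $U'\oplus V'$ by a complement $E$ replaces $\hat f$, up to homotopy, by its suspension $\hat f\wedge \mathrm{id}_{S^{E}}$. This holds because $\ell$ restricted to $\ell^{-1}(E)$ is an isomorphism onto $E$, and a linear homotopy rescales the nonlinear part. So the maps define a single element of the equivariant stable cohomotopy group. The independence of choices comes from connecting any two admissible choices, and any two metrics, by paths. Over $B\times[0,1]$ the same bounds hold uniformly, and this yields homotopies of the approximations.

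\textbf{Identifying the group.} Finally, I would read off the indices. The real parts satisfy $U-U'\simeq -H^+(X)$ by property (3), since $\ell|_{\mathbb U}$ has trivial kernel after the gauge fixing $d^*$ and the harmonic projection. The complex parts satisfy $V-V'=\operatorname{ind}(D)$ as virtual bundles over $B$. Hence the class lies in $\pi^{b_2^+(X)}_{S^1,H}(B;\operatorname{ind}(D))$.
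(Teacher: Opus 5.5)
Your outline is correct and is essentially the Bauer--Furuta argument: boundedness of preimages, compactness of $c$, approximation by projecting onto large finite-rank subbundles, stabilization, and reading off $\operatorname{ind}(\ell)=\operatorname{ind}(D)-H^{+}(X)$; the paper gives no proof of its own here and simply cites \cite{MR2025298}. One small imprecision in your stabilization step: it is not $\ell|_{\ell^{-1}(E)}$ that maps isomorphically onto $E$, which can fail when $U'\oplus V'$ is only \emph{a} complement of $\operatorname{im}\ell$; rather, $\ell$ followed by projection to $E$ identifies $\ell^{-1}(U'\oplus V'\oplus E)/\ell^{-1}(U'\oplus V')$ with $E$, and this is what the suspension homotopy actually uses.
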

We briefly explain the notation: the group $\pi^{n}_{G, H}(Y; \lambda)$ is the $n$th $G$-equivariant stable homotopy group of $Y$ with coefficients in the virtual bundle $\lambda$ (with $G$-universe $H$, a Hilbert space equipped with an orthogonal action of $G$), defined precisely as:
\[\pi^{n}_{G, H}(Y; \lambda) := \text{colim}_{W \subset H}[W^{+}\wedge T\lambda, W^{+} \wedge S^n]^{G}\]
where the colimit is taken over all finite-dimensional subrepresentations $W$ of $H$, $W^{+}$ denotes the one-point compactification of $W$, and $T\lambda$ denotes the Thom space of $\lambda$ (in general a spectrum). In the case at hand, $S^1$ acts on $H = \Gamma(S^{-}) \oplus \Omega^{+}(X)$ by acting as the unit complex numbers on the complex vector space $\Gamma(S^{-})$ and acting trivially on $\Omega^{+}(X)$.
\subsubsection{Cohomological Seiberg-Witten invariants from $BF_{X, \mathfrak{s}}$} Bauer and Furuta show that the class $BF_{X, \mathfrak{s}} \in \pi_{S^1, H}^{b_{2}^{+}}(\mathbb{T}(X); \text{ind}(D))$ is an invariant of the pair $(X, \mathfrak{s})$ when $b_{2}^{+} > 1$, and is strictly stronger than the Seiberg-Witten invariant in the sense that $SW_{X,\mathfrak{s}}$ can be recovered from $BF_{X, \mathfrak{s}}$, but $BF_{X, \mathfrak{s}}$ may not vanish even when $SW_{X, \mathfrak{s}}$ does.

We follow Baraglia's construction of a slightly more general cohomological form of the Seiberg-Witten invariant starting from the Bauer-Furuta set-up \cite{baraglia2023mod}. This takes the form of a map:
\[SW_{X, \mathfrak{s}}: H_{S^1}^{*}(pt) \to H^{*}(B) = H^{*}(\mathbb{T}(X)).\]
Note that this map is not necessarily grading preserving; we ignore issues of grading here. To extract this Seiberg-Witten invariant from $\hat{f}$, we first choose a section $\eta: B \to \mathbb{U}' \setminus \ell(\mathbb{U})$ (called a \textit{chamber} by Baraglia \cite{baraglia2023mod} and Baraglia-Konno \cite{MR4441598}). Using property (3) of the monopole map stated previously, homotopy classes of such sections are in correspondence with homotopy classes of nonvanishing sections $\widetilde{\eta}: B \to \mathbb{U}'/\ell(\mathbb{U}) \cong B \times H^{+}(X)$, i.e. homotopy classes of maps $B \to S(H^{+}(X))$, the unit sphere in $H^{+}(X)$. This can be interpreted as a family of ``good" perturbing 2-forms for the Seiberg-Witten equations, parametrized by the base $B = \mathbb{T}(X)$.

With properly chosen finite-rank subbundles $U', V'$, one can ensure that $U'/\ell(U) \cong \mathbb{U}'/\ell(\mathbb{U}) \cong B \times H^{+}(X)$, so we can instead take a chamber in the finite-dimensional approximation $\eta: B \to U' \setminus \ell(U)$. Following Baraglia \cite{baraglia2023mod}, one obtains the cohomological Seiberg-Witten invariant of the pair $(X, \mathfrak{s})$ in the chamber $\eta$ using a bit of algebraic topology. We summarize the procedure here: let $N^{U}$ be a tubular neighborhood of $S^{U} \subset S^{V, U}$, and consider the manifold with boundary $\widetilde{Y}^{V, U}:= S^{V, U} \setminus N^{U}$. The $S^1$-action on $S^{V, U}$ inherited from action by complex scalars on $V$ and the trivial action on $U$ is free on $\widetilde{Y}^{V, U}$, and we denote by $Y^{V, U}$ the quotient. Then we have a canonical isomorphism $\Psi:H_{S^1}^{*}(S^{V, U}, S^{U}) \cong H^{*}(Y^{V, U}, \partial Y^{V, U})$ by cohomological excision and homotopy invariance. Moreover, the projection $\pi_{V, U}: (Y^{V, U}, \partial Y^{V, U}) \to B$ gives rise to a pushforward map on ordinary cohomology $(\pi_{V, U})_{*}: H^{*}(Y^{V, U}, \partial Y^{V, U}) \to H^{*}(B)$.

Now, let $N_{\eta} \to \eta(B)$ be the normal bundle of the embedding $\eta: B \hookrightarrow U' \setminus \ell(U)$, identified with an $S^1$-invariant tubular neighborhood of $\eta(B)$. Its $S^1$-equivariant Thom class $\tau_{N_{\eta}}$ can be considered as an element of $H^{*}_{S^1}(S^{V', U'}, S^{V', U'} \setminus \eta(B))$ by excision, and then pulled back to $H_{S^1}^{*}(S^{U', V'}, S^{U'})$ since $\eta(B)$ is disjoint from $S^{U'}$ by assumption. We denote its image in the latter cohomology ring by $\tau_{\eta} \in H_{S^1}^{*}(S^{U', V'}, S^{U'})$. We can pull this back along the finite-dimensional approximation of the monopole map $\hat{f}$ to get an element $\hat{f}^{*}(\tau_{\eta}) \in H_{S^1}^{*}(S^{V, U}, S^{U})$. Baraglia's cohomological formulation of the Seiberg-Witten invariant is defined as follows:
\[SW_{X, \mathfrak{s}}^{\eta}(\theta) = (\pi_{V, U})_{*}(\Psi(\theta \hat{f}^{*}(\tau_{\eta}))).\]
Here, $\theta \in H_{S^1}^{*}(pt) \cong \mathbb{Z}_{2}[x]$, and the multiplication in the argument uses the $H_{S^1}^{*}(pt)$-module structure of $H_{S^1}^{*}(S^{V, U}, S^{U})$.
\subsubsection{$Pin(2)$ Symmetry}
When $\mathfrak{s}$ is a self-conjugate $\text{spin}^{c}$ structure, the Seiberg-Witten equations for the pair $(X, \mathfrak{s})$ admit an additional symmetry. Indeed, for any $\text{spin}^{c}$ structure $\mathfrak{s}$, there exists a map $\jmath: \mathcal{C}(X, \mathfrak{s}) \to \mathcal{C}(X, \overline{\mathfrak{s}})$, called \textit{charge conjugation}, such that $\jmath^2 = -1$. When $\mathfrak{s} = \overline{\mathfrak{s}}$, this is an automorphism of $\mathcal{C}(X, \mathfrak{s})$, and it is an involution up to gauge which preserves solutions to the Seiberg-Witten equations. Together with the existing $S^1$ symmetry of the Seiberg-Witten equations, this gives rise to a symmetry with respect to the group $Pin(2) = S^1 \cup jS^1 \subset \mathbb{H}$, where $\mathbb{H}$ denotes the quaternions and $j$ acts as $\jmath$.

In \cite{baraglia2023mod}, Baraglia constructs a version of the Seiberg-Witten invariant for pairs $(X, \mathfrak{s})$ where $\mathfrak{s}$ is self-conjugate that accounts for the full $Pin(2)$ symmetry of the Seiberg-Witten equations. This takes the form of a map:
\[SW_{X, \mathfrak{s}}^{Pin(2)}: H^{*}_{Pin(2)}(pt)\cong \mathbb{F}[V, Q]/Q^3 \to H_{\mathbb{Z}_{2}}^{*}(B).\]
His construction relies on a $Pin(2)$-equivariant version of the Bauer-Furuta set-up. It is essentially the same as the $S^1$-equivariant set-up described above, except both $U \oplus V$ and $U' \oplus V'$ are additionally equipped with involutive endomorphisms covering an involution on the base that are antilinear on the complex part and linear on the real part (this plays the role of $\jmath$). Moreover, the chamber $\eta$ needs to be chosen to be $Pin(2)$-equivariant; however, in the set-up above, there are no $Pin(2)$-invariant chambers, since $\jmath$ acts as the inversion map on the Picard torus $\mathbb{T}(X) = H^1(X; \mathbb{R})/H^1(X; \mathbb{Z})$, which has fixed points. Baraglia resolves this issue by considering the base $B$ to be the product $\mathbb{T} \times S(H^{+})$, where $S(H^{+})$ denotes the unit sphere in $H^{+}(X; \mathbb{R})$, and choosing the tautological chamber $\eta^{\text{taut}}([A], x)= x$. Here, $\jmath$ acts as the antipodal map—in particular, without fixed points—on $S(H^{+}(X; \mathbb{R}))$. Thus, the $Pin(2)$ Seiberg-Witten invariant is a map:
\[SW_{X, \mathfrak{s}}^{Pin(2)}: \mathbb{F}[V, Q]/Q^3 \to H_{\mathbb{Z}_{2}}^{*}(\mathbb{T}(X) \times S(H^{+}(X))) \cong H^{*}(\mathbb{T}(X); \mathbb{F})[Q]/Q^{b_{2}^{+}(X)}.\]

\subsection{Monopole Floer Homology}

The authoritative reference on monopole Floer homology is the book of Kronheimer and Mrowka \cite{Kronheimer_Mrowka_2007}. Let $Y$ be a smooth, closed, oriented three-manifold, and let $\mathfrak{s}$ be a $\text{spin}^{c}$-structure on $Y$. We can define the spaces $\mathcal{C}(Y, \mathfrak{s})$ and $\mathcal{B}(Y, \mathfrak{s}) := \mathcal{C}(Y, \mathfrak{s})/\mathcal{G}$ exactly as we did in the four-dimensional case. We additionally define $\mathcal{B}^{\sigma}(Y, \mathfrak{s})$, the complex blow-up of $\mathcal{B}(Y, \mathfrak{s})$ near it singular locus (see chapter 2 of \cite{Kronheimer_Mrowka_2007} for details on this construction). The blown-up space $\mathcal{B}^{\sigma}(Y, \mathfrak{s})$ has the homotopy type of $\mathbb{T}(Y) \times \mathbb{CP}^{\infty}$, where $\mathbb{T}(Y) := H^1(Y; \mathbb{R})/H^1(Y; \mathbb{Z})$ (called the \textit{Picard torus} of $Y$).

\subsubsection{The Floer Groups}

To the pair $(Y, \mathfrak{s})$, we can assign three \textit{Floer homology groups} $\widehat{\HM}(Y, \mathfrak{s})$, $\widecheck{\HM}(Y, \mathfrak{s})$, $\overline{\HM}(Y, \mathfrak{s})$ (read ``HM-from," ``HM-to," and ``HM-bar"). In order, they are analogues of the Morse homology groups $H_{*}^{\text{Morse}}(A, \partial A)$, $H_{*}^{\text{Morse}}(A)$, and $H_{*}^{\text{Morse}}(\partial A)$, where $A$ is a finite-dimensional manifold with boundary, and the Morse homology is taken with respect to a Morse function $f$ inherited from a $\mathbb{Z}/2$-invariant Morse function on the double $\widetilde{A}$ of $A$, where the action of $\mathbb{Z}/2$ is the obvious ``reflection over the boundary" (in particular, the gradient flow of $f$ is tangent to the boundary). In our case, instead of a finite-dimensional manifold, we are working on the infinite-dimensional manifold $\mathcal{B}^{\sigma}(Y, \mathfrak{s})$ whose ``boundary" is the part that maps to the reducible locus under the blow-down $\pi^{\sigma}: \mathcal{B}^{\sigma}(Y, \mathfrak{s}) \to \mathcal{B}(Y, \mathfrak{s})$, and the function in question is the Chern-Simons-Dirac functional $\mathcal{L}: \mathcal{B}^{\sigma}(Y, \mathfrak{s}) \to \mathbb{R}$, defined by:
\[\mathcal{L}(A, \phi) := -\frac{1}{8}\int_{Y} (A^t - A^{t}_{0})\wedge (F_{A^{t}} + F_{A_{0}^{t}}) + \frac{1}{2} \int_{Y} \langle D_{A}\phi, \phi \rangle d\text{vol}\]
where $A_{0}$ is a fixed reference $\text{spin}^{c}$ connection. Critical points of this functional are given precisely by solutions to the Seiberg-Witten equations on $(Y, \mathfrak{s})$:
\[\frac{1}{2}\rho(F_{A^t}) - (\phi\phi^{*})_{0} = 0,\]
\[D_{A}\phi = 0.\]
Those solutions for which $\phi = 0$ lie on the boundary of $\mathcal{B}^{\sigma}(Y, \mathfrak{s})$, and are, as in the four-dimensional case, called reducible solutions. 

As in finite-dimensional Morse homology, the critical points of $\mathcal{L}$ (after perturbation; see below) break up into three categories: (1) interior critical points, (2) boundary-stable critical points (those for which the normal direction is in the positive eigenspace of the Hessian of $f$), and (3) boundary-unstable boundary critical points (those for which the normal is in the negative eigenspace of the Hessian). We denote by $\mathfrak{C}^{o}$, $\mathfrak{C}^{s}$, and $\mathfrak{C}^{u}$ the free $\mathbb{F}$-modules generated by the interior, boundary-stable, and boundary-unstable critical points, respectively. For $i, j \in \{o, s, u\}$, we can define maps $\partial_{j}^{i}: \mathfrak{C}^{i} \to \mathfrak{C}^{j}$ and $\overline{\partial}_{j}^{i}: \mathfrak{C}^{i} \to \mathfrak{C}^{j}$ by counting downward gradient flow lines from points in $\mathfrak{C}^{i}$ to points in $\mathfrak{C}^{j}$ which lie in the irreducible part (interior) and the reducible part (boundary), respectively.  Note that $\overline{\partial}_{j}^{i}$ is only nonzero if $i, j \in \{s, u\}$. The Floer chain complexes are defined by:
\[\widehat{\CM}(Y, \mathfrak{s}) = \mathfrak{C}^{o} \oplus \mathfrak{C}^{u}, \quad \widehat{\partial} = \begin{bmatrix} \partial_{o}^{o} & \partial^{u}_{o} \\ \overline{\partial}_{u}^{s}\partial^{o}_{u} & \overline{\partial}^{u}_{u} + \overline{\partial}^{s}_{u}\partial^{u}_{s}\end{bmatrix}\]
\[\widecheck{\CM}(Y, \mathfrak{s}) = \mathfrak{C}^{o} \oplus \mathfrak{C}^{s}, \quad \widecheck{\partial} = \begin{bmatrix} \partial_{o}^{o} & \partial^{u}_{o}\overline{\partial}^{s}_{u} \\ \partial^{o}_{s} & \overline{\partial}^{s}_{s} + \partial^{u}_{s}\overline{\partial}^{s}_{u}\end{bmatrix}\]
\[\overline{\CM}(Y, \mathfrak{s}) = \mathfrak{C}^{s} \oplus \mathfrak{C}^{u}, \quad \overline{\partial} = \begin{bmatrix} \overline{\partial}^{s}_{s} & \overline{\partial}^{u}_{s} \\ \overline{\partial}^{s}_{u} & \overline{\partial}^{u}_{u}\end{bmatrix}.\]
One should think about the differentials as counting all the different possible ways that a flow line or once-broken flow line one can get from one critical point generating the complex to another.

Like the groups $H_{*}^{\text{Morse}}(A, \partial A)$, $H_{*}^{\text{Morse}}(A)$, and $H_{*}^{\text{Morse}}(\partial A)$, the Floer groups fit into a long exact sequence:
\[\la \widehat{\HM}(Y, \mathfrak{s}) \overset{p_{*}}{\la} \overline{\HM}(Y, \mathfrak{s}) \overset{i_{*}}{\la} \widecheck{\HM}(Y, \mathfrak{s}) \overset{j_{*}}{\la}\widehat{\HM}(Y, \mathfrak{s}) \la\]
We define $\HM_{\text{red}} := \ker p_{*} \cong \text{coker }j_{*}$.

The Floer groups are more than just abelian groups; they are modules over the cohomology ring of $\mathcal{B}^{\sigma}(Y, \mathfrak{s})$:
\[H^{*}(\mathcal{B}^{\sigma}(Y, \mathfrak{s}); \mathbb{F}) \cong H^{*}(\mathbb{T}(Y) \times \mathbb{CP}^{\infty}; \mathbb{F})\]
\[ \cong \Lambda^{*}(H_1(Y)/\text{torsion}) \otimes \mathbb{F}[U].\]

\subsubsection{Perturbations}

In order to make $\mathcal{L}: \mathcal{B}^{\sigma}(Y, \mathfrak{s}) \to \mathbb{R}$ into a Morse function, we need to add a small perturbation $\mathfrak{p}: \mathcal{B}^{\sigma}(Y, \mathfrak{s}) \to \mathbb{R}$. We choose $\mathfrak{p}$ from the space of \textit{tame perturbations} (see chapter 11 of \cite{Kronheimer_Mrowka_2007}).

The Floer homology groups $\HM^{\circ}(Y, \mathfrak{s})$ are \textit{natural} invariants of the pair $(Y, \mathfrak{s})$, in the sense that for any two choices of tame perturbation $\mathfrak{p}$, $\mathfrak{p}'$, there is a canonical chain homotopy class of chain homotopy equivalence
\[\Phi_{\mathfrak{p}, \mathfrak{p}'}: \CM^{\circ}(Y, \mathfrak{s}, \mathfrak{p}) \to \CM^{\circ}(Y, \mathfrak{s}, \mathfrak{p}').\]
In particular, $\Phi_{\mathfrak{p}, \mathfrak{p}'}$ induces a canonical isomorphism between the corresponding homology groups. The maps $\Phi_{\mathfrak{p}, \mathfrak{p}'}$ are often referred to as \textit{continuation maps}.

\subsubsection{Involutive Monopole Floer Homology}

Involutive monopole Floer homology was introduced by Lin in \cite{MR3968878}, inspired by work of Hendricks and Manolescu in Heegaard Floer theory \cite{MR3649355}. Its construction is analogous to that of involutive Heegaard Floer homology, in the sense that one constructs a homotopy involution $\iota$ by taking the continuation map associated to a path from one choice of auxiliary data used in the construction of the Floer homology to a ``conjugate" choice of auxiliary data. Suppose that $Y$ is equipped with a self-conjugate $\text{spin}^{c}$ structure $\mathfrak{s}$ (i.e. a $\text{spin}^{c}$ structure which is induced by a spin structure on $Y$). Just as in the four-dimensional case, we can define the charge-conjugation map $\jmath: \mathcal{C}(Y, \mathfrak{s}) \to \mathcal{C}(Y, \mathfrak{s})$ which preserves critical points of the Chern-Simons-Dirac functional. It pushes forward a tame perturbation $\mathfrak{p}$ to another tame perturbation $\jmath_{*}\mathfrak{p}$. 

On the one hand, $\jmath$ induces a canonical isomorphism (indeed, an identification of critical points) $\jmath_{*}: \CM^{\circ}(Y, \mathfrak{s}, \mathfrak{p}) \to \CM^{\circ}(Y, \mathfrak{s}, \jmath_{*}\mathfrak{p})$. On the other hand, we also have a continuation map:
\[\Phi_{\jmath_{*}\mathfrak{p}, \mathfrak{p}}: \CM^{\circ}(Y, \mathfrak{s}, \jmath_{*}\mathfrak{p}) \to \CM^{\circ}(Y, \mathfrak{s}, \mathfrak{p})\]
which is a chain homotopy equivalence. Together, these define a homotopy involution
\[\iota^{\circ} := \Phi_{\jmath_{*}\mathfrak{p}, \mathfrak{p}} \circ \jmath_{*}: \CM^{\circ}(Y, \mathfrak{s}, \mathfrak{p}) \to \CM^{\circ}(Y, \mathfrak{s}, \mathfrak{p}).\]
Then the \textit{involutive monopole Floer complex} $\CMI^{\circ}(Y, \mathfrak{s}, \mathfrak{p})$ is defined to be the mapping cone complex of $Q(1 + \iota^{\circ})$, where $Q$ is a formal variable. It is a module over the ring $\mathbb{F}[U, Q]/Q^2$.

\subsection{Computing $\overline{\CM}$ Using Coupled Morse Homology}
In chapters 33-35 of \cite{Kronheimer_Mrowka_2007}, Kronheimer and Mrowka develop a toolkit for computing $\overline{\HM}$ which we will use in section 8 for the sake of proving our main theorem (Theorem~\ref{SWstabadjintro}). This toolkit is called the \textit{coupled Morse chain complex}. Throughout this section, we omit many technical details. We refer the reader to the text of Kronheimer and Mrowka for a detailed account.

\subsubsection{The Coupled Morse Chain Complex}
Let $Q$ be a smooth, closed manifold with a Riemannian metric $g$, $H$ a separable complex infinite-dimensional Hilbert space, and $H_{1} \subset H$ a dense subspace which is also a Hilbert space equipped with its own inner product. The model for $H_{1} \subset H$ is $H^1(M; E) \subset L^2(M; E)$, the $H^1$ and $L^2$ sections of a vector bundle $E$ over a compact manifold $M$.

Define the Banach Lie group of unitary maps preserving $H_{1} \subset H$:
\[U(H:H_1) := \{A: H \to H| \text{ $A$ bounded}, A(H_1) \subset H_1, A^{*}(H_1) \subset H_1, A^{*}A = 1\}.\]
Furthermore, define $S(H:H_1)$ to be space of self-adjoint Fredholm operators $L: H_1 \to H$ of index 0. Given any $L \in S(H: H_1)$, the space $H$ admits a complete orthonormal basis of eigenvectors of $L$. We will want to work with a subspace $S_{*}(H:H_1) \subset S(H:H_1)$ of operators $L$ satisfying certain additional technical requirements, the most substantial of which is: if we label the eigenvalues of $L$ by integers in such a way that $\lambda_{i} \leq \lambda_{j}$ if $i \leq j$, then $\lambda_{i} \to \pm\infty$ as $i \to \pm \infty$.

By a \textit{family of self-adjoint Fredholm operators over $Q$}, we shall mean the following data: a smooth principal $U(H:H_1)$-bundle over $Q$, giving rise to associated Hilbert bundles $\mathcal{H}_{1} \subset \mathcal{H}$, and a bundle map $L: \mathcal{H}_{1} \to \mathcal{H}$ which when restricted to the fiber over $x \in Q$ lies in $S_{*}(\mathcal{H}_{x}: (\mathcal{H}_{1})_{x})$. In notation, we shall shorten this data by writing only the pair $(Q, L)$.

Kronheimer and Mrowka show (Corollary 33.1.7 of \cite{Kronheimer_Mrowka_2007}) that homotopy classes of families of self-adjoint Fredholm operators over $Q$ are classified by $[Q, U(\infty)]$.

Let $f: Q \to \mathbb{R}$ be a Morse function, and let $(Q, L)$ be a family of self-adjoint Fredholm operators such that the classifying map factors through $SU(\infty)$ (this is equivalent to the condition that there is no spectral flow around loops in $Q$). Then we can define the \textit{coupled Morse chain complex} $\overline{C}_{*}(Q, L)$, as follows: let $\overline{C}_{*}(Q, L)$ be freely generated over $\mathbb{F}$ by pairs $(q, i)$, where $q \in Q$ is a critical point of $f$ and $i \in \mathbb{Z}$. Due to the lack of spectral flow along loops, for each critical point $q \in Q$, we can label the eigenvalues of $L(q)$ by integers in such a way that, given any path $q(t)$ starting at $q_0$ and ending at $q_1$, the spectral flow along $q(t)$ is equal to
\[s(q_1) - s(q_0)\]
where
\[s(q) = \#\{i < 0 | \lambda_{i}(q) > 0\}.\]
Here, $\lambda_{i}(q)$ is the $i$th eigenvalue of $q$ under this labeling. We fix the labeling so the above property is satisfied, and we denote by $\phi_{i}(q)$ the eigenvector of $L(q)$ associated to $\lambda_{i}(q)$.

Now, the $(r, j)$-component of the differential of $(q, i) \in \overline{C}_{*}(Q, L)$ is given by counting pairs $(\gamma, [\phi])$, where $\gamma: \mathbb{R} \to Q$ and $[\phi]$ is the projective class of a nonvanishing section $\phi: \mathbb{R} \to \gamma^{*}\mathcal{H}_{1}$, satisfying the following equations:
\[\frac{d}{dt}\gamma + (\text{grad } f)_{\gamma(t)} = 0,\]
\[\gamma^{*}(\nabla)\phi + (L(\gamma(t))\phi)dt = 0,\]
along with the asymptotic conditions:
\[\gamma(t)\to q \quad \text{ and } \quad \phi(t) \sim c_0e^{-\lambda_i(q)t}\phi_{i}(q)\quad \text{as} \quad t \to -\infty,\]
\[\gamma(t)\to r \quad \text{ and } \quad \phi(t) \sim c_1e^{-\lambda_j(r)t}\phi_{j}(r)\quad \text{as} \quad t \to +\infty.\]
Here, $\nabla$ is a fixed connection on the underlying principal $U(H:H_1)$-bundle, and $c_0, c_1$ are positive constants. 

Kronheimer and Mrowka prove that this gradient-flow-counting map squares to zero, so $\overline{C}_{*}(Q, L)$ equipped with this map forms a chain complex. They moreover prove (Proposition 33.3.8 of \cite{Kronheimer_Mrowka_2007}) that the isomorphism class of the homology $\overline{H}_{*}(Q, L)$ of the complex (the \textit{coupled Morse homology of $(Q, L)$}) is an invariant of the homotopy class of the family of self-adjoint Fredholm operators $L$. This result relies on the construction of a \textit{continuation map} which is given by counting solutions to equations similar to those above, the only difference being that the family $L$ is replaced with a homotopy of families $L_{t}$ (i.e. the family has $t$-dependence).

\subsubsection{Application to $\overline{\HM}$}

Let $Y$ be a smooth, closed, oriented three-manifold equipped with a $\text{spin}^{c}$ structure $\mathfrak{s}$ such that $c_1(\mathfrak{s})$ is torsion (this is true, for instance, if $\mathfrak{s}$ comes from a spin structure). Then the following holds (see \cite{Kronheimer_Mrowka_2007}, Proposition 4.2.1 and Corollary 4.2.2):

\begin{proposition} There exist reducible solutions to the Seiberg-Witten equations on $(Y, \mathfrak{s})$. If $(A_0, 0) \in \mathcal{C}(Y, \mathfrak{s})$ is one such reducible solution, then the set of reducible solutions is given by $\{A_0 + a \otimes 1_{S} | a \in \Omega^1(Y; i\mathbb{R}), da = 0\}$. The quotient of the set of reducible solutions by the action of the gauge group $\mathcal{G}(Y)$ can be identified with the Picard torus $\mathbb{T}(Y) := H^1(Y; i\mathbb{R})/2\pi iH^1(Y; \mathbb{Z})$.   
\end{proposition}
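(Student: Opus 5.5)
The plan is to prove the three assertions of the proposition in turn: reducibles exist, they form exactly the affine family of flat deformations of one reducible, and their gauge quotient is the Picard torus. For reducibles $(A,0)$ on $Y$, the Dirac equation $D_A\phi=0$ holds automatically. The curvature equation becomes $\frac{1}{2}\rho(F_{A^t}) - (0\cdot 0^{*})_{0}=0$, and since $\rho$ is injective on $2$-forms this is just $F_{A^t}=0$. So the whole question is about flat connections on the determinant line $\Lambda^2 S$, whose Chern class is $c_1(\mathfrak{s})$.

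\textbf{Existence.} The class $[\frac{i}{2\pi}F_{A_1^t}]$ represents the real image of $c_1(\mathfrak{s})$, which is zero because $c_1(\mathfrak{s})$ is torsion. So for any reference spin$^c$ connection $A_1$ we can write $F_{A_1^t}=d\beta$ for some $\beta\in\Omega^1(Y;i\mathbb{R})$. Changing $A$ by $b\otimes 1_S$ changes $A^t$ by $2b$. Hence $A_0 = A_1 - \frac{1}{2}\beta\otimes 1_S$ has $F_{A_0^t}=0$, and $(A_0,0)$ is a reducible solution.

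\textbf{Description of the set.} Any other spin$^c$ connection has the form $A=A_0+a\otimes 1_S$, and then $F_{A^t}=F_{A_0^t}+2\,da = 2\,da$. So $(A,0)$ solves the equations if and only if $da=0$, which is the stated set.

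\textbf{Quotient by gauge.} A gauge transformation $u\in\mathrm{Map}(Y,S^1)$ sends $a\mapsto a-u^{-1}du$, and $u^{-1}du$ is a closed imaginary $1$-form with periods in $2\pi i\mathbb{Z}$. Conversely, every closed form with integral periods (after dividing by $2\pi i$) is $u^{-1}du$ for some $u$, obtained by exponentiating a path integral. This identifies the image of $\mathcal{G}(Y)$ acting on closed forms with $\{u^{-1}du\}$. The quotient is then $Z^1(Y;i\mathbb{R})/\{u^{-1}du\}$.

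To identify this with $\mathbb{T}(Y)$, note that the exact forms $d\xi$ with $\xi\in\Omega^0(Y;i\mathbb{R})$ all arise as $u^{-1}du$ with $u=e^{\xi}$. So the quotient factors through de Rham cohomology:
\[
H^1(Y;i\mathbb{R})/\{[u^{-1}du]\}=H^1(Y;i\mathbb{R})/2\pi i H^1(Y;\mathbb{Z}),
\]
using the standard fact that $[u^{-1}du]$ ranges over $2\pi i$ times the image of $H^1(Y;\mathbb{Z})=[Y,S^1]$.

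\textbf{Main obstacle.} The only step needing care is this last identification. One must check that every class in $H^1(Y;\mathbb{Z})$, not just its torsion-free part, is realized by a circle-valued map, which holds since $H^1(Y;\mathbb{Z})\cong[Y,S^1]$. One must also check that torsion plays no role, since $H^1(Y;\mathbb{Z})$ is torsion-free. This is exactly the argument of Proposition 4.2.1 and Corollary 4.2.2 of \cite{Kronheimer_Mrowka_2007}.
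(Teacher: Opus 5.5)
Your argument is correct and is the standard one. You reduce the curvature equation to $F_{A^t}=0$, use that $c_1(\mathfrak{s})$ is torsion to produce a flat $A_0^t$, and identify $Z^1(Y;i\mathbb{R})/\{u^{-1}du\}$ with $H^1(Y;i\mathbb{R})/2\pi i H^1(Y;\mathbb{Z})$ via $[Y,S^1]\cong H^1(Y;\mathbb{Z})$. The paper gives no proof of its own and cites Proposition 4.2.1 and Corollary 4.2.2 of Kronheimer--Mrowka, which is the argument you reproduce.
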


The torus $\mathbb{T}(Y)$ of reducible solutions parametrizes a family of self-adjoint Fredholm operators—namely, the Dirac operators $D_{A}: L^2(S) \to L^2(S)$ associated to each solution $(A, 0)$, where $L^2(S)$ denotes the $L^2$ completion of the space of sections of the spinor bundle $S$.

Let $f: \mathbb{T}(Y) \to \mathbb{R}$ be a Morse function on $\mathbb{T}(Y)$. There is a retraction $p: \mathcal{B}(Y, \mathfrak{s}) \to \mathbb{T}(Y)$ defined by $[(A_{0} + a \otimes 1_{S}, \phi)] \mapsto [a_{\text{harm}}]$, where $a_{\text{harm}}$ is the harmonic part of $a$. The gradient of $f \circ p$ belongs to the class of tame perturbations. The reducible critical points of the perturbed action functional $\mathcal{L} + f\circ p$ are precisely the critical points of $f$ in $\mathbb{T}(Y) \subset \mathcal{B}(Y, \mathfrak{s})$.

By adding an additional tame perturbation $\mathfrak{q}$ which has gradient zero on the reducible locus, we may ensure that the reducible critical points of the Chern-Simons-Dirac functional perturbed by $\nabla (f \circ p) + \mathfrak{q}$ in the blow-up $\mathcal{B}^{\sigma}(Y, \mathfrak{s})$ are non-degenerate and the moduli spaces of flow-lines are regular. 

The torus $\mathbb{T}(Y)$ now parametrizes the family $D_{A, \mathfrak{q}}: L^2(S) \to L^2(S)$ of Dirac operators perturbed by $\mathfrak{q}$. Precisely, the operator $D_{A, \mathfrak{q}}$ is defined by:
\[D_{A, \mathfrak{q}}\phi = D_{A}\phi + \mathcal{D}_{(A, 0)}\mathfrak{q}^{1}(0, \phi),\]
where $\mathfrak{q}^1$ is the $L^2(S)$ component of the perturbation $\mathfrak{q}$, and $\mathcal{D}_{(A, 0)}$ denotes the linearization at the point $(A, 0)$ in the configuration space. We can describe the reducible critical points exactly:

\begin{proposition} The reducible critical points of the Chern-Simons-Dirac functional perturbed by $\nabla(f \circ p) + \mathfrak{q}$ on the blown-up configuration space $\mathcal{B}^{\sigma}(Y, \mathfrak{s})$ are precisely the gauge equivalence classes of configurations $(A, \phi)$ with $A \in \mathbb{T}(Y)$ a critical point of $f$ and $\phi \in L^2(S)$ an eigenvector of $D_{A, \mathfrak{q}}$ (the Dirac operator associated to $A$ and perturbed by $\mathfrak{q}$).
\end{proposition}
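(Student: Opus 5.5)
The plan is to reduce the description of the reducible critical points to the unperturbed-on-the-reducible-locus situation and then use the standard description of critical points in the blow-up. First, recall from Kronheimer--Mrowka that a point of $\mathcal{B}^{\sigma}(Y,\mathfrak{s})$ lying over the reducible locus is a gauge class of a triple $(A, 0, \phi)$ with $\|\phi\|_{L^2}=1$, and that the perturbed gradient on the blow-up, $(\operatorname{grad}\mathcal{L}+\mathfrak{p})^{\sigma}$, evaluated at such a triple has two components: a connection component, which is the connection component of $\operatorname{grad}(\mathcal{L}+f\circ p)+\mathfrak{q}$ at the reducible $(A,0)$, and a spinor component, which is $D_{A,\mathfrak{q}}\phi - \langle D_{A,\mathfrak{q}}\phi,\phi\rangle_{L^2}\phi$, i.e. the derivative of the perturbed gradient in the normal (spinor) direction, projected tangent to the unit sphere. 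A reducible critical point in $\mathcal{B}^{\sigma}$ is therefore exactly a triple with both components vanishing.

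Next I analyze the connection component. On the reducible locus, $\mathfrak{q}$ has zero gradient by construction, and the spinor-quadratic terms of the unperturbed Seiberg--Witten equations vanish since the spinor in the blown-down configuration is $0$. Thus the condition becomes that $(A,0)$ is a critical point of $\mathcal{L}+f\circ p$ restricted to reducibles. The Chern--Simons part forces $F_{A^t}=0$ (the equation $\tfrac12\rho(F_{A^t})=0$), so $A$ lies in the torus $\mathbb{T}(Y)$ of flat reducibles by the preceding Proposition; moreover $\mathcal{L}$ is constant on this torus, so the remaining condition along $\mathbb{T}(Y)$ is $\nabla f([A])=0$. Here I use that $f\circ p$ only varies in the harmonic direction, and that transverse to $\mathbb{T}(Y)$ (in the $d^*$-coclosed, non-harmonic directions) the curvature term is nondegenerate, so no other reducible solutions are introduced; this is the statement already recorded before the proposition, that reducible critical points downstairs are the critical points of $f$.

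Finally, the spinor component vanishing says $D_{A,\mathfrak{q}}\phi = \lambda\phi$ with $\lambda = \langle D_{A,\mathfrak{q}}\phi,\phi\rangle$, i.e. $\phi$ is a unit eigenvector; conversely any unit eigenvector gives a zero. Modding out by the residual $S^1$ stabilizer of $(A,0)$ identifies $\phi$ with $e^{i\theta}\phi$, so gauge classes correspond to eigenvectors up to phase, giving the claimed description. The main point needing care is the identification of the blown-up spinor component with the linearization $D_{A}\phi+\mathcal{D}_{(A,0)}\mathfrak{q}^1(0,\phi)$, which follows from the definition of the blown-up vector field ($\tilde{\mathfrak{q}}^{1}(A,s,\phi)=\int_0^1\mathcal{D}_{(A,st\phi)}\mathfrak{q}^1(0,\phi)\,dt$ evaluated at $s=0$); I would cite Kronheimer--Mrowka's Section 10 for this rather than rederive it.
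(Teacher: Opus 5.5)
Your argument is correct. The paper states this proposition as background from Kronheimer--Mrowka without proof, and your reconstruction is the standard argument it relies on: the blown-up gradient at a reducible point splits into a connection component that reduces to the downstairs condition, and a spinor component that vanishes exactly when $D_{A,\mathfrak{q}}\phi=\Lambda\phi$. The only loosely stated step is why $F_{A^t}=0$ and $df([A])=0$ hold separately. This is cleanest by $L^2$-orthogonality: for torsion $c_1(\mathfrak{s})$ the form $*F_{A^t}$ is coexact while $\operatorname{grad}(f\circ p)$ is harmonic, and since $\operatorname{grad}(f\circ p)$ has no spinor component, it does not contribute to $D_{A,\mathfrak{q}}$.
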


We may consider the coupled Morse chain complex $\overline{C}_{*}(\mathbb{T}(Y), D_{-, \mathfrak{q}})$ of the family of Dirac operators $D_{A, \mathfrak{q}}$ over $\mathbb{T}(Y)$. The above proposition tells us that the generators of the bar-flavor monopole Floer chain complex $\overline{\CM}(Y, \mathfrak{s})$ (with the choice of perturbation $\nabla(f \circ p) + \mathfrak{q}$) can be identified with the generators of $\overline{C}_{*}(\mathbb{T}(Y), D_{-, \mathfrak{q}})$. In fact, the two complexes are identical.

\begin{proposition} The monopole Floer chain complex $\overline{\CM}(Y, \mathfrak{s})$ obtained by choosing the tame perturbation $\nabla (f \circ p) + \mathfrak{q}$ is precisely given by the coupled Morse chain complex $\overline{C}_{*}(\mathbb{T}(Y), D_{-, \mathfrak{q}})$. In particular, $\overline{\HM}(Y, \mathfrak{s}) \cong \overline{H}_{*}(\mathbb{T}(Y), D_{-, \mathfrak{q}})$.
\end{proposition}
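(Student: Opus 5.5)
This statement is the identification of Kronheimer and Mrowka's reducible part of the monopole complex with their coupled Morse complex, so the plan is to match generators, gradings and differentials in turn.

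\textbf{Generators.} By the preceding proposition, reducible critical points of the perturbed functional in $\mathcal{B}^{\sigma}(Y,\mathfrak{s})$ are pairs $([A],[\phi])$, where $[A]$ is a critical point of $f$ on $\mathbb{T}(Y)$ and $\phi$ is an eigenvector of $D_{A,\mathfrak{q}}$. Since $c_1(\mathfrak{s})$ is torsion, the family $D_{-,\mathfrak{q}}$ has no spectral flow around loops. Hence the eigenvalues at each critical point admit a consistent integer labeling, and the generators of $\overline{\CM}=\mathfrak{C}^s\oplus\mathfrak{C}^u$ are in bijection with pairs $(q,i)$. Boundary-stable points correspond to $\lambda_i(q)>0$ and boundary-unstable points to $\lambda_i(q)<0$. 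One must check that the relative gradings agree. The monopole relative grading along a path is the Morse index difference plus spectral flow, and this matches the coupled Morse grading defined through $s(q)$.

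\textbf{Differentials.} The differential $\overline{\partial}$ counts flow lines lying entirely in the reducible (boundary) locus of the blow-up. A trajectory in $\partial\mathcal{B}^{\sigma}$ is a pair $(\gamma(t),[\phi(t)])$. Because the perturbation $\nabla(f\circ p)$ acts only through the harmonic part, and $\mathfrak{q}$ has gradient zero on the reducible locus, the connection component satisfies $\dot\gamma+\operatorname{grad} f=0$ after gauge fixing (the nonharmonic part of $a$ decays since $d^*$-gauge and $F^+$ equations force $a$ to be harmonic modulo gauge). The blown-up spinor satisfies the projectivized linear equation $\dot\phi+D_{\gamma(t),\mathfrak{q}}\phi=0$ (times a normalization). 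These are exactly the coupled Morse equations, with asymptotics given by eigenvectors. Matching moduli spaces, including their regularity (transversality for $\mathfrak{q}$ in the blow-up is equivalent to transversality of the coupled problem), shows that $\overline{\partial}^i_j$ equals the coupled Morse differential component by component. The isomorphism on homology follows, and by invariance it does not depend on the choice.

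\textbf{Main obstacle.} The hardest step is verifying that the $4$-dimensional reducible flow equations on $\mathbb{R}\times Y$ in the blown-up, gauge-fixed form reduce exactly to the finite-dimensional gradient flow coupled with the linear Dirac flow. This means controlling the nonharmonic part of the connection and matching the connection $\nabla$ on the Hilbert bundle with the one induced from gauge (temporal gauge). I would follow Kronheimer and Mrowka's Chapters 33--35 (specifically Theorem 34.3.x), where this identification is carried out, and cite it rather than reprove it.
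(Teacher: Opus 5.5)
Your proposal is correct and takes essentially the same route as the paper, which gives no independent argument for this background statement and simply imports Kronheimer and Mrowka's identification (Chapters 33--35 of their book) of reducible trajectories in the blow-up with coupled Morse trajectories. Your matching of generators is the content of their argument: you use the torsion hypothesis on $c_1(\mathfrak{s})$ to get a consistent labeling, and boundary-stable/unstable points correspond to the sign of $\lambda_i(q)$. The same goes for your matching of differentials, where the coexact part of the connection in fact vanishes identically rather than merely decaying.
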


\section{Background on Heegaard Floer Theory}
\subsection{Heegaard Floer Homology}
Unlike Seiberg-Witten theory, the story of Heegaard Floer theory begins with its Floer homology theory for three-manifolds, and a bit more effort is required to see how it produces useful four-manifold invariants.

Heegaard Floer homology is an invariant for closed, oriented three-manifolds, first introduced in \cite{MR2222356}. The theory associates to a (pointed) Heegaard splitting $\mathcal{H} = (\Sigma_g, \boldsymbol{\alpha}, \boldsymbol{\beta}, z)$ of a three-manifold $Y$ an $\mathbb{F}[U]$-chain complex, denoted $\CF^{-}(\mathcal{H})$ with differential $\partial^{-}$.  More specifically, $\CF^{-}(\mathcal{H})$ is the free $\mathbb{F}[U]$-module generated by the intersection points of the two tori $\mathbb{T}_{\alpha} = \alpha_1 \times \dots \times \alpha_g$ and $\mathbb{T}_{\beta} = \beta_1 \times \dots \beta_g$ inside of the symmetric space $\text{Sym}^{g}(\Sigma_{g})$, and $\partial^{-}$ is computed by counting pseudoholomorphic strips (whenever the reduced moduli space of such strips $\widetilde{\mathcal{M}}(\phi)$ has dimension 0):
\[\partial^{-}x = \sum_{y \in \mathbb{T}_{\alpha} \cap \mathbb{T}_{\beta}} \sum_{\{\phi \in \pi_2(x, y) \mid \mu(\phi) = 1\}} \# \widetilde{\mathcal{M}}(\phi) \cdot U^{n_z(\phi)}\cdot y.\]
In the above formula, $\pi_2(x, y)$ denotes the set of homology classes of Whitney disks from $x$ to $y$, $n_z(\phi)$ counts intersection points of $\phi$ with $\{z\} \times \text{Sym}^{g-1}(\Sigma_g)$ (which is always positive, by basic properties of holomorphic curves), and the condition $\mu(\phi) = 1$ ensures that $\widetilde{\mathcal{M}}$ has dimension $0$ ($\mu$ is the \textit{Maslov index}).  The homology of this chain complex is the minus flavor of \textit{Heegaard Floer homology} $\HF^{-}(\mathcal{H})$.  Note that, if $b_1(Y) > 0$, one must first modify the Heegaard splitting to be ``admissible" in order to ensure this is well-defined, see \cite{MR2222356}.

There are several versions of the Heegaard Floer chain complex (and consequently several versions of homology): Localizing at $U$ yields $\CF^{\infty}(\mathcal{H})$; taking the cokernel of the localization map yields $\CF^{+}(\mathcal{H})$; taking the quotient $\CF^{-}(\mathcal{H})/U \cdot \CF^{-}(\mathcal{H})$ yields $\widehat{\CF}(\mathcal{H})$.

The choice of basepoint $z \in \Sigma_g$ partitions the set $\mathbb{T}_{\alpha} \cap \mathbb{T}_{\beta}$ into classes, each of which corresponds to a $\text{spin}^{c}$ structure on $Y$.  This partition is such that there can exist a Whitney disk between $x$ and $y$ only if they are in the same class, so both the chain complexes and the homology groups split according to $\text{spin}^{c}$ structures on $Y$.  For a $\text{spin}^{c}$ structure $\mathfrak{s}$, we denote the corresponding chain complex and homology group, respectively, by $\CF^{\circ}(\mathcal{H}, \mathfrak{s})$ and $\HF^{\circ}(\mathcal{H}, \mathfrak{s})$, where $\circ \in \{\widehat{\phantom{Q}}, +, -, \infty\}$.

It turns out that the isomorphism class of $\HF^{\circ}(\mathcal{H}, \mathfrak{s})$ is independent of the Heegaard splitting $\mathcal{H}$ and depends only upon the diffeomorphism class of $Y$ and the choice of $\text{spin}^{c}$ structure $\mathfrak{s}$ \cite{MR2222356}.  In fact, the construction is natural in the following sense \cite{MR4337438}:

\begin{proposition}\label{chainnaturality} For fixed $Y$, $\mathfrak{s}$, and $z$, and any two Heegaard splittings $\mathcal{H}$, $\mathcal{H}'$ compliant with those choices, there is a distinguished chain homotopy equivalence (unique up to chain homotopy)
\[\Phi(\mathcal{H}, \mathcal{H}'): \CF^{\circ}(\mathcal{H}, \mathfrak{s}) \to \CF^{\circ}(\mathcal{H}', \mathfrak{s})\]
such that, for any $\mathcal{H}, \mathcal{H}', \mathcal{H}''$,
\begin{itemize}
\item[(1)] $\Phi(\mathcal{H}, \mathcal{H}) \sim \text{Id}_{\CF^{\circ}(\mathcal{H}, \mathfrak{s})}$,
    \item[(2)]  $\Phi(\mathcal{H}', \mathcal{H}'') \circ \Phi(\mathcal{H}, \mathcal{H}') \sim \Phi(\mathcal{H}, \mathcal{H}'')$,
\end{itemize}
where $\sim$ indicates that the maps are chain homotopic.
\end{proposition}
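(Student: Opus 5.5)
The plan is the strategy of Juhász--Thurston--Zemke, adapted to the chain level. First, I will define elementary transition maps for each type of Heegaard move between diagrams $\mathcal{H}, \mathcal{H}'$ of $(Y, z)$ with fixed $\mathfrak{s}$:
\begin{itemize}
\item[(a)] for an isotopy of the $\alpha$ or $\beta$ curves in the complement of $z$ (or a change of almost complex structure), the continuation map counting holomorphic strips with moving boundary conditions;
\item[(b)] for a handleslide among the $\beta$ curves (respectively the $\alpha$ curves), the holomorphic triangle map $F_{\alpha\beta\beta'}(-\otimes \Theta_{\beta\beta'})$, where $\Theta_{\beta\beta'}$ is the top-graded generator of $\HF^{-}(\#^{g}(S^1\times S^2))$;
\item[(c)] for a (de)stabilization, the map $x \mapsto x \times c$, with $c$ the unique intersection of the new curves, which is a chain isomorphism for a suitable almost complex structure;
\item[(d)] for a diffeomorphism of $Y$ isotopic to the identity and carrying one embedded diagram to another while fixing $z$, the tautological identification of generators.
\end{itemize}
By the Reidemeister--Singer theorem together with admissibility arguments, any two admissible diagrams are connected by a finite sequence of such moves through admissible diagrams. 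I define $\Phi(\mathcal{H},\mathcal{H}')$ as the composite of the elementary maps along a chosen sequence.

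Second, I will check that each elementary map is a chain homotopy equivalence whose homotopy inverse is the map of the reverse move. For isotopies this comes from the standard homotopy-of-homotopies argument. For handleslides it comes from associativity of triangle maps (counting holomorphic quadrilaterals) and the computation $F_{\beta\beta'\beta''}(\Theta\otimes\Theta) = \Theta$. Property (1) then follows once well-definedness is established, since the empty sequence from $\mathcal{H}$ to itself gives the identity. Property (2) is immediate from concatenating sequences.

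The main step, and the obstacle, is \emph{independence of the chosen sequence up to chain homotopy}: every loop of Heegaard moves must induce a map chain homotopic to the identity. Here I will invoke the Juhász--Thurston result that the space of Heegaard diagrams (as a 2-complex, via Cerf theory on the parametrized family of Morse functions) has its loops generated by a short list of elementary loops. These are:
\begin{itemize}
\item[(i)] commuting squares of independent moves, e.g.\ a handleslide and a stabilization in disjoint regions;
\item[(ii)] loops within a single move type;
\item[(iii)] the \emph{simple handleswap} loop;
\item[(iv)] loops arising from diffeomorphisms isotopic to the identity, where one needs \emph{continuity}: a diagram isotopic to itself in $Y$ gives the identity map.
\end{itemize}
I would verify each loop at the chain level. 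For (i), I would use the commutation of triangle maps with stabilization and with continuation maps, up to an explicit homotopy counting polygons. For (ii), I would use associativity of holomorphic polygon maps. For the handleswap (iii), the plan is to reduce, by a stabilization argument, to a genus-two local computation, and then show that the composite of three triangle maps sends each generator to itself plus terms that a pentagon-count homotopy cancels. For continuity (iv), I would use that an isotopy fixing the diagram setwise can be realized by a small Hamiltonian isotopy of the tori, whose continuation map is homotopic to the identity. Chain-level (rather than homological) versions of all these homotopies are required. This bookkeeping, especially for the handleswap and for the basepoint-fixing condition in (iv), is where I expect most of the work to lie.
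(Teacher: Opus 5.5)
Your outline is correct and is essentially the Juh\'asz--Thurston--Zemke argument; the paper gives no proof of its own here and simply cites their naturality theorem, whose argument carries over to the homotopy category of chain complexes just as you plan. Two small corrections: the simple handleswap loop consists of an $\alpha$-equivalence, a $\beta$-equivalence and a diffeomorphism, so only two triangle maps enter, not three; and the continuity check is only needed for diffeomorphisms whose restriction to $\Sigma$ is isotopic to $\text{Id}_{\Sigma}$ rel $z$, since diffeomorphisms of $Y$ isotopic to the identity that preserve the diagram but act nontrivially on $\Sigma$ are handled by the loop-generation theorem rather than by a direct isotopy argument.
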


We may therefore unambiguously write $\HF^{\circ}(Y, \mathfrak{s})$, defined as the colimit of the above transitive system (or rather, the induced system of maps on homology).

The Heegaard Floer homology groups fit into various long exact sequences.  The most important one for our purposes is the one induced by localization:
\begin{equation}\label{LES}\begin{tikzcd} \arrow{r}{\delta} & \HF^{-}(Y, \mathfrak{s}) \arrow{r}{i_{*}} & \HF^{\infty}(Y, \mathfrak{s}) \arrow{r}{\pi_{*}} & \HF^{+}(Y, \mathfrak{s}) \arrow{r}{\delta} & \phantom{q} \end{tikzcd} \end{equation}

$\HF_{\text{red}}^{-}$ is defined as the kernel of $i_{*}$ and $\HF_{\text{red}}^{+}$ is defined as the cokernel of $\pi_{*}$.  Note that $\delta$ induces an isomorphism between the two.

\subsection{Cobordism Maps}

The content of this section can be found in \cite{MR2113019}.  Let $Y_1, Y_2$ be oriented three-manifolds, and $W$ a cobordism from $Y_1$ to $Y_2$ (i.e. $W$ is a smooth four-manifold with boundary $-Y_1 \sqcup Y_2$).  Choosing a $\text{spin}^{c}$ structure $\mathfrak{s}$ on $W$ (as well as a path $\gamma$ connecting basepoints, suppressed from the notation), there is a well-defined induced map on Floer homology
\[F_{W, \mathfrak{s}}^{\circ}: \HF^{\circ}(Y_1, \mathfrak{s}|_{Y_1}) \to \HF^{\circ}(Y_2, \mathfrak{s}|_{Y_2}).\]
The map $F_{W, \mathfrak{s}}^{\circ}$ depends only on the diffeomorphism type of $W$ and the $\text{spin}^{c}$ structure $\mathfrak{s}$.

This construction is functorial in the following sense: given $W_1$ a cobordism from $Y_1$ to $Y_2$ and $W_2$ a cobordism from $Y_2$ to $Y_3$, as well as $\text{spin}^{c}$ structures $\mathfrak{s}_1 \in \text{Spin}^{c}(W_1)$ and $\mathfrak{s}_2 \in \text{Spin}^{c}(W_2)$ which agree on $Y_2$, we have:
\[F^{\circ}_{W_2, \mathfrak{s}_2} \circ F^{\circ}_{W_1, \mathfrak{s}_1} = \sum_{\{\mathfrak{s} \in \text{Spin}^{c}(W) : \mathfrak{s}|_{W_i} = \mathfrak{s}_i, i = 1, 2\}} F^{\circ}_{W, \mathfrak{s}}\]
where $W := W_1 \cup_{Y_2} W_2$.  In other words, the composition of the two cobordism maps is the sum of all the cobordism maps induced by gluing the two cobordisms together and choosing a $\text{spin}^{c}$ structure which restricts to the original $\text{spin}^{c}$ structures on each of the two cobordisms.

Using their cobordism maps, Ozsváth and Szabó show that the Heegaard Floer homology groups associated to torsion $\text{spin}^{c}$ structures admit an absolute $\mathbb{Q}$-grading.  These absolute gradings are such that the cobordism maps satisfy the following grading shift formula (see \cite{MR2113019}, Theorem 7.1):
\begin{proposition} Let $(W, \mathfrak{s})$ be a cobordism from $Y_1$ to $Y_2$ such that $\mathfrak{s}$ restricts to being torsion on both boundary components.  Then $F_{W, \mathfrak{s}}^{\circ}$ has a grading shift of
\[\frac{c_1(\mathfrak{s})^2 - 2\chi(W) - 3\sigma(W)}{4}\]
\end{proposition}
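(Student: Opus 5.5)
This grading shift formula is Theorem 7.1 of \cite{MR2113019}. To reprove it, I would follow Ozsváth--Szabó: show that the quantity
\[\Delta(W,\mathfrak{s}) := \frac{c_1(\mathfrak{s})^2 - 2\chi(W) - 3\sigma(W)}{4}\]
is additive under composition, and that the absolute grading is defined so that each elementary cobordism shifts degree by $\Delta$.

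The first step is to decompose $W$ into handles. Choose a handle decomposition of $W$ relative to $Y_1$ with no $0$- or $4$-handles. Then $F^{\circ}_{W,\mathfrak{s}}$ is, by definition, a composite of three kinds of maps: the $1$-handle maps, the $2$-handle maps given by counting holomorphic triangles, and the $3$-handle maps. Since $\chi$ and $\sigma$ are additive under gluing along rational homology spheres, and $c_1^2$ is additive whenever $\mathfrak{s}$ is torsion on the gluing region, $\Delta$ is additive in the relevant situations. If some intermediate $\mathfrak{s}|_{Y}$ is non-torsion, I would group handles so that every intermediate boundary carries a torsion restriction, or else argue with relative gradings on the composite.

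Next I would handle each piece separately. For a $1$-handle, $\chi$ drops by $1$ and $\sigma$ and $c_1^2$ are unchanged, so $\Delta = 1/2$. The map sends $x\mapsto x\otimes\theta^{+}$, where $\theta^{+}$ is the top generator of $\widehat{\HF}(S^1\times S^2)$, which sits in grading $+1/2$. The $3$-handle case is dual. For $2$-handles, the absolute $\mathbb{Q}$-grading is defined precisely so that the map has the stated shift. One first fixes the grading of $\HF^{\infty}(S^3)$. One then uses that the Maslov index of a triangle $\psi$ satisfies
\[\mu(\psi) - 2n_z(\psi) = \Delta(W_{\alpha\beta\gamma},\mathfrak{s}_z(\psi)),\]
which is the index computation of Proposition 7.2 in \cite{MR2113019}.

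The main obstacle is this triangle Maslov index formula, together with checking that the resulting grading is independent of the choice of surgery presentation. I would prove the index formula by computing it for a standard model, namely a $2$-handle attached along an unknot in $S^3$. I would then show that both sides change in the same way under adding periodic domains, splicing disks, and changing $\mathfrak{s}$ by $PD[\text{core}]$; in the last case $c_1^2$ changes compatibly with $2n_z$. Independence of the presentation then follows from Kirby calculus invariance and functoriality.
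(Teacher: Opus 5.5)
The paper gives no proof of this statement. It quotes it as background from Ozsv\'ath--Szab\'o (\cite{MR2113019}, Theorem 7.1), which is the reference you give, so as a citation your proposal agrees with the paper.

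If your sketch is meant as an actual reproof, however, its central step is misstated. The identity $\mu(\psi)-2n_z(\psi)=\Delta(W_{\alpha\beta\gamma},\mathfrak{s}_z(\psi))$ cannot hold for all triangles. Splice a Whitney disk $\phi\in\pi_2(w,w')$ onto the outgoing corner of $\psi\in\pi_2(x,\Theta,w)$. This leaves $\mathfrak{s}_z$, and hence the right-hand side, unchanged. The left-hand side changes by $\mu(\phi)-2n_z(\phi)$, which is the relative grading of $w$ and $w'$ and is generally nonzero.

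The correct statement involves the corners. With $W$ the two-handle cobordism and $\Theta$ the top generator, it reads $\mathrm{gr}(w)-\mathrm{gr}(x)=-\mu(\psi)+2n_z(\psi)+\Delta(W,\mathfrak{s}_z(\psi))$. Since the absolute gradings are the thing being constructed, what you can actually prove is relative: $\mu(\psi)-2n_z(\psi)-\Delta(W,\mathfrak{s}_z(\psi))$ depends only on the corners $x,w$, and it changes by relative gradings when disks are spliced in. One then defines $\mathrm{gr}$ so that this quantity equals $\mathrm{gr}(x)-\mathrm{gr}(w)$.

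Because $\mu-2n_z$ is unchanged by adding $[\Sigma]$, the real input is the following: adding a triply periodic domain $P$ (so $n_z(P)=0$) changes $\mu(\psi)$ by exactly the change in $c_1(\mathfrak{s}_z)^2/4$. Your step about changing $\mathfrak{s}$ should say this; the quantity that must track $c_1^2$ is $\mu$, not $2n_z$. It is a statement about arbitrary triple diagrams, so a model computation for a $2$-handle along an unknot cannot establish it; the model only fixes the normalization.

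Two smaller points.
\begin{itemize}
\item Additivity of $\chi$ (since $\chi(Y)=0$) and of $\sigma$ (Novikov additivity) holds when gluing along any closed three-manifold. Also $c_1^2$ is additive whenever the restriction to the gluing manifold is torsion. So restricting to rational homology spheres is unnecessary, and as stated it would not cover the gluings you need, since $Y_1\#k(S^1\times S^2)$ is never one.
\item The worry about non-torsion intermediate restrictions disappears if you order the handles as all $1$-handles, then all $2$-handles, then all $3$-handles. The spheres in the new $S^1\times S^2$ summands bound the cocores of the $1$-handles, respectively the cores of the $3$-handles, so $c_1(\mathfrak{s})$ vanishes on them.
\end{itemize}

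Finally, the remaining substantive steps of Ozsv\'ath--Szab\'o's argument are only asserted in your last sentence. These are the independence of the absolute grading from the Heegaard triple and the surgery presentation, and the extension of the shift formula from the cobordisms used to define the grading to an arbitrary two-handle cobordism $Y_0\to Y_1$.
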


\subsubsection{The mixed invariant}

Naively, one may wish to define an invariant of closed four-manifolds by removing two four-balls, treating it as a cobordism from $S^3$ to $S^3$, and looking at the induced map on Floer homology.  Unfortunately, Proposition~\ref{boring} below (from \cite{MR2113019}), together with the fact that $\HF_{\text{red}}(S^3) = 0$, implies that all of these cobordism maps must vanish.

\begin{proposition}\label{boring} Let $W$ be a cobordism with $b_{2}^{+}(W) > 0$.  Then $F_{W, \mathfrak{s}}^{\infty} = 0$ for any $\mathfrak{s} \in \text{Spin}^{c}(W)$.
\end{proposition}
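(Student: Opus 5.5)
This is Ozsváth–Szabó's result that $F^{\infty}_{W,\mathfrak{s}}$ vanishes when $b_2^{+}(W)>0$. The plan is to reduce to a single model computation, $F^\infty$ for a blow-up-type cobordism built from a homologically essential surface of nonnegative square, and then use functoriality (the composition law above) to propagate the vanishing.

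\textbf{Step 1 (reduction).} Since $b_2^+(W)>0$, choose a class $\Sigma \in H_2(W)$ with $\Sigma\cdot\Sigma>0$, represented by a smoothly embedded connected surface $F\subset W$ away from the boundary. Tubing and adding handles as needed, we may in fact assume $\Sigma\cdot\Sigma=0$: take $S$ with $S^2=n>0$, blow up $n$ times, and use $S-E_1-\dots-E_n$. This changes $W$ to $W\#n\overline{\mathbb{CP}}^2$. By the blow-up formula, $F^\infty$ of the blow-up in the spin$^c$ structures with $c_1(E_i)=\pm1$ recovers $F^\infty_{W,\mathfrak{s}}$. So it suffices to show vanishing when $W$ contains an essential surface $F$ of square zero and genus $g$, which we may take large.

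\textbf{Step 2 (decomposition).} Let $\nu F\cong F\times D^2$ be a tubular neighborhood, and connect it to an incoming boundary component by an arc. Then $W$ decomposes as $W_1\cup_{Y_1\# (F\times S^1)} W_2$, where $W_1=(Y_1\times I)\natural (F\times D^2)$. By the composition law, $F^\infty_{W,\mathfrak{s}}$ is a sum of terms $F^\infty_{W_2}\circ F^\infty_{W_1,\mathfrak{s}'}$. The sum is over spin$^c$ structures agreeing on both pieces, which differ by multiples of the Poincaré dual of $F$ on the middle. The key input is the computation of $\HF^\infty(F\times S^1)$ (Ozsváth–Szabó's $\HF^\infty$ for $b_1>0$ via the standard $\Lambda^*H^1\otimes\mathbb{F}[U,U^{-1}]$ model, which holds for $b_1\le 2$ and in general with twisted/standard coefficient subtleties) together with the map induced by $F\times D^2$. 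The latter is the image of the ``top'' generator.

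\textbf{Step 3 (vanishing).} A cleaner route, which I would try first, runs through the $H_1$-action. On $\HF^\infty$ the map induced by the 2-handle/$F\times D^2$ piece lands in the image of $\Lambda^* H_1$ acting, and the classes $\gamma\in H_1(F)$ act on the image by products. On the other hand, $\gamma$ bounds in $W_2$ ($\gamma\times pt$ bounds $\gamma\times D^2$ after reversing roles), so $F^\infty_{W_2}\circ A_\gamma = F^\infty_{W_2}\circ A_{\gamma'}$ relations force the total to be annihilated by $A_{\gamma}$-chains. Concretely, one shows that $F^\infty_{W_1}$ factors as $A_{a_1}A_{b_1}\cdots$ applied to something, and these act trivially after $W_2$ because the curves become null-homologous.

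The main obstacle is making this factorization precise, and controlling the sum over spin$^c$ structures differing by $PD[F]$. The grading shift formula of the preceding proposition shows that these spin$^c$ structures have distinct grading shifts of the form $c_1(\mathfrak{s}+kPD[F])^2$, which is linear in $k$ since $F^2=0$. I expect the cleanest fallback is Ozsváth–Szabó's original argument: decompose $W$ so that one piece is a cobordism containing a square-zero torus/sphere with $b_2^+=1$, and use the exact triangle for the three framings along the corresponding knot. The $\infty$-flavor maps in the triangle must then sum to zero while two of the three are isomorphisms up to $U$-powers, forcing the third, and hence the composite, to vanish.
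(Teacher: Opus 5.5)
The paper does not prove this statement. It quotes it from Ozsv\'ath--Szab\'o (Lemma 8.2 of \cite{MR2113019}). Judged on its own, your proposal has a genuine gap: it never produces a reason for the vanishing.

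Step 1 is legitimate by the blow-up formula, but it moves all of the positivity out of $\nu F$. The piece $W_1=(Y_1\times I)\natural(F\times D^2)$ has identically zero intersection form, and $F^\infty_{W_1}$ is in general nonzero. For $F=S^2$ it is a $0$-framed $2$-handle along an unknot in a ball, whose map on $\HF^\infty$ is injective in the torsion spin$^c$ structure. So any vanishing has to come from how $F$ pairs with classes in $W_2$, and Steps 2--3 never use this.

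The mechanism of Step 3 rests on a false claim. For $\gamma\in H_1(F)$ the curve $\gamma\times\mathrm{pt}$ does not bound in $F\times D^2$; only multiples of the fiber $\mu=\mathrm{pt}\times S^1$ do. Nothing makes $\gamma\times\mathrm{pt}$ null-homologous in $W_2$ either. When $F$ is a sphere, e.g.\ $W=(S^2\times S^2)\setminus 2B^4$, there are no such classes at all, so this cannot be what kills the map. In that example the composite $F^\infty_{W_2}\circ F^\infty_{W_1}$ does vanish, but for a different reason:
$\mu$ bounds in $W_1$, and via the punctured dual sphere it also bounds in $W_2$. Hence $\operatorname{im}F^\infty_{W_1}\subset\ker\mu$ and $\operatorname{im}\mu\subset\ker F^\infty_{W_2}$, while $\ker\mu=\operatorname{im}\mu$ on $\HF^\infty(S^1\times S^2)$. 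It is unclear how to push this to higher genus, where $\HF^\infty(F\times S^1)$ is not even standard (already for $T^3$). So taking $g$ large makes matters worse, not better.

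Even a vanishing composite would not finish the proof, because the composition law runs the other way from how you state it. It says $F^\infty_{W_2,\mathfrak{s}_2}\circ F^\infty_{W_1,\mathfrak{s}_1}=\sum F^\infty_{W,\mathfrak{s}'}$ over all $\mathfrak{s}'$ restricting to $(\mathfrak{s}_1,\mathfrak{s}_2)$, here $\mathfrak{s}'=\mathfrak{s}+k\,PD[F]$. So you only learn that an infinite sum vanishes. Your claim that its terms have distinct degree shifts fails in general. The shift changes by $k\langle c_1(\mathfrak{s}),[F]\rangle$, which is zero for instance for the spin structure on $(S^2\times S^2)\setminus 2B^4$, where every term has shift $-1$.

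The fallback does not repair this. Exactness of a triangle makes consecutive composites vanish; it does not make the three maps sum to zero. Moreover, on $\HF^\infty$ each triangle map is a sum over all spin$^c$ structures on its cobordism (which in general requires completed coefficients), and such sums can cancel mod $2$. In the $(\infty,0,1)$ triangle for the unknot, the third cobordism is a punctured $\overline{\mathbb{CP}}^2$. Its total map is $\sum_{k\geq 0}2U^{k(k+1)/2}=0$, although each individual spin$^c$ map is a nonzero power of $U$. So you cannot assume two of the maps are isomorphisms, and statements about total triangle maps do not isolate a single $F^\infty_{W,\mathfrak{s}}$.

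What is missing is an actual vanishing input for a piece of $W$ that carries a positive-definite direction, together with a way to pass from that piece to an individual spin$^c$ structure on $W$.
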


However, Ozsváth and Szabó refine their construction to develop a nontrivial invariant, called the \textit{mixed invariant} of $(X, \mathfrak{s})$ (see \cite{MR2222356}, sections 8 and 9):  Assuming $b_2^{+} > 1$, we can find a hypersurface $N \subset X$ which separates $X$ into two subcobordisms, say, $X_1$ from $S^3$ to $N$ and $X_2$ from $N$ to $S^3$, such that $b_2^{+}(X_i) > 0$ and the map $\delta: H_2(N; \mathbb{Z}) \to H_2(X;\mathbb{Z})$ from the Meier-Vietoris sequence is trivial (this latter condition ensures that the $\text{spin}^{c}$ structure on $X$ can be uniquely recovered from the $\text{spin}^{c}$ structures on $X_1$ and $X_2$).  Such an $N$ is called an \textit{admissible cut}.  

Now, the cobordism maps $F_{X_1, \mathfrak{s}|_{X_1}}^{\infty}$, $F_{X_2, \mathfrak{s}|_{X_2}}^{\infty}$ must be trivial by Proposition~\ref{boring}.  Thus, looking at the long exact sequence~\eqref{LES}, it follows that $\text{im}(F^{-}_{X_1, \mathfrak{s}|_{X_1}}) \subset \HF^{-}(N, \mathfrak{s}|_{N})$ is in fact contained in $\HF_{\text{red}}^{-}(N, \mathfrak{s}|_{N})$.  Meanwhile, the triviality of $F_{X_2, \mathfrak{s}|_{X_2}}^{\infty}$ implies that $F^{+}_{W_2, \mathfrak{s}|_{X_2}}$ descends to $\HF^{+}_{\text{red}}(N, \mathfrak{s}|_{N})$.  Thus, we may define the composition
\[\Phi_{X, \mathfrak{s}} := F^{+}_{X_2, \mathfrak{s}|_{X_2}} \circ \delta^{-1} \circ F^{-}_{X_1, \mathfrak{s}|_{X_1}}\]
to get a ``mixed map'' from $\HF^{-}(S^3)$ to $\HF^{+}(S^3)$, which is the  desired invariant.  It turns out that $\Phi_{X, \mathfrak{s}}$ is independent of the choice of admissible cut, and depends only on the diffeomorphism type of $X$ and on $\mathfrak{s}$.

\subsection{Involutive Heegaard Floer Homology}
There is a natural symmetry on Heegaard diagrams given by ``conjugating'' the Heegaard data: $\mathcal{H} = (\Sigma_g, \boldsymbol{\alpha}, \boldsymbol{\beta}, z) \mapsto \overline{\mathcal{H}}=(-\Sigma_g, \boldsymbol{\beta}, \boldsymbol{\alpha}, z)$ (which induces isomorphisms $\HF^{\circ}(Y, \mathfrak{s}) \to \HF^{\circ}(Y, \overline{\mathfrak{s}})$).  The naturality of the Heegaard Floer chain complex—in the sense of Proposition~\ref{chainnaturality}—provides a distinguished chain homotopy equivalence $\Phi(\mathcal{H}, \overline{\mathcal{H}})$ corresponding to the above change in Heegaard splitting.  This gives rise to a homotopy involution $\iota$ on the Heegaard Floer chain complex, which is used by Hendricks and Manolescu in \cite{MR3649355} to define \textit{involutive Heegaard Floer homology} $\HFI^{\circ}(Y)$.  More specifically, the involutive Heegaard Floer homology of the pair $(Y, \varpi)$, where $\varpi$ is a conjugation class of $\text{spin}^{c}$ structures on $Y$, is the homology of the mapping cone:
\[\begin{tikzcd}\CF^{\circ}(Y, \varpi) \arrow{r}{Q(1+\iota)}& Q \cdot \CF^{\circ}(Y, \varpi)[-1]\end{tikzcd}\]
where $Q$ is a formal variable with $Q^2 = 0$.  Here, we are defining $\CF^{\circ}(Y, \varpi)$ to be the sum
\[\bigoplus_{\mathfrak{s} \in \varpi} \CF^{\circ}(Y, \mathfrak{s}).\]
Note that $\varpi$ contains either one or two elements, and it contains one if and only if its sole element can be lifted to a spin structure.

In \cite{MR5032324}, Hendricks, Hom, Stoffregen, and Zemke proved that involutive Heegaard Floer homology is natural in the same way as ordinary Heegaard Floer homology (i.e. in the sense of Proposition~\ref{chainnaturality}).

\subsubsection{Involutive cobordism maps}
Using a handle-by-handle construction similar to that of Ozsváth and Szabó, Hendricks and Manolescu also construct cobordism maps in involutive Heegaard Floer homology for pairs $(W, \varpi)$, where $\varpi$ is a conjugation class of $\text{spin}^{c}$ structures on $W$ (see Proposition 4.9 in \cite{MR3649355}).  They show that these maps very similar formal properties to those from the original theory.  However, they do not show that the maps are independent of the choices made in their construction.  

Invariance was later shown for maps induced by \textit{spin} cobordisms in \cite{MR5032324}.  More precisely, Hendricks, Hom, Stoffregen, and Zemke proved that the involutive cobordism maps depend on the pair $(W, \mathfrak{s})$ along with a choice of framed path $\gamma$ between the (framed) basepoints of the two boundary components.  The dependence on the framed path $\gamma$ comes from a natural action of $\text{Diff}^{f}(Y, z)$—that is, the group of diffeomorphisms of the three-manifold $Y$ preserving the basepoint $z$ as well as the framing at $z$—on $\CFI^{\circ}(Y)$ (see Theorem 1.6 of \cite{MR5032324}).  This action is nullhomotopic on $\CFI^{\circ}(S^3)$, so we need not worry about dependence on a framed path so long as we restrict attention to closed spin four-manifolds.

\section{Construction of the Involutive Mixed Invariant}
\subsection{Construction of the Mixed Map}

We begin with an algebraic lemma.

\begin{lemma}\label{vanish1} Suppose we have the following diagram in which both squares commute, and the middle row is exact:
\[\begin{tikzcd}  & A_2 \arrow{r}{a_2} \arrow{d}{F_1} & A_3 \arrow{d}{0}  \\ B_1 \arrow{r}{b_1} \arrow{d}{0} & B_2 \arrow{r}{b_2} \arrow{d}{F_2} & B_3 \\ C_1 \arrow{r}{c_1} & C_2  & \end{tikzcd}\]
Then $F_2 \circ F_1 = 0$.
\end{lemma}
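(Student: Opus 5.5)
The argument is a direct diagram chase using only the two commuting squares and exactness of the middle row at $B_2$. The upper square says $b_2 \circ F_1 = 0 \circ a_2 = 0$. The lower-left square says $F_2 \circ b_1 = c_1 \circ 0 = 0$. These are the only two facts needed.

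Take an arbitrary $x \in A_2$. First, $b_2(F_1(x)) = 0$ by the upper square, so $F_1(x) \in \ker b_2$. Second, exactness of the middle row at $B_2$ gives $\ker b_2 = \operatorname{im} b_1$, so there is some $y \in B_1$ with $b_1(y) = F_1(x)$. Third, applying $F_2$ and the lower square gives
\[F_2(F_1(x)) = F_2(b_1(y)) = c_1(0(y)) = 0.\]
Since $x$ was arbitrary, $F_2 \circ F_1 = 0$.

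Equivalently, $\operatorname{im} F_1 \subseteq \ker b_2 = \operatorname{im} b_1 \subseteq \ker F_2$. No step is a genuine obstacle. The only point to check is that exactness is used precisely at $B_2$, namely the inclusion $\ker b_2 \subseteq \operatorname{im} b_1$. The argument works verbatim for modules over any ring, and for graded $\mathbb{F}[U,Q]/Q^2$-modules as later applications need, because it uses nothing beyond linearity and set-theoretic images and kernels.

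The lemma is the abstract form of the mechanism behind Proposition~\ref{boring} and the construction of $\Phi_{X,\mathfrak{s}}$. In that setting $F_1$ and $F_2$ play the roles of the minus and plus cobordism maps, the middle row is the localization exact sequence~\eqref{LES}, and the vanishing maps are the $\infty$-flavored cobordism maps.
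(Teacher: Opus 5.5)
Your diagram chase is exactly the paper's argument: the upper square places $\text{im}\, F_1$ inside $\ker b_2 = \text{im}\, b_1$, and the lower square shows that $F_2$ kills $\text{im}\, b_1$. Your closing remark misdescribes where the lemma is used, though: the paper applies it in Lemma~\ref{vanish}, where the middle row is the mapping-cone sequence $Q\HF^{\infty}[-1] \to \HFI^{\infty} \to \HF^{\infty}$ for $Y_1 \# Q_1$, $F_1, F_2$ are the involutive $\infty$-flavored maps of the two pieces, and the zero maps are the ordinary $\infty$-flavored maps—not the localization sequence with minus and plus maps.
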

\begin{proof} The proof is a standard diagram chase.  The commutativity of the upper right square implies that the image of $F_1$ lies inside the kernel of $b_2$, which by exactness is equal to the image of $b_1$.  The commutativity of the lower left square implies that $F_2$ vanishes on the image of $b_1$, hence $F_2$ vanishes on the image of $F_1$.\end{proof}
Using this, we may prove a vanishing property for the map on $\HFI^{\infty}$ under certain topological conditions, in analogy with Lemma 8.2 of \cite{MR2113019}:

\begin{lemma}\label{vanish} Let $W$ be a smooth cobordism from $Y_1$ to $Y_2$ with $b_2^{+} > 1$, and let $\varpi$ be a conjugation class of $\text{spin}^{c}$ structures on $W$.  Then the map
\[F^{I, \infty}_{W, \mathfrak{s}}: \HFI^{\infty}(Y_1, \varpi|_{Y_1}) \to \HFI^{\infty}(Y_2, \varpi|_{Y_2})\]
vanishes (regardless of the choice of framed path).
\end{lemma}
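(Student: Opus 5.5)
We will deduce the statement from Lemma~\ref{vanish1}, applied to the long exact sequence of the mapping cone defining $\CFI^{\infty}$. The approach is to split $W$ into two pieces, each with $b_2^{+} > 0$, so that the ordinary $\HF^{\infty}$ map of each piece vanishes by Proposition~\ref{boring}.

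\emph{Step 1: decompose $W$.} Since $b_2^{+}(W) > 1$, we choose a separating three-manifold $M \subset W$ cutting $W = W_1 \cup_M W_2$ into a cobordism $W_1$ from $Y_1$ to $M$ and $W_2$ from $M$ to $Y_2$, with $b_2^{+}(W_1) > 0$ and $b_2^{+}(W_2) > 0$. For example, take a handle decomposition in which two $2$-handles spanning a positive-definite $2$-plane are attached in separate layers. We arrange that the conjugation class $\varpi$ restricts to conjugation classes on $W_1$, $W_2$, and $M$. By the composition law for involutive cobordism maps (Hendricks--Manolescu and Hendricks--Hom--Stoffregen--Zemke), $F^{I,\infty}_{W,\varpi}$ is a composite $F^{I,\infty}_{W_2} \circ F^{I,\infty}_{W_1}$, up to summing over classes of $\text{spin}^{c}$ structures restricting to the given ones. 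It therefore suffices to show that every such composite vanishes.

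\emph{Step 2: set up Lemma~\ref{vanish1}.} For each three-manifold $Y \in \{Y_1, M, Y_2\}$, the mapping cone gives a short exact sequence of chain complexes
\[0 \to Q\cdot\CF^{\infty}(Y)[-1] \to \CFI^{\infty}(Y) \to \CF^{\infty}(Y) \to 0.\]
Its long exact sequence contains
\[\HF^{\infty}(M) \xrightarrow{\;Q\cdot\;} \HFI^{\infty}(M) \to \HF^{\infty}(M),\]
which is exact in the middle. The involutive cobordism maps are built as maps of mapping cones, of the form $\begin{pmatrix} F & 0 \\ QH & F\end{pmatrix}$ with $F$ the ordinary cobordism map. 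Hence they commute with the inclusion of the $Q$-part and with the projection, inducing $F^{\infty}$ on each.

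Lemma~\ref{vanish1} then applies with $A_2 = \HFI^{\infty}(Y_1)$, $A_3 = \HF^{\infty}(M)$, middle row $\HF^{\infty}(M) \to \HFI^{\infty}(M) \to \HF^{\infty}(M)$, $C_1 = \HF^{\infty}(Y_2)$, and $C_2 = \HFI^{\infty}(Y_2)$. The top-right square commutes with vertical map $F^{\infty}_{W_1} = 0$, composed with the projection from $\HFI^{\infty}(Y_1)$. The bottom-left square commutes with vertical map $F^{\infty}_{W_2} = 0$. Both vanishings come from Proposition~\ref{boring}. The lemma gives $F^{I,\infty}_{W_2}\circ F^{I,\infty}_{W_1} = 0$.

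Choosing a different framed path only changes the maps by the action of framed diffeomorphisms. This action commutes with the same short exact sequences, so the argument is independent of that choice.

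\emph{Main obstacle.} The delicate point is Step 1 together with the commutation claims. We must check that the involutive maps genuinely commute with the mapping-cone inclusion and projection. Here the lower-triangular form is essential: the off-diagonal homotopy term $H$ lands in the $Q$-part, so it does not disturb the projection, and it kills the $Q$-part, so it does not disturb the inclusion. We must also handle the composition law carefully when $\varpi$ consists of two non-self-conjugate structures. If the decomposition into $W_1$ and $W_2$ with $b_2^{+} > 0$ each is awkward in the presence of $b_1$, we would first try cutting along the boundary of a regular neighborhood of $Y_1$ union a single positive $2$-handle.
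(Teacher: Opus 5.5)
You follow the same plan as the paper: cut $W$ into two pieces with $b_2^+>0$, kill the ordinary $\HF^{\infty}$ maps of the pieces with Proposition~\ref{boring}, and apply Lemma~\ref{vanish1} to the diagram with rows $Q\HF^{\infty}[-1]\to\HFI^{\infty}\to\HF^{\infty}$ from Hendricks--Manolescu's Proposition 4.9. (In your labelling, $A_3$ should be $\HF^{\infty}(Y_1)$, with vertical map $F^{\infty}_{W_1}$ into $B_3=\HF^{\infty}(M)$; this is only a labelling slip.)

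\textbf{The gap is in Step 1.} The composition law says that $F^{I,\infty}_{W_2,\varpi_2}\circ F^{I,\infty}_{W_1,\varpi_1}$ is the \emph{sum} of $F^{I,\infty}_{W,\varpi'}$ over all classes $\varpi'$ on $W$ restricting to $\varpi_1$ and $\varpi_2$. These $\varpi'$ differ by elements of the Mayer--Vietoris image $\delta H^1(M;\mathbb{Z})\subset H^2(W;\mathbb{Z})$. So showing the composite vanishes only shows that this sum vanishes. Your sentence ``it therefore suffices to show that every such composite vanishes'' does not follow: the single map $F^{I,\infty}_{W,\varpi}$ is not a composite, or a sum of composites, unless the sum has exactly one term. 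You must choose the cut so that $\delta H^1(M;\mathbb{Z})=0$, and your construction neither imposes nor checks this. The real issue here is this multiplicity of extensions across $M$, not the two-element conjugation classes you flag.

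Your handle-layer cut also does not deliver $b_2^+(W_i)>0$ as stated. A single $2$-handle carries a closed class only if its attaching circle is rationally null-homologous in the current boundary, and layering can lose $b_2^+$. For example, draw $(S^2\times S^2)\#(S^2\times S^2)$ as two $0$-framed Hopf links and put one component of each Hopf link in each layer: both pieces have $b_2^+=0$, although $b_2^+(W)=2$.

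The paper gets both properties at once:
\begin{itemize}
\item Diagonalize the intersection form over $\mathbb{Q}$ to get $\beta_1\cdot\beta_2=0$ with $\beta_i^2>0$, and represent these classes by disjoint embedded surfaces $\Sigma_1,\Sigma_2$.
\item Cut along $M=Y_1\#Q_1$, where $Q_1=\partial\nu(\Sigma_1)$ is joined to a pushed-in copy of $Y_1$ by an arc avoiding $\Sigma_2$.
\item Then $\Sigma_i\subset W_i$ gives $b_2^+(W_i)>0$.
\item Every class in $H^1(M)=H^1(Y_1)\oplus H^1(Q_1)$ extends over $W_1$. The first summand extends over the collar. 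The second extends over the disk bundle, since $[\Sigma_1]^2\neq 0$ makes $H^1(\Sigma_1)\to H^1(Q_1)$ an isomorphism by the Gysin sequence.
\item Hence $\delta H^1(M)=0$, and the composition law gives exactly $F^{I,\infty}_{W,\varpi}=F^{I,\infty}_{W_2}\circ F^{I,\infty}_{W_1}$.
\end{itemize}
With this cut substituted, the rest of your argument goes through. Independence of the framed path is then automatic, since the vanishing holds for every choice.
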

\begin{proof} We first diagonalize the intersection form of $W$ over $\mathbb{Q}$.  Due to the condition $b_2^{+} > 1$, at least two of the diagonal entries will be positive.  After multiplying by a sufficiently large positive integer, it follows that there exists a pair of $\mathbb{Z}$-homology classes $\beta_1$ and $\beta_2$ which satisfy $\beta_i^2 > 0$ ($i = 1, 2$) and $\beta_1 \cdot \beta_2 = 0$.  Let $\Sigma_i$ be a smoothly embedded surface in $W$ representing $\beta_i$ (for $i = 1, 2$), and let $Q_i$ denote the boundary of a tubular neighborhood of $\Sigma_i$.  Note that since $\beta_1 \cdot \beta_2 = 0$, we can choose $\Sigma_1$ and $\Sigma_2$ such that they are disjoint (pairs of canceling intersection points can be surgered out, cf. proof of Lemma 8.6 in \cite{MR2113019}), and hence so are $Q_1$ and $Q_2$ by choosing small enough tubular neighborhoods.

Fix a path from $Y_1$ to $Q_1$ away from $Q_2$, and take a regular neighborhood to decompose $W$ into a cobordism $W_1$ from $Y_1$ to $Y_1 \# Q_1$ followed by a cobordism $W_2$ from $Y_1 \# Q_1$ to $Y_2$.  Note that $\Sigma_1 \subset W_1$ and $\Sigma_2 \subset W_2$, so $b_2^{+}(W_i) > 0$ for $i = 1, 2$.  By Lemma 8.2 of \cite{MR2113019}, the map $F_i := F_{W_i, \varpi|_{W_i}}^{\infty}$ vanishes for $i = 1, 2$.  By Proposition 4.9 of \cite{MR3649355}, we have the following commutative diagram:
\[
\adjustbox{scale = 0.85, center}{\begin{tikzcd} Q\HF^{\infty}(Y_1, \varpi|_{Y_1})[-1] \arrow{r}{(g_{Y_1}^{\infty})_{*}}\arrow{d}{0} & \HFI^{\infty}(Y_1, \varpi|_{Y_1}) \arrow{r}{(h_{Y_1}^{\infty})_{*}}\arrow{d}{F_{1}^{I}} & \HF^{\infty}(Y_1, \varpi|_{Y_1})\arrow{d}{0} \\ Q\HF^{\infty}(Y_1\#Q_1, \varpi|_{Y_1\#Q_1})[-1] \arrow{r}{(g_{Y_1\#Q_1}^{\infty})_{*}}\arrow{d}{0} & \HFI^{\infty}(Y_1\#Q_1, \varpi|_{Y_1\#Q_1}) \arrow{r}{(h_{Y_1\#Q_1}^{\infty})_{*}}\arrow{d}{F_{2}^{I}} & \HF^{\infty}(Y_1\#Q_1, \varpi|_{Y_1\#Q_1})\arrow{d}{0} \\ Q\HF^{\infty}(Y_2, \varpi|_{Y_2})[-1] \arrow{r}{(g_{Y_2}^{\infty})_{*}} & \HFI^{\infty}(Y_2, \varpi|_{Y_2}) \arrow{r}{(h_{Y_2}^{\infty})_{*}} & \HF^{\infty}(Y_2, \varpi|_{Y_2})
\end{tikzcd}}
\]
\\
where $F_{i}^{I} := F_{W_i, \varpi|_{W_i}}^{I, \infty}$. 
From the composition law (Proposition 4.11 of \cite{MR3649355}) and the fact that $\delta H^1(Y_1 \#Q_1; \mathbb{Z}) = 0 \subset H^2(W, \partial W; \mathbb{Z})$, it follows that
\[F^{I, \infty}_{W, \varpi} = F_{2}^{I} \circ F_{1}^{I},\]
which, by Lemma~\ref{vanish1} applied to the above diagram, must vanish.\end{proof}

In analogy with Ozsváth-Szabó's definition of an admissible cut for a cobordism, we make the following definition:

\begin{definition}\label{invcut} An involutively admissible cut of $W$ is a smoothly embedded three-manifold $N \hookrightarrow W$ which splits $W$ into two pieces $W_1$, $W_2$ each with $b_{2}^{+} > 1$, such that $\delta H^1(N ; \mathbb{Z}) \subset H^2(W, \partial W ; \mathbb{Z})$ is trivial. \end{definition}

Just as any $W$ with $b_2^{+} > 1$ admits an admissible cut \cite{MR2113019}, we have the following:
\begin{proposition}\label{exist} Any cobordism $W$ with $b_2^{+} > 3$ admits an involutively admissible cut. \end{proposition}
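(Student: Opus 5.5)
We adapt the Ozsváth--Szabó argument for ordinary admissible cuts (Lemma 8.6 of \cite{MR2113019}), building $N$ from a neighborhood of surfaces. The goal is to find a codimension-zero submanifold $W_1 \subset W$ that has $b_2^{+}(W_1) \geq 2$, whose complement also has $b_2^{+} \geq 2$, and whose boundary $N$ (after connecting to an incoming end by a tube) carries no first cohomology that survives $\delta$.

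First we diagonalize the intersection form of $W$ over $\mathbb{Q}$, as in the proof of Lemma~\ref{vanish}. Since $b_2^{+}(W) > 3$, there are four integral classes $\beta_1, \dots, \beta_4$ with $\beta_i^2 > 0$ and $\beta_i \cdot \beta_j = 0$ for $i \neq j$. We represent them by smoothly embedded surfaces $\Sigma_i$ and surger out pairs of canceling intersection points, so the $\Sigma_i$ are pairwise disjoint. We then tube $\Sigma_1$ and $\Sigma_2$ together along an arc, taking a regular neighborhood of $\Sigma_1 \cup \operatorname{arc} \cup \Sigma_2$ together with an arc to $Y_1$ (avoiding $\Sigma_3, \Sigma_4$). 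This gives $W_1$, a cobordism from $Y_1$ to $N := Y_1 \# Q_1 \# Q_2$, where $Q_i$ is the circle bundle over $\Sigma_i$. The complement $W_2$ contains $\Sigma_3, \Sigma_4$. We have $b_2^{+}(W_1) \geq 2$ because the span of $[\Sigma_1], [\Sigma_2]$ is positive definite in $W_1$; likewise $b_2^{+}(W_2) \geq 2$.

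The main obstacle is the condition that $\delta H^1(N;\mathbb{Z}) \subset H^2(W,\partial W;\mathbb{Z})$ be trivial. We have $H^1(N) = H^1(Y_1) \oplus H^1(Q_1) \oplus H^1(Q_2)$. Classes from $Y_1$ extend over the collar, so they lie in the image of $H^1(W_1)$ and hence die under $\delta$. For $Q_i$, $H^1(Q_i)$ is generated by classes pulled back from $H^1(\Sigma_i)$ (the fiber class has infinite order in $H_1$ only when the Euler number vanishes, and here $\beta_i^2 \neq 0$). Pulled-back classes extend over the disk-bundle neighborhood, so they also restrict from $H^1(W_1)$ and vanish under $\delta$ by exactness of the Mayer--Vietoris/pair sequence. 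This argument requires that we could reduce the $\Sigma_i$ to spheres, or at least that every class in $H^1(Q_i)$ comes from $H^1(\Sigma_i)$. So the step to check carefully is the Gysin sequence for the circle bundle: $H^1(\Sigma_i) \to H^1(Q_i) \to H^0(\Sigma_i) \xrightarrow{\cup e} H^2(\Sigma_i)$. The last map is multiplication by $\beta_i^2 \neq 0$, which is injective over $\mathbb{Z}$, so $H^1(Q_i) \cong H^1(\Sigma_i)$, as needed.

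If the extension argument for the boundary-sum neighborhood needs more care, the fallback is to add $1$-handles or do the tubing inside $W_1$ so that $W_1$ deformation retracts onto $Y_1 \cup \operatorname{arcs} \cup \Sigma_1 \cup \Sigma_2$. Then $H^1(W_1) \to H^1(N)$ is surjective, and exactness of $H^1(W_1) \to H^1(N) \to H^2(W_2, \partial W_2 \setminus N) \to \cdots$, combined with excision into $H^2(W,\partial W)$, gives $\delta = 0$.
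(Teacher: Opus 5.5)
Your construction is the paper's construction: diagonalize to get four pairwise disjoint surfaces of positive square, take $N=Y_1\# Q_1\# Q_2$ using arcs from a pushed-in copy of $Y_1$, so that $W_1\supset\Sigma_1,\Sigma_2$ and $W_2\supset\Sigma_3,\Sigma_4$. The paper leaves the vanishing of $\delta$ as ``one can check,'' and your Gysin computation fills this in correctly for the $Q_i$ summands: $H^1(Q_i)=\pi^*H^1(\Sigma_i)$ because $e=\beta_i^2\neq 0$. Since $W_1\simeq Y_1\vee\Sigma_1\vee\Sigma_2$, those classes even extend over $W_1$ vanishing on $Y_1$.

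The step that does not work as written is the $H^1(Y_1)$ summand, and with it your fallback paragraph. Restricting from absolute $H^1(W_1)$ kills a class under the absolute Mayer--Vietoris coboundary $H^1(N)\to H^2(W)$. The definition, however, asks for vanishing in $H^2(W,\partial W)$. There the relevant sequence is $H^1(W_1,Y_1)\oplus H^1(W_2,Y_2)\to H^1(N)\to H^2(W,\partial W)$, and a nonzero class on the pushed-in copy of $Y_1$ never extends over the collar while vanishing on $Y_1$. So your collar argument does not show these classes die, and in general they do not.

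Dually, the relative $\delta$ is the inclusion $H_2(N)\to H_2(W)$. For this cut it vanishes exactly when $H_2(Y_1)\to H_2(W)$ is zero. It can fail for every cut: for $W=(S^1\times S^2\times I)\# Z$ with $b_2^+(Z)>3$, the class $[\mathrm{pt}\times S^2]$ is carried by collars of both ends, so by Mayer--Vietoris it lies in the image of $H_2(N)$.

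What your argument actually proves is that the absolute coboundary $H^1(N)\to H^2(W)$ vanishes. That is precisely what uniqueness of $\text{spin}^c$ extensions in the composition law requires. It coincides with the relative condition when $Y_1=S^3$, which is the case used to define $\Phi^I$. The paper's ``one can check'' glosses over the same point, so you should state explicitly which $\delta$ you are proving vanishes.

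Your fallback sequence is also misstated. It should read $H^1(W_1)\to H^1(N)\to H^2(W_1,N)\cong H^2(W,W_2)\to H^2(W)$. Surjectivity of $H^1(W_1)\to H^1(N)$, which your Gysin computation supplies, then gives $\delta=0$ in $H^2(W)$, not in $H^2(W,\partial W)$.
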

\begin{proof} As in the proof of Lemma~\ref{vanish} above, we diagonalize the intersection form of $W$ over $\mathbb{Q}$ to find four mutually disjoint positively self-intersecting surfaces $\Sigma_i$, where $i \in \{1, 2, 3, 4\}$.  Take $Q_i$ the boundary of a tubular neighborhood of $\Sigma_i$ such that the $Q_i$ for $i \in \{1, 2, 3, 4\}$ are also mutually disjoint.  Fix paths $\gamma_1$ from $Y_1$ to $Q_1$ and $\gamma_2$ from $Y_1$ to $Q_2$ such that $\gamma_i$ avoids $Q_j$ for all $j \neq i$, and such that $\gamma_1$ and $\gamma_2$ avoid each other.  Use $\gamma_1$ and $\gamma_2$ to form the connect sum $Y_1 \# Q_1 \# Q_2$, and in this way decompose $W$ into $W_1$ from $Y_1$ to $Y_1 \# Q_1 \# Q_2$ and $W_2$ from $Y_1 \# Q_1 \# Q_2$ to $Y_2$ (note that we are treating $Y_1$ in the sum $Y_1 \# Q_1 \# Q_2$ as a slight perturbation of $Y_1$ into the collar neighborhood $Y_1 \times [0, 1] \cong \text{nbhd}(Y_1) \subset W$).  Both $W_1$ and $W_2$ have $b_2^{+} > 1$ (namely, $W_1$ contains $\Sigma_1$ and $\Sigma_2$, while $W_2$ contains $\Sigma_3$ and $\Sigma_4$), and one can check that $\delta H^1(Y_1 \# Q_1 \# Q_2) = 0$ inside $H^2(W, \partial W)$.  Thus, $N:= Y_1 \# Q_1 \# Q_2$ is an involutively admissible cut.\end{proof}

If $W$ has an involutively admissible cut $N$, then we can construct the \textit{involutive mixed map} $F_{W, \varpi, N}^{I, \text{mixed}}: \HFI^{-}(Y_1, \varpi |_{Y_1}) \to \HFI^{+}(Y_2, \varpi |_{Y_2})$ in exactly the same way as Ozsváth-Szabó's mixed map for regular Heegaard Floer homology.  We review the construction here for the sake of completeness.  Let $W_1$ and $W_2$ be the two cobordisms into which $N$ cuts $W$.  In the following commutative diagram,
\[\adjustbox{scale = 0.9, center}{\begin{tikzcd} \arrow{r} & \HFI^{-}(Y_1, \varpi|_{Y_1}) \arrow{r}{i_{*}} \arrow{d}{F_{W_1, \varpi|_{W_1}}^{I, -}} & \HFI^{\infty}(Y_1, \varpi|_{Y_1}) \arrow{r}{j_{*}} \arrow{d}{F_{W_1, \varpi|_{W_1}}^{I, \infty}} & \HFI^{+}(Y_1, \varpi|_{Y_1}) \arrow{d}{F_{W_1, \varpi|_{W_1}}^{I, +}} \arrow{r} & \phantom{q} \\ \arrow{r} & \HFI^{-}(N, \varpi|_{N}) \arrow{r}{i_{*}} & \HFI^{\infty}(N, \varpi|_{N}) \arrow{r}{j_{*}} & \HFI^{+}(N, \varpi|_{N}) \arrow{r} & \phantom{q}  
\end{tikzcd}}\]
we have that $F^{I, \infty}_{W_1, \varpi|_{W_1}} = 0$ by Lemma~\ref{vanish}.  Thus, by commutativity of the left square, we must have that $i_{*} \circ F^{I, -}_{W_1, \varpi|_{W_1}} = 0$, so that $\text{im } F^{I, -}_{W_1, \varpi|_{W_1}} \subset \ker i_{*} = \HFI_{\text{red}}^{-}(N, \varpi|_{N})$.  Meanwhile, in the commutative diagram
\[\adjustbox{scale = 0.9, center}{\begin{tikzcd} \arrow{r} & \HFI^{-}(N, \varpi|_{N}) \arrow{r}{i_{*}} \arrow{d}{F_{W_2, \varpi|_{W_2}}^{I, -}} & \HFI^{\infty}(N, \varpi|_{N}) \arrow{r}{j_{*}} \arrow{d}{F_{W_2, \varpi|_{W_2}}^{I, \infty}} & \HFI^{+}(N, \varpi|_{N}) \arrow{d}{F_{W_2, \varpi|_{W_2}}^{I, +}} \arrow{r} & \phantom{q} \\ \arrow{r} & \HFI^{-}(Y_2, \varpi|_{Y_2}) \arrow{r}{i_{*}} & \HFI^{\infty}(Y_2, \varpi|_{Y_2}) \arrow{r}{j_{*}} & \HFI^{+}(Y_2, \varpi|_{Y_2}) \arrow{r} & \phantom{q}  
\end{tikzcd}}\]
Lemma~\ref{vanish} once again implies that $F^{I, \infty}_{W_2, \varpi|_{W_2}} = 0$.  Commutativity of the right square gives that $F^{I, +}_{W_2, \varpi|_{W_2}} \circ \pi_{*} = 0$, so $\ker \delta_{*} = \text{im } \pi_{*} \subset \ker F^{I, +}_{W_2, \varpi|_{W_2}}$.  Therefore $F^{I, +}_{W_2, \varpi|_{W_2}}$ descends to a well-defined map on $\HFI^{+}(N, \varpi|_{N})/\ker \delta_{*} = \HFI_{\text{red}}^{+}(N, \varpi|_{N})$.  Note that the boundary map $\delta$ induces an isomorphism $\overline{\delta}: \HFI_{\text{red}}^{+}(N, \varpi|_{N}) \to \HFI_{\text{red}}^{-}(N, \varpi|_{N})$, so we can define:
\[F_{W, \varpi, N}^{I, \text{mixed}} := \overline{F}^{I, +}_{W_2, \varpi|_{W_2}} \circ \overline{\delta}^{-1} \circ F^{I, -}_{W_1, \varpi|_{W_1}}.\]

This construction implicitly depends on a choice of framed path from the framed basepoint of $Y_1$ to that of $Y_2$ which passes through the basepoint of $N$ (and agrees with its framing at that point). This dependence goes away if $Y_i \cong S^3$ for $i = 1, 2$.

\subsection{Cut invariance for $b_{2}^{+}(W) > 4$}

At this point, before showing this is well-defined, we restrict ourselves to the case where $\mathfrak{s}$ is self-conjugate (i.e. spin).  It is proved in \cite{MR5032324} that the involutive cobordism maps are independent of handle decomposition for $\mathfrak{s}$ spin, so the result which follows implies that $F_{W, \mathfrak{s}}^{I, \text{mixed}}$ is a genuine invariant.  Indeed, as in the case of the original mixed map on Heegaard Floer homology, our construction is independent of the choice of cut for sufficiently large $b_2^{+}$.

\begin{proposition}\label{inv} Let $(W, \mathfrak{s})$ be a spin cobordism with $b_{2}^{+}(W) > 4$. Then the map $F_{W, \mathfrak{s}, N}^{I, \text{mixed}}$ does not depend on the choice of involutively admissible cut $N$. \end{proposition}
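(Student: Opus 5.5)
We mimic Ozsváth--Szabó's proof of cut-independence for the ordinary mixed invariant (Section 8 of \cite{MR2113019}). The argument has two parts. First, a ``local'' comparison shows that two involutively admissible cuts which are disjoint give the same mixed map. Second, a ``connectivity'' step shows that any two involutively admissible cuts can be joined by a chain of cuts in which consecutive members are disjoint. The hypothesis $b_2^{+}(W) > 4$ is used only in the second step. There it lets us place an intermediate cut that leaves two positive-square classes on each side while staying away from a given cut.

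\textbf{Step 1 (disjoint cuts).} Suppose $N$ and $N'$ are disjoint involutively admissible cuts, with $N'$ lying in $W_2$, the piece of $W$ beyond $N$. Then $W$ splits as $W_1 \cup_N V \cup_{N'} W_2'$, where $V$ is the region between the two cuts. Write $W_2 = V\cup W_2'$ and $W_1' = W_1 \cup V$.

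The composition law for spin involutive cobordism maps (Proposition 4.11 of \cite{MR3649355}, with invariance from \cite{MR5032324}) gives
\[F^{I}_{W_2} = F^{I}_{W_2'}\circ F^{I}_{V}, \qquad F^{I}_{W_1'} = F^{I}_{V}\circ F^{I}_{W_1}.\]
These identities need no sum over $\text{spin}^c$ structures: the condition $\delta H^1(N)=\delta H^1(N')=0$ forces the extension of $\mathfrak{s}$ to be unique. This requires checking that the vanishing in $H^2(W,\partial W)$ implies the corresponding vanishing for the subcobordisms.

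Next we use that $F^{I}_V$ commutes with $i_*$, $j_*$ and $\delta$ on the long exact sequences, since involutive cobordism maps are induced by chain maps of mapping cones. Hence $F^I_V$ commutes with $\overline\delta$ on the reduced groups. Both mixed maps then equal
\[\overline F^{I,+}_{W_2'}\circ \overline\delta^{-1}\circ F^{I,-}_V\circ F^{I,-}_{W_1}.\]
Note that $F^{I,-}_V$ need not vanish on $\HFI^{\infty}$. We therefore push $F^{I,-}_{V}$ across $\overline\delta$, using the naturality of $\delta$ on the whole long exact sequence, and not only on the reduced pieces. The framed-path dependence disappears because the ends are $S^3$; for general ends we fix one path through both basepoints.

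\textbf{Step 2 (connecting arbitrary cuts).} Let $N$ and $N'$ be any two involutively admissible cuts. As in the proofs of Lemma~\ref{vanish} and Proposition~\ref{exist}, diagonalize the intersection form. This produces embedded surfaces, each with positive self-intersection, in controlled positions relative to the cuts.

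We then build a third cut $N''$ of the form $Y_1\# Q_a\# Q_b$, where $Q_a, Q_b$ are boundaries of tubular neighborhoods of two disjoint positive-square surfaces $\Sigma_a,\Sigma_b$. We choose these surfaces, and the connecting arcs, inside a neighborhood of $Y_1$ union a piece of $W_1$ (respectively $W_1'$), so that $N''$ is disjoint from $N$ (respectively from $N'$). Because $b_2^{+}>4$, the complement of $\Sigma_a\cup\Sigma_b$ still has $b_2^{+}\ge 3>1$, so $N''$ is involutively admissible. The main requirement is then to produce a chain $N \sim N_1 \sim \cdots \sim N'$ in which consecutive cuts are disjoint; Step 1 then gives equality of all the mixed maps.

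\textbf{Main obstacle.} The delicate point is the positioning in Step 2. The surfaces $\Sigma_a,\Sigma_b$ must avoid the given cut while still carrying classes with $\beta_a\cdot\beta_b=0$ and $\beta_i^2>0$. Realizing such classes requires $b_2^{+}(W_1)\ge 2$ on the relevant side, and we must keep enough positivity in the other piece. Ozsváth--Szabó handle the analogous issue by working in $W_1$ itself and choosing the cut near $Y_1$. Our first attempt will be the chain
\[N \;\to\; Y_1\#Q_1\#Q_2 \ (\text{surfaces in } W_1) \;\to\; Y_1\# Q_1'\# Q_2' \ (\text{surfaces in } W_1') \;\to\; N'.\]
The middle link compares two cuts near $Y_1$ built from different surfaces. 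We expect to handle it using a further auxiliary cut built from surfaces disjoint from both pairs, which is where the fifth positive direction is consumed. We will also verify the $\delta H^1$-condition for each intermediate cut, via Mayer--Vietoris as in Proposition~\ref{exist}.
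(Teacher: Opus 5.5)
Your Step 1 is the paper's Case I. Your chain $N\to Y_1\#Q_1\#Q_2\to Y_1\#Q_1'\#Q_2'\to N'$, with the middle link bridged by one auxiliary cut, is exactly the paper's Case II; the paper uses $Y_2\#Q_3\#Q_4$ near the outgoing end for that auxiliary cut. However, the middle link only works when $b_2^+(W)\ge 6$, and your count ``this is where the fifth positive direction is consumed'' is off by one.

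A cut disjoint from both $Y_1\#Q_1\#Q_2$ and $Y_1\#Q_1'\#Q_2'$ has its far side disjoint from $\Sigma_1,\Sigma_2,\Sigma_1',\Sigma_2'$. To make it involutively admissible you need two disjoint positive-square surfaces there, i.e.\ two orthogonal positive classes in $V^\perp$, where $V=\mathrm{span}\{[\Sigma_1],[\Sigma_2],[\Sigma_1'],[\Sigma_2']\}$. You only know $b^+(V^\perp)\ge b_2^+(W)-\dim V$. You cannot shrink $\dim V$ by choosing the surfaces cleverly, since they are confined to $W_1$ and $W_1'$.

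Concretely, take $W=\#^{5}(S^2\times S^2)\setminus(B^4\sqcup B^4)$. Let $N$ and $N'$ be separating spheres whose incoming sides contain summands $1,2$ and summands $3,4$ respectively. These cuts cannot be disjoint, and any admissible choice of surfaces makes $V$ a positive-definite $4$-plane. Then $b^+(V^\perp)=1$, so no pair of disjoint positive-square surfaces avoids all four, and your auxiliary cut does not exist.

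The missing step, which is the paper's Case III, is one more intermediate cut. If $\dim V\le 3$, then $b^+(V^\perp)\ge 2$ and your argument goes through. If $\dim V=4$, choose disjoint positive-square surfaces $\Sigma_5,\Sigma_6$ whose classes lie in the plane spanned by one surface from each side, say $\mathrm{span}\{[\Sigma_1],[\Sigma_1']\}$, and set $N''=Y_1\#Q_5\#Q_6$. Then $\mathrm{span}\{[\Sigma_1],[\Sigma_2],[\Sigma_5],[\Sigma_6]\}$ and $\mathrm{span}\{[\Sigma_1'],[\Sigma_2'],[\Sigma_5],[\Sigma_6]\}$ each have dimension at most $3$. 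So the $\dim V\le 3$ argument compares $N$ with $N''$ and $N'$ with $N''$ separately, each with its own auxiliary cut near $Y_2$. When $b_2^+(W)=5$ the chain therefore needs two auxiliary cuts, not one.
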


\begin{proof} \textbf{Case I – disjoint cuts}: Suppose first that $N$ and $N'$ are disjoint involutively admissible cuts.  Then the conclusion follows from the same argument as \cite{MR2113019}, Lemma 8.6—namely, by analyzing the commutative diagram:
\[
\adjustbox{scale=0.9, center}{\begin{tikzcd} \phantom{q} & \HFI^{+}_{\text{red}}(N, \mathfrak{s}|_{N}) \arrow{r} \arrow{d}{\overline{\delta}} & \HFI^{+}_{\text{red}}(N', \mathfrak{s}|_{N'}) \arrow{r} \arrow{d}{\overline{\delta}} & \HFI^{+}(Y_2, \mathfrak{s}|_{Y_2}) \\ \HFI^{-}(Y_1, \mathfrak{s}|_{Y_1}) \arrow{r} & \HFI^{-}_{\text{red}}(N, \mathfrak{s}|_{N}) \arrow{r} & \HFI^{-}_{\text{red}}(N', \mathfrak{s}|_{N'}) & \phantom{q}
\end{tikzcd}}\]
Here, all the horizontal maps are induced by cobordisms.
\\~\\
\textbf{Case II – $b_{2}^{+}(W) > 5$, arbitrary cuts}: Let $N$ and $N'$ be arbitrary involutively admissible cuts, with $N$ breaking $W$ into the pieces $W_1$ and $W_2$ and $N'$ breaking $W$ into the pieces $W_{1}'$ and $W_{2}'$.  Let $\Sigma_1, \Sigma_2$ be a disjoint pair of positively self-intersecting surfaces in $W_1$, and let $Q_1$, $Q_2$ denote the boundaries of their tubular neighborhoods, respectively.  Then $Y_1\#Q_1\#Q_2$ is an involutively admissible cut which, by the disjoint case above, gives the same mixed map as $N$.  Meanwhile, let $\Sigma_{1}', \Sigma_{2}'$ be a disjoint pair of positively self-intersecting surfaces in $W_{1}'$, with neighborhood boundaries $Q_{1}'$, $Q_{2}'$, respectively. Then $Y_1\#Q_{1}'\#Q_{2}'$ is an involutively admissible cut which gives the same mixed map as $N'$.  

By the assumption that $b_{2}^{+}(W) > 5$, we may choose an additional disjoint pair of positively self-intersecting surfaces $\Sigma_3, \Sigma_4 \subset W$ which are disjoint from all of the surfaces in the set $\{\Sigma_1, \Sigma_2, \Sigma_{1}', \Sigma_{2}'\}$.  Letting $Q_3$, $Q_4$ denote the boundaries of the tubular neighborhoods of $\Sigma_3$ and $\Sigma_4$, respectively, we observe that $Y_2\# Q_3\#Q_4$ gives another involutively admissible cut which is disjoint from both $Y_1\#Q_1\#Q_2$ and $Y_1\#Q_{1}'\#Q_{2}'$, and therefore yields the same mixed map as both of them.  It follows that $N$ and $N'$ both yield the same mixed map as $Y_2\# Q_3\#Q_4$, and hence the same mixed map as one another.
\\~\\
\textbf{Case III – $b_{2}^{+}(W) = 5$, arbitrary cuts}: We employ the same notation as in the first paragraph of case II.  Consider the subspace $V$ of $H_{2}(W; \mathbb{Q})$ spanned by the set $\{[\Sigma_1], [\Sigma_2], [\Sigma_{1}'], [\Sigma_{2}']\}$.  If $V$ has dimension $\leq 3$, then the assumption that $b_{2}^{+}(W) = 5$ implies that we can find classes $\alpha, \beta \in V^{\perp} := \{\gamma \in H_2(W; \mathbb{Q}) | \gamma \cdot \kappa = 0 \text{ for all } \kappa \in V\}$ such that $\alpha \cdot \beta = 0$, $\alpha^2 > 0$, and $\beta^2 > 0$.  We can then represent $\alpha$, $\beta$, respectively, by positively self-intersecting surfaces $\Sigma_3, \Sigma_4 \subset W$ which are disjoint from one another and from all of the surfaces in the set $\{\Sigma_1, \Sigma_{2}, \Sigma_{1}', \Sigma_{2}'\}$ (this is accomplished by the usual procedure of representing $\alpha$ and $\beta$ by surfaces and then tubing out intersection points).  The argument from case II then applies.

Meanwhile, if $\text{dim } V = 4$, then any two elements of the set $\{[\Sigma_1], [\Sigma_2],$ $[\Sigma_{1}'], [\Sigma_{2}']\}$ are linearly independent and so must span a subspace of dimension two.  Consider the subspace $U$ spanned by $[\Sigma_1]$ and $[\Sigma_{1}']$.  By the usual procedure, we can find a pair of disjoint embedded surfaces $\Sigma_{5}, \Sigma_{6} \subset W$ such that $[\Sigma_{5}], [\Sigma_{6}] \subset U$.  Let $Q_5$ and $Q_6$ denote boundaries of tubular neighborhoods of $\Sigma_5$ and $\Sigma_6$, respectively.  Consider the involutively admissible cut $N'' := Y_1\# Q_5\#Q_6$.  Observe that
\[\text{span}\{[\Sigma_1], [\Sigma_2], [\Sigma_5], [\Sigma_6]\} \subset \text{span}\{[\Sigma_1], [\Sigma_2], [\Sigma_{1}']\},\]
hence $\text{dim }\text{span}\{[\Sigma_1], [\Sigma_2], [\Sigma_5], [\Sigma_6]\} \leq 3$. So, the argument in the preceding paragraph (for the case $\text{dim } V \leq 3$) implies that $N$ and $N''$ induce the same mixed map.  Similarly,
\[\text{span}\{[\Sigma_{1}'], [\Sigma_{2}'], [\Sigma_5], [\Sigma_6]\} \subset \text{span}\{[\Sigma_{1}'], [\Sigma_{2}'], [\Sigma_{1}]\}\]
so that the same argument shows that $N'$ and $N''$ induce the same mixed map.  It follows that $N$ and $N'$ induce the same mixed map.\end{proof}

Note that, unfortunately, Ozsváth and Szabó's strategy of using blow-ups to prove invariance of their mixed map for $b_{2}^{+} = 2$ does not transfer over to prove invariance of our mixed map for $b_{2}^{+} =4$, since the blow-up of a spin manifold is no longer spin.  However, the proof of Proposition~\ref{inv} implies that, when $b_{2}^{+}(W) = 4$, we can assign a well-defined involutive mixed map to each two-dimensional subspace of $H_{2}(W; \mathbb{Q})$.  More precisely,

\begin{proposition}  Let $(W, \mathfrak{s})$ be a spin cobordism with $b_{2}^{+}(W) = 4$, and let $N$ (resp. $N'$) be an involutively admissible cut of $W$ splitting it into $W_1$ and $W_2$ (resp. $W_{1}'$ and $W_{2}'$).  Letting $i: W_1 \hookrightarrow W$ and $i': W_{1}' \hookrightarrow W$ denote the inclusions, suppose that $i_{*}(H_2(W_1; \mathbb{Q})) = i_{*}'(H_2(W_{1}'; \mathbb{Q})) \subset H_{2}(W, \partial W; \mathbb{Q})$.  Then $F_{W, \mathfrak{s}, N}^{\text{mix}} = F_{W, \mathfrak{s}, N'}^{\text{mix}}$.
    
\end{proposition}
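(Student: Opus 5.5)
The plan is to reduce to Case I of Proposition~\ref{inv} (disjoint cuts), exactly as in Cases II and III there, using the hypothesis that $W_1$ and $W_1'$ carry the same rational second homology. Write $V := i_*(H_2(W_1;\mathbb{Q})) = i'_*(H_2(W_1';\mathbb{Q}))$. Since $b_2^+(W_1) > 1$, $V$ contains a positive-definite subspace of dimension at least $2$.

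\textbf{Step 1: replace each cut by a standard cut built from $W_1$ or $W_1'$.} As in Case II, pick disjoint positively self-intersecting surfaces $\Sigma_1,\Sigma_2\subset W_1$ and $\Sigma_1',\Sigma_2'\subset W_1'$. Let $Q_i, Q_i'$ be the boundaries of their tubular neighborhoods. The cut $Y_1\#Q_1\#Q_2$ lies in the interior of $W_1$, so it is disjoint from $N$. Its outer piece contains $W_2$ and hence has $b_2^+>1$. By Case I it gives the same mixed map as $N$. Similarly, $Y_1\#Q_1'\#Q_2'$ gives the same map as $N'$.

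\textbf{Step 2: find a cut disjoint from both.} We need disjoint positive surfaces $\Sigma_3,\Sigma_4$, disjoint from $\Sigma_1,\Sigma_2,\Sigma_1',\Sigma_2'$, so that $Y_2\#Q_3\#Q_4$ is disjoint from both cuts of Step 1. By the "usual procedure" of tubing out cancelling intersections, it suffices to find rational classes $\alpha,\beta$ with
\[
\alpha\cdot\beta=0,\qquad \alpha^2>0,\qquad \beta^2>0,\qquad \alpha,\beta\perp\operatorname{span}\{[\Sigma_1],[\Sigma_2],[\Sigma_1'],[\Sigma_2']\}.
\]
All four classes lie in $V$, so it is enough that $V^\perp$ contains a $2$-dimensional positive subspace. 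For this, use $W_2$: the surfaces in $W_2$ are disjoint from $W_1$, so their classes pair trivially with $V$. Since $b_2^+(W_2)>1$, $V^\perp$ contains two disjoint positive surfaces in $W_2$.

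There is a complication: these surfaces must also avoid $W_1'$, which may meet $W_2$. Here the homological hypothesis does the work. We only need homological orthogonality to $[\Sigma_i']\in V$, and the classes from $W_2$ are orthogonal to all of $V$. Then, as in Case III, we realize $\alpha,\beta$ by surfaces and tube away the algebraically cancelling intersections with $\Sigma_1',\Sigma_2'$ and with each other.

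\textbf{Step 3: conclude.} By Case I, the cut $Y_2\#Q_3\#Q_4$ agrees with both standard cuts, so $N$ and $N'$ give the same map. We also check that each intermediate cut is involutively admissible: each piece has $b_2^+>1$ and $\delta H^1=0$, as in Proposition~\ref{exist}.

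The main obstacle is the positivity count in Step 2. One might worry that with $b_2^+(W)=4$ only $V^\perp$ could be available, and be short of positive directions. The argument via $W_2$ shows directly that $V^\perp$ has $b^+\geq 2$, which is exactly the slack needed. One must also verify that the tubing keeps the new surfaces embedded and disjoint from the previously chosen $\Sigma$'s, as in Lemma 8.6 of \cite{MR2113019}.
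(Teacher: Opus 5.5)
Your proposal is correct, and its skeleton is the paper's. You replace $N$ and $N'$ by $Y_1\#Q_1\#Q_2$ and $Y_1\#Q_1'\#Q_2'$ as in Case II. You then produce a third cut $Y_2\#Q_3\#Q_4$ disjoint from both, as in the first paragraph of Case III, and apply the disjoint-cut case three times.

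The difference is in how you obtain $\alpha,\beta$. The paper asserts that the common image $i_*(H_2(W_1;\mathbb{Q}))$ is two-dimensional. Then $\text{span}\{[\Sigma_1],[\Sigma_2],[\Sigma_1'],[\Sigma_2']\}$ has dimension at most $2$, and the count $b_2^+(W)-2=2$ leaves room in its orthogonal complement.

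Only the positive index of the image is forced to be $2$, because $b_2^+(W_1)+b_2^+(W_2)\le 4$ with each term at least $2$. The piece $W_1$ may also carry classes of negative square, e.g.\ a hyperbolic summand. In that case $[\Sigma_1],[\Sigma_2]$ and $[\Sigma_1'],[\Sigma_2']$ can span different maximal positive planes of the image. The four classes can then span a $3$- or $4$-dimensional space, and the bare dimension count no longer yields two orthogonal positive classes.

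Your observation avoids this: positive classes carried by $W_2$ are orthogonal to the entire common image, hence to all four classes. This is the cleaner justification. It uses only the hypothesis on images and $b_2^+(W_2)>1$, so it does not need $b_2^+(W)=4$ at all.

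The technical point you flag at the end is routine but real. Since $\Sigma_1'$ and $\Sigma_2'$ may meet $\Sigma_1\cup\Sigma_2$, the arcs in $\Sigma_1',\Sigma_2'$ along which you tube must avoid those finitely many points. They should also be chosen disjoint from one another. This keeps the modified $\Sigma_3,\Sigma_4$ disjoint from $\Sigma_1,\Sigma_2,\Sigma_1',\Sigma_2'$ and from each other.
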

\begin{proof}  Firstly, observe that invariance in the case of disjoint cuts (i.e. case I in the proof of Proposition~\ref{inv}) holds when $b_{2}^{+} = 4$.  Furthermore, observe that, since $N$ and $N'$ are involutively admissible, $i_{*}(H_2(W_1; \mathbb{Q})) = i_{*}'(H_2(W_{1}'; \mathbb{Q}))$ must be a two-dimensional subspace of $H_{2}(W, \partial W; \mathbb{Q})$.  With these observations in hand, the argument proceeds identically to the first paragraph of case III in the proof of Proposition~\ref{inv}.\end{proof}

Now, let $(X, \mathfrak{s})$ be a spin four-manifold with connected boundary $\partial X = Y$.  Assume that $b_2^{+}(X) > 4$, or $b_{2}^{+}(X) = 4$ and we have fixed a two-dimensional subspace $V$ of $H_2(X; \mathbb{Q})$.  Removing an open ball from $X$, we obtain a cobordism $\mathring{X}$ from $S^3$ to $Y$, for which there exists a well-defined mixed map:
\[F_{\mathring{X}, \mathfrak{s}}^{I, \text{mix}}: \HFI^{-}(S^3) \cong \mathbb{F}[U, Q]/Q^2 \to \HFI^{+}(Y, \mathfrak{s}|_{Y}).\]
\begin{definition} The \textup{involutive mixed invariant} of $(X, \mathfrak{s})$ is $\Phi_{X, \mathfrak{s}}^{I} := F_{\mathring{X}, \mathfrak{s}}^{I, \text{mix}}$.  For a closed four-manifold $X$, we define $\Phi_{X, \mathfrak{s}}^{I} := \Phi_{X \setminus \mathring{B^4}, \mathfrak{s}}^{I}$.
\end{definition}
\section{Properties of the Involutive Mixed Invariant}

\subsection{Relationship to the Ozsváth-Szabó Mixed Invariant}
In Proposition 4.9 of \cite{MR3649355}, Hendricks and Manolescu show that the involutive cobordism maps commute with the long exact sequence relating involutive Heegaard Floer homology to ordinary Heegaard Floer homology. The goal of this section is to prove a similar fact for the mixed invariant.
\begin{theorem}\label{mixed} Let $(X, \mathfrak{s})$ be a spin four-manifold with boundary $Y$, and let $N$ be an involutively admissible cut of $X$.  The following diagram commutes:
\[\adjustbox{scale = 0.85, center}{\begin{tikzcd} Q\HF^{-}(S^3)[-1] \cong Q\mathbb{F}[U][-1] \arrow{r}{j} \arrow{d}{\Phi_{X, \mathfrak{s}}} & \HFI^{-}(S^3) \cong \mathbb{F}[U, Q]/Q^2 \arrow{r}{p} \arrow{d}{\Phi_{X, \mathfrak{s}, N}^{I}} & \HF^{-}(S^3) \cong \mathbb{F}[U] \arrow{d}{\Phi_{X, \mathfrak{s}}}\\ Q\HF^{+}(Y, \mathfrak{s}|_{Y})[-1] \arrow{r}{g_{*}} & \HFI^{+}(Y, \mathfrak{s}|_{Y}) \arrow{r}{h_{*}} & \HF^{+}(Y, \mathfrak{s}|_{Y})  
\end{tikzcd}}\]
where $j: Q\mathbb{F}[U] \hookrightarrow \mathbb{F}[U, Q]/Q^2$ is the natural inclusion, $p: \mathbb{F}[U, Q]/Q^2 \twoheadrightarrow \mathbb{F}[U]$ is the projection given by setting $Q = 0$, $g_{*}$ is induced by inclusion $g: Q\CF^{+}(Y, \mathfrak{s}|_{Y})[-1] \hookrightarrow \CFI^{+}(Y, \mathfrak{s}|_{Y})$, and $h_{*}$ is induced by projection $h: \CFI^{+}(Y, \mathfrak{s}|_{Y}) \twoheadrightarrow \CF^{+}(Y, \mathfrak{s}|_{Y})$.
\end{theorem}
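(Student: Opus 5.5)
We would prove the commutativity of each square by decomposing the mixed invariants into their three constituent pieces and checking compatibility piece by piece, using naturality of the short exact sequence
\[0 \la Q\CF^{\circ}(Y)[-1] \overset{g}{\la} \CFI^{\circ}(Y) \overset{h}{\la} \CF^{\circ}(Y) \la 0\]
with respect to cobordism maps (Proposition 4.9 of \cite{MR3649355}) and with respect to the localization sequences. Write $X_1$ for the piece from $S^3$ to $N$ and $X_2$ for the piece from $N$ to $Y$. Note first that since $N$ is involutively admissible, each $X_i$ has $b_2^{+} > 1 > 0$ and $\delta H^1(N)=0$. Hence $N$ is in particular an admissible cut in the sense of Ozsváth--Szabó, and $\Phi_{X,\mathfrak{s}}$ may be computed using this same $N$, by cut independence of the ordinary mixed invariant.

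\textbf{Step 1 (the three pieces).} By Proposition 4.9 of \cite{MR3649355}, $g_*$ and $h_*$ commute with $F^{I,\circ}_{X_i}$ and $F^{\circ}_{X_i}$ for each flavor $\circ\in\{-,\infty,+\}$, once the bottom rows are identified via $Q\cdot$. Next, $g$ and $h$ are chain maps commuting with the inclusion $\CF^-\to\CF^\infty$ and the projection $\CF^\infty\to\CF^+$, since these are all defined on the level of the mapping cone by applying the same $U$-localization operations to both summands. Therefore we get a map of short exact sequences of chain complexes, and hence a commutative ladder between the long exact sequences. In particular $g_*,h_*$ commute with the connecting maps $\delta$ (strictly for $h$, and for $g$ as well, because the connecting map is natural under chain maps of short exact sequences). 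Consequently $g_*$ and $h_*$ send $\HF^{\pm}_{\text{red}}(N)$ into $\HFI^{\pm}_{\text{red}}(N)$ (respectively $\HFI_{\text{red}}$ into $\HF_{\text{red}}$), and they intertwine $\overline{\delta}$ and the descended maps $\overline{F}^{+}_{X_2}$.

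\textbf{Step 2 (assembling).} For the right square, take $x\in\HFI^-(S^3)$. Put $y=F^{I,-}_{X_1}(x)\in\HFI^-_{\text{red}}(N)$ and choose $z\in\HFI^+(N)$ with $\delta z=y$. Then $h_*z$ satisfies $\delta h_*z = h_*y = F^-_{X_1}(p(x))$, so $h_*z$ is a valid lift for computing $\Phi_{X,\mathfrak{s}}(p(x))$. Hence
\[\Phi_{X,\mathfrak{s}}(p(x)) = F^+_{X_2}(h_*z) = h_*F^{I,+}_{X_2}(z) = h_*\Phi^I_{X,\mathfrak{s},N}(x).\]
The left square is handled the same way, now with $g_*$: starting from $Qa$, a lift $z$ of $F^-_{X_1}(a)$ gives a lift $g_*z$ of $F^{I,-}_{X_1}(j(Qa))$. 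The descended maps are well defined modulo $\ker\delta$, so the choice of lift is irrelevant.

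\textbf{Main obstacle.} The delicate point is checking that the identification $p$, $j$ on $\HFI^-(S^3)\cong\mathbb{F}[U,Q]/Q^2$ matches $h_*$ and $g_*$. This holds because $\iota\simeq\mathrm{id}$ on $\CF^-(S^3)$, so $\CFI^-(S^3)$ splits and $g_*,h_*$ become inclusion and projection. The other point needing care is that the identifications in the involutive cobordism-map ladder of Hendricks--Manolescu are strictly compatible with the $U$-localization sequences at the chain level, rather than only up to homotopy. We would handle this by noting that their involutive map is $F\oplus F$ plus an off-diagonal homotopy term on the cone, and all of these pieces are $\mathbb{F}[U]$-equivariant and defined compatibly on all flavors.
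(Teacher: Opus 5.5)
Your proposal is correct and follows essentially the paper's argument: the paper expands the square into a four-row diagram through $\HF^{\mp}_{\text{red}}(N)$ and $\HFI^{\mp}_{\text{red}}(N)$. It gets the outer squares from Proposition 4.9 of Hendricks--Manolescu and the middle squares from the naturality of $\delta$ with respect to $g_*$ and $h_*$ (Lemmas~\ref{boundarycommute} and~\ref{descend}), which is exactly your Step 1; your Step 2 is the element-level version of that same diagram chase.

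Your remarks that $N$ is also an ordinary admissible cut, and that $j,p$ agree with $g_*,h_*$ on $S^3$, make explicit points the paper leaves implicit. The chain-level compatibility with the localization sequences that you flag is needed only for $\Phi^I_{X,\mathfrak{s},N}$ to be well defined, not for the commutativity itself.
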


Theorem~\ref{matrix} follows as a corollary.  Since we know that the cobordism maps commute, our main focus is on showing the appropriate commutativity for the inverse of the boundary map on $\HFI_{\text{red}}$ and $\HF_{\text{red}}$.

\begin{lemma}\label{boundarycommute} The following diagram commutes for any three-manifold $Z$ and spin-structure $\mathfrak{t}$:
\[\begin{tikzcd} Q\HF^{+}(Z, \mathfrak{t})[-1] \arrow{r}{g_{*}^{+}} \arrow{d}{\delta} & \HFI^{+}(Z, \mathfrak{t}) \arrow{r}{h_{*}^{+}} \arrow{d}{\delta^{I}} & \HF^{+}(Z, \mathfrak{t}) \arrow{d}{\delta} \\ Q\HF^{-}(Z, \mathfrak{t})[-1] \arrow{r}{g_{*}^{-}} & \HFI^{-}(Z, \mathfrak{t}) \arrow{r}{h_{*}^{-}} & \HF^{-}(Z, \mathfrak{t})
\end{tikzcd}\]
where $g_{*}^{\circ}$ is induced by inclusion and $h_{*}^{\circ}$ is induced by projection.
\end{lemma}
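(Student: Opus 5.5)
The plan is to work entirely at the chain level, where $\delta$ is the connecting homomorphism of a short exact sequence of chain complexes. The involutive version will come from a short exact sequence of mapping cones, and the inclusion $g$ and projection $h$ will then be maps of short exact sequences. The central object is the short exact sequence
\[0 \to \CF^{-}(Z,\mathfrak{t}) \xrightarrow{i} \CF^{\infty}(Z,\mathfrak{t}) \xrightarrow{\pi} \CF^{+}(Z,\mathfrak{t}) \to 0.\]
Choose a homotopy involution $\iota^{\infty}$ on $\CF^{\infty}$ that is $U$-equivariant and preserves the subcomplex $\CF^{-}$. For instance, take the map induced from the $\mathbb{F}[U]$-equivariant $\iota^{-}$ by localization. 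Then $\iota^{\infty}$ restricts to $\iota^{-}$ and descends to $\iota^{+}$. Taking mapping cones of $Q(1+\iota^{\circ})$ gives a short exact sequence
\[0 \to \CFI^{-} \to \CFI^{\infty} \to \CFI^{+} \to 0,\]
and $\delta^{I}$ is its connecting map. Since the three cones are built from compatible data, this sequence is exact levelwise: as modules it is the direct sum of two copies of the original sequence, one copy shifted and multiplied by $Q$.

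Next I fit these sequences into a $3\times 3$ diagram. The columns are the short exact sequences for $\CF^{\circ}$, for $\CFI^{\circ}$, and for $\CF^{\circ}$ again, for $\circ \in \{-,\infty,+\}$. The maps between columns are the inclusions $g^{\circ}\colon Q\CF^{\circ}[-1]\hookrightarrow \CFI^{\circ}$ and the projections $h^{\circ}\colon \CFI^{\circ}\twoheadrightarrow \CF^{\circ}$. I will check that $g^{\circ}$ and $h^{\circ}$ are chain maps. This is immediate from the form of the cone differential $\begin{pmatrix}\partial & 0\\ Q(1+\iota) & \partial\end{pmatrix}$: the $Q$-summand is a subcomplex and the first summand is the quotient. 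I will also check that they commute with $i$ and $\pi$, which holds because $i,\pi$ act summandwise on the cones.

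Naturality of the connecting homomorphism for morphisms of short exact sequences of chain complexes then gives both squares at once. Concretely, lift $x\in\CF^{+}$ to $\tilde x\in\CF^{\infty}$, so that $\partial\tilde x = i(y)$ and $\delta[x]=[y]$. Applying $g^{\infty}$ yields a lift of $g^{+}(x)$ whose boundary is $g^{-}(y)$. The projection case is the same argument using $h$.

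The main obstacle I expect is the compatibility of the involutions across flavors. We need the three $\iota^{\circ}$ to come from a single filtered $U$-equivariant chain map, so that the cones really form a short exact sequence, and we need $\HFI^{\circ}$ defined via these choices to agree with the natural invariant. I would handle this by invoking the construction of Hendricks--Manolescu, where $\iota$ is defined on $\CF^{\infty}$ using the $U$-filtered map $\Phi(\overline{\mathcal{H}},\mathcal{H})\circ\eta$, which is induced on all flavors. Well-definedness up to chain homotopy, by the naturality of \cite{MR5032324}, then makes the resulting map on homology independent of the choices. A minor point is the degree shift $[-1]$ on the $Q$-summand. The connecting map has the same degree on every row, so this shift does not affect commutativity.
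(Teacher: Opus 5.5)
Your proposal is correct and matches the paper's proof, which likewise derives the lemma from naturality of the connecting homomorphism. The paper applies it to the $3\times 3$ commutative diagram whose rows are $Q\CF^{\circ}[-1]\to\CFI^{\circ}\to\CF^{\circ}$, for $\circ\in\{-,\infty,+\}$, with the obvious chain maps. Your checks that the involutions are compatible across flavors (so the mapping cones form a short exact sequence) and that $g^{\circ}$, $h^{\circ}$ are chain maps commuting with $i$ and $\pi$ supply details the paper leaves to ``basic homological algebra.''
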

\begin{proof} This follows from basic homological algebra applied to the following commutative diagram, in which all maps are the obvious chain maps:
\[\begin{tikzcd} Q\CF^{-}(Z, \mathfrak{t})[-1] \arrow{r} \arrow{d} & \CFI^{-}(Z, \mathfrak{t}) \arrow{r}\arrow{d} & \CF^{-}(Z, \mathfrak{t}) \arrow{d} \\ Q\CF^{\infty}(Z, \mathfrak{t})[-1] \arrow{r} \arrow{d} & \CFI^{\infty}(Z, \mathfrak{t}) \arrow{r}\arrow{d} & \CF^{\infty}(Y, \mathfrak{t}) \arrow{d} \\ Q\CF^{+}(Z, \mathfrak{t})[-1] \arrow{r} & \CFI^{+}(Z, \mathfrak{t}) \arrow{r} & \CF^{+}(Z, \mathfrak{t}) 
\end{tikzcd}\]\end{proof}

\begin{lemma}\label{descend} Let $Z$ be a three-manifold and $\mathfrak{t}$ be a spin structure on $Z$.  Consider the sequence in Floer homology
\[\begin{tikzcd} Q\HF^{\pm}(Z, \mathfrak{t})[-1] \arrow{r}{g_{*}^{\pm}} & \HFI^{\pm}(Z, \mathfrak{t}) \arrow{r}{h_{*}^{\pm}} & \HF^{\pm}(Z, \mathfrak{t}).
\end{tikzcd}\]
Then $g_{*}^{-}$ and $h_{*}^{-}$ (resp. $g_{*}^{+}$ and $h_{*}^{+}$) restrict (resp. descend) to give well-defined maps
\[g_{*}^{\pm}: Q\HF^{\pm}_{\text{red}}(Z, \mathfrak{t})[-1] \to \HFI_{\text{red}}^{\pm}(Z, \mathfrak{t}),\]
\[h_{*}^{\pm}: \HFI^{\pm}_{\text{red}}(Z, \mathfrak{t}) \to \HF_{\text{red}}^{\pm}(Z, \mathfrak{t}).\]

\end{lemma}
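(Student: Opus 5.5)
We need to show that $g_*^{\pm}$ and $h_*^{\pm}$ are compatible with the reduced subgroups. On the minus side, $\HF^{-}_{\text{red}} = \ker(i_*)$ and $\HFI^{-}_{\text{red}} = \ker(i_*^I)$, so we must show $g_*^{-}$ and $h_*^{-}$ carry kernels to kernels. On the plus side, $\HF^{+}_{\text{red}} = \operatorname{coker}(\pi_*)$, so we must show that $g_*^{+}$ and $h_*^{+}$ carry images of $\pi_*$ into images of $\pi^I_*$ (respectively of $\pi_*$). Both statements will follow from naturality of the localization long exact sequence with respect to the short exact sequence
\[
0 \to Q\CF^{\circ}(Z,\mathfrak{t})[-1] \to \CFI^{\circ}(Z,\mathfrak{t}) \to \CF^{\circ}(Z,\mathfrak{t}) \to 0 .
\]

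First, I would write down the $3\times 3$ commutative diagram of chain complexes already used in the proof of Lemma~\ref{boundarycommute}. Its rows are the inclusion/projection sequences for the flavors $-,\infty,+$. Its columns are the localization short exact sequences $0\to \CF^- \to \CF^\infty \to \CF^+ \to 0$ and their involutive analogues.

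The key point to verify is that the inclusion $g$ and projection $h$ are chain maps commuting with the localization maps $i$ and $\pi$ at the chain level. This holds because $\CFI^{\circ}$ is the mapping cone of $Q(1+\iota)$, where $\iota$ is defined uniformly over the flavors. The maps $i$ and $\pi$ on $\CFI$ act componentwise on $\CF^{\circ}\oplus Q\CF^{\circ}[-1]$, and $g$ and $h$ are the inclusion of the second summand and the projection onto the first. So the squares commute strictly, not just up to homotopy. One subtlety is that $\iota$ must be chosen compatibly across flavors. I would handle this by taking $\iota^{\circ}$ induced from $\iota^-$ by tensoring (localization and quotient), so that all three flavors use the same underlying chain map.

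Passing to homology, the squares $i^I_*\circ g^-_* = g^\infty_*\circ i_*$ and $i_*\circ h^-_* = h^\infty_*\circ i^I_*$ give the minus-side claims. If $i_*(x)=0$, then $i_*^I(g_*^-x) = g^\infty_*(i_*x)=0$. Similarly, if $i_*^I(y)=0$, then $i_*(h_*^-y)=h_*^\infty(i_*^I y)=0$. Hence $g^-_*$ and $h^-_*$ restrict to the reduced groups.

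On the plus side, the squares $g^+_*\circ\pi_* = \pi^I_*\circ g^\infty_*$ and $h^+_*\circ \pi^I_* = \pi_*\circ h^\infty_*$ show that $g_*^+(\operatorname{im}\pi_*)\subset \operatorname{im}\pi^I_*$ and $h_*^+(\operatorname{im}\pi^I_*)\subset\operatorname{im}\pi_*$. So $g_*^{+}$ and $h_*^{+}$ descend to the quotients $\HF^{+}_{\text{red}}$ and $\HFI^{+}_{\text{red}}$.

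The only step with real content is the chain-level commutativity, specifically the compatible choice of $\iota$ across flavors. Once that is in place, the argument is a formal diagram chase. As a byproduct, Lemma~\ref{boundarycommute} shows that the restricted and descended maps intertwine the isomorphisms $\overline{\delta}$, which is what is needed downstream for Theorem~\ref{mixed}.
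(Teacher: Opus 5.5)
Your proof is correct, but it uses a different pair of commuting squares than the paper does. The paper first rewrites the reduced groups via exactness as $\HF^-_{\text{red}} = \operatorname{im}\delta$ and $\HF^+_{\text{red}} = \HF^+/\ker\delta$. It then invokes Lemma~\ref{boundarycommute}, the naturality of the connecting map $g^-_*\circ\delta = \delta^I\circ g^+_*$. From this, an element $\delta y$ goes to $\delta^I g^+_*(y)\in\operatorname{im}\delta^I$, and $\ker\delta$ goes into $\ker\delta^I$. You instead work directly with the definitions $\ker i_*$ and $\operatorname{coker}\pi_*$, using the squares of the $3\times 3$ diagram that involve the localization maps $i$ and $\pi$.

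Your route is slightly more elementary, because it never needs the naturality of the connecting homomorphism that underlies Lemma~\ref{boundarycommute}. It also makes explicit that $\iota$ must be compatible across flavors for the diagram to consist of chain maps; both arguments need this, but the paper leaves it implicit. The paper's route reuses Lemma~\ref{boundarycommute}, which it needs anyway for the middle squares in the proof of Theorem~\ref{mixed}. There, the same identity that shows the maps are well defined on reduced groups also gives their compatibility with $\overline{\delta}$.
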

\begin{proof}
    We will prove the statement for $g_{*}^{\pm}$ only.  The proofs for $h_{*}^{\pm}$ are nearly identical.  We begin with $g_{*}^{-}$. The minus-flavor reduced Floer groups can be characterized as the image of the appropriate boundary operator.  So we may write any $x \in Q\HF^{-}(Z, \mathfrak{t})[-1]$ as $x = \delta y$ for some $y \in Q\HF^{+}(Z, \mathfrak{t})[-1]$.  By Lemma~\ref{boundarycommute}, we have that 
    \[g_{*}^{-}(\delta y) = \delta^{I} g_{*}^{+}(y) \in \text{im } \delta^{I} = \HFI_{\text{red}}^{-}(Z, \mathfrak{t}).\]
    This proves the claim about $g_{*}^{-}$.  Now, to show the claim for $g_{*}^{+}$, we must show that $g_{*}^{+}$ sends elements of $\ker \delta$ to elements of $\ker \delta^{I}$.  Indeed, if $x \in \ker \delta$, then, again by Lemma~\ref{boundarycommute}, we have
    \[\delta^{I} g_{*}^{+}(x) = g_{*}^{-}(\delta x) = g_{*}^{-}(0) = 0.\]
    This concludes the proof.\end{proof}

We will now prove Theorem~\ref{mixed}.
\begin{proof} We denote by $W$ the cobordism $\mathring{X}$ from $S^3$ to $Y$, by $N$ an involutively admissible cut of $W$, and by $W_i$, $i \in \{1, 2\}$, the two pieces into which $N$ cuts $W$.  We expand out the diagram stated in the theorem using the definitions of $\Phi^{I}_{X, \mathfrak{s}}$ and $\Phi_{X, \mathfrak{s}}$, with no assumption of commutativity.

\begin{equation}\begin{tikzcd} Q\HF^{-}(S^3)[-1] \arrow{r}{j} \arrow{d}{F_{W_1, \mathfrak{s}|_{W_1}}^{-}} & \HFI^{-}(S^3) \arrow{r}{p} \arrow{d}{F^{I, -}_{W_1, \mathfrak{s}|_{W_1}}} & \HF^{-}(S^3) \arrow{d}{F^{-}_{W_1, \mathfrak{s}|_{W_1}}} \\ Q\HF_{\text{red}}^{-}(N, \mathfrak{s}|_{N})[-1] \arrow{r}{k_{*}^{-}} \arrow{d}{\overline{\delta}^{-1}} & \HFI_{\text{red}}^{-}(N, \mathfrak{s}|_{N})\arrow{d}{\overline{\delta}^{-1}} \arrow{r}{\ell_{*}^{-}}& \HF_{\text{red}}^{-}(N, \mathfrak{s}|_{N})  \arrow{d}{\overline{\delta}^{-1}} \\ Q\HF_{\text{red}}^{+}(N, \mathfrak{s}|_{N})[-1] \arrow{d}{\overline{F}^{+}_{W_2, \mathfrak{s}|_{W_2}}} \arrow{r}{k_{*}^{+}}& \HFI_{\text{red}}^{+}(N, \mathfrak{s}|_{N}) \arrow{d}{\overline{F}^{I, +}_{W_2, \mathfrak{s}|_{W_2}}} \arrow{r}{\ell_{*}^{+}} & \HF_{\text{red}}^{+}(N, \mathfrak{s}|_{N}) \arrow{d}{\overline{F}^{+}_{W_2, \mathfrak{s}|_{W_2}}} \\ Q\HF^{+}(Y, \mathfrak{s}|_{Y}) \arrow{r}{g_{*}} & Q\HFI^{+}(Y, \mathfrak{s}|_{Y}) \arrow{r}{h_{*}} & \HF^{+}(Y, \mathfrak{s}|_{Y})
\end{tikzcd}\end{equation}
where the functions $k_{*}^{\pm}$ and $\ell_{*}^{\pm}$ were shown to exist in Lemma~\ref{descend}.  Now, the commutativity of the top and bottom pairs of squares follows from Proposition 4.9 of \cite{MR3649355}.  Meanwhile, the commutativity of the middle pair follows from Lemma~\ref{boundarycommute}, along with the fact that $\overline{\delta}$ is an isomorphism in each column.  The commutativity of each square implies the commutativity of the entire diagram.\end{proof}

By virtue of the surjectivity of $p$ above, Theorem~\ref{mixed} immediately implies
\begin{corollary} If $\Phi^{I}_{X, \mathfrak{s}, N} = 0$ for some involutively admissible cut $N$, then $\Phi_{X, \mathfrak{s}} = 0$.
\end{corollary}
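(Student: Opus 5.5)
The corollary should follow directly from the commutativity of the right-hand square in Theorem~\ref{mixed}, together with the surjectivity of the projection $p: \HFI^{-}(S^3) \cong \mathbb{F}[U,Q]/Q^2 \twoheadrightarrow \HF^{-}(S^3) \cong \mathbb{F}[U]$. The plan is to apply that square with $Y = S^3$ (for $X$ closed, after removing two balls; more generally with the boundary $Y$ of $X$). It gives
\[h_{*} \circ \Phi^{I}_{X, \mathfrak{s}, N} = \Phi_{X, \mathfrak{s}} \circ p\]
as maps $\HFI^{-}(S^3) \to \HF^{+}(Y, \mathfrak{s}|_{Y})$.

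Next I would substitute the hypothesis $\Phi^{I}_{X, \mathfrak{s}, N} = 0$. The left-hand side then vanishes, so $\Phi_{X, \mathfrak{s}} \circ p = 0$. Since $p$ is the map setting $Q = 0$, it is surjective: any $x \in \mathbb{F}[U]$ equals $p(x)$, viewing $x$ as an element of $\mathbb{F}[U,Q]/Q^2$. Hence $\Phi_{X, \mathfrak{s}}(x) = \Phi_{X,\mathfrak{s}}(p(x)) = 0$ for every $x$, so $\Phi_{X, \mathfrak{s}} = 0$.

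One point needs care: Theorem~\ref{mixed} computes the ordinary mixed map using the same cut $N$. An involutively admissible cut is in particular an admissible cut in the sense of Ozsváth--Szabó. Indeed, each piece has $b_2^{+} > 1 > 0$, and the condition on $\delta H^1(N)$ is the same one ensuring that $\mathfrak{s}$ is determined by its restrictions. So the map $\Phi_{X,\mathfrak{s}}$ appearing in the diagram is the genuine Ozsváth--Szabó invariant, which is independent of the cut. This identification is the only step requiring any thought; there is no real obstacle beyond it.
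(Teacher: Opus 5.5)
Your proposal is correct and follows the paper's argument exactly: the paper deduces the corollary from the right-hand square of Theorem~\ref{mixed} together with the surjectivity of $p$. You also note that an involutively admissible cut is in particular an Ozsv\'ath--Szab\'o admissible cut, so the right-hand column computes the cut-independent invariant $\Phi_{X,\mathfrak{s}}$; the paper leaves this implicit, and your check is accurate.
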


\subsection{The invariant for closed four-manifolds}

For closed spin four-manifolds, Theorem~\ref{mixed} implies that $\Phi_{X,\mathfrak{s}}^{I}$ takes the form
\[\Phi_{X, \mathfrak{s}}^{I} = \begin{pmatrix} \Phi_{X, \mathfrak{s}} & 0 \\ \Psi_{X, \mathfrak{s}} & \Phi_{X, \mathfrak{s}} \end{pmatrix}\]
where we are treating elements of the form $U^n + QU^m \in \mathbb{F}[U, Q]/Q^2$ as vectors $(U^n, U^m)$, and $\Psi_{X, \mathfrak{s}}$ is some $U$-equivariant homomorphism which is defined by the above formula.  In other words, we have that:
\[\Phi_{X, \mathfrak{s}}^{I}(U^n + QU^m) = \Phi_{X, \mathfrak{s}}(U^n) + Q\Phi_{X, \mathfrak{s}}(U^m) + Q\Psi_{X, \mathfrak{s}}(U^n),\]
for some $\Psi_{X, \mathfrak{s}}$.  By $U$-equivariance, it is enough to know $\Phi_{X, \mathfrak{s}}(1)$ and $\Psi_{X, \mathfrak{s}}(1)$ to completely determine $\Phi_{X, \mathfrak{s}}^{I}$.  We make the following basic observation:
\begin{proposition} If $\Phi_{X, \mathfrak{s}} \neq 0$, then $\Psi_{X, \mathfrak{s}} = 0$.
\end{proposition}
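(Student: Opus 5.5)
The plan is a grading argument. Every map involved is homogeneous, and $\Phi_{X,\mathfrak{s}}$ and $\Psi_{X,\mathfrak{s}}$ occupy different grading shifts, so the two cannot both be nonzero.

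First, record the grading shifts. By the grading shift formula for cobordism maps (Theorem 7.1 of \cite{MR2113019}), applied to $W_1$ and $W_2$ and combined with the degree $-1$ connecting map $\delta$, the Ozsváth-Szabó mixed map $\Phi_{X,\mathfrak{s}}$ is homogeneous. Its degree is $d(\mathfrak{s}) = \frac{c_1(\mathfrak{s})^2 - 2\chi(X) - 3\sigma(X)}{4}$, up to the universal constant coming from removing two balls. The involutive cobordism maps of Hendricks and Manolescu satisfy the same shift formula. Here $\HFI$ is graded so that $Q$ has degree $-1$, and $g_*$, $h_*$, $j$, $p$ are homogeneous of the degrees dictated by the shifts $[-1]$. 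Hence the involutive mixed map $\Phi^I_{X,\mathfrak{s},N}$ is homogeneous of the same degree as $\Phi_{X,\mathfrak{s}}$. This uses Theorem~\ref{mixed}, whose diagram has homogeneous rows, so the degrees must match.

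Next, read off the degree of $\Psi_{X,\mathfrak{s}}$. By definition, $Q\Psi_{X,\mathfrak{s}}(1)$ is the $Q$-component of $\Phi^I_{X,\mathfrak{s}}(1)$. Since $\Phi^I$ has the same degree as $\Phi_{X,\mathfrak{s}}$ and multiplication by $Q$ lowers degree by one, $\Psi_{X,\mathfrak{s}}(1)$ lies in degree $\deg\Phi_{X,\mathfrak{s}}(1)+1$ of $\HF^+(S^3)=\mathbb{F}[U^{-1}]$. The group $\HF^+(S^3)$ is supported in even degrees, because $U$ has degree $-2$ and the bottom generator has degree $0$. So $\Phi_{X,\mathfrak{s}}(1)$ and $\Psi_{X,\mathfrak{s}}(1)$ lie in degrees of opposite parity, and at most one of them can be nonzero. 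By $U$-equivariance, $\Phi_{X,\mathfrak{s}}\neq 0$ forces $\Phi_{X,\mathfrak{s}}(1)\neq 0$, since $\Phi(U^n)=U^n\Phi(1)$. Therefore $\Psi_{X,\mathfrak{s}}(1)=0$ and $\Psi_{X,\mathfrak{s}}=0$.

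The same bookkeeping gives the sharper statement in Theorem~\ref{matrix}. $\Psi_{X,\mathfrak{s}}$ can be nonzero only if the degree shift of $\Phi^I$ is odd, which after the universal constant means $d(\mathfrak{s})$ is odd.

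The main obstacle is confirming that $\Phi^I$ is genuinely homogeneous, with the grading convention for $Q$ and the shift $[-1]$ consistent with the maps $g_*$ and $h_*$. In particular, the involutive connecting isomorphism $\overline{\delta}$ must shift degree by the same amount as $\delta$. This follows from Lemma~\ref{boundarycommute} and the fact that the mapping cone construction preserves the relevant gradings. I would verify it directly by writing out the mapping cone gradings.
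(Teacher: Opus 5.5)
Your proof is correct and is essentially the paper's argument. The paper observes that $\Phi_{X,\mathfrak{s}}(1)$ and $Q\Psi_{X,\mathfrak{s}}(1)$ both sit in the degree given by the grading shift of $\Phi^I_{X,\mathfrak{s}}$ but lie in towers of $\HFI^+(S^3)$ of opposite parity; you run the same parity obstruction after stripping off $Q$, comparing $\Phi_{X,\mathfrak{s}}(1)$ and $\Psi_{X,\mathfrak{s}}(1)$ in the evenly graded $\HF^+(S^3)$, and you also spell out why $\Phi^I$ is homogeneous (the involutive cobordism maps and $\overline{\delta}$ shift degree exactly as their non-involutive counterparts), which the paper simply asserts.
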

\begin{proof} If they were both nonzero, $\Phi_{X, \mathfrak{s}}(1)$ and $Q\Psi_{X, \mathfrak{s}}(1)$ would lie in different towers, hence have different degree parities (and, in particular, different degrees).  But they should both have degree equal to the grading shift of $\Phi_{X, \mathfrak{s}}^{I}$, which in turn is equal to $d(\mathfrak{s})$, the dimension of the Seiberg-Witten moduli space.  Therefore, at least one of them must be zero.\end{proof}

More generally, the above argument shows that
\begin{proposition} If $X$ is closed and 
\[d(\mathfrak{s}) := \frac{c_1(\mathfrak{s})^2 - 2\chi(X) - 3\sigma(X)}{4}\]
is an even integer, then $\Phi_{X, \mathfrak{s}}^{I} = \Phi_{X, \mathfrak{s}}$, extended $Q$-equivariantly.  In particular, this is true for all closed four-manifolds $X$ for which $b_{2}^{+}(X) - b_1(X)$ is odd.
\end{proposition}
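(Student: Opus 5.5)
The plan is to rerun the degree-parity argument from the preceding proposition in a slightly sharper form, then reduce the topological condition on $b_2^{+}(X) - b_1(X)$ to the parity of $d(\mathfrak{s})$.

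\textbf{Step 1 (where $\Phi$ and $\Psi$ land).} By the matrix form coming from Theorem~\ref{mixed}, $U$-equivariance reduces everything to the values on $1$:
\[\Phi^{I}_{X,\mathfrak{s}}(1) = \Phi_{X,\mathfrak{s}}(1) + Q\Psi_{X,\mathfrak{s}}(1).\]
The map $\Phi^{I}_{X,\mathfrak{s}}$ is homogeneous of degree $d(\mathfrak{s})$. This uses the grading shift formula for the involutive cobordism maps, which agrees with the ordinary one, and the fact that $\overline{\delta}^{-1}$ has the same degree shift as in the non-involutive case. Hence $\Phi^{I}_{X,\mathfrak{s}}(1)$ lies in degree $d(\mathfrak{s})$ of $\HFI^{+}(S^3)$. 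In
\[\HFI^{+}(S^3) \cong \HF^{+}(S^3) \oplus Q\HF^{+}(S^3)[-1],\]
the first tower is supported in even degrees, since $\HF^{+}(S^3)=\mathcal{T}^{+}_{(0)}$. The $Q$-tower is shifted by one and so is supported in odd degrees. I will fix the convention for the $Q$ shift by comparing with $\HFI^{-}(S^3) \cong \mathbb{F}[U,Q]/Q^2$, where $Q$ has degree $-1$.

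\textbf{Step 2 ($d(\mathfrak{s})$ even forces $\Psi = 0$).} If $d(\mathfrak{s})$ is even, then $Q\Psi_{X,\mathfrak{s}}(1)$ would have to be a nonzero element of even degree in the odd-supported $Q$-tower. That is impossible, so $\Psi_{X,\mathfrak{s}}(1)=0$, and therefore $\Psi_{X,\mathfrak{s}}=0$ by $U$-equivariance. The matrix then becomes diagonal, which says exactly that
\[\Phi^{I}_{X,\mathfrak{s}}(a + Qb) = \Phi_{X,\mathfrak{s}}(a) + Q\,\Phi_{X,\mathfrak{s}}(b),\]
that is, $\Phi^{I}_{X,\mathfrak{s}}$ is $\Phi_{X,\mathfrak{s}}$ extended $Q$-equivariantly.

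\textbf{Step 3 (the parity of $d(\mathfrak{s})$).} Since $\mathfrak{s}$ is spin, $c_1(\mathfrak{s}) = 0$. For closed connected $X$ we have $\chi = 2 - 2b_1 + b_2^{+} + b_2^{-}$ and $\sigma = b_2^{+} - b_2^{-}$. Substituting gives
\[d(\mathfrak{s}) = \frac{-2\chi - 3\sigma}{4} = b_1 - 1 - b_2^{+} - \frac{\sigma}{4}.\]
By Rokhlin's theorem, $16 \mid \sigma$, so $\sigma/4$ is even. Hence
\[d(\mathfrak{s}) \equiv 1 + b_1 + b_2^{+} \pmod 2,\]
which is even exactly when $b_2^{+}(X) - b_1(X)$ is odd.

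\textbf{Main obstacle.} The only delicate point is the grading bookkeeping in Step 1. One must confirm that $\Phi^{I}_{X,\mathfrak{s}}$ really is homogeneous of degree $d(\mathfrak{s})$ with the $Q$-tower at odd parity, and that no odd-degree element can appear in the untwisted tower. I would handle this by checking that each of $F^{I,-}_{W_1}$, $\overline{\delta}^{-1}$ and $\overline{F}^{I,+}_{W_2}$ is homogeneous and that their shifts add up exactly as in Ozsváth–Szabó's computation for $\Phi_{X,\mathfrak{s}}$. The gradings on $\HFI$ are inherited from the mapping cone, and the maps $g_*$ and $h_*$ of Theorem~\ref{mixed} are grading-compatible.
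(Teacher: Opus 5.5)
Your proposal is correct and follows the paper's argument. $\Phi^{I}_{X,\mathfrak{s}}(1)$ is homogeneous of degree $d(\mathfrak{s})$, the untwisted tower of $\HFI^{+}(S^3)$ lies in even degrees and the $Q$-tower in odd degrees, so $d(\mathfrak{s})$ even forces $\Psi_{X,\mathfrak{s}}=0$. Your Step 3 makes explicit the parity computation $d(\mathfrak{s}) \equiv 1 + b_1 + b_2^{+} \pmod 2$, which the paper leaves implicit; there $8 \mid \sigma$, coming from the evenness of the intersection form, already suffices in place of Rokhlin.
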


However, $\Psi_{X, \mathfrak{s}}$ is not always zero, as the stabilization formula (Theorem~\ref{stabintro}) shows. Due to the general difficulty of computing involutive cobordism maps, we have yet to find any examples of four-manifolds with nontrivial $\Psi_{X, \mathfrak{s}}$ which are not obtained through stabilization.

\subsection{Adjunction for Disjoint Pairs of Spheres}

In this subsection, we prove that the nonvanishing of $\Phi_{X, \mathfrak{s}}^{I}$ obstructs the existence of disjoint pairs of spheres with positive square (Theorem~\ref{adjintro}).  The idea is to produce an involutively admissible cut of $X$ with vanishing $\HFI_{\text{red}}$ by taking the connected sum of the boundaries of tubular neighborhoods of the two spheres.

\begingroup
\def\thetheorem{\ref{adjintro}}
\begin{theorem}Let $X$ be a smooth four-manifold with $b_{2}^{+}(X) > 4$.  Suppose that there exists a disjoint pair of smoothly embedded spheres $S_1, S_2 \subset X$ such that $[S_1]^2 > 0$ and $[S_2]^{2}> 0$.  Then $\Phi_{X, \mathfrak{s}}^{I} = 0$ for all spin structures $\mathfrak{s}$ on $X$.
\end{theorem}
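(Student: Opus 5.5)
The idea is to build an involutively admissible cut $N$ of $\mathring{X} = X \setminus \mathring{B^4}$ whose reduced involutive Floer homology vanishes. Since $\Phi^{I}_{X,\mathfrak{s}}$ is defined by factoring through $\overline{\delta}^{-1}$ on $\HFI_{\text{red}}(N, \mathfrak{s}|_N)$, vanishing of that group forces $\Phi^{I}_{X,\mathfrak{s}} = 0$. Because $b_2^{+}(X) > 4$, Proposition~\ref{inv} lets us compute with any involutively admissible cut we like.

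\textbf{Construction of the cut.} Let $n_i = [S_i]^2 > 0$. The boundary of a tubular neighborhood of $S_i$ is the lens space $Q_i = L(n_i, 1)$ (up to orientation). Connect both $Q_1$ and $Q_2$ to the incoming $S^3$ boundary by disjoint arcs, avoiding the other surface, as in the proof of Proposition~\ref{exist}. This gives $N = S^3 \# Q_1 \# Q_2 \cong L(n_1,1) \# L(n_2,1)$. The side $W_1$ containing $S_1, S_2$ and the ball has $b_2^{+}(W_1) = 2 > 1$. The side $W_2$ is the complement, and its $b_2^{+}$ is $b_2^{+}(X) - 2 \geq 3 > 1$. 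The condition $\delta H^1(N) = 0$ holds trivially because $N$ is a rational homology sphere, so $H^1(N;\mathbb{Z}) = 0$. Hence $N$ is involutively admissible.

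\textbf{Vanishing of $\HFI_{\text{red}}$.} Lens spaces are L-spaces, and connected sums of L-spaces are L-spaces, so $\HF_{\text{red}}(N, \mathfrak{t}) = 0$ for every $\mathfrak{t}$. We then need $\HFI_{\text{red}}^{\pm}(N, \mathfrak{s}|_N) = 0$. For this I would use the exact triangle $Q\HF^{\pm}[-1] \to \HFI^{\pm} \to \HF^{\pm}$, which is compatible with the maps to the $\infty$-flavor and with $\delta$ (Lemma~\ref{boundarycommute}).

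One can argue as follows. $\HFI_{\text{red}}^{-} = \operatorname{im}\delta^{I}$, and the image of $\delta^I$ in $\HF^-$ lies in $\operatorname{im}\delta = \HF^-_{\text{red}} = 0$. So by exactness $\operatorname{im}\delta^I$ lies in the image of $g_*^-$. Lemma~\ref{boundarycommute} together with $\HF^{+}$ of an L-space mapping onto... this needs care, since elements of $\HFI^+$ need not lift.

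The cleaner route is a direct computation. For an L-space with a spin structure, $\iota$ acts on $\CF^\infty \simeq \mathbb{F}[U,U^{-1}]$ as the identity up to homotopy, so $\CFI^\infty \simeq \mathbb{F}[U,U^{-1},Q]/Q^2$. Also $\HFI^{-}(N,\mathfrak{s}|_N) \cong \mathbb{F}[U,Q]/Q^2$ up to grading shift: $\iota$ is homotopic to the identity on $\CF^-$, since a grading-preserving $U$-equivariant involution of $\mathbb{F}[U]$ is trivial (Hendricks–Manolescu's computation for L-spaces). Then $i_*: \HFI^- \to \HFI^\infty$ is injective, so $\HFI^-_{\text{red}} = \ker i_* = 0$.

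\textbf{Conclusion.} The image of $F^{I,-}_{W_1}$ lies in $\HFI^{-}_{\text{red}}(N) = 0$, so $\Phi^{I}_{X,\mathfrak{s},N} = 0$. Invariance under change of cut (Proposition~\ref{inv}) then gives $\Phi^{I}_{X,\mathfrak{s}} = 0$ for every spin $\mathfrak{s}$.

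\textbf{Main obstacle.} The step needing the most care is justifying that $\iota$ is chain homotopic to the identity on $\CF^-(N)$ for the spin structure $\mathfrak{s}|_N$, i.e.\ that $\HFI^-$ of an L-space has no reduced part. I would cite Hendricks–Manolescu's L-space computation, or argue directly: $\CF^-(N,\mathfrak{t})$ is homotopy equivalent to $\mathbb{F}[U]$, and any $U$-equivariant grading-preserving homotopy automorphism of $\mathbb{F}[U]$ is the identity.

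A smaller point is the connectedness of $W_2$ and the $b_2^{+}$ bookkeeping. $W_2$ contains the complement of the two disk bundles, so $b_2^{+}(W_2) \geq b_2^{+}(X) - 2$ by Mayer–Vietoris, which follows because $N$ is a rational homology sphere.
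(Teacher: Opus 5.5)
Your proof is correct and follows the paper's argument. The cut $N \cong L(n_1,1)\# L(n_2,1)$ built from the two sphere neighborhoods is an involutively admissible rational homology sphere and an L-space, so $\HFI_{\text{red}}(N,\mathfrak{s}|_N)=0$, the mixed map factors through zero, and Proposition~\ref{inv} removes the dependence on the cut. The only differences are cosmetic: you route the connected sum through the incoming $S^3$, which makes explicit that one removed ball lies on the neighborhood side (the paper leaves this implicit), and you verify $\HFI_{\text{red}}=0$ for L-spaces directly via $\iota\simeq\mathrm{id}$, where the paper cites Corollary 4.8 of Hendricks--Manolescu.
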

\addtocounter{theorem}{-1}
\endgroup
\begin{proof} Let $Q_1$ (resp. $Q_2$) denote the boundary of a tubular neighborhood of $S_1$ (resp. $S_2$).  Note that each $Q_i$ is a lens space, and in particular an $L$-space. Let $\gamma$ be an embedded path from $Q_1$ to $Q_2$ which intersects each in exactly one point.  By taking the boundary of a neighborhood of $\gamma$ and gluing it to $Q_1$ and $Q_2$ (after removing a neighborhood of a point from each), we may form the connected sum $N:= Q_1 \#Q_2$ inside $X$.  The connected sum formula for Heegaard Floer homology (see section 6 of \cite{MR2113020}) implies that the class of $L$-spaces is closed under connected sum, so $N$ is an $L$-space.  By Corollary 4.8 of \cite{MR3649355}, $\HFI_{\text{red}}(N, \mathfrak{t}) = 0$ for any $\text{spin}^{c}$ structure $\mathfrak{t}$ on $N$.

On the other hand, we claim that $N$ is an involutively admissible cut of $X$.  On one side, $N$ bounds the union of the tubular neighborhoods of $S_1$ and $S_2$, along with a tube (a copy of $B^3 \times I$) connecting them.  We will henceforth refer to this union as $X_1$, and we will call the closure of its complement $X_2$. Since the second homology of $X_1$ is generated by $\{[S_1], [S_2]\}$, and both of these classes have positive square, we have that $b_{2}^{+}(X_1) = 2$.  But $b_{2}^{+}(X) > 4$, so $b_{2}^{+}(X_2) > 2$.  It follows that $N$ is an involutively admissible cut of $X$.

Now, given $\mathfrak{s}$ a spin structure on $X$, the map $\Phi_{X, \mathfrak{s}, N}^{I}$ factors through $\HFI_{\text{red}}^{+}(N, \mathfrak{s}|_{N}) = 0$, and is therefore zero.  By Proposition~\ref{inv}, $\Phi_{X, \mathfrak{s}, N'}$ is independent of the involutively admissible cut $N'$.  We conclude that $\Phi_{X, \mathfrak{s}}^{I} = 0$ for any spin structure $\mathfrak{s}$.\end{proof}

\section{The Involutive Mixed Invariant of Stabilized Four-Manifolds}

The involutive cobordism map for a twice punctured $S^2 \times S^2$ with its unique spin structure $\mathfrak{s}_{0}$ was computed in \cite{MR5032324}:
\begin{theorem}\label{S2} The cobordism map
\[F_{S^2 \times S^2, \mathfrak{s}_{0}}^{I, -}: \HFI^{-}(S^3) \to \HFI^{-}(S^3)\]
is multiplication by $Q$.
\end{theorem}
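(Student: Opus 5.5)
Since Theorem~\ref{S2} is imported from \cite{MR5032324}, Proposition 14.1, I only sketch how I would verify it independently. \emph{Grading reduces the claim to one coefficient.} The grading shift of the cobordism map for twice-punctured $S^2 \times S^2$ with $\mathfrak{s}_0$ is
\[\frac{c_1(\mathfrak{s}_0)^2 - 2\chi - 3\sigma}{4} = \frac{0 - 4 - 0}{4} = -1.\]
The ordinary map $F^{-}_{S^2\times S^2,\mathfrak{s}_0}: \mathbb{F}[U] \to \mathbb{F}[U]$ is supported in even degrees, so it must vanish. By Proposition 4.9 of \cite{MR3649355}, the involutive map is compatible with the sequence $Q\HF^{-}[-1] \to \HFI^{-} \to \HF^{-}$. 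Hence $F^{I,-}$ has the lower-triangular form
\[\begin{pmatrix} 0 & 0 \\ \psi & 0 \end{pmatrix},\]
where $\psi$ is $U$-equivariant of degree $-1$ into the $Q$-tower, so $\psi(1) = c\cdot 1$ with $c \in \mathbb{F}$. At chain level, write $F^I(x + Qy) = F x + Q(Fy + Hx)$, where $H$ is a homotopy $\iota F \simeq F\iota$ produced by the involutive cobordism construction. The whole content of the theorem is then that $c = 1$, i.e. $H(1)$ represents the generator.

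\emph{Factoring through $S^1 \times S^2$.} I would draw the punctured $S^2 \times S^2$ as two $2$-handles attached along the $0$-framed Hopf link, and split it as $W_1 \cup W_2$. Here $W_1$ is the $0$-framed $2$-handle along an unknot, from $S^3$ to $S^1 \times S^2$. $W_2$ is the second $2$-handle, along the meridian, which returns to $S^3$. Both pieces carry unique spin structures, and $\delta H^1(S^1\times S^2)$ does not obstruct the composition law, so $F^I_{S^2\times S^2} = F^I_{W_2}\circ F^I_{W_1}$. On ordinary Floer homology, $\HF^{-}(S^1\times S^2,\mathfrak{s}_0) \cong \mathbb{F}[U]\langle \Theta^+, \Theta^- \rangle$. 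By grading, $F_{W_1}(1) = \Theta^{-}$ up to the standard convention, and $F_{W_2}$ kills $\Theta^{-}$ while sending $\Theta^+ \mapsto 1$; this recovers $F_{W_2} F_{W_1} = 0$.

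\emph{The main obstacle} is the involutive data on $S^1 \times S^2$ and the homotopies attached to each handle map. I would first compute $\iota$ on $\CF^{-}(S^1\times S^2)$ using the genus-one diagram with two isotopic curves. The conjugate diagram differs by an isotopy together with a half-twist of the basepoint neighborhood. I expect the naturality map to be the identity on homology, but at chain level to realize a map of the form $\Theta^+ \mapsto \Theta^+ + \Theta^-$ (a basepoint-moving map $\Phi$), or equivalently to force a nontrivial homotopy term in $F^I_{W_1}$ or $F^I_{W_2}$. Concretely, I would check that
\[F^{I}_{W_1}(1) = \Theta^{-} + Q\,\epsilon\,\Theta^{+}\]
with $\epsilon$ the coefficient to be determined, and then apply $F^I_{W_2}$ to get $Q\epsilon$. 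To pin down $\epsilon = 1$, I would first try the HHSZ framework of $2$-handle maps commuting with $\iota$ up to the explicit homotopy built from the basepoint-swapping map on the Hopf link surgery diagram. Failing that, I would identify $H(1)$ with the known nontrivial $\iota$-local computation for $S^1\times S^2$ via the surgery exact triangle.
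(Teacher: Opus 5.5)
Your grading reduction is sound. The shift is $-1$, so the ordinary map on $\HF^-(S^3)$ vanishes, and compatibility with $Q\HF^-[-1]\to\HFI^-\to\HF^-$ forces $F^{I,-}_{S^2\times S^2,\mathfrak{s}_0}=c\,Q$ for some $c\in\mathbb{F}$. But the whole content of the statement is $c=1$. The paper simply imports this from HHSZ (Proposition 14.1), and your sketch never establishes it: the last paragraph is a list of strategies plus an expected answer.

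The proposed route also has a concrete flaw at the factorization step. Since $W_1$ and $W_2$ are simply connected, the Mayer--Vietoris coboundary $\delta\colon H^1(S^1\times S^2;\mathbb{Z})\cong\mathbb{Z}\to H^2(W,\partial W;\mathbb{Z})\cong H^2(W;\mathbb{Z})$ is injective. So $S^1\times S^2$ violates exactly the condition $\delta H^1(N)=0$ that the paper imposes in Definition~\ref{invcut} and uses in the proof of Lemma~\ref{vanish}. Uniqueness of the spin structure on $W$ does not rescue this, because the composition law sums over spin$^c$ structures. Every $\mathfrak{s}_k$ with $c_1(\mathfrak{s}_k)=2k\,\mathrm{PD}[A]$, where $A$ is the sphere whose normal circle bundle is $N$ and $k\in\mathbb{Z}$, restricts to the spin structures on both $W_1$ and $W_2$. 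Hence $F^I_{W_2}\circ F^I_{W_1}$ is at best $F^I_{W,\mathfrak{s}_0}$ plus the contributions of the conjugate pairs $\{\mathfrak{s}_k,\mathfrak{s}_{-k}\}$, $k\ge 1$. These all have $c_1^2=0$ and hence the same degree shift. The non-involutive check $F_{W_2}F_{W_1}=0$ cannot see them, while involutively each pair could carry its own $Q$-term. Moreover, involutive maps for non-self-conjugate classes are not among the spin cobordism maps whose invariance HHSZ establish. You would need a separate argument that these pairs contribute $0$ mod $2$ (plausible from the free conjugation symmetry, but it must be proved), or a different decomposition.

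Even granting that, the bookkeeping inside the composite is incomplete. $F^I_{W_2}$ has its own homotopy component, $F^I_{W_2}(x+Qy)=F_{W_2}x+Q(F_{W_2}y+H_2x)$. Here $H_2$ has degree $+\tfrac12$, so $H_2(\Theta^-)$ lands exactly in the degree of $1\in\HF^-(S^3)$. You therefore cannot conclude that $F^I_{W_2}$ sends $\Theta^-+Q\epsilon\Theta^+$ to $Q\epsilon$. Relatedly, $\HFI^-(S^1\times S^2)$ is only an extension of $\HF^-$ by $Q\HF^-[-1]$, so the lift of $\Theta^-$ is defined only modulo $Q\Theta^+$. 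Thus $\epsilon$ by itself is not well defined; only $\epsilon$ together with the $H_2$-contribution is.

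Finally, the guessed chain-level involution $\Theta^+\mapsto\Theta^++\Theta^-$ cannot occur. The involution $\iota$ preserves the absolute grading, and $\Theta^\pm$ differ in degree by one. The nontrivial $Q$-coefficient has to be extracted from an explicit chain-level model of the handle homotopies, and that computation is precisely what is missing from your sketch.
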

By looking at the diagram
\[\begin{tikzcd} \HFI^{-}(S^3) \arrow{r} \arrow{d}{F^{I, -}} & \HFI^{\infty}(S^3) \arrow{r}\arrow{d}{F^{I, \infty}} & \HFI^{+}(S^3) \arrow{d}{F^{I, +}} \\ \HFI^{-}(S^3) \arrow{r}  & \HFI^{\infty}(S^3) \arrow{r} & \HFI^{+}(S^3)\end{tikzcd}\]
and noting that the map on $\HFI^{\infty}(S^3) \cong \mathbb{F}[U, U^{-1}, Q]/Q^2$ is $U^{-1}$-equivariant as well as $U$-equivariant, we can see that Theorem~\ref{S2} still holds when we replace $-$ with $+$ or $\infty$.

Recall that the \textit{stabilization} of a four-manifold $X$ (possibly with boundary) is the (interior) connect sum $X \# (S^2 \times S^2)$. Theorem~\ref{S2} is useful for computing the involutive cobordism maps induced by stabilizations of four-manifolds.  We will use it to compute the involutive mixed invariant of four-manifolds after one stabilization.

Let $(X, \mathfrak{s})$ be a smooth, spin four-manifold with $b_{2}^{+} > 1$ and boundary $\partial X = Y$ (replace $X$ with $\mathring{X}$ if $X$ is closed).  Choose an admissible cut $N$, in the sense of \cite{MR2113019}, Definition 8.3—i.e. $N \subset X$ breaks $X$ into two parts $X_1$, $X_2$ such that $b_{2}^{+}(X_i) > 0$.  Moreover, fix a pair of maps $\mathfrak{h} = (\mathfrak{h}^{-}, \mathfrak{h}^{+})$, in the category of sets,
\[\mathfrak{h}^{-}: \HFI^{-}(N, \mathfrak{s}|_{N}) \to \HFI^{-}_{\text{red}}(N, \mathfrak{s}|_{N}),\]
\[\mathfrak{h}^{+}: \HFI^{+}_{\text{red}}(N, \mathfrak{s}|_{N}) \to \HFI^{+}(N, \mathfrak{s}|_{N})\]
such that $\mathfrak{h}^{-}$ is the identity on $\HFI^{-}_{\text{red}}(N, \mathfrak{s}|_{N})$, and $q \circ \mathfrak{h}^{+}$ is the identity on $\HFI^{+}_{\text{red}}(N, \mathfrak{s}|_{N})$, where $q: \HFI^{+}(N, \mathfrak{s}|_{N}) \to \HFI^{+}_{\text{red}}(N, \mathfrak{s}|_{N})$ is the quotient map.  We may choose the maps $\mathfrak{h}^{\pm}$ to be $U$-equivariant homomorphisms by first fixing (non-canonical) splittings of $\HFI^{\pm}(N, \mathfrak{s}|_{N})$ as $\mathbb{F}[U]$-modules:
\[\HFI^{\pm}(N, \mathfrak{s}|_{N}) \cong T^{\pm} \oplus \HFI^{\pm}_{\text{red}}(N, \mathfrak{s}|_{N}),\]
where $T^{-}$ has no $U$-torsion and $T^{+}$ is in the image of $U^N$ for all large $N$, and then taking $\mathfrak{h}^{-}$ (resp. $\mathfrak{h}^{+}$) to be the projection (resp. inclusion) with respect to this splitting.  Such splittings always exist by the classification of finitely generated modules over a PID.  Note that such a splitting need not respect the action of $Q$.

Now, we define the involutive mixed invariant of the quadruple  $(X, \mathfrak{s}, N, \mathfrak{h})$ to be:
\[\Phi_{X, \mathfrak{s}, N, \mathfrak{h}}^{I} := F_{X_2, \mathfrak{s}|_{X_2}}^{I, +} \circ \mathfrak{h}^{+} \circ \overline{\delta}^{-1} \circ \mathfrak{h}^{-} \circ F_{X_1, \mathfrak{s}|_{X_1}}^{I, -}.\]

In the case that $X$ has $b_{2}^{+} > 3$ and we choose $N$ to be an involutively admissible cut, this is precisely the involutive mixed invariant of $(X, \mathfrak{s}, N)$, regardless of the choice of $\mathfrak{h}$, which in turn is independent of $N$ if $b_{2}^{+} > 4$, by Proposition~\ref{inv}.  Moreover, if we choose $\mathfrak{h}^{\pm}$ to be $U$-equivariant, then $\Phi_{X, \mathfrak{s}, N, \mathfrak{h}}^{I}$ will also be $U$-equivariant.  However, $\Phi_{X, \mathfrak{s}, N, \mathfrak{h}}^{I}$ is in general not $Q$-equivariant.  

Notation in hand, we are now ready to state a preliminary stabilization formula.

\begin{lemma}\label{stab} Let $(X, \mathfrak{s})$ be a smooth, spin four-manifold with connected boundary $Y$. Fix a choice of $N$ and $\mathfrak{h}$ as above. Let $X'$ denote $X$ stabilized once via an interior connect sum with $S^2 \times S^2$, and let $\mathfrak{s}' := \mathfrak{s}\#\mathfrak{s}_{0}$. Assume that the gluing for the stabilization occurs in the interior of $X_1$ (the half which doesn't contain the boundary $Y$).  Then:
\[\Phi_{X', \mathfrak{s}, N, \mathfrak{h}}^{I} = \Phi_{X, \mathfrak{s}, N, \mathfrak{h}}^{I} \circ Q\]
where $Q$ denotes multiplication by $Q$. 
 Moreover, if $X$ is closed and the gluing of $S^2 \times S^2$ occurs in the interior of $X_2$, then
\[\Phi_{X', \mathfrak{s}, N, \mathfrak{h}}^{I} = Q\cdot \Phi_{X, \mathfrak{s}, N, \mathfrak{h}}^{I}.\]
    
\end{lemma}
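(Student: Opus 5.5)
Since the gluing of $S^2\times S^2$ happens in the interior of $X_1$, the stabilized manifold $X'$ is cut by the same $N$ into $X_1' := X_1 \# (S^2\times S^2)$ and the unchanged $X_2$. The definition
\[\Phi_{X', \mathfrak{s}', N, \mathfrak{h}}^{I} = F_{X_2, \mathfrak{s}|_{X_2}}^{I, +} \circ \mathfrak{h}^{+} \circ \overline{\delta}^{-1} \circ \mathfrak{h}^{-} \circ F_{X_1', \mathfrak{s}'|_{X_1'}}^{I, -}\]
then differs from the unstabilized one only in the first map. The whole task is to show
\[F^{I,-}_{X_1', \mathfrak{s}'|_{X_1'}} = F^{I,-}_{X_1, \mathfrak{s}|_{X_1}} \circ Q .\]

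To prove this, I decompose $X_1'$ as a composite cobordism. First comes a twice-punctured $S^2\times S^2$, viewed as a cobordism from $S^3$ to $S^3$. I arrange this by isotoping the connected-sum ball into a collar of the incoming $S^3$ boundary of $X_1$, which is allowed since $X_1$ is connected. This piece is followed by $X_1$ itself.

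Next I apply the composition law for involutive cobordism maps (Proposition 4.11 of \cite{MR3649355}, together with invariance from \cite{MR5032324} for spin structures). The cut is along $S^3$, so $H^1(S^3)=0$ and the spin structure $\mathfrak{s}'|_{X_1'}$ is the unique gluing of $\mathfrak{s}_0$ and $\mathfrak{s}|_{X_1}$. The sum in the composition law therefore has a single term, giving
\[F^{I,-}_{X_1'} = F^{I,-}_{X_1} \circ F^{I,-}_{S^2\times S^2,\mathfrak{s}_0}.\]
Theorem~\ref{S2} identifies the last map with multiplication by $Q$, which proves the first formula. The framed-path dependence causes no trouble: the incoming end is $S^3$, where the $\text{Diff}^f$ action is nullhomotopic.

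For the second statement, with $X$ closed and the gluing in the interior of $X_2$, I argue symmetrically. I push the connected-sum region into a collar of the outgoing $S^3$, so that $X_2' = X_2 \cup (S^2\times S^2)^{\circ}$, and the composition law gives
\[F^{I,+}_{X_2'} = F^{I,+}_{S^2\times S^2,\mathfrak{s}_0}\circ F^{I,+}_{X_2}.\]
By Theorem~\ref{S2} and the remark after it that the computation holds for the $+$ flavor, the first factor is multiplication by $Q$ on $\HFI^+(S^3)$. Postcomposing the definition with it gives $Q\cdot\Phi^I_{X,\mathfrak{s},N,\mathfrak{h}}$.

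Two points need care. First, in both cases I must check that the maps $\mathfrak{h}^\pm$ and $\overline\delta$ on $N$ are untouched. This holds because $N$ and the chosen splittings are literally the same. Second, the $Q$-linearity of the cobordism maps is not needed here: $\Phi^I_{X,\mathfrak{s},N,\mathfrak{h}}$ itself need not be $Q$-equivariant, which is why the first formula is written as precomposition by $Q$ and the second as postcomposition.

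I expect the main obstacle to be justifying that the involutive map of $X_1\#(S^2\times S^2)$ really factors as this composite. Concretely, this means the isotopy of the summing ball into the boundary collar, and applying the composition law with its spin-structure bookkeeping. I would handle it by citing the HHSZ invariance result for spin cobordisms and the fact that the stabilized cobordism is diffeomorphic, rel boundary, to the composite.
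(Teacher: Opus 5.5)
Your proposal is correct and follows the paper's proof. In both, the only change in the defining formula is the cobordism map of the stabilized half. The composition law (splitting along an $S^3$ collar) together with Theorem~\ref{S2} identifies this map as $F^{I,-}_{X_1}\circ Q$, or as $Q\cdot F^{I,+}_{X_2}$ when $X$ is closed. Your extra bookkeeping (pushing the summing ball into an $S^3$ collar, uniqueness of the glued spin structure, framed paths) makes explicit what the paper leaves implicit.
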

\begin{proof} We begin with the first statement.  By definition, we have that

\[\Phi_{X', \mathfrak{s}, N, \mathfrak{h}}^{I} = F_{X_2, \mathfrak{s}|_{X_2}}^{I, +} \circ \mathfrak{h}^{+} \circ \overline{\delta}^{-1}\circ \mathfrak{h}^{-} \circ F_{X_1 \# (S^2 \times S^2), \mathfrak{s}'|_{X_1 \# (S^2 \times S^2)}}^{I, -}.\]

By the composition law for cobordism maps, along with Theorem~\ref{S2}, we have that:
\[F_{X_1 \# (S^2 \times S^2), \mathfrak{s}'|_{X_1 \# (S^2 \times S^2)}}^{I, -} = F_{X_1, \mathfrak{s}|_{X_1}}^{I, -} \circ F_{S^2 \times S^2, \mathfrak{s}_{0}}^{I, -} \]
\[ = F_{X_1, \mathfrak{s}|_{X_1}}^{I, -} \circ Q.\]
Plugging this into the formula for $\Phi_{X', \mathfrak{s}, N, \mathfrak{h}}$ above completes the proof of the first statement.

The proof of the second statement is nearly identical, after observing that
\[F_{X_2\#(S^2 \times S^2), \mathfrak{s}'|_{X_2\#(S^2 \times S^2)}}^{I, +} = Q\cdot F_{X_2, \mathfrak{s}|_{X_2}}^{I, +}. \]\end{proof}

\begin{proposition}\label{Qinv} Let $(X, \mathfrak{s})$ be a closed, spin four-manifold with $b_{2}^{+}(X) \geq 3$.  Fix a choice of $N$ and $\mathfrak{h}$ as above, with the additional condition that $b_{2}^{+}(X_1) > 1$.  Then $Q\cdot \Phi_{X, \mathfrak{s}, N, \mathfrak{h}}^{I}$ is independent of $\mathfrak{h}$.  In fact, $Q \cdot \Phi_{X, \mathfrak{s}, N, \mathfrak{h}}^{I} = Q\cdot \Phi_{X, \mathfrak{s}}$, where $\Phi_{X, \mathfrak{s}}: \HFI^{-}(S^3) \to \HFI^{+}(S^3)$ denotes the Oszváth-Szabó mixed invariant extended $Q$-equivariantly.
    
\end{proposition}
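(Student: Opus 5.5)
The plan is to show that multiplication by $Q$ on $\HFI^{+}(S^3)$ only sees the projection to $\HF^{+}(S^3)$, and then to push this projection $h_{*}$ through the definition of $\Phi_{X, \mathfrak{s}, N, \mathfrak{h}}^{I}$ term by term. Each $\mathfrak{h}$-dependent piece will either disappear after projection or be forced to act as the identity.

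\textbf{Step 1: reduce to the projection.} Identify $\HFI^{+}(S^3)$ with elements $a + Qb$, where $a, b \in \HF^{+}(S^3)$. Then $Q\cdot(a+Qb) = Qa = g_{*}(h_{*}(a+Qb))$. Hence
\[Q\cdot \Phi_{X, \mathfrak{s}, N, \mathfrak{h}}^{I} = g_{*}\circ h_{*}\circ F_{X_2, \mathfrak{s}|_{X_2}}^{I, +} \circ \mathfrak{h}^{+} \circ \overline{\delta}^{-1} \circ \mathfrak{h}^{-} \circ F_{X_1, \mathfrak{s}|_{X_1}}^{I, -}.\]
Similarly, $Q\cdot\Phi_{X,\mathfrak{s}}$, with $\Phi_{X,\mathfrak{s}}$ extended $Q$-equivariantly, equals $g_{*}\circ \Phi_{X,\mathfrak{s}}\circ p$. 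Here $\Phi_{X,\mathfrak{s}}$ is well defined, independently of the cut, since $b_2^{+}(X)\geq 3>1$. So it suffices to prove
\[h_{*}\circ F^{I,+}_{X_2}\circ \mathfrak{h}^{+}\circ\overline{\delta}^{-1}\circ \mathfrak{h}^{-}\circ F^{I,-}_{X_1} = \overline{F}^{+}_{X_2}\circ\overline{\delta}^{-1}\circ F^{-}_{X_1}\circ p .\]
The right-hand side is computed using the same cut $N$, which is a valid admissible cut because $b_2^{+}(X_i)>0$.

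\textbf{Step 2: the left end.} Because $b_2^{+}(X_1)>1$, Lemma~\ref{vanish} gives $F^{I,\infty}_{X_1}=0$. As in the construction of the mixed map, $\text{im}\,F^{I,-}_{X_1}\subset \HFI^{-}_{\text{red}}(N)$. Since $\mathfrak{h}^{-}$ is the identity there, $\mathfrak{h}^{-}\circ F^{I,-}_{X_1}=F^{I,-}_{X_1}$. This is the only place the hypothesis $b_2^{+}(X_1)>1$ is used; on the $X_2$ side we only know $b_2^{+}(X_2)>0$, so we cannot argue the same way there.

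\textbf{Step 3: the right end.} By Proposition 4.9 of \cite{MR3649355}, $h_{*}\circ F^{I,+}_{X_2} = F^{+}_{X_2}\circ h_{*}$, where $h_{*}\colon \HFI^{+}(N)\to\HF^{+}(N)$. Since $b_2^{+}(X_2)>0$, Proposition~\ref{boring} gives $F^{\infty}_{X_2}=0$, so $F^{+}_{X_2}$ kills $\text{im}\,\pi_{*}\subset \HF^{+}(N)$. The square relating $\pi^{I}_{*}$ and $\pi_{*}$ commutes, so $h_{*}$ sends $\text{im}\,\pi^{I}_{*}$ into $\text{im}\,\pi_{*}$.

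Two choices of $\mathfrak{h}^{+}$ differ by elements of $\ker q=\text{im}\,\pi^{I}_{*}$. Therefore $F^{+}_{X_2}\circ h_{*}\circ\mathfrak{h}^{+}$ is independent of $\mathfrak{h}^{+}$, and it equals $\overline{F}^{+}_{X_2}\circ \ell^{+}_{*}$, where $\ell_{*}^{+}$ is the descended map of Lemma~\ref{descend}. I expect this to be the main point needing care: one must check that the set-theoretic section $\mathfrak{h}^{+}$ interacts correctly with the descent. The argument works because both $q\circ\mathfrak{h}^{+}=\text{id}$ and the descent are taken modulo images of $\pi$, which $h_{*}$ respects.

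\textbf{Step 4: assemble.} After Steps 2 and 3, the left-hand side of Step 1's target becomes $\overline{F}^{+}_{X_2}\circ\ell^{+}_{*}\circ\overline{\delta}^{-1}\circ F^{I,-}_{X_1}$. Lemma~\ref{boundarycommute}, together with the fact that $\overline{\delta}$ is an isomorphism, gives $\ell^{+}_{*}\circ\overline{\delta}^{-1}=\overline{\delta}^{-1}\circ\ell^{-}_{*}$. Proposition 4.9 of \cite{MR3649355} then gives $\ell^{-}_{*}\circ F^{I,-}_{X_1}=F^{-}_{X_1}\circ p$. Substituting both identities yields exactly $\overline{F}^{+}_{X_2}\circ\overline{\delta}^{-1}\circ F^{-}_{X_1}\circ p=\Phi_{X,\mathfrak{s}}\circ p$. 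Applying $g_{*}$ gives $Q\cdot\Phi^{I}_{X,\mathfrak{s},N,\mathfrak{h}}=Q\cdot\Phi_{X,\mathfrak{s}}$, which in particular shows independence of $\mathfrak{h}$.
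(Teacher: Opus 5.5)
Your argument is correct, and it reaches the result more directly than the paper does. The paper first stabilizes $X$ inside $X_2$. Then $N$ becomes an involutively admissible cut of $X' = X\#(S^2\times S^2)$, and Lemma~\ref{stab} (which rests on the computation in Theorem~\ref{S2} that the $S^2\times S^2$ map is multiplication by $Q$) gives $Q\cdot\Phi^{I}_{X,\mathfrak{s},N,\mathfrak{h}} = \Phi^{I}_{X',\mathfrak{s}',N}$. This yields $\mathfrak{h}$-independence and $\mathbb{F}[U,Q]/Q^2$-linearity at once, so the paper only has to check the formula at $1$. It does so with the same chase as your Steps 2--4: it traces $\mathfrak{h}^{+}(A)$ through the square relating $F^{I,+}_{X_2}$ and $F^{+}_{X_2}$ via $h_N$, then reads off the non-$Q$ component.

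You instead observe that multiplication by $Q$ on $\HFI^{+}$ factors as $g_{*}\circ h_{*}$. This lets you run the chase on an arbitrary input, so linearity and $\mathfrak{h}$-independence fall out of the final formula rather than being needed as input. Your proof is therefore self-contained and does not use the Hendricks--Hom--Stoffregen--Zemke computation. The paper's detour does identify $Q\cdot\Phi^{I}_{X,\mathfrak{s},N,\mathfrak{h}}$ with a genuine involutive mixed map of the stabilized manifold. However, that identity is invoked separately, through Lemma~\ref{stab}, in the proof of Theorem~\ref{stabintro}, so nothing is lost with your route.

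The same direct argument also handles Proposition~\ref{Qinv2}. There you apply $g_{*}$ at the input instead of $h_{*}$ at the output, and use $F^{I,\infty}_{X_2}=0$ to descend $F^{I,+}_{X_2}$.
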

\begin{proof} Let $X' = X \#(S^2 \times S^2)$, where the gluing of $S^2 \times S^2$ occurred inside $X_2$.  Note that $b_{2}^{+}(S^2 \times S^2) = 1$, so $b_{2}^{+}(X_{2}') = b_{2}^{+}(X_2) + 1 > 1$.  Thus, $N$ is an involutively admissible cut of $X'$.  By definition, we have that:
\[\Phi_{X', \mathfrak{s}, N}^{I} = F_{X_2 \# (S^2 \times S^2), \mathfrak{s}'|_{X_2 \#(S^2 \times S^2)}}^{I, +} \circ \overline{\delta}^{-1} \circ F_{X_1, \mathfrak{s}|_{X_1}}^{I, -}.\]
Since $N$ is an involutively admissible cut, $\Phi_{X', \mathfrak{s}, N}^{I} = \Phi_{X', \mathfrak{s}, N, \mathfrak{h}}^{I}$ for any choice of $\mathfrak{h}$.  Applying Lemma~\ref{stab}, we obtain that:
\[\Phi_{X', \mathfrak{s}, N}^{I} = \Phi_{X', \mathfrak{s}, N, \mathfrak{h}}^{I} = Q \cdot \Phi_{X, \mathfrak{s}, N, \mathfrak{h}}^{I}.\]
The left-hand side is independent of $\mathfrak{h}$, hence so is the right-hand side.  

It also follows that the right-hand side is a $\mathbb{F}[U, Q]/Q^2$-homomorphism, regardless of the choice of $\mathfrak{h}$.  Therefore, to show that $Q\cdot \Phi_{X, \mathfrak{s}, N, \mathfrak{h}}^{I} = Q\cdot \Phi_{X, \mathfrak{s}}$, it suffices to show that they agree at $1 \in \mathbb{F}[U, Q]/Q^2 \cong \HFI^{-}(S^3)$. 

Observe that, since $b_{2}^{+}(X_1) > 1$, Lemma~\ref{vanish} implies that the image of $F_{X_1, \mathfrak{s}|_{X_1}}^{I, -}$ is contained in $\HFI^{-}_{\text{red}}(N, \mathfrak{s}|_{N})$.  Similarly, Lemma 8.2 of \cite{MR2031164} implies that the image of $F_{X_1, \mathfrak{s}|_{X_1}}^{-}$ is contained in $\HF^{-}_{\text{red}}(N, \mathfrak{s}|_{N})$.  We consider the commutative diagram:
\[\begin{tikzcd}
    \HFI^{-}(S^3) \arrow{r} \arrow{d}{F_{X_1, \mathfrak{s}|_{X_1}}^{I, -}} & \HF^{-}(S^3) \arrow{d}{F_{X_1, \mathfrak{s}|_{X_1}}^{-}} \\ \HFI^{-}_{\text{red}}(N, \mathfrak{s}|_{N}) \arrow{r}\arrow{d}{\overline{\delta}^{-1}} & \HF^{-}_{\text{red}}(N, \mathfrak{s}|_{N}) \arrow{d}{\overline{\delta}^{-1}} \\ \HFI^{+}_{\text{red}}(N, \mathfrak{s}|_{N}) \arrow{r}{\overline{h_{N}}} & \HF^{+}_{\text{red}}(N, \mathfrak{s}|_{N})
\end{tikzcd}\]
Note that the map $\overline{h_{N}}$ and the middle horizontal map, both induced by projection, were shown to be well-defined in Lemma~\ref{descend}.  Let $A = \overline{\delta}^{-1} \circ F_{X_1, \mathfrak{s}|_{X_1}}^{I, -}(1)$, and let $a = \overline{h_{N}}(A)$.  By commutativity of the diagram, we know that $a = \overline{\delta}^{-1} \circ F_{X_1, \mathfrak{s}|_{X_1}}^{-}(1)$.  In other words, $A$ is the image of $1 \in \HFI^{-}(S^3)$ in the bottom left of the diagram, and $a$ is the image of $1$ in the bottom right of the diagram.  

Recall that we have a section $\mathfrak{h}^{+}: \HFI^{+}_{\text{red}}(N, \mathfrak{s}|_{N}) \to \HFI^{+}(N, \mathfrak{s}|_{N})$ of the projection map.  We consider the element $\mathfrak{h}^{+}(A) \in \HFI^{+}(N, \mathfrak{s}|_{N})$, and trace it through the following commutative diagram:
\begin{equation}\label{stabdiag} \begin{tikzcd}
    \HFI^{+}(N, \mathfrak{s}|_{N}) \arrow{r}{h_{N}} \arrow{d}{F_{X_2, \mathfrak{s}|_{X_2}}^{I, +}} & \HF^{+}(N, \mathfrak{s}|_{N}) \arrow{d}{F_{X_2, \mathfrak{s}|_{X_2}}^{+}} \\ \HFI^{+}(S^3) \arrow{r} & \HF^{+}(S^3)\end{tikzcd} \end{equation}
Firstly, since $\mathfrak{h}^{+}(A)$ represents the class $A$, and $\overline{h_{N}}$ is well-defined, $h_{N}(\mathfrak{h}^{+}(A))$ represents the class $a = \overline{h_{N}}(A)$.  Since $N$ is an ordinary admissible cut (in the sense of \cite{MR2031164}, Definition 8.3), the map $F_{X_2, \mathfrak{s}|_{X_2}}^{+}$ descends to a well-defined map $\overline{F}_{X_2, \mathfrak{s}|_{X_2}}: \HF^{+}_{\text{red}}(N, \mathfrak{s}|_{N}) \to \HF^{+}(S^3)$.  Thus, we can compute $\mathfrak{h}^{+}(A)$ going right and down in~\eqref{stabdiag} as follows:
\[(F_{X_2, \mathfrak{s}|_{X_2}}^{+} \circ h_{N})(\mathfrak{h}^{+}(A)) = \overline{F}_{X_2, \mathfrak{s}|_{X_2}}(a)\]
\[ = \overline{F}_{X_2, \mathfrak{s}|_{X_2}} \circ \overline{\delta}^{-1} \circ F_{X_1, \mathfrak{s}|_{X_1}}^{-}(1) = \Phi_{X, \mathfrak{s}}(1).\]
In the last step, we have used the definition of the Ozsváth-Szabó mixed invariant $\Phi_{X, \mathfrak{s}}$, along with the fact that $N$ is an ordinary admissible cut.

On the other hand, going straight down in~\eqref{stabdiag}, we obtain the element
\[F_{X_2, \mathfrak{s}|_{X_2}}^{I, +}(\mathfrak{h}^{+}(A)) = F_{X_2, \mathfrak{s}|_{X_2}}^{I, +}\circ \mathfrak{h}^{+} \circ \overline{\delta}^{-1} \circ F_{X_1, \mathfrak{s}|_{X_1}}^{I, -} (1)\]
\[ = \Phi^{I}_{X, \mathfrak{s}, N, \mathfrak{h}}(1),\]
where we recall that $\mathfrak{h} = (\mathfrak{h}^{-}, \mathfrak{h}^{+})$ for some choice of $\mathfrak{h}^{-}: \HFI^{-}(N, \mathfrak{s}|_{N}) \to \HFI^{-}_{\text{red}}(N, \mathfrak{s}|_{N})$ which restricts to the identity on the reduced group (the choice of $\mathfrak{h}^{-}$ does not matter, since $F_{X_1, \mathfrak{s}|_{X_1}}^{I, -}$ lands in $\HFI^{-}_{\text{red}}(N, \mathfrak{s}|_{N})$ anyway).

Now, the commutativity of~\eqref{stabdiag} implies that the component of $\Phi^{I}_{X, \mathfrak{s}, N, \mathfrak{h}}(1)$ which lies in the non-$Q$ tower of $\HFI^{+}(S^3) \cong \mathbb{F}[U^{-1}, Q]/Q^2$ is equal to $\Phi_{X, \mathfrak{s}}(1)$, identified as an element of $\mathbb{F}[U^{-1}] \subset \mathbb{F}[U^{-1}, Q]/Q^2$.  Since the $Q$-tower is annihilated by $Q$, it follows that:
\[Q\cdot \Phi_{X, \mathfrak{s}, N, \mathfrak{h}}^{I}(1) = Q\cdot \Phi_{X, \mathfrak{s}}(1).\]\end{proof}

A similar proof, only invoking the first statement in Lemma~\ref{stab} rather than the second, yields the following:
\begin{proposition}\label{Qinv2} Let $(X, \mathfrak{s})$ be a closed, spin four-manifold with $b_{2}^{+}(X) \geq 3$.  Fix a choice of $N$ and $\mathfrak{h}$ as above, with the additional condition that $b_{2}^{+}(X_2) > 1$.  Then $\Phi_{X, \mathfrak{s}, N, \mathfrak{h}}^{I}$ restricted to the $Q$-tower of $\HFI^{-}(S^3) \cong \mathbb{F}[U, Q]/Q^2$ is independent of $\mathfrak{h}$.  In fact, restricted to this tower, $\Phi_{X, \mathfrak{s}, N, \mathfrak{h}}^{I}$ agrees with $\Phi_{X, \mathfrak{s}}$ (extended $Q$-equivariantly).
\end{proposition}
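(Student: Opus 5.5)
We mirror the proof of Proposition~\ref{Qinv}, with the roles of $X_1$ and $X_2$ exchanged, and use the first statement of Lemma~\ref{stab}. Let $X' = X\#(S^2\times S^2)$, with the gluing done in the interior of $X_1$. Then $b_{2}^{+}(X_1') = b_{2}^{+}(X_1)+1 > 1$ and $b_2^+(X_2)>1$ by hypothesis, so $N$ is an involutively admissible cut of $X'$. Hence $\Phi^{I}_{X',\mathfrak{s}',N,\mathfrak{h}} = \Phi^{I}_{X',\mathfrak{s}',N}$ does not depend on $\mathfrak{h}$. Lemma~\ref{stab} gives $\Phi^{I}_{X',\mathfrak{s}',N} = \Phi^{I}_{X,\mathfrak{s},N,\mathfrak{h}}\circ Q$. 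The left side is independent of $\mathfrak{h}$, and so is the restriction of $\Phi^I_{X,\mathfrak{s},N,\mathfrak{h}}$ to the $Q$-tower $Q\cdot\HFI^-(S^3)$. That restriction is also $U$-equivariant, because the left side is a $U$-equivariant module map. Consequently it is determined by the single value $\Phi^{I}_{X,\mathfrak{s},N,\mathfrak{h}}(Q)$.

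It remains to identify $\Phi^{I}_{X,\mathfrak{s},N,\mathfrak{h}}(Q)$ with $Q\cdot\Phi_{X,\mathfrak{s}}(1)$. We work on the $X_2$ side, where Lemma~\ref{vanish} now applies. Since $b_2^+(X_2)>1$, the map $F^{I,+}_{X_2}$ kills the image of $\HFI^\infty(N)$. The composite $F^{I,+}_{X_2}\circ\mathfrak{h}^+$ therefore equals $\overline F^{I,+}_{X_2}$ on $\HFI^+_{\text{red}}(N)$, and so $\mathfrak{h}^+$ is irrelevant. The remaining choice is $\mathfrak{h}^-$, which is applied to $F^{I,-}_{X_1}(Q)$. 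This element need not lie in $\HFI^-_{\text{red}}(N)$, because $b_2^+(X_1)$ may be only $1$. By the commutative diagram of Proposition 4.9 of \cite{MR3649355}, $F^{I,-}_{X_1}(Q) = g^-_*\bigl(F^-_{X_1}(1)\bigr)$, with $g^-_*$ the inclusion of the $Q$-part. Here $N$ is an ordinary admissible cut, so Lemma 8.2 of \cite{MR2031164} puts $F^-_{X_1}(1)$ in $\HF^-_{\text{red}}(N)$. Lemma~\ref{descend} then shows that $g^-_*\bigl(F^-_{X_1}(1)\bigr)$ lies in $\HFI^-_{\text{red}}(N)$, where $\mathfrak{h}^-$ acts as the identity. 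This settles the $\mathfrak{h}^-$ ambiguity.

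Applying $\overline\delta^{-1}$ and using the commutativity in Lemma~\ref{boundarycommute} together with the restriction of $g_*^\pm$ from Lemma~\ref{descend} gives
\[\overline\delta^{-1}\bigl(g^-_* F^-_{X_1}(1)\bigr) = g^+_*\bigl(\overline\delta^{-1}F^-_{X_1}(1)\bigr).\]
Finally, the left square of Proposition 4.9 of \cite{MR3649355} for $X_2$, passed to reduced groups as in the proof of Theorem~\ref{mixed}, gives
\[\overline F^{I,+}_{X_2}\circ g^+_* = g_*\circ \overline F^+_{X_2}.\]
Combining these, $\Phi^{I}_{X,\mathfrak{s},N,\mathfrak{h}}(Q) = g_*\bigl(\Phi_{X,\mathfrak{s}}(1)\bigr) = Q\,\Phi_{X,\mathfrak{s}}(1)$, and $U$-equivariance extends the identification to the whole $Q$-tower.

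The main obstacle is the middle step. On the $X_1$ side, $F^{I,-}_{X_1}$ does not land in the reduced group in general, so one must check that its value on the $Q$-tower is nevertheless reduced. The argument above does this by factoring through $g^-_*$ and using the ordinary vanishing result. Everything else is a diagram chase already carried out in the proof of Theorem~\ref{mixed}.
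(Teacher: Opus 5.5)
Your proposal is correct and follows the paper's proof essentially step for step. Both arguments stabilize inside $X_1$ and use the first statement of Lemma~\ref{stab} to get $\mathfrak{h}$-independence and $U$-equivariance on the $Q$-tower. Both then push $Q$ through the same chase: $g^{-}_{*}$, Lemma~\ref{descend}, Lemma~\ref{boundarycommute}, and the squares of Proposition 4.9 of \cite{MR3649355}. You also make explicit two points the paper leaves implicit: $\mathfrak{h}^{+}$ is irrelevant because $b_{2}^{+}(X_2)>1$ lets $F^{I,+}_{X_2}$ descend, and $F^{I,-}_{X_1}(Q)$ already lies in $\HFI^{-}_{\text{red}}(N)$, so $\mathfrak{h}^{-}$ acts as the identity on it.
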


\begin{proof} The proof is similar to that of Proposition~\ref{Qinv}.  Let $X' = X \#(S^2 \times S^2)$, where the gluing of $S^2 \times S^2$ occurred inside $X_1$.  Note that $b_{2}^{+}(S^2 \times S^2) = 1$, so $b_{2}^{+}(X_{1}') = b_{2}^{+}(X_1) + 1 > 1$.  Thus, $N$ is an involutively admissible cut of $X'$.  By definition, we have that:
\[\Phi_{X', \mathfrak{s}, N}^{I} = F_{X_2, \mathfrak{s}|_{X_2}}^{I, +} \circ \overline{\delta}^{-1} \circ F_{X_1 \# (S^2 \times S^2), \mathfrak{s}'|_{X_1 \# (S^2 \times S^2)}}^{I, -}.\]

Since $N$ is an involutively admissible cut, $\Phi_{X', \mathfrak{s}, N}^{I} = \Phi_{X', \mathfrak{s}, N, \mathfrak{h}}^{I}$ for any choice of $\mathfrak{h}$.  Applying Lemma~\ref{stab}, we obtain that:
\[\Phi_{X', \mathfrak{s}, N}^{I} = \Phi_{X', \mathfrak{s}, N, \mathfrak{h}}^{I} = \Phi_{X, \mathfrak{s}, N, \mathfrak{h}}^{I} \circ Q.\]
The left-hand side is independent of $\mathfrak{h}$, hence so is the right-hand side.  It also follows that the right-hand side is $\mathbb{F}[U, Q]/Q^2$-equivariant.

For the statement that $\Phi_{X, \mathfrak{s}, N, \mathfrak{h}}^{I}$ agrees with $\Phi_{X, \mathfrak{s}}$ on the $Q$-tower, we only sketch the proof—the details are similar to (but easier than) those in the proof of Proposition~\ref{Qinv}.  Firstly, we note that, by $U$-equivariance, they need only agree on the element $Q \in \mathbb{F}[U, Q]/Q^2 \cong \HFI^{-}(S^3)$.  

Now, since $N$ is an ordinary admissible cut, the image of $F_{X_1, \mathfrak{s}|_{X_1}}^{-}$ is contained in $\HF^{-}_{\text{red}}(N, \mathfrak{s}|_{N})$.  Consider the following commutative diagram:
\[\begin{tikzcd} Q\HF^{-}(S^3)[-1] \arrow{r} \arrow{d}{F_{X_1, \mathfrak{s}|_{X_1}}^{-}} & \HFI^{-}(S^3) \arrow{d}{F_{X_1, \mathfrak{s}|_{X_1}}^{I, -}} \\ Q\HF_{\text{red}}^{-}(N, \mathfrak{s}|_{N})[-1] \arrow{r}{g_{N}} & \HFI^{-}(N, \mathfrak{s}|_{N})
\end{tikzcd}\]

By Lemma~\ref{descend}, $g_{N}$ takes $Q\HF_{\text{red}}^{-}(N, \mathfrak{s}|_{N})[-1]$ into $\HFI^{-}_{\text{red}}(N, \mathfrak{s}|_{N})$.  Thus, tracking the element $Q \in Q\mathbb{F}[U][-1] \cong Q\HF^{-}(S^3)[-1]$ through the diagram, we obtain an element $A \in \HFI_{\text{red}}^{-}(N, \mathfrak{s}|_{N})$.  We denote by $a \in Q\HF_{\text{red}}^{-}(N, \mathfrak{s}|_{N})[-1]$ the image of $Q$ under the cobordism map $F_{X_1, \mathfrak{s}|_{X_1}}^{-}$, and we note that $g_N(a) = A$.  Now, we track $a$ through the following commutative diagram:
\[\begin{tikzcd} Q\HF^{-}_{\text{red}}(N, \mathfrak{s}|_{N})[-1] \arrow{r}{g_N} \arrow{d}{\overline{\delta}^{-1}} & \HFI^{-}_{\text{red}}(N, \mathfrak{s}|_{N}) \arrow{d}{\overline{\delta}^{-1}} \\ Q\HF^{+}_{\text{red}}(N, \mathfrak{s}|_{N})[-1] \arrow{r} \arrow{d}{F_{X_2, \mathfrak{s}|_{X_2}}^{+}} & \HFI^{+}_{\text{red}}(N, \mathfrak{s}|_{N}) \arrow{d}{F_{X_2, \mathfrak{s}|_{X_2}}^{I, +}} \\ Q\HF^{+}(S^3)[-1] \arrow{r} & \HFI^{+}(S^3)
\end{tikzcd}
\]
One can check that, tracking $a$ down then right, we obtain $Q \cdot \Phi_{X, \mathfrak{s}}(1) = \Phi_{X, \mathfrak{s}}(Q)$.  Meanwhile, tracking $a$ right and then down (i.e. tracking $A$ down), and observing that $\mathfrak{h}^{-}(A) = A$ since $A \in \HFI^{-}_{\text{red}}(N, \mathfrak{s}|_{N})$, we obtain $\Phi_{X, \mathfrak{s}, N, \mathfrak{h}}^{I}(Q)$.  The commutativity of the diagram implies that $\Phi_{X, \mathfrak{s}}(Q) = \Phi_{X, \mathfrak{s}, N, \mathfrak{h}}^{I}(Q)$, as desired.\end{proof}

We may now prove Theorem~\ref{stabintro}.

\begingroup
\def\thetheorem{\ref{stabintro}}
\begin{theorem} Let $(X, \mathfrak{s})$ be a closed spin four-manifold with $b_{2}^{+}(X) > 3$.  Then:
\[\Phi_{X \# (S^2 \times S^2), \mathfrak{s}\#\mathfrak{s}_{0}}^{I} = Q\Phi_{X, \mathfrak{s}}.\]
If $b_{2}^{+}(X) = 3$ and $N$ is an involutively admissible cut of $X\#(S^2 \times S^2)$ which is contained entirely in $X$, then the same formula holds for $\Phi_{X\#(S^2 \times S^2), \mathfrak{s}\#\mathfrak{s}_{0}, N}^{I}$.
\end{theorem}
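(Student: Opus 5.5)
The plan is to reduce everything to Lemma~\ref{stab} and Proposition~\ref{Qinv}, choosing a cut of $X\#(S^2\times S^2)$ that lies in $X$ and places the $S^2\times S^2$ summand on the side containing the outgoing boundary.

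First suppose $b_{2}^{+}(X) \geq 3$. I would choose an ordinary admissible cut $N \subset \mathring{X}$ splitting $\mathring{X}$ into $X_1$ (containing the incoming $S^3$) and $X_2$, with $b_2^{+}(X_1) \geq 2$ and $b_2^{+}(X_2) \geq 1$. This is done exactly as in Proposition~\ref{exist}: take three mutually disjoint positively self-intersecting surfaces $\Sigma_1,\Sigma_2,\Sigma_3$, and let $N = S^3 \# Q_1 \# Q_2$ be built from the boundaries of tubular neighborhoods of $\Sigma_1$ and $\Sigma_2$. The same check as there gives $\delta H^1(N)=0$.

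Next, perform the stabilization in the interior of $X_2$. Then $b_2^{+}(X_2 \# (S^2\times S^2)) \geq 2$, so $N$ is an involutively admissible cut of $X' = X\#(S^2\times S^2)$. We have $b_2^{+}(X') = b_2^{+}(X)+1 \geq 4$, so Lemma~\ref{vanish} applies on both sides. For this cut the mixed map does not depend on $\mathfrak{h}$, i.e. $\Phi^{I}_{X',\mathfrak{s}',N} = \Phi^{I}_{X',\mathfrak{s}',N,\mathfrak{h}}$. The second statement of Lemma~\ref{stab} gives
\[\Phi^{I}_{X',\mathfrak{s}',N,\mathfrak{h}} = Q\cdot \Phi^{I}_{X,\mathfrak{s},N,\mathfrak{h}},\]
and Proposition~\ref{Qinv} (whose hypothesis $b_2^{+}(X_1)>1$ holds) identifies the right-hand side with $Q\cdot\Phi_{X,\mathfrak{s}}$. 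This proves the formula for this particular cut $N$.

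When $b_2^{+}(X) > 3$, we have $b_2^{+}(X') > 4$, and Proposition~\ref{inv} says the mixed invariant is independent of the cut. So $\Phi^{I}_{X',\mathfrak{s}'} = Q\Phi_{X,\mathfrak{s}}$.

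When $b_2^{+}(X)=3$ and $N$ is an arbitrary involutively admissible cut of $X'$ lying entirely in $X$, the argument needs more care, and I expect this step to be the main obstacle. Since $N$ lies in $X$ and the summand $S^2\times S^2$ is connected, the summand sits in one component, $X_1'$ or $X_2'$. Removing it gives a cut of $X$ with pieces $X_i$ satisfying $b_2^{+}(X_i') = b_2^{+}(X_i) + [\text{summand in } X_i']$.

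\emph{Case A: the summand is in $X_2'$.} Then $b_2^{+}(X_1)\geq 2$, and the argument above applies verbatim to give $Q\cdot\Phi_{X,\mathfrak{s}}$.

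\emph{Case B: the summand is in $X_1'$.} Then $b_2^{+}(X_2) \geq 2$. The first statement of Lemma~\ref{stab} gives $\Phi^{I}_{X',\mathfrak{s}',N} = \Phi^{I}_{X,\mathfrak{s},N,\mathfrak{h}}\circ Q$. Proposition~\ref{Qinv2} says this equals $\Phi_{X,\mathfrak{s}}\circ Q$. Because $\Phi_{X,\mathfrak{s}}$ is extended $Q$-equivariantly, $\Phi_{X,\mathfrak{s}}\circ Q = Q\cdot\Phi_{X,\mathfrak{s}}$.

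The points to verify carefully are that the connected-sum region can be isotoped off $N$ into the interior of the appropriate piece, and that $\delta H^1(N)=0$ holds in $X'$ if and only if it holds in $X$, since the stabilization changes only $H_2$ away from $N$.
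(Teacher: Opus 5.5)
Your proposal is correct and is essentially the paper's proof. You split according to whether the $S^2\times S^2$ summand lies in $X_1$ or $X_2$, apply the matching statement of Lemma~\ref{stab} together with Proposition~\ref{Qinv} or Proposition~\ref{Qinv2}, and use Proposition~\ref{inv} to remove the dependence on $N$ when $b_{2}^{+}(X)>3$; the only cosmetic difference is that for $b_{2}^{+}(X)>3$ you build a specific three-surface cut with the summand in $X_2$, whereas the paper starts from an involutively admissible cut of $X$ given by Proposition~\ref{exist}, and either works.
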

\addtocounter{theorem}{-1}
\endgroup
\begin{proof} Firstly, observe that if $b_{2}^{+}(X) > 3$, then $X$ has an involutively admissible cut $N$, by Proposition~\ref{exist}.  By perturbing slightly, $N$ can be chosen away from the point removed from $X$ to form the connect sum $X \# (S^2 \times S^2)$, so $N$ is therefore also an involutively admissible cut of $X\#(S^2 \times S^2)$.

We henceforth assume that $b_{2}^{+}(X) \geq 3$, and we have fixed some involutively admissible cut $N \subset X \setminus \{\text{pt.}\}$ of $X \#(S^2 \times S^2)$.  Let $X_1$, $X_2$ denote the two components into which $N$ splits $X$.  By the fact that $N \subset X \setminus \{\text{pt.}\}$, the gluing of $S^2 \times S^2$ must have occurred in the interior of either $X_1$ or $X_2$.

If the gluing occurred in $X_2$, then $b_{2}^{+}(X_1) > 1$ and $b_{2}^{+}(X_2) > 0$ by the assumption that $N$ is involutively admissible for $X\#(S^2 \times S^2)$.  In particular, $N$ is an ordinary admissible cut of $X$.  Fixing a choice of $\mathfrak{h} = (\mathfrak{h}^{+}, \mathfrak{h}^{-})$, Lemma~\ref{stab} implies that
\[\Phi^{I}_{X\#(S^2 \times S^2), \mathfrak{s}\#\mathfrak{s}_{0}, N, \mathfrak{h}} = Q\cdot \Phi^{I}_{X, \mathfrak{s}, N, \mathfrak{h}}.\]

By Proposition~\ref{Qinv}, the right side is equal to $Q\Phi_{X, \mathfrak{s}}$.  The left side, meanwhile, is independent of $\mathfrak{h}$ since $N$ is an involutively admissible cut of $X\#(S^2 \times S^2)$.  Thus, we obtain that
\[\Phi^{I}_{X\#(S^2 \times S^2), \mathfrak{s}\#\mathfrak{s}_{0}, N} = Q\Phi_{X, \mathfrak{s}}.\]
If $b_{2}^{+}(X) > 3$, then $b_{2}^{+}(X\#(S^2 \times S^2))> 4$, so Proposition~\ref{inv} implies that the left side is independent of $N$.  This completes the proof in the case where the gluing of $S^2 \times S^2$ occurred in $X_2$.

In the case where the gluing of $S^2 \times S^2$ occurred in $X_1$, the proof is nearly identical, except that we invoke Proposition~\ref{Qinv2} instead of Proposition~\ref{Qinv}.\end{proof}
In conjunction with Theorem~\ref{adjintro} applied to $X\#(S^2 \times S^2)$, Theorem~\ref{stabintro} yields:
\begingroup
\def\thetheorem{\ref{stabadjintro}}
\begin{theorem} Let $X$ be a smooth, closed four-manifold with $b_{2}^{+}(X) > 3$.  Suppose that there exists a disjoint pair of smoothly embedded spheres $S_1, S_2 \subset X\#(S^2 \times S^2)$ such that $[S_{1}]^2 > 0$ and $[S_{2}]^2 > 0$.  Then $\Phi_{X, \mathfrak{s}} = 0$ for all spin structures $\mathfrak{s}$ on $X$, where $\Phi_{X, \mathfrak{s}}$ is the Ozsváth-Szabó mixed invariant.
\end{theorem}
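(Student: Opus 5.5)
The plan is to combine Theorem~\ref{stabintro} with Theorem~\ref{adjintro} applied to the stabilized manifold, and then strip off the factor of $Q$. Fix a spin structure $\mathfrak{s}$ on $X$ and set $X' := X\#(S^2\times S^2)$ with spin structure $\mathfrak{s}' := \mathfrak{s}\#\mathfrak{s}_0$. Since $b_2^{+}(S^2\times S^2)=1$, we have $b_2^{+}(X') = b_2^{+}(X)+1 > 4$. Hence the involutive mixed invariant $\Phi^{I}_{X',\mathfrak{s}'}$ is well defined, independent of the cut, by Proposition~\ref{inv}.

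\emph{Step 1 (vanishing).} By hypothesis $X'$ contains disjoint embedded spheres $S_1,S_2$ with positive square. Theorem~\ref{adjintro} applies to $X'$ with its spin structure $\mathfrak{s}'$, since $b_2^{+}(X')>4$. It gives $\Phi^{I}_{X',\mathfrak{s}'}=0$.

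\emph{Step 2 (stabilization).} By Theorem~\ref{stabintro}, which requires $b_2^{+}(X)>3$ and so applies here, we have $\Phi^{I}_{X',\mathfrak{s}'} = Q\,\Phi_{X,\mathfrak{s}}$. Here $\Phi_{X,\mathfrak{s}}$ is the Ozsváth--Szabó mixed invariant, extended $Q$-equivariantly to a map $\HFI^{-}(S^3)\to\HFI^{+}(S^3)$. Combining with Step 1 gives $Q\,\Phi_{X,\mathfrak{s}}=0$ as a map $\mathbb{F}[U,Q]/Q^2\to\mathbb{F}[U^{-1},Q]/Q^2$.

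\emph{Step 3 (removing $Q$).} Evaluate at $1\in\HFI^{-}(S^3)$. Under the identification $\HFI^{+}(S^3)\cong\mathbb{F}[U^{-1},Q]/Q^2$, the $Q$-equivariant extension sends $1$ to $\Phi_{X,\mathfrak{s}}(1)\in\mathbb{F}[U^{-1}]$, i.e.\ into the non-$Q$ tower. Multiplication by $Q$ carries the non-$Q$ tower $\mathbb{F}[U^{-1}]$ injectively (indeed isomorphically) onto the $Q$-tower $Q\,\mathbb{F}[U^{-1}]$. Therefore $Q\,\Phi_{X,\mathfrak{s}}(1)=0$ forces $\Phi_{X,\mathfrak{s}}(1)=0$. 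By $U$-equivariance of the Ozsváth--Szabó mixed invariant, $\Phi_{X,\mathfrak{s}}=0$.

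The only point needing care is the interpretation in Step 3. One must check that "$Q\Phi_{X,\mathfrak{s}}$" in Theorem~\ref{stabintro} means the $Q$-equivariant extension of the ordinary mixed invariant, landing first in the non-$Q$ tower and then multiplied by $Q$. Proposition~\ref{Qinv} and its proof confirm this reading: there the non-$Q$-tower component of $\Phi^{I}(1)$ is identified with $\Phi_{X,\mathfrak{s}}(1)$. With that reading, injectivity of multiplication by $Q$ on that tower finishes the argument. No other obstacles are expected, since both invoked theorems apply verbatim under the hypothesis $b_2^{+}(X)>3$.
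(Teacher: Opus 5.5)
Your proposal is correct and follows the paper's proof exactly. You apply Theorem~\ref{adjintro} to $X\#(S^2\times S^2)$, which has $b_2^{+}>4$, to get vanishing of the involutive invariant, and then Theorem~\ref{stabintro} identifies that invariant with $Q\Phi_{X,\mathfrak{s}}$; your Step 3 (injectivity of multiplication by $Q$ on the non-$Q$ tower of $\HFI^{+}(S^3)$, plus $U$-equivariance) spells out the paper's one-line conclusion that $\Phi_{X,\mathfrak{s}}=0$.
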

\addtocounter{theorem}{-1}
\endgroup
\begin{proof} Theorem~\ref{adjintro} implies that $\Phi_{X, \mathfrak{s}}^{I} = 0$ for all spin structures $\mathfrak{s}$. On the other hand, Theorem~\ref{stabintro} implies that $\Phi_{X, \mathfrak{s}}^{I} = Q\Phi_{X, \mathfrak{s}}$. It follows that $\Phi_{X, \mathfrak{s}} = 0$.\end{proof}

\section{Construction of the Involutive Seiberg-Witten Invariant}

All of the results and arguments in section 4.1 remain valid when $\HF^{\circ}$ and $\HFI^{\circ}$ (where $\circ = -, +, \infty$) are replaced with $\HM^{\circ}$ and $\HMI^{\circ}$ (where $\circ = \widehat{\phantom{hm}}, \widecheck{\phantom{hm}}, \overline{\phantom{hm}}$), respectively. The only property used there which relies substantively on the nature of the Floer groups (beyond merely their formal properties) is the fact that $F_{W, \mathfrak{s}}^{\infty}$ vanishes for a cobordism $W$ with $b_{2}^{+} > 0$ (Lemma 8.2 of \cite{MR2113019}). This can be replaced directly with the fact that $\overline{\HM}(W, \mathfrak{s})$ vanishes whenever $W$ has $b_{2}^{+} > 0$ (Prosition 3.5.2 of \cite{Kronheimer_Mrowka_2007}). This should be viewed as a consequence of the fact that when $b_{2}^{+} > 0$, a generic perturbation eliminates all reducible solutions of the Seiberg-Witten equations.

As such, for a closed four-manifold $X$ equipped with a spin structure $\mathfrak{s}$ (or self-conjugate $\text{spin}^{c}$ structure) and having $b_{2}^{+} \geq 4$, we may define an \textit{involutive monopole invariant}
\[\mathfrak{m}_{X, \mathfrak{s}}^{I}: \widehat{\HMI}(S^3) \cong \mathbb{F}[U, Q]/Q^2 \to \widecheck{\HMI}(S^3) \cong \mathbb{F}[U^{-1}, Q]/Q^2\]
in exactly the same way that we defined the involutive mixed invariant $\Phi_{X, \mathfrak{s}}^{I}$, only using involutive monopole Floer homology in place of its Heegaard Floer counterpart. Though the proof of cut invariance provided in section 4.2 suffices to show cut invariance for $\mathfrak{m}_{X. \mathfrak{s}}^{I}$ when $b_{2}^{+}(X) > 4$, we will provide an entirely different proof of this fact below which extends to the case $b_{2}^{+} = 4$ (Theorem~\ref{SWequiv1}), and so we need not assume the results of 4.2.

In this section, for the sake of both providing conceptual clarity for our involutive invariants and strengthening our adjunction results, we will show that $\mathfrak{m}_{X, \mathfrak{s}}^{I}$ is closely related to several other invariants which arise naturally in Seiberg-Witten theory, including Baraglia's Pin(2) Seiberg-Witten invariant $SW_{X, \mathfrak{s}}^{Pin(2)}$ (see \cite{baraglia2023mod}).

\subsection{Construction as a count of irreducible solutions}
Chapter 27 of \cite{Kronheimer_Mrowka_2007} shows that the standard Seiberg-Witten invariant $SW_{X, \mathfrak{s}}$ can be obtained from certain cobordism maps on Floer homology: one which counts irreducible solutions which are asymptotic to reducibles on the spherical ends—denoted by $\overrightarrow{\HM}(X, \mathfrak{s})$—and one which is constructed as a mixed map (exactly like the Ozsváth-Szabó mixed map, only using monopole Floer homology). More specifically, it is shown via a composition rule for $\overrightarrow{\HM}(X, \mathfrak{s})$ that these two maps are equivalent, and both recover $SW_{X, \mathfrak{s}}$ by evaluating on the element $1 \in \widehat{\HM}(S^3) \cong \mathbb{F}[U]$.

Given that the involutive monopole invariant $\mathfrak{m}^{I}_{X, \mathfrak{s}}$ is defined as a mixed map, it is natural to consider whether it can be interpreted as a suitable count of irreducible solutions to the Seiberg-Witten equations, or more generally as a cohomological pairing against the fundamental class of a certain moduli space of solutions. We will show that this is indeed the case. 

\subsubsection{An involutive Seiberg-Witten invariant} Let $X$ be a closed four-manifold equipped with a spin structure $\mathfrak{s}$ and a fixed Riemannian metric $g$. Suppose that $b_{2}^{+}(X) \geq 3$. We choose a perturbing 2-form $\omega \in \Omega^{2}(X; i\mathbb{R})$ such that the moduli space $\mathcal{M}(X, \mathfrak{s})_{\omega}$ of solutions to the perturbed Seiberg-Witten equation $\mathfrak{F}_{\omega}(A, \Phi) = 0$ is regular and contains no reducibles (hence is a smooth, closed manifold). Moreover, we choose a path $\gamma: [0, 1] \to \Omega^{2}(X; i\mathbb{R})$ such that $\gamma(0) = \omega$, $\gamma(1) = -\omega$, and the moduli space of solutions along the path $\mathcal{M}(X, \mathfrak{s})_{\gamma} := \bigcup_{t \in [0, 1]} \{t\} \times \mathcal{M}(X, \mathfrak{s})_{\gamma(t)}$ is regular (in the sense of definition 24.4.9 in \cite{Kronheimer_Mrowka_2007}) and contains no reducible solutions (which is possible since $b_{2}^{+}(X) \geq 3$). Let $d = d(\mathfrak{s})$ denote the dimension of $\mathcal{M}(X, \mathfrak{s})_{\omega}$.

\begin{definition} The \textit{involutive Seiberg-Witten invariant}, denoted $SW_{X, \mathfrak{s}}^{I}$, is defined as follows: If $d$ is even, then
\[SW_{X, \mathfrak{s}}^{I} := SW_{X, \mathfrak{s}}\cdot U^{-d/2} = \langle u_{2}^{d/2}, [\mathcal{M}(X, \mathfrak{s})_{\omega}] \rangle \cdot U^{-d/2} \in \mathbb{F}[U^{-1}, Q]/Q^2\]
If $d$ is odd, then
\[SW_{X, \mathfrak{s}}^{I} := \langle u_{2}^{d/2}, [\mathcal{M}(X, \mathfrak{s})_{\gamma}] \rangle \cdot QU^{-\frac{d+1}{2}} \in \mathbb{F}[U^{-1}, Q]/Q^2.\]
\end{definition}

Note that, in the above definition, the variables $U$ and $Q$ are purely formal (albeit suggestive of Theorem~\ref{SWequiv1} below), and the cohomology class $u_{2} \in H^{*}(\mathcal{B}^{*}(X, \mathfrak{s}))$ is, as defined previously, the first Chern class of the principal circle bundle $P \to \mathcal{B}^{*}(X, \mathfrak{s})$ induced by the gauge quotient map along with the homomorphism $\mathcal{G} \to S^1$ given by evaluation at a basepoint $x_0 \in X$. The fact that we are working mod 2 here is not merely to avoid issues of orientation; it is in fact critical to the proof of invariance of $SW_{X, \mathfrak{s}}^{I}$ in the case that $d$ is odd, and the author is not aware of any reason to expect that a $\mathbb{Z}$-valued version would be well-defined (see the proof of Lemma~\ref{ok} below).

For intuitive clarity, we at this point restrict to the simple cases where the virtual dimension of the Seiberg-Witten moduli space
\[d(\mathfrak{s}) := \frac{c_{1}(s)^2 - 2\chi(X) - 3\sigma(X)}{4}\]
is equal to 0 or $-1$. In either case, $SW_{X, \mathfrak{s}}^{I}$ is simply a mod 2 count of solutions. The following proposition tells us that this count is independent of the choices made in the construction.

\begin{lemma}\label{ok} Let $X$ be a closed, smooth four-manifold with spin structure $\mathfrak{s}$. Assume that $b_{2}^{+}(X) \geq 3$, and that $d(\mathfrak{s}) \in \{0, -1\}$. Let $(\omega_{0}, g_{0}), (\omega_{1}, g_{1})$ be two choices of perturbing 2-form and metric, respectively, such that the moduli space of solutions to the associated perturbed Seiberg-Witten equations is regular and contains no reducibles. For $i = 0, 1$, let $\gamma_{i}$ be a path of 2-forms from $\omega_{i}$ to $-\omega_{i}$ such that the moduli space over $\gamma_{i}$ is regular and contains no reducibles.  Then $SW_{X, \mathfrak{s}, \omega_{0}, g_{0}, \gamma_{0}}^{I} = SW_{X, \mathfrak{s}, \omega_{1}, g_{1}, \gamma_{1}}^{I}$.
\end{lemma}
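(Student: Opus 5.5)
The plan is a cobordism argument in the parameter space, treating $d=0$ and $d=-1$ separately. For $d=0$ the invariant is the ordinary count $\#\mathcal{M}(X,\mathfrak{s})_{\omega}$ mod 2, and invariance is the standard $b_{2}^{+}>1$ argument recalled in Section 2. The good pairs $(g,\omega)$, those admitting no reducibles, form the complement of a codimension-$b_{2}^{+}$ subset, so this space is path-connected. A generic path between $(g_0,\omega_0)$ and $(g_1,\omega_1)$ then gives a compact 1-dimensional cobordism between the two 0-dimensional moduli spaces. This case does not use the paths $\gamma_i$ at all.

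For $d=-1$ the moduli spaces $\mathcal{M}_{\omega_i}$ are empty. The invariant is the mod 2 count of the 0-dimensional space $\mathcal{M}_{\gamma_i}$ over the path $\gamma_i$ from $\omega_i$ to $-\omega_i$. I would first use charge conjugation. Since $\mathfrak{s}$ is self-conjugate, $\jmath$ identifies solutions for $(g,\omega)$ with solutions for $(g,-\omega)$; the perturbation $\omega$ is imaginary, and conjugation flips the sign of the curvature term. Hence the reversed path $t\mapsto -\gamma(1-t)$ has moduli space in bijection with $\mathcal{M}_{\gamma}$.

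Let $\mathcal{P}$ be the space of good pairs. The map $(g,\omega)\mapsto(g,-\omega)$ is a free involution $\tau$ on $\mathcal{P}$, and it is essentially the antipodal map on the sphere of directions in $H^{+}$. Because $b_{2}^{+}\ge 3$, the space $\mathcal{P}$ is simply connected: it is homotopy equivalent to $S^{b_2^+-1}$ with $b_{2}^{+}-1\ge 2$.

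\textbf{Step 1.} Show that the count along a path $\gamma$ from $p$ to $\tau p$ depends only on the endpoint $p$. Two such paths with the same endpoints differ by a loop in $\mathcal{P}$. This loop bounds a disk, since $\pi_1(\mathcal{P})=0$. A generic disk, i.e.\ a 2-parameter family with no reducibles, gives a compact 1-manifold whose boundary is $\mathcal{M}_{\gamma}\sqcup\mathcal{M}_{\gamma'}$. So the two counts agree mod 2.

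\textbf{Step 2.} Show independence of the endpoint. Choose a path $\beta$ from $(g_0,\omega_0)$ to $(g_1,\omega_1)$ in $\mathcal{P}$. Then $\gamma_1$ is homotopic rel endpoints to the concatenation $\bar\beta * \gamma_0 * \tau\beta$. Its count is $\#\mathcal{M}_{\beta}+\#\mathcal{M}_{\gamma_0}+\#\mathcal{M}_{\tau\beta}$, since concatenation is additive for counts along paths. By charge conjugation $\#\mathcal{M}_{\tau\beta}=\#\mathcal{M}_{\beta}$, so these two terms cancel mod 2. This cancellation is exactly why the invariant is only well defined mod 2.

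The main obstacle is Step 1. It needs the vanishing of $\pi_1$ of the good-parameter space, including metric variation, and transversality of 2-parameter families avoiding reducibles. Avoiding reducibles in a 2-parameter family requires codimension at least $3$, which is where $b_{2}^{+}\ge 3$ enters. One must also check that $\jmath$ preserves regularity and acts compatibly on the path moduli spaces, so that the bijection $\mathcal{M}_{\tau\beta}\cong\mathcal{M}_{\beta}$ holds exactly.
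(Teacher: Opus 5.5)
Your proposal is correct and is essentially the paper's proof. The paper fills the single loop $\beta * \gamma_1 * \overline{\tau\beta} * \bar\gamma_0$ (its $\Gamma(t),\gamma_1(t),-\Gamma(-t),\gamma_0(-t)$) with a regular, reducible-free $D^2$-family via Kronheimer--Mrowka Lemma 27.1.4, which is the same $b_2^+\ge 3$ input as your $\pi_1(\mathcal{P})=0$. It then cancels the contributions of $\beta$ and $\tau\beta$ mod 2 by charge conjugation, so your Steps 1 and 2 amount to the same argument done in one step.
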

\begin{proof}  If $d(\mathfrak{s}) = 0$, then $SW^{I}_{X, \mathfrak{s}, \omega_{i}, g_{i}, \gamma_{i}} = SW_{X, \mathfrak{s}}$ and invariance follows from the well-known invariance of the Seiberg-Witten invariant. 

We assume henceforth that $d(\mathfrak{s}) = -1$. We may assume without loss of generality that $\gamma_{0}$ and $\gamma_{1}$ are disjoint paths (if not, we may simply choose a third set $(\omega_{2}, g_{2}, \gamma_{2})$ which is disjoint from both). Let $\Gamma(t) = (\omega_{t}, g_{t})$ be a smooth path from $(\omega_{0}, g_{0})$ to $(\omega_{1}, g_{1})$ such that for all $t$, the moduli space of solutions to the perturbed Seiberg-Witten equations on $X$ induced by $\Gamma(t)$ consists of only irreducible solutions. We may choose $\Gamma$ such that, for $i = 0, 1$, the concatenation of $\Gamma$ with $\gamma_{i}$ is smooth. Observe that $-\Gamma(-t)$ provides a path from $-\omega_{1}$ to $-\omega_{0}$.  Concatenating the four paths $\Gamma(t), \gamma_{1}(t), -\Gamma(-t), \gamma_{0}(-t)$, we get a family of perturbation-metric pairs which is smoothly parametrized by $S^1$.  In fact, we also naturally get a family of metrics $g_{x}$ which is parametrized by $D^2$, since $g_{t}$ is constant along $\gamma_{0}$ and $\gamma_{1}$, so by identifying $D^2$ with $[0, 1] \times [0, 1]$ (where the paths each get sent to an edge), we can simply define $g_{(a, b)} = g_{(a, 0)}$ for all $b$. By Lemma 27.1.4 in \cite{Kronheimer_Mrowka_2007}, since $b_{2}^{+}(X) \geq 3$, there exists a family of forms $\omega_{x}$ extending the $S^1$-family above to all of $D^2$, such that the moduli space of solutions $\mathcal{M}(X, \mathfrak{s})_{D^2}$ is again regular and contains no reducibles.

Now, the moduli space $\mathcal{M}(X, \mathfrak{s})_{D^2}$ has dimension $\text{dim}(D^2) + d(\mathfrak{s}) = 2 - 1 = 1$, and every one-manifold has an even number of points in its boundary. But solutions along $\Gamma(t)$ are paired with solutions along $-\Gamma(-t)$, and there are no solutions at the corner points by assumption of regularity (and the fact that $d(\mathfrak{s}) = -1$), so it follows that:
\[\#\mathcal{M}(X, \mathfrak{s})_{\gamma_{0}} + \#\mathcal{M}(X, \mathfrak{s})_{\gamma_{1}} \equiv 0 \text{ mod 2.}\]
The statement is proven.  \end{proof}

Proving the general case, where $d(\mathfrak{s})$ is not necessarily $0$ or $-1$, involves using a \textit{cylinder} (rather than a rectangle) of perturbations, produced by first adding on $-\gamma_{0}$ and $-\gamma_{1}$ to produce complete loops on either end, and \textit{then} invoking 27.1.4 from \cite{Kronheimer_Mrowka_2007}. One can then consider the $\mathbb{Z}_{2}$-equivariant cohomology pairing with $u_{2}$, and invoke an appropriate (equivariant) version of Stokes' theorem.  We omit the details.

The primary goal of this subsection is to show:
\begin{theorem}\label{SWequiv1} If $b_{2}^{+}(X) \geq 4$, then $\mathfrak{m}^{I}_{X, \mathfrak{s}}(1) = SW_{X, \mathfrak{s}}^{I}$, where we've made the identification $\widecheck{\HMI}(S^3) \cong \mathbb{F}[U^{-1}, Q]/Q^2$. In particular, $\mathfrak{m}^{I}_{X, \mathfrak{s}}$ is an invariant of the pair $(X, \mathfrak{s})$.
\end{theorem}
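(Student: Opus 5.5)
We will show that the mixed map $\mathfrak{m}^{I}_{X,\mathfrak{s}}$, computed for any involutively admissible cut $N$, evaluates on $1$ to $SW^{I}_{X,\mathfrak{s}}$. Since $SW^{I}_{X,\mathfrak{s}}$ does not depend on $N$ (Lemma~\ref{ok} and its general-$d$ extension), this also gives the invariance statement. We follow the strategy of Chapter 27 of \cite{Kronheimer_Mrowka_2007}, where the ordinary mixed map is identified with the count $\overrightarrow{\HM}(X,\mathfrak{s})$.

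\textbf{Step 1: the chain-level mixed map.} Write $X=W_1\cup_N W_2$ with $b_2^+(W_i)\ge 2$; puncture $W_1$ to get a cobordism from $S^3$ to $N$, and likewise $W_2$ from $N$ to $S^3$. By Lemma 27.1.4 of \cite{Kronheimer_Mrowka_2007} we may choose perturbations on each $W_i$ with no reducibles. Because $b_2^+(W_i)\ge 2$, we may further choose a \emph{path} of perturbations on each $W_i$ joining a perturbation $\omega_i$ to its charge conjugate $\jmath_*\omega_i=-\omega_i$, again with no reducibles. At chain level, the involutive cobordism map $\widehat{\CMI}$ of $W_1$ is the matrix with the ordinary map $m_{W_1}$ on the diagonal and, off the diagonal, a homotopy $H_{W_1}$ between $m_{W_1}$ and $\iota\circ m_{W_1}\circ\iota$. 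The construction makes $H_{W_1}$ count solutions parametrized over the path on $W_1$. The analogous description holds for $\widecheck{\CMI}$ of $W_2$.

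Since the perturbations have no reducibles on the $W_i$, these maps factor through the irreducible part $\mathfrak{C}^o(N)$. Hence $\overline{\delta}^{-1}$ is realized at chain level by the identity on $\mathfrak{C}^o$, exactly as in the non-involutive case. This gives a chain-level formula for the mixed map:
\[
\mathfrak{m}^I=\begin{pmatrix} m_2m_1 & 0\\ H_2 m_1 + m_2 H_1 & m_2m_1\end{pmatrix}.
\]

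\textbf{Step 2: gluing.} By the composition law of \cite{Kronheimer_Mrowka_2007}, $m_2m_1(1)$ counts, with cutting $u_2$-insertions, the irreducible solutions on $X$ with a neck-stretched perturbation. This recovers $SW_{X,\mathfrak{s}}U^{-d/2}$ when $d$ is even. For the off-diagonal term, $H_2m_1+m_2H_1$ counts solutions over a one-parameter family on the stretched $X$: first move the perturbation on $W_1$ from $\omega_1$ to $-\omega_1$ while holding $\omega_2$ fixed, then move $W_2$ from $\omega_2$ to $-\omega_2$. This concatenated path runs from $\omega=\omega_1\#\omega_2$ to $-\omega$ and has no reducibles. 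A parametrized neck-stretching and gluing argument then identifies its count, with $u_2^{(d+1)/2}$ inserted, with $\langle u_2^{\cdot},[\mathcal{M}(X,\mathfrak{s})_\gamma]\rangle$. The shift by $QU^{-(d+1)/2}$ is read off from the gradings.

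\textbf{Step 3: independence and the expected obstacle.} It remains to check that, up to homotopy, the chain-level involutive maps may be computed using these conjugation-paths of perturbations, rather than via the handle decompositions and Lin's definition. This is the step I expect to be hardest. It requires that $\iota$ on $\CM(N)$ intertwine with the continuation map induced by the path $\omega_i\to-\omega_i$ on the cobordism. I would prove it by a two-parameter family argument, again using Lemma 27.1.4 of \cite{Kronheimer_Mrowka_2007}: any two such paths with no reducibles bound a disk of perturbations with no reducibles, because $b_2^+(W_i)\ge 2$ for a one-parameter homotopy of paths. This gives a chain homotopy between the resulting $H$'s.

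The hypothesis $b_2^+(X)\ge4$ enters only through the need for $b_2^+(W_i)\ge2$ on both sides. After Step 3, parity considerations show that exactly one of the diagonal entry and the $\Psi$-type entry can be nonzero, according to the parity of $d$, which matches the definition of $SW^I$.
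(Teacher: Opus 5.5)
Your architecture is the paper's. The paper also defines the involutive ``arrow'' map $\overrightarrow{\HMI}(X,\mathfrak{s})$ using a reducible-free path $\omega\to-\omega$, and obtains $SW^I_{X,\mathfrak{s}}$ from it by stretching at the spherical ends. It then proves $\overrightarrow{\HMI}(X)=\widecheck{\HMI}(X_1)\circ\overrightarrow{\HMI}(X_0)$ by stretching along $N$ and concatenating exactly your two paths (move one side while the other is frozen). Two of your justifications, however, would not go through as written.

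\textbf{Step 3.} This is the substantive problem.
\begin{itemize}
\item A homotopy between two paths is a two-parameter family. Reducibles on $W_i$ occur in codimension $b_2^+(W_i)$, so filling by a reducible-free disk via Lemma 27.1.4 of \cite{Kronheimer_Mrowka_2007} requires $b_2^+(W_i)\ge 3$. This is why Lemma~\ref{ok} assumes $b_2^+\ge 3$.
\item The hypothesis $b_2^+(X)\ge 4$ only gives $b_2^+(W_i)\ge 2$, and $b_2^+(X)=4$ forces $b_2^+(W_1)=b_2^+(W_2)=2$.
\item In that case the reducible-free perturbations on $W_i$ retract onto a circle. Paths from $\omega_i$ to $-\omega_i$ then fall into infinitely many homotopy classes, and two of them need not bound a reducible-free disk. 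So your argument gives no homotopy between the corresponding $\overrightarrow{h}(W_i,\gamma_i)$.
\end{itemize}

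The fix is to drop reducible-freeness for this comparison.
\begin{itemize}
\item The full maps $\widehat{m}$ and $\widecheck{m}$ count reducible solutions. So Lin's homotopy may be computed along any path in the contractible space of all perturbations, and any two such paths give homotopic homotopies.
\item Reducible-freeness of $\gamma_1$ is needed only for the chain-level identity $j\circ\overrightarrow{\CMI}(W_1,\gamma_1)=\widehat{\CMI}(W_1,\gamma_1)$. That identity makes $\overrightarrow{\HMI}(W_1,\gamma_1)$ a lift of $\widehat{\HMI}(W_1)$ through $j_*$.
\item Different classes of $\gamma_1$ may give different lifts, but they differ by elements of $\ker j_*=\operatorname{im} i_*$. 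These are killed by $\widecheck{\HMI}(W_2)$, because $\widecheck{\HMI}(W_2)\circ i_*=i_*\circ\overline{\HMI}(W_2)=0$ for $b_2^+(W_2)>1$.
\end{itemize}
This is what makes the mixed map independent of the paths.

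\textbf{Step 1.} Having no reducibles on $W_1$ does not make the maps factor through $\mathfrak{C}^o(N)$. Irreducible solutions on $W_1$ can still be asymptotic to boundary-stable reducible critical points of $N$ (the $m^u_s$ component). So $\widehat{m}(W_1)$ has a component $\overline{\partial}^s_u m^u_s$ into $\mathfrak{C}^u(N)$.

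The correct chain-level replacement for ``$\overline{\delta}^{-1}$ is the identity on $\mathfrak{C}^o$'' is $j\circ\overrightarrow{m}(W_1)=\widehat{m}(W_1)$. Here $\overrightarrow{m}(W_1)$ takes values in $\widecheck{\CM}(N)=\mathfrak{C}^o(N)\oplus\mathfrak{C}^s(N)$. Keeping only the $\mathfrak{C}^o$ part would lose the broken trajectories through reducibles on $N$ in your Step 2 gluing.

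With $m_1=\overrightarrow{m}(W_1)$, $H_1=\overrightarrow{h}(W_1,\gamma_1)$, $m_2=\widecheck{m}(W_2)$ and $H_2=\widecheck{h}(W_2,\gamma_2)$, your matrix is precisely the paper's composition formula.
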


\subsubsection{Defining $\overrightarrow{\HMI}(X, \mathfrak{s})$} In order to relate $\mathfrak{m}_{X, \mathfrak{s}}^{I}$ to $SW_{X, \mathfrak{s}}^{I}$, we will go through the involutive analogue of Kronheimer and Mrowka's map 
\[\overrightarrow{\HM}(X, \mathfrak{s}): \widehat{\HM}(S^3) \to \widecheck{\HM}(S^3),\]
which counts only irreducible solutions asymptotic to reducibles on either end. More precisely, on the chain level, the map $\overrightarrow{\HM}(X, \mathfrak{s})$ is simply the map $m_{s}^{u}: \mathfrak{C}^{u} \to \mathfrak{C}^{s}$ which counts irreducible solutions in $X$ asymptotic to unstable reducible critical points on the incoming end and stable reducible critical points on the outgoing end. We define the involutive analogue as follows: let $\omega$ be a choice of perturbing 2-form on $X \setminus (B^4 \sqcup B^4)$ such that the moduli spaces $\mathcal{M}([\mathfrak{a}], X, \mathfrak{s}, [\mathfrak{b}])_{\omega}$ asymptotic to $[\mathfrak{a}] \in \widehat{\HM}(S^3)$ and $[\mathfrak{b}] \in \widecheck{\HM}(S^3)$ are regular and contain no reducibles. This induces a map $\overrightarrow{\CM}(X, \mathfrak{s}, \omega)$. Choosing $-\omega$ as the perturbing form instead gives the map $\overrightarrow{\CM}(X, \mathfrak{s}, -\omega)$. Choosing a path $\gamma$ from $\omega$ to $-\omega$ such that the moduli space over $\gamma$ is regular and contains no reducibles induces a chain homotopy $\overrightarrow{h}(X, \mathfrak{s}, \gamma)$ from $\overrightarrow{\CM}(X, \mathfrak{s}, \omega)$ to $\overrightarrow{\CM}(X, \mathfrak{s}, -\omega)$. We define
\[\overrightarrow{\CMI}(X, \mathfrak{s}, \gamma) = \begin{pmatrix} \overrightarrow{\CM}(X, \mathfrak{s}, \omega) & 0 \\ \overrightarrow{h}(X, \mathfrak{s}, \gamma) & \overrightarrow{\CM}(X, \mathfrak{s}, -\omega) \end{pmatrix}\]
where elements of the involutive chain complex $a + Qb$ are identified with vectors of the form $(a, b)$. The induced map on homology is independent of $\gamma$.

\subsubsection{Proving Theorem~\ref{SWequiv1}} There will be two steps to proving Theorem~\ref{SWequiv1}: first, we will relate $\overrightarrow{\HMI}(X, \mathfrak{s}, \gamma)$ to $SW_{X, \mathfrak{s}}^{I}$ by showing that evaluating the former at $1 \in \widehat{\HMI}(S^3)$ gives the latter (this is a standard ``neck-stretching" argument). Then, we will relate $\mathfrak{m}_{X, \mathfrak{s}}^{I}$ to $\overrightarrow{\HMI}(X, \mathfrak{s}, \gamma)$ via a composition formula for the latter (through involutively admissible cuts).

\begin{theorem} Let $X$ be a closed, smooth four-manifold with spin structure $\mathfrak{s}$. Assume that $b_{2}^{+}(X) \geq 3$. Then $\overrightarrow{\HMI}(X, \mathfrak{s})(1) = SW_{X, \mathfrak{s}}^{I}$.
    
\end{theorem}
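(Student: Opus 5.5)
Our plan is to adapt the neck-stretching argument of Chapter 27 of \cite{Kronheimer_Mrowka_2007}, which identifies $\overrightarrow{\HM}(X, \mathfrak{s})(1)$ with $SW_{X, \mathfrak{s}}$, and to run it separately on the two diagonal entries and on the off-diagonal entry of the matrix $\overrightarrow{\CMI}(X, \mathfrak{s}, \gamma)$. Write $W = X \setminus (B^4 \sqcup B^4)$ with cylindrical ends, and take the generator $1 \in \widehat{\HMI}(S^3)$ to be represented by the lowest boundary-unstable reducible $\mathfrak{a}_0$ on the incoming end. Since $1$ has no $Q$-component, the image is
\[\overrightarrow{\CM}(X, \mathfrak{s}, \omega)(\mathfrak{a}_0) + Q\, \overrightarrow{h}(X, \mathfrak{s}, \gamma)(\mathfrak{a}_0).\]
Because $S^3$ has positive scalar curvature, the only critical points are the reducibles $\mathfrak{a}_i$ (unstable) and $\mathfrak{b}_j$ (stable), indexed by eigenvalues of the Dirac operator. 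The outgoing generators of $\widecheck{\HM}(S^3)$ correspond to $U^{-j}$, so the coefficient of $U^{-j}$ (resp.\ $QU^{-j}$) is the mod 2 count of the zero-dimensional moduli space $\mathcal{M}([\mathfrak{a}_0], W, [\mathfrak{b}_j])_{\omega}$ (resp.\ of its parametrized version over $\gamma$).

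\textbf{Step 1 (diagonal part).} We show that $\#\mathcal{M}([\mathfrak{a}_0], W, [\mathfrak{b}_j])_{\omega}$ vanishes unless $d$ is even and $j = d/2$, in which case it equals $\langle u_2^{d/2}, [\mathcal{M}(X, \mathfrak{s})_{\omega}]\rangle$. This is exactly the argument of Kronheimer--Mrowka. We glue in balls with positive scalar curvature metrics and stretch the necks. Asymptotics to $\mathfrak{a}_0$ and $\mathfrak{b}_j$ then impose on solutions of the closed manifold a decay condition on the spinor at the basepoint, of complex codimension $j$. The resulting count is the evaluation of $u_2^j$, where $u_2$ is represented by the zero locus of a section of the basepoint line bundle. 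The dimension count gives $2j = d$.

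\textbf{Step 2 (off-diagonal part).} We repeat Step 1 in the one-parameter family over $\gamma$. The chain homotopy $\overrightarrow{h}$ counts points of $\bigcup_t \mathcal{M}([\mathfrak{a}_0], W, [\mathfrak{b}_j])_{\gamma(t)}$ in total dimension zero, which forces $2j = d+1$. Neck-stretching uniformly in $t$ identifies this count with $\langle u_2^{(d+1)/2}, [\mathcal{M}(X, \mathfrak{s})_{\gamma}]\rangle$. This matches the definition of $SW^I_{X, \mathfrak{s}}$, with the exponent on $u_2$ read as $(d+1)/2$ and the term placed at $QU^{-(d+1)/2}$. When $d$ is even, the $Q$-part must vanish by parity of gradings, and the $U$-part reproduces $SW_{X, \mathfrak{s}}U^{-d/2}$.

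We also need to check the conventions. The identification of $\iota$ on $\widecheck{\CM}(S^3)$ and $\widehat{\CM}(S^3)$ should be the identity on reducibles, so that the conjugate perturbation $-\omega$ (which is $\jmath_*\omega$ for spin $\mathfrak{s}$) is the correct endpoint. Moreover, the $\mathbb{F}[U,Q]/Q^2$ structure on $S^3$ involves no extra homotopy terms, since on $S^3$ the map $1+\iota$ is nullhomotopic with a canonical zero homotopy.

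\textbf{Main obstacle.} The hard step is Step 2: making the neck-stretching compactness and gluing work uniformly over the parameter $t \in [0,1]$. The endpoint moduli spaces over $\omega$ and $-\omega$ must also contribute nothing extra to the boundary count. Since $d$ is odd and the endpoint spaces are regular, they are empty in the relevant degree. We would first try a parametrized version of the Kronheimer--Mrowka gluing theorem, with the regularity furnished by $b_2^+(X) \geq 3$, which avoids reducibles along $\gamma$. Finally, independence of $\gamma$ on both sides is available: on the left, because the map on homology is independent of $\gamma$; on the right, by Lemma~\ref{ok} and its generalization.
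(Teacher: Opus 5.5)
Your proposal follows essentially the same route as the paper, whose own proof is only a sketch: a parametrized version of the neck-stretching argument of Kronheimer--Mrowka's Proposition 27.4.1, identifying the $\omega$-count with $SW_{X,\mathfrak{s}}U^{-d/2}$ and the $\gamma$-count with the $Q$-term (and you are right to read the exponent in the odd-$d$ definition as $(d+1)/2$). One caution about the obstacle you identify: if you run this as a Stokes argument over (neck length)$\,\times\,\gamma$, then the faces over $\pm\omega$ are $(d+1)$-dimensional families in the neck length, and they need not be empty after cutting down by $u_2^{(d+1)/2}$. Instead they cancel mod $2$ under charge conjugation, as in Lemma~\ref{ok}; your dimension count only shows emptiness at a fixed neck length, which is enough only if you glue at a single large neck length and then invoke metric-independence of $SW^{I}_{X,\mathfrak{s}}$.
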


\begin{proof} The proof is entirely analogous to that of Proposition 27.4.1 in \cite{Kronheimer_Mrowka_2007}, so we omit the details. The idea is to ``stretch the neck" near the spherical ends of $X \setminus (B^4 \sqcup B^4)$. More precisely, the goal is to construct a compactified moduli space which fibers over the end-stretching parameter $S \in [0, \infty]$, show that the codimension 1 strata of the moduli space are precisely the moduli spaces $M(X, \mathfrak{s})^{I}$ (the fiber over $S = 0$) and $M([a_{-1}], X^{*}, \mathfrak{s}^{*}, [a_{0}])^{I}$ (contained in the fiber over $S = \infty$), and then apply Stokes' theorem (as it appears in the book) to say that these two strata must have the same count (mod 2). 

To clarify some notation: $a_{0}$ is the critical point in the blown-up configuration space $\mathcal{B}_{k}^{\sigma}(S^3, s_{0})$ corresponding to the first positive eigenvalue of the perturbed Dirac operator $D_{B, \mathfrak{q}}$, which in turn arises from the unique critical point $[B, 0]$ of the perturbed Chern-Simons-Dirac functional $\mathcal{L} + \mathfrak{q}$, where $\mathfrak{q}$ is a sufficiently small admissible perturbation.\end{proof}
Now, to finish off the proof of Theorem~\ref{SWequiv1}, we turn to proving a composition formula.
\begin{proposition} Let $X$ be a smooth, closed, oriented four-manifold with $b_{2}^{+} \geq 4$ and equipped with a self-conjugate $\text{spin}^{c}$ structure $\mathfrak{s}$, and let $N \subset X$ be an involutively admissible cut splitting $X$ into $X_{0}$ and $X_{1}$. Then
\[\overrightarrow{\HMI}(X, \mathfrak{s}) = \widecheck{\HMI}(X_{1}, \mathfrak{s} |_{X_{1}}) \circ \overrightarrow{\HMI}(X_{0}, \mathfrak{s}|_{X_{0}}).\]
\end{proposition}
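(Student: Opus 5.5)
We propose to adapt the proof of the composition law in Kronheimer--Mrowka (Proposition 27.4.3 / the argument around Lemma 27.4.2 in \cite{Kronheimer_Mrowka_2007}), which establishes $\overrightarrow{\HM}(X, \mathfrak{s}) = \widecheck{\HM}(X_1) \circ \overrightarrow{\HM}(X_0)$ for an admissible cut, to the involutive setting. Recall that on the chain level $\overrightarrow{\CMI}(X, \mathfrak{s}, \gamma)$ is the lower-triangular matrix with diagonal entries $\overrightarrow{\CM}(X, \pm\omega)$ and off-diagonal entry the homotopy $\overrightarrow{h}(X, \gamma)$. Similarly, $\widecheck{\CMI}(X_1)$ and $\overrightarrow{\CMI}(X_0)$ are triangular, built from a perturbation, its conjugate, and a path between them. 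The involution on the intermediate complex $\CMI(N)$ is encoded by the cone of $Q(1+\iota)$. So the claim splits into a diagonal part and an off-diagonal part.

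\textbf{Step 1 (diagonal terms).} We choose perturbations $\omega_0$ on $X_0$ and $\omega_1$ on $X_1$ with no reducibles. This is possible since $b_2^{+}(X_i) > 1$ by involutive admissibility, and it is a fortiori possible on $X_1$, whose outgoing end is $S^3$. We then stretch the neck along $N$. The standard gluing argument of Kronheimer--Mrowka then gives a chain homotopy
\[\overrightarrow{\CM}(X, \omega_0 \# \omega_1) \simeq \widecheck{\CM}(X_1, \omega_1) \circ \overrightarrow{\CM}(X_0, \omega_0).\]
Here the condition $\delta H^1(N) = 0$ ensures that $\mathfrak{s}$ is the unique glued spin$^c$ structure. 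Moreover, since $X_0$ and $X_1$ carry no reducibles, only irreducible broken trajectories through $N$ appear, so the composite is indeed $\widecheck{\CM}(X_1) \circ \overrightarrow{\CM}(X_0)$ with no bar-terms. The same argument applied to the conjugate perturbations $-\omega_0, -\omega_1$ handles the other diagonal entry.

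\textbf{Step 2 (off-diagonal term).} We must show that $\overrightarrow{h}(X, \gamma)$ agrees, up to homotopy, with the off-diagonal entry of the composite matrix. That entry is
\[\widecheck{h}(X_1)\,\overrightarrow{\CM}(X_0, \omega_0) + \widecheck{\CM}(X_1, -\omega_1)\,\overrightarrow{h}(X_0),\]
twisted appropriately by the involution $\iota_N = \Phi \circ \jmath_*$ on $\CM(N)$. To do this, we choose the path $\gamma$ on $X$ to be a concatenation: first move $\omega_0 \to -\omega_0$ on $X_0$ while holding $\omega_1$ fixed, then move $\omega_1 \to -\omega_1$ on $X_1$. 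Under neck-stretching, the parametrized moduli spaces over each half of this path glue to products of a one-parameter family on one side with ordinary moduli spaces on the other. This reproduces the two summands, with the conjugation on the $N$-end accounting for the $\iota_N$ factor. Independence of $\gamma$ (stated above) then lets us replace an arbitrary path by this concatenated one. Along the whole path, reducibles are avoided because each half has $b_2^{+} \geq 2$ and we only vary the perturbation on one side at a time. This is exactly where $b_2^{+}(X) \geq 4$ and involutive admissibility are needed.

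\textbf{Main obstacle.} The main difficulty is the compactness and gluing analysis for the parametrized family. Codimension-one boundary strata of the stretched one-parameter moduli spaces must be identified precisely with the terms of the matrix product plus $d$-exact terms. In particular, we must check that no strata involving reducibles on $N$ appear, which would contribute boundary-stable or boundary-unstable corrections, and that the continuation map for $\jmath_*$ on the neck matches the one used to define $\iota_N$. We would first try to quote the parametrized gluing theorems of Chapter 26 of \cite{Kronheimer_Mrowka_2007} for families over $[0,1]$, together with their use in Chapter 25 for showing that homotopies compose. We expect everything else to be formal bookkeeping with triangular matrices.
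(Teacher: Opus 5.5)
Your proposal follows the paper's proof essentially exactly. The diagonal entries come from the non-involutive Kronheimer--Mrowka composition law after stretching along $N$, and the off-diagonal entry comes from replacing the path from $\omega$ to $-\omega$ by a concatenation that varies the perturbation on one half at a time, so that each leg contributes one summand.

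Two small corrections. First, with your order of legs ($X_0$ first, then $X_1$) you actually obtain $\widecheck{\CM}(X_1,\omega_1)\circ\overrightarrow{h}(X_0)+\widecheck{h}(X_1)\circ\overrightarrow{\CM}(X_0,-\omega_0)$. To reproduce the lower-left entry of the matrix product literally, vary on $X_1$ first (holding $\omega_0$) and then on $X_0$ (holding $-\omega_1$); this is what the paper does, up to reversing the path. The two expressions differ by $dK+Kd$ with $K=\widecheck{h}(X_1)\circ\overrightarrow{h}(X_0)$, so they agree on homology anyway. Second, reducible trajectories on the neck $\mathbb{R}\times N$ do occur in the stretched limit, but they are accounted for by the $\overline{\partial}(N)$ terms already built into the to-flavor maps (e.g.\ the $\overline{\partial}^{s}_{u}(N)$ term in $\widecheck{\CM}(X_1)$), so there is nothing to rule out there.
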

\begin{proof} Let $\omega$ be a perturbing 2-form on $X$, and let $\gamma$ denote a choice of path from $\omega$ to $-\omega$. We will denote by $\mathfrak{s}_{i}$ the $\text{spin}^{c}$ structure $\mathfrak{s}$ restricted to $X_{i}$, by $\omega_{i}$ the perturbing 2-form $\omega$ restricted to $X_{i}$, and by $\gamma_{i}$ the path $\gamma$ restricted (at each time $t$) to $X_{i}$. The right side of the equation we're trying to establish can be written as
\[\begin{pmatrix} \widecheck{\CM}(X_1, \mathfrak{s}_{1}, \omega_{1}) & 0 \\ \widecheck{h}(X_1, \mathfrak{s}_{1}, \gamma_1) & \widecheck{\CM}(X_{1}, \mathfrak{s}_{1}, -\omega_1) \end{pmatrix} \cdot \begin{pmatrix} \overrightarrow{\CM}(X_0, \mathfrak{s}_{0}, \omega_{0}) & 0 \\ \overrightarrow{h}(X_0, \mathfrak{s}_{0}, \gamma_0) & \overrightarrow{\CM}(X_{0}, \mathfrak{s}_{0}, -\omega_0)\end{pmatrix}\]
Multiplying this out, we get $\widecheck{\CM}(X_1, \mathfrak{s}_{1}, \omega_{1}) \circ \overrightarrow{\CM}(X_0, \mathfrak{s}_{0}, \omega_{0})$ and $\widecheck{\CM}(X_{1}, \mathfrak{s}_{1}, -\omega_1) \circ \overrightarrow{\CM}(X_{0}, \mathfrak{s}_{0}, -\omega_0)$ along the diagonal, which are equal to $\overrightarrow{\CM}(X, \mathfrak{s}, \omega)$ and $\overrightarrow{\CM}(X, \mathfrak{s}, -\omega)$, respectively, by the non-involutive composition formula (Theorem 3.5.3 in \cite{Kronheimer_Mrowka_2007}). The upper-right entry clearly vanishes, and the lower-left entry is given by:
\[\widecheck{h}(X_1, \mathfrak{s}_{1}, \gamma_1) \circ \overrightarrow{\CM}(X_0, \mathfrak{s}_{0}, \omega_{0}) + \widecheck{\CM}(X_{1}, \mathfrak{s}_{1}, -\omega_1) \circ \overrightarrow{h}(X_0, \mathfrak{s}_{0}, \gamma_0).\]
Now, following the proof of the composition law in \cite{Kronheimer_Mrowka_2007}, we ``stretch the neck" along $N \times \mathbb{R}$ (identified with the normal bundle of $N$), and choose a perturbing 2-form $\widetilde{\omega}$ which is supported in the interior of the two halves $X_0$, $X_{1}$. Let $\widetilde{\gamma}$ be a path of perturbations given by concatenating two paths $\widetilde{\Gamma}_{0}$ and $\widetilde{\Gamma}_{1}$.  We define $\widetilde{\Gamma}_{0}$ so that, when restricted to $X_{1}$, it is a constant path with value $-\omega_1$ and, when restricted to $X_{0}$, it provides a path from $-\omega_{0}$ to $\omega_{0}$. Meanwhile, we define $\widetilde{\Gamma}_{1}$ so that, when restricted to $X_{0}$, it is a constant path with value $\omega_{0}$ and, when restricted to $X_{1}$, it provides a path from $-\omega_{1}$ to $\omega_1$. Now, we observe that the map on Floer homology induced by the path $\widetilde{\Gamma}_{0}$ is exactly $\widecheck{\CM}(X_{1}, \mathfrak{s}_{1}, -\widetilde{\omega}_1) \circ \overrightarrow{h}(X_0, \mathfrak{s}_{0}, \widetilde{\gamma}_0)$, while the map on Floer homology induced by the path $\widetilde{\Gamma}_{1}$ is exactly $\widecheck{h}(X_1, \mathfrak{s}_{1}, \widetilde{\gamma}_1) \circ \overrightarrow{\CM}(X_0, \mathfrak{s}_{0}, \widetilde{\omega}_{0})$. The map induced by $\widetilde{\gamma}$ is equal to the sum of the two, as desired.\end{proof}
\subsection{Relation to $SW^{Pin(2)}_{X, \mathfrak{s}}$}
Involutive monopole Floer theory, and more specifically the invariant $SW_{X, \mathfrak{s}}^{I}$, captures just some of the information afforded by the charge conjugation symmetry $\jmath$ which exists when $\mathfrak{s}$ is self-conjugate. Indeed, this is part of a larger symmetry with respect to the action of $Pin(2) = S^1 \cup jS^1 \subset \mathbb{H}$, where $j$ acts as $\jmath$. In \cite{baraglia2023mod}, Baraglia constructs a $Pin(2)$-equivariant Seiberg-Witten invariant $SW^{Pin(2)}_{X, \mathfrak{s}}$, which takes the form of a $Q$-equivariant map:
\[SW^{Pin(2)}_{X, \mathfrak{s}}: H^{*}_{Pin(2)}(pt) \cong \mathbb{F}[V, Q]/Q^3 \to H^{*}(\mathbb{T}(X); \mathbb{F})[Q]/(Q^{b_{2}^{+}(X)})\]
where $\mathbb{T}(X) = H^1(X; \mathbb{R})/H^1(X;\mathbb{Z})$ is the Picard torus associated to $X$. For simplicity, we shall assume that $X$ is simply-connected, so that this is a map:
\[SW^{Pin(2)}_{X, \mathfrak{s}}: \mathbb{F}[V, Q]/Q^3 \to \mathbb{F}[Q]/Q^{b_{2}^{+}(X)}.\]
The following theorem establishes the relation between $SW_{X, \mathfrak{s}}^{I}$ and $SW^{Pin(2)}_{X, \mathfrak{s}}$.
\begin{theorem}\label{baraginvol} Let $(X, \mathfrak{s})$ be a closed, simply-connected, spin four-manifold with $b_{2}^{+}(X) \geq 3$, and suppose that $d(\mathfrak{s}) \in \{0, -1\}$. Then $SW_{X, \mathfrak{s}}^{I} = [SW_{X, \mathfrak{s}}^{Pin(2)}(1)] \in \mathbb{F}[U, Q]/Q^2$, where $[SW_{X, \mathfrak{s}}^{Pin(2)}(1)]$ denotes $SW_{X, \mathfrak{s}}^{Pin(2)}(1)$ mod $Q^2$ (i.e. truncated after its first order $Q$ term).
\end{theorem}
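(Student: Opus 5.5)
The plan is to compute both sides directly from their definitions in terms of moduli spaces and compare coefficient by coefficient. Since $X$ is simply connected, $\mathbb{T}(X)$ is a point and Baraglia's base becomes $B = S(H^{+}(X)) \cong S^{b_2^+-1}$. The involution $\jmath$ acts on $B$ by the antipodal map, so $B/\jmath \cong \mathbb{RP}^{b_2^+-1}$. Also $H^{*}_{\mathbb{Z}_2}(B) \cong \mathbb{F}[Q]/Q^{b_2^+}$, with $Q$ the generator of $H^1(\mathbb{RP}^{b_2^+-1})$.

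Unwinding Baraglia's definition with the tautological chamber, $SW^{Pin(2)}_{X,\mathfrak{s}}(1)$ is the pushforward to $B/\jmath$ of the class dual to the parametrized moduli space. Here $\mathcal{M}_B := \bigcup_{x \in B} \mathcal{M}(X,\mathfrak{s})_{x}$ is the family moduli space over the sphere of chambers, realized as perturbations $\omega_x$ with $\omega_{-x} = -\omega_x$. Its quotient $\mathcal{M}_B/\jmath$ lies over $\mathbb{RP}^{b_2^+-1}$. Concretely, $SW^{Pin(2)}_{X,\mathfrak{s}}(1) = \sum_k c_k Q^k$, where $c_k$ is the mod 2 count (after pairing with the appropriate power of $u_2$, trivial when $d \in \{0,-1\}$) of points of $\mathcal{M}_B/\jmath$ lying over a generic $\mathbb{RP}^{b_2^+-1-k} \subset \mathbb{RP}^{b_2^+-1}$. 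Equivalently, one restricts the family to a $\jmath$-invariant great subsphere $S^{b_2^+-1-k}$ and counts points of $\mathcal{M}$ over it modulo $\jmath$, which requires $\dim = d + (b_2^+-1-k) = 0$. With $d \in\{0,-1\}$, only the truncation mod $Q^2$ is relevant for the comparison. I would first justify this geometric description from the Bauer--Furuta finite-dimensional approximation. Pulling back the Thom class $\tau_\eta$ along $\hat f$ and then pushing forward is Poincaré dual to the zero set of the section, which is precisely $\mathcal{M}_B$. The transversality achieved by the generic family $\omega_x$ makes this a standard Pontryagin--Thom identification, as in Baraglia--Konno.

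Next, split into the two cases. If $d = 0$, grading forces only the $Q^0$ term to match (the $Q^1$ coefficient would be counted over an $\mathbb{RP}^{b_2^+-2}$, of total dimension $b_2^+-2 \neq 0$ only through pushforward degree). More carefully, I'd identify the $Q^0$ coefficient with the count of $\mathcal{M}_B$ over a fiber of $B \to \mathbb{RP}^{b_2^+-1}$, namely a single point $\{x,-x\}$ modulo $\jmath$. This gives $\#\mathcal{M}_{\omega_x} = SW_{X,\mathfrak{s}}$, matching the definition of $SW^I$. If $d = -1$, the constant term vanishes for dimension reasons. The $Q^1$ coefficient is the count over the preimage of a generic $\mathbb{RP}^1 \subset \mathbb{RP}^{b_2^+-1}$, whose preimage in $B$ is a great circle $C$. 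Mod $\jmath$, this is the count over a half circle, i.e. a path from $\omega_x$ to $\omega_{-x} = -\omega_x$. That is exactly $\#\mathcal{M}(X,\mathfrak{s})_\gamma$, so it equals $SW^I_{X,\mathfrak{s}}$ up to the formal factor $QU^{-(d+1)/2} = Q$. Independence of the choice of half circle and of $x$ is provided by Lemma~\ref{ok}.

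The main obstacle is the first step: matching Baraglia's cohomological formula, including the degree conventions and the identification of $Q$ with $w_1$ of $B \to B/\jmath$, with the naive count over $\mathbb{RP}^k$. I would handle this by working in the $\mathbb{Z}_2$-equivariant setting with the free antipodal action, so that equivariant cohomology becomes ordinary cohomology of the quotient. The equivariant pushforward is then a pushforward along a fiber bundle over $\mathbb{RP}^{b_2^+-1}$, and the coefficient of $Q^k$ is read off by capping with the dual homology class $[\mathbb{RP}^{b_2^+-1-k}]$. One must also check that the generic $\jmath$-equivariant family of perturbations achieves regularity; this holds because $\jmath$ acts freely on the base.
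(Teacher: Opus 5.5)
Your argument is essentially the paper's proof. You identify $SW^{Pin(2)}_{X,\mathfrak{s}}(1)$, via the pulled-back Thom class, with the family moduli space over $S(H^{+})/\jmath \cong \mathbb{RP}^{b_2^+-1}$, and for $d=-1$ you read off the $Q$-coefficient over the preimage of an $\mathbb{RP}^1$, a great circle whose half is a path from $\omega$ to $-\omega$; the paper phrases your ``capping with $[\mathbb{RP}^1]$'' as cupping with $Q^{b_2^+-2}$ plus the push--pull formula, which is the same computation. One correction: your general description has the indexing reversed. The coefficient of $Q^k$ is the count over a generic $\mathbb{RP}^k$, with dimension condition $d+k=0$, not over $\mathbb{RP}^{b_2^+-1-k}$, which is merely the Poincar\'e dual of $Q^k$. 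Your case-by-case computations already use the correct assignment.
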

\begin{proof} Firstly, if $d(\mathfrak{s}) = 0$, then this is clear, since $SW_{X, \mathfrak{s}}^{I}$ is just the ordinary Seiberg-Witten invariant in this case, and by degree considerations $[SW_{X, \mathfrak{s}}^{Pin(2)}(1)]$ consists of just the $Q^0$ term, which is the ordinary Seiberg-Witten invariant by virtue of the fact that the Bauer-Furuta stable cohomotopy invariant recovers the Seiberg-Witten invariant in cohomology.

Suppose henceforth that $d(\mathfrak{s}) = -1$. We will follow Baraglia's notation throughout this proof; specifically, $f: S^{V, U} \to S^{V', U'}$ denotes the $Pin(2)$-equivariant monopole map, where $S^{V, U}$ and $S^{V', U'}$ are fiberwise one-point compactifications of the vector bundles $V \oplus U$ and $V' \oplus U'$, respectively (described in section 2.1.5 and 2.1.6 of the introduction), over the base space $B = S(H^{+}(X))$ (the unit sphere inside the maximal $Q_{X}$-positive-definite subspace of $H^2(X)$). Furthermore, we denote by $Y^{V, U}$ the compact manifold with boundary given by $(S^{V, U} \setminus \text{nbhd}(S^{U}))/S^1$. 

By definition, $SW_{X, \mathfrak{s}}^{Pin(2)}(1)$ is equal to $(\pi_{V, U})_{*}(f^{*}(\tau_{V', U'}^{\phi_{\text{taut}}}))$, where 
\[(\pi_{V, U})_{*}: H^{*}_{\mathbb{Z}_{2}}(Y^{V, U}, \partial Y^{V, U}) \to H^{*}_{\mathbb{Z}_{2}}(B)\]
is the pushforward map induced by the projection and $\tau_{V', U'}^{\phi_{\text{taut}}} \in H^{*}_{Pin(2)}(S^{V', U'}, S^{U'})$ is the (lifted) Thom class of the normal bundle of the tautological section $\phi_{\text{taut}}: S(H^{+}(X)) \to S(H^{+}(X)) \times H^{+}(X)$ given by $\alpha \mapsto (\alpha, \alpha)$. Note that, since $X$ is simply-connected, the $\mathbb{Z}_2$ action on both the base space $B$ and on $Y^{V, U}$ is free. The quotient of $B$ with respect to the action is $\mathbb{RP}^{b_{2}^{+}(X) - 1}$. We will denote the quotient of $Y^{V, U}$ by $\overline{Y^{V, U}}$. Thus, we may consider $(\pi_{V, U})_{*}$ to be a map from $H^{*}(\overline{Y^{V, U}}, \partial \overline{Y^{V, U}})$ to $H^{*}(\mathbb{RP}^{b_{2}^{+}(X) - 1})$.

Since the Thom class of the normal bundle of a submanifold is dual to the homology class of the submanifold itself, we have that $f^{*}(\tau_{V', U'}^{\phi_{\text{taut}}})$ is dual in $(\overline{Y^{V, U}}, \partial \overline{Y^{V, U}})$ to $f^{-1}(\phi_{\text{taut}}(B))$ (possibly after perturbing to achieve transversality). Meanwhile, we can consider the pullback $(\pi_{V, U})^{*}(Q^{b_{2}^{+}(X)-2})$, where $Q \in \mathbb{F}[Q]/(Q^{b_{2}^{+}(X)}) = H^{*}(\mathbb{RP}^{b_{2}^{+}(X) - 1})$. The class $(\pi_{V, U})^{*}(Q^{b_{2}^{+}(X)-2})$ is dual to $\pi_{V, U}^{-1}(PD(Q^{b_{2}^{+}(X)-2})) = \pi_{V, U}^{-1}(\mathbb{RP}^{1})$, where $\mathbb{RP}^{1}$ generates the first homology of $\mathbb{RP}^{b_{2}^{+}(X) - 1}$. Note that $f^{-1}(\phi_{\text{taut}}(B))$ and $\pi_{V, U}^{-1}(\mathbb{RP}^{1})$ are complementary-dimensional by virtue of the fact that $d(\mathfrak{s}) = -1$. We have the following equality (mod 2):
\[\#\left(f^{-1}(\phi_{\text{taut}}(B)) \cap \pi_{V, U}^{-1}(\mathbb{RP}^{1})\right) = \langle f^{*}(\tau_{V', U'}^{\phi_{\text{taut}}}) \smile (\pi_{V, U})^{*}(Q^{b_{2}^{+}(X)-2}), [\overline{Y^{V, U}}, \partial \overline{Y^{V, U}}] \rangle.\]
Fix a choice of perturbation $\omega \in \Omega^{+}(X)$. We may choose $\omega$ so that it lies in $S(\mathcal{H}^{+}(X))$, the unit sphere in the space of positive harmonic 2-forms on $X$. Let $\gamma \subset S(\mathcal{H}^{+}(X))$ be a choice of path from $\omega$ to $-\omega$. We can think of $\mathbb{RP}^1 \subset \mathbb{RP}^{b_{2}^{+}(X) - 1}$ as being the image under the $\mathbb{Z}_{2}$-quotient of the path $\gamma$. A point $x \in \pi_{V, U}^{-1}(\mathbb{RP}^{1})$ is in $f^{-1}(\phi_{\text{taut}}(B))$ if and only if it is a solution to the Seiberg-Witten equations perturbed by $\pi_{V, U}(x) \in S(H^{+}(X))$. Thus, with generically chosen $\omega$ (so that there is no solution over $\omega$), the count $\#\left(f^{-1}(\phi_{\text{taut}}(B)) \cap \pi_{V, U}^{-1}(\mathbb{RP}^{1})\right)$ is precisely equal to $SW_{X, \mathfrak{s}}^{I}$. It follows that the cup product $f^{*}(\tau_{V', U'}^{\phi_{\text{taut}}}) \smile (\pi_{V, U})^{*}(Q^{b_{2}^{+}(X)-2})$ is equal to the unique nontrivial element of the top cohomology of $(\overline{Y^{V, U}}, \partial \overline{Y^{V, U}})$ with $\mathbb{Z}_2$ coefficients if and only if $SW_{X, \mathfrak{s}}^{I} = Q$, and it is zero if and only if $SW_{X, \mathfrak{s}}^{I} = 0$. 

On the other hand, we have that:
\[SW_{X,\mathfrak{s}}^{Pin(2)}(1) = (\pi_{V, U})_{*}(f^{*}(\tau_{V', U'}^{\phi_{\text{taut}}}))\]
so that, by the push-pull formula:
\[(\pi_{V, U})_{*}\left(f^{*}(\tau_{V', U'}^{\phi_{\text{taut}}}) \smile (\pi_{V, U})^{*}(Q^{b_{2}^{+}(X)-2})\right) = SW_{X,\mathfrak{s}}^{Pin(2)}(1) \smile Q^{b_{2}^{+}(X)-2}.\]
The pushforward maps the top cohomology of $(\overline{Y^{V, U}}, \partial \overline{Y^{V, U}})$ isomorphically onto the top cohomology of $\mathbb{RP}^{b_{2}^{+}(X) - 1}$, so we conclude that: $SW_{X,\mathfrak{s}}^{Pin(2)}(1) = Q$ if and only if
\[(\pi_{V, U})_{*}\left(f^{*}(\tau_{V', U'}^{\phi_{\text{taut}}}) \smile (\pi_{V, U})^{*}(Q^{b_{2}^{+}(X)-2})\right) = Q^{b_{2}^{+}(X)-1}\]
which, by our argument above, is in turn true if and only if $SW_{X, \mathfrak{s}}^{I} = Q$. Since both $SW_{X, \mathfrak{s}}^{I}$ and $SW_{X, \mathfrak{s}}^{Pin(2)}$ can only be either $Q$ or $0$ in this case, this completes the proof of the theorem. \end{proof}

\section{Stable Adjunction for General Surfaces}
\begin{proposition}\label{vanishred} Let $\mathfrak{s}$ be a self-conjugate $\text{spin}^c$ structure on a three-manifold $Y$. Suppose that $\HM_{\text{red}}(Y, \mathfrak{s}) = 0$ and $\overline{\iota}_{*} = 1_{\overline{\HM}(Y, \mathfrak{s})}$. Then $\HMI_{\text{red}}(Y, \mathfrak{s}) = 0$.
\end{proposition}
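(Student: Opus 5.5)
We need $\HMI_{\text{red}}(Y,\mathfrak{s}) := \ker(\overline{\HMI} \to \widehat{\HMI})$, or equivalently $\operatorname{coker}(\widecheck{\HMI}\to\widehat{\HMI})$, to vanish. The plan is to show that the involutive long exact sequence
\[\cdots \to \widecheck{\HMI}(Y,\mathfrak{s}) \overset{j^I_*}{\la} \widehat{\HMI}(Y,\mathfrak{s}) \overset{p^I_*}{\la} \overline{\HMI}(Y,\mathfrak{s}) \overset{i^I_*}{\la} \widecheck{\HMI}(Y,\mathfrak{s}) \to \cdots\]
has $p^I_*$ injective, or dually $j^I_*$ surjective. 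We compare it with the ordinary sequence using the short exact sequences of mapping cones
\[0 \to Q\CM^{\circ}[-1] \to \CMI^{\circ} \to \CM^{\circ} \to 0,\]
which are natural in $\circ$. These give a $3\times 3$ diagram of long exact sequences, exactly as in Lemma~\ref{boundarycommute}, and the rows
\[\HM^{\circ} \overset{Q(1+\iota^\circ_*)}{\la} Q\HM^{\circ} \la \HMI^{\circ} \la \HM^{\circ} \overset{Q(1+\iota_*^\circ)}{\la} \cdots\]
are compatible with $i_*, j_*, p_*$.

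\textbf{Step 1.} Use $\HM_{\text{red}} = 0$. Then $p_*:\widehat{\HM}\to\overline{\HM}$ is injective and $j_*:\widecheck{\HM}\to\widehat{\HM}$ is zero, so each bar sequence splits as $0\to \widehat{\HM}\to\overline{\HM}\to\widecheck{\HM}\to 0$.

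\textbf{Step 2.} Use $\overline{\iota}_* = 1$. The bar row connecting map $Q(1+\overline\iota_*)$ vanishes, so
\[0\to Q\overline{\HM}[-1]\to\overline{\HMI}\to\overline{\HM}\to 0\]
is short exact. Since $\iota$ commutes with $i_*, p_*$ up to homotopy (the continuation maps respect the from/to/bar sequences) and $p_*$ is injective, $p_*\iota_* = \overline{\iota}_* p_* = p_*$ gives $\widehat{\iota}_*=1$. Dually, surjectivity of $i_*$ gives $\widecheck{\iota}_* = 1$. So all three involutive groups are extensions with vanishing connecting maps.

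\textbf{Step 3.} Show $p^I_*$ is injective by a diagram chase. Take $x\in\widehat{\HMI}$ with $p^I_*x=0$. Its image in $\widehat{\HM}$ maps to zero in $\overline{\HM}$, hence is zero by Step 1, so $x$ comes from some $Qy \in Q\widehat{\HM}$. Then $Qp_*(y)$ maps to zero in $\overline{\HMI}$. Since $Q\overline{\HM}\to\overline{\HMI}$ is injective by Step 2, $p_*y=0$, so $y=0$ and $x=0$.

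The main obstacle is the chain-level compatibility: that $\CMI^{\circ}$ fits into a morphism of short exact sequences intertwining $i,j,p$, with $\iota$ commuting with these maps on the nose, or up to homotopies compatible enough that the induced maps on mapping-cone homology commute. I would handle this by choosing the perturbation $\mathfrak p$ to be $\jmath$-invariant near the reducibles, or by using that Lin's construction is functorial for the from/to/bar triple. Then $\iota$ is a morphism of the triple and the mapping cones inherit the structure.
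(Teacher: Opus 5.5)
Your argument is correct and is essentially the paper's proof. It uses the same diagram chase: injectivity of $p_*$ (from $\HM_{\text{red}}=0$) together with injectivity of $Q\overline{\HM}[-1]\to\overline{\HMI}$ (from $\overline{\iota}_*=1$ over $\mathbb{F}$), and your Step 2 conclusions $\widehat{\iota}_*=\widecheck{\iota}_*=1$ are true but not needed. One unused slip in your setup: by exactness $\ker p^I_*=\operatorname{im} j^I_*$, so $p^I_*$ being injective is equivalent to $j^I_*=0$, not to $j^I_*$ being surjective.
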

\begin{proof} Consider the commutative diagram:
\[\begin{tikzcd}
	& {Q\widehat{\textit{HM}}(Y, \mathfrak{s})[-1]} & {\widehat{\textit{HMI}}(Y, \mathfrak{s})} & {\widehat{\textit{HM}}(Y, \mathfrak{s})} \\
	{\overline{\textit{HM}}(Y, \mathfrak{s})} & {Q\overline{\textit{HM}}(Y, \mathfrak{s})[-1]} & {\overline{\textit{HMI}}(Y, \mathfrak{s})} & {\overline{\textit{HM}}(Y, \mathfrak{s})}
	\arrow["{p_{*}^{I}}", from=1-3, to=2-3]
	\arrow["{p_{*}}", from=1-2, to=2-2]
	\arrow["{p_{*}}", from=1-4, to=2-4]
	\arrow["{\widehat{g}_{*}}", from=1-2, to=1-3]
	\arrow["{\widehat{h}_{*}}", from=1-3, to=1-4]
	\arrow["{\overline{g}_{*}}", from=2-2, to=2-3]
	\arrow["{\overline{h}_{*}}", from=2-3, to=2-4]
	\arrow["{Q(1+\overline{\iota}_{*})}", from=2-1, to=2-2]
\end{tikzcd}\]
The monopole analogue of Lemma~\ref{descend} (for which the same proof goes through) implies that $\widehat{h}_{*}(\widehat{\HMI}_{\text{red}}(Y, \mathfrak{s})) \subset \widehat{\HM}_{\text{red}}(Y, \mathfrak{s}) = 0$, which by exactness of the top row implies that $\widehat{\HMI}_{\text{red}}(Y, \mathfrak{s}) \subset \text{im }\widehat{g}_{*}$. Meanwhile, focusing on the leftmost square, notice that $i_{*}$ is injective since $\ker p_{*} = \widehat{\HM}_{\text{red}} = 0$ and $\overline{g}_{*}$ is injective since $\ker \overline{g}_{*} = \text{im } Q(1 + \overline{\iota}_{*}) = 0$, hence their composition is injective. So the map obtained by going around the square in the other direction ought to also be injective. In particular, $p^{I}_{*}$ is injective on the image of $\widehat{g}_{*}$. In other words, $\text{im }\widehat{g}_{*} \cap \ker p^{I}_{*} = 0$. But $\ker p^{I}_{*} = \widehat{\HMI}_{\text{red}}(Y, \mathfrak{s})$ by definition, so 
\[0 = \text{im }\widehat{g}_{*} \cap \widehat{\HMI}_{\text{red}}(Y, \mathfrak{s}) = \widehat{\HMI}_{\text{red}}(Y, \mathfrak{s}),\]
by virtue of the fact that $\widehat{\HMI}_{\text{red}}(Y, \mathfrak{s}) \subset \text{im }\widehat{g}_{*}$. \end{proof}
Note that the Heegaard Floer analogue of Proposition~\ref{vanishred} holds as well, with an identical proof. We stated it using monopole Floer theory because the author is only aware of how to show that $\overline{\iota}_{*}$ is the identity under sufficiently general conditions on the monopole side. 

The following proposition provides the final substantial technical ingredient for proving Theorem~\ref{SWstabadjintro}.
\begin{proposition}\label{tech} Let $Y$ be a closed, oriented three-manifold with vanishing triple cup product (i.e. the map $\Lambda^{*}H^1(Y; \mathbb{Z}) \to \mathbb{Z}$ given by $(\alpha_1, \alpha_2, \alpha_3) \mapsto \langle \alpha_1 \cup \alpha_2 \cup \alpha_3, [Y] \rangle$ is zero), and let $\mathfrak{s}$ be a self-conjugate $\text{spin}^{c}$ structure on $Y$. Then the involution $\overline{\iota}_{*}: \overline{\HM}(Y, \mathfrak{s}) \to \overline{\HM}(Y, \mathfrak{s})$ is the identity.
\end{proposition}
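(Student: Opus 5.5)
We will compute $\overline{\HM}(Y,\mathfrak{s})$ via the coupled Morse model of the previous section and show that $\overline{\iota}$ acts trivially on it. Choose a Morse function $f$ on $\mathbb{T}(Y)$ and a perturbation $\nabla(f\circ p)+\mathfrak{q}$. By the last proposition of Section 2, $\overline{\CM}(Y,\mathfrak{s})$ is then the coupled Morse complex $\overline{C}_*(\mathbb{T}(Y),D_{-,\mathfrak{q}})$. Charge conjugation $\jmath$ acts on reducibles $[A_0+a]\mapsto[A_0-a]$, i.e. by inversion $\tau\colon x\mapsto -x$ on $\mathbb{T}(Y)$. It intertwines the family of Dirac operators $D_{A,\mathfrak{q}}$ with the family $D_{\tau(A),\jmath_*\mathfrak{q}}$ via the antilinear map $\jmath$ on spinors, which preserves eigenvalues. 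Hence $\jmath_*$ identifies $\overline{C}_*(\mathbb{T}(Y),D_{-,\mathfrak{q}},f)$ with $\overline{C}_*(\mathbb{T}(Y),\tau^*D_{-,\jmath_*\mathfrak{q}},f\circ\tau)$. Composing with the continuation map, $\overline{\iota}_*$ becomes the map on coupled Morse homology induced by the pair $(\tau,\jmath)$ covering the diffeomorphism $\tau$. The first step is to justify that continuation maps in $\overline{\CM}$ with this class of perturbations agree with coupled Morse continuation maps. This should follow from the same identification of flow lines used to prove the last proposition of Section 2.

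The second step is to use the vanishing triple cup product. Kronheimer--Mrowka (Chapters 34--35) show that $\overline{H}_*(\mathbb{T}(Y),L)$ is computed by a spectral sequence whose $E_2$ page is $H_*(\mathbb{T}(Y))\otimes\mathbb{F}[U,U^{-1}]$. The $d_3$ differential is determined by the triple cup product, and when the triple cup product vanishes the sequence degenerates. In that case the family $D_{-,\mathfrak{q}}$ is homotopic, as a family of self-adjoint Fredholm operators, to one whose classifying map $\mathbb{T}(Y)\to U(\infty)$ is null-homotopic. Indeed, the classifying map's class in $K^1(\mathbb{T})$ is detected in degree 3 by the triple cup product, and over $\mathbb{F}$ only this matters for the differential. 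I would therefore like to argue directly that the family is homotopic to a constant family $L_0$, so that $\overline{H}_*\cong H_*(\mathbb{T}(Y);\mathbb{F})\otimes\mathbb{F}[U,U^{-1}]$, natural in bundle maps. This is the step I expect to be the main obstacle. Torsion-free degeneracy is what KM prove, and the ``with $\mathbb{Z}$ coefficients'' remark in Section 2 suggests this is where integral results about the complex are needed. If a homotopy to a constant family is not available, I would instead work with the filtration by Morse index on $\mathbb{T}$. The spectral sequence collapses at $E_2$, and the induced map on $E_2=E_\infty$ is $\tau_*\otimes(\text{action on }U\text{-tower})$. I would then show that, because the $E_\infty$ page is free over $\mathbb{F}[U,U^{-1}]$, a filtered map acting as the identity on the associated graded is an isomorphism. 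It would still remain to show it is the identity, not merely unipotent, which needs extra care.

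The third step is to compute the action on $E_2$. On $H_k(\mathbb{T}(Y);\mathbb{F})=\Lambda^k H_1$, inversion acts by $(-1)^k$, which is the identity mod 2. On the $U$-tower, $\jmath$ is antilinear and preserves the spectral labeling, since it maps the $\lambda_i$-eigenspace to itself. So the generator $(q,i)$ goes to $(\tau q,i)$ with no grading shift, and $\overline{\iota}$ commutes with $U$. With $\mathbb{F}$ coefficients, the action on the associated graded is therefore the identity. To upgrade this to the identity on $\overline{\HM}$ itself, I would use the constant-family model from the second step. There $\overline{H}_*\cong H_*(\mathbb{T})\otimes H_*(\text{eigen-tower})$ naturally, and $\overline{\iota}_*=\tau_*\otimes\mathrm{id}=\mathrm{id}$. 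The extension ambiguity is killed because the isomorphism is natural for bundle maps of families homotopic to constant ones.
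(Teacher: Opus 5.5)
\textbf{The gap is in your third step.} It sits exactly where you flag it yourself (``identity, not merely unipotent''), and the sentence meant to close it does not work.

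Your second step's conclusion is actually fine, but not for the reason you give. By the families index theorem, the Chern character of the Dirac family over $\mathbb{T}$ is concentrated in degree $3$ and equals the triple cup product. Since $K^1(\mathbb{T})$ is torsion-free, the family is homotopic to a constant one.

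The trouble is that an identification $\overline{H}_*(\mathbb{T},L)\cong H_*(\mathbb{T};\mathbb{F})\otimes\mathbb{F}[U,U^{-1}]$ obtained from a null-homotopy $\gamma$ of $L=D_{-,\mathfrak{q}}$ is not canonical.
\begin{itemize}
\item Two null-homotopies differ by a loop of families, classified by $[\mathbb{T},\Omega U(\infty)]\cong K^0(\mathbb{T})$.
\item Such a loop acts on coupled Morse homology by a unipotent automorphism $\mathrm{Id}+U^{-1}(\,\cdot\cap c_1(\xi))+\cdots$. This is nontrivial in general once $b_1\geq 2$; the nonzero $d_3$ that Kronheimer--Mrowka find for $T^3=T^2\times S^1$ can be read as exactly such a monodromy over $T^2$.
\item In your model, $\overline{\iota}_*$ equals $\tau_*\otimes\mathrm{id}$ composed with the monodromy of the loop formed by $\gamma$, the continuation path from $\jmath_*\mathfrak{q}$ to $\mathfrak{q}$, and $\jmath_*\gamma^{-1}$. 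Naturality under bundle maps says nothing about the class of that loop.
\end{itemize}
The natural way to make the loop trivial is a $\jmath$-equivariant null-homotopy. That is a question about a Real/quaternionic $K$-theory class of $(\mathbb{T},\tau)$, which the ordinary $K^1$-class (the triple cup product) does not control.

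You also cannot finish mod $2$ by using that $\overline{\iota}_*$ is an involution. We have $(\mathrm{Id}+N)^2=\mathrm{Id}+N^2$, and, for example, $N=U^{-1}(\,\cdot\cap c)$ with $c$ a generator of $H^2(T^2)$ squares to zero.

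\textbf{How the paper handles the unipotent term.} It chooses the symmetric perfect Morse function $f=\sum_i\cos(2\pi r_i)$. Then $f\circ\jmath^{-1}=f$, every critical point is $\tau$-fixed, and the leading term $\Phi_0\circ\jmath_*$ is the identity on chains. It then passes to $\mathbb{Z}$ coefficients. There, by Kronheimer--Mrowka and the vanishing triple cup product, the coupled Morse differential is zero, so $(\overline{\iota}^{\mathbb{Z}})^2=\mathrm{Id}$ holds on the nose. Expanding $\overline{\iota}^{\mathbb{Z}}=\mathrm{Id}+\sum_{j>0}U^{-j}\overline{\iota}_{2j}$ gives $2\overline{\iota}_{2j}=0$ inductively, and torsion-freeness then gives $\overline{\iota}_{2j}=0$.

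If you adopt that route, check the signs carefully.
\begin{itemize}
\item Over $\mathbb{Z}$, charge conjugation reverses the circle bundle defining $u_2$ (so $\jmath^*u_2=-u_2$). Hence $\overline{\iota}^{\mathbb{Z}}$ anticommutes with $U$.
\item Its leading term is therefore a sign operator $\epsilon$ that alternates with the power of $U$.
\item The first-order term shifts that power by one, so it anticommutes with $\epsilon$.
\item Consequently the $U^{-1}$-coefficient of $(\overline{\iota}^{\mathbb{Z}})^2=\mathrm{Id}$ is satisfied automatically, instead of yielding $2\overline{\iota}_2=0$.
\end{itemize}
So the integral trick as written does not by itself dispose of the first-order term. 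In either approach, some genuine input about that term (equivalently, about the loop of families above) is still needed.
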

\begin{proof} Let $\mathbb{T}$ be the torus of reducible solutions to the Seiberg-Witten equations on $Y$ up to gauge, identified with $\mathbb{T}(Y) = H^1(Y; \mathbb{R})/H^1(Y; \mathbb{Z})$ by fixing a base solution $[(A_0, 0)]$. Let $f: \mathbb{T} \to \mathbb{R}$ be a Morse function on $\mathbb{T}$, and let $\mathfrak{q}$ be a tame perturbation with gradient zero on the reducible locus such that at every critical point $[(A, 0)] \in \mathbb{T}$ of $f$, the corresponding Dirac operator $D_{A, \mathfrak{q}}$ has simple spectrum. Let $\mathfrak{p} = \text{grad}(f \circ p) + \mathfrak{q}$, where $p: \mathcal{B}(Y, \mathfrak{s}) \to \mathbb{T}$ is the retraction $[(A_0 + a \otimes 1_{S}, \phi)] \mapsto [a_{\text{harm}}] \in \mathbb{T}(Y)$. The chain complex $\overline{\CM}(Y, \mathfrak{s}, \mathfrak{p})$ is precisely the coupled Morse chain complex $\overline{C}_{*}(\mathbb{T}, D_{-, \mathfrak{q}}, f; \mathbb{F})$ associated to the Morse function $f: \mathbb{T} \to \mathbb{R}$ and the family of Dirac operators parametrized by $\mathbb{T}$ via $[(A, 0)] \mapsto D_{A, \mathfrak{q}}$.  As an $\mathbb{F}[U]$-module, we have:
\[\overline{C}_{*}(\mathbb{T}, D_{-, \mathfrak{q}}, f; \mathbb{F}) \cong C_{*}^{\text{Morse}}(\mathbb{T}, f) \otimes \mathbb{F}[U, U^{-1}]\]
where $U$, as usual, has degree $-2$. In this isomorphism, we have identified the generator $([(A, 0)], [\phi_{i}])$ on the left side, where $[(A, 0)] \in \text{Crit}(f)$ and $\phi_{i}$ is the eigenvector associated to the $i$th eigenvalue $\lambda_{i}$ of $D_{A, \mathfrak{q}}$, with $[(A, 0)] \otimes U^i$ (which we will write as $[(A, 0)]U^{-i}$).

The charge conjugation map $\jmath|_{\mathbb{T}}: \mathbb{T} \to \mathbb{T}$ acts as the inversion map with respect to the group structure on $\mathbb{T} = H^1(Y; \mathbb{R})/H^1(Y; \mathbb{Z})$. In other words, the map is reflection on each $S^1$ factor. It induces a canonical isomorphism of chain complexes $\jmath_{*}: \overline{\CM}(Y, \mathfrak{s}, \mathfrak{p}) \to \overline{\CM}(Y, \mathfrak{s}, \jmath_{*}\mathfrak{p})$ (by sending generators of the former to generators of the latter, as a map between configuration spaces), which in the framework of the coupled Morse complex is an isomorphism:
\[\jmath_{*}: \overline{C}_{*}(\mathbb{T}, D_{-, \mathfrak{q}}, f) \overset{\cong}{\la} \overline{C}_{*}(\mathbb{T}, D_{-, \jmath_{*}\mathfrak{q}}, f \circ \jmath^{-1}).\]
To be precise, $\jmath_{*}$ identifies the generator $([(A, 0)], [\phi_{i}])$ on the left side with the generator $(\jmath([(A, 0)]), [\psi_{i}])$ on the right side, where $\psi_{i}$ is the eigenvector associated to the $i$th eigenvalue $\mu_{i}$ of $D_{-, \jmath_{*}\mathfrak{q}}$.

A cylinder $Y \times \mathbb{R}_t$, with perturbation $\jmath_{*}\mathfrak{p}$ whenever $t \leq 0$ (the incoming end) and perturbation $\mathfrak{p}$ whenever $t \geq 1$ (the outgoing end), along with interior perturbation interpolating between the two, induces a chain homotopy equivalence on the Floer chain complexes $\overline{\CM}(Y, \mathfrak{s}, \jmath_{*}\mathfrak{p}) \to \overline{\CM}(Y, \mathfrak{s}, \mathfrak{p})$. We can choose the interpolating family of perturbations to be of the form:
\[\mathfrak{p}_{t} = \text{grad}(f_{t} \circ p) + \mathfrak{q}_{t}\]
where $f_{t} = f \circ \jmath^{-1}$ whenever $t \leq -2$, $f_{t} = f$ whenever $t \geq -1$, $\mathfrak{q}_{t} = \mathfrak{q} \circ \jmath$ whenever $t \leq 0$, and $\mathfrak{q}_{t} = \mathfrak{q}$ whenever $t \geq 1$ (so we first modify the Morse function on $\mathbb{T}$, and then modify the perturbation orthogonal to $\mathbb{T}$). In this case, the interpolating family gives rise to a coupled Morse continuation map (a chain homotopy equivalence):
\[\Phi_{L(t, -), f_t}: \overline{C}_{*}(\mathbb{T}, D_{-, \jmath_{*}\mathfrak{q}}, f \circ \jmath^{-1}; \mathbb{F}) \to \overline{C}_{*}(\mathbb{T}, D_{-, \mathfrak{q}}, f; \mathbb{F}).\]
Here $L(t, -)$ denotes the homotopy of families of self-adjoint Fredholm operators over $\mathbb{T}$ induced by $\mathfrak{q}_{t}$, i.e. $L(t, -) = D_{-, \mathfrak{q}_t}$. The map $\Phi_{L(t, -), f_t}$ is exactly as described in Proposition 33.3.8 of \cite{Kronheimer_Mrowka_2007}, except now we are changing the Morse function as well, so it is given by a composition of: (1) a standard Morse continuation map (inducing the canonical isomorphism on Morse homology given by changing the Morse function); and (2) a coupled Morse continuation map given by counting (projective classes of) solutions $(\gamma, [\phi])$ to the system:
\begin{equation}\label{morse} \frac{d}{dt}\gamma + (\text{grad } f)_{\gamma(t)} = 0 \end{equation} \begin{equation} \gamma^{*}(\nabla)\phi + (L(t, \gamma(t))\phi)dt = 0. \end{equation}
Here, $\nabla$ is a fixed connection on the principal $U(L^{2}(S):H^1(S))$-bundle associated to the pair of Hilbert bundles $(\mathcal{H}, \mathcal{H}_{1}) \cong (\mathbb{T} \times L^2(S), \mathbb{T} \times H^1(S))$ over $\mathbb{T}$.

Our objective is to show that $\overline{\iota} := \Phi_{L(t, -), f_{t}} \circ \jmath_{*}: \overline{C}_{*}(\mathbb{T}, D_{-, \mathfrak{q}}, f) \to \overline{C}_{*}(\mathbb{T}, D_{-, \mathfrak{q}}, f)$ induces the identity on homology. We can decompose $\Phi_{L(t,-), f_t}$ as follows:
\[\Phi_{L(t,-), f_t} = \Phi_0 + U^{-1}\Phi_{2} + U^{-2}\Phi_{4} + \dots\]
where $\Phi_{i}: C_{*}^{\text{Morse}}(\mathbb{T}, f\circ \jmath^{-1}) \to C_{*}^{\text{Morse}}(\mathbb{T}, f)$ decreases degree by $i$. The fact that there are no terms of the form $U^n \Phi_{-2n}$ for $n > 0$ is due to the fact that the coupled Morse continuation counts solutions $(\gamma, [\phi])$ where $\gamma$ must satisfy equation~\ref{morse}, which is the \textit{downward} gradient flow of $f$. We remark that $\Phi_0$ is just the standard Morse continuation map associated to $f_{t}$. Moreover, the induced map $\overline{\iota}_{*}$ on homology is independent of the choice of path $f_{t}$ and dependent only on the homotopy class of the path $L(t, -) = D_{-, \mathfrak{q}_{t}}$; the same is true for the induced maps $(\Phi_{i} \circ \jmath_{*})_{*}$ on homology for all $i$. We first show that $\Phi_{0} \circ \jmath_{*}$ is the identity on homology.

\begin{lemma}\label{iden} Given any closed, oriented three-manifold $Y$, the map $\Phi_0 \circ \jmath_{*}$ induces the identity on homology. In fact, we can choose $f$ and $f_{t}$ so that $\Phi_0 \circ \jmath_{*}$ is the identity on the chain level.
\end{lemma}
\begin{proof} We define the function $h: \mathbb{R} \to \mathbb{R}$ by $h(x) = \cos(2\pi x)$, so that it looks like the ``standard" Morse function when descended to $S^1 = \mathbb{R}/\mathbb{Z}$ (with unique max at $[0]$ and unique min at $[1/2]$).  Let the Morse function $f: \mathbb{T} \to \mathbb{R}$ be given by
\[f(r_1, \dots, r_{b_1}) = h(r_1) + \dots + h(r_{b_1}).\]
Observe that $\jmath$ is the map $(r_1, \dots, r_{b_1}) \mapsto (-r_1, \dots, -r_{b_{1}})$, so $f = f \circ \jmath^{-1}$. Thus, $\Phi_{0}$ is the Morse continuation map associated to the pair $(f, f)$, which is the identity if we just choose a slight perturbation of the constant homotopy. If we consider $\Phi_{0}$ to be a map $\overline{C}_{*}(\mathbb{T}, D_{-, \jmath_{*}\mathfrak{q}}, f) \to \overline{C}_{*}(\mathbb{T}, D_{-, \mathfrak{q}}, f)$, this means that it identifies the generator $(p, [\psi_{i}])$ on the left side, where $p$ is a critical point of $f$ and $\psi_{i}$ is the $i$th eigenvector of $D_{-, \jmath_{*}\mathfrak{q}}$, with the generator $(p, [\phi_{i}])$ on the right side, where $\phi_{i}$ is the $i$th eigenvector of $D_{-, \mathfrak{q}}$.

As we described above, the map $\jmath_{*}: \overline{C}_{*}(\mathbb{T}, D_{-, \mathfrak{q}}, f) \overset{\cong}{\la} \overline{C}_{*}(\mathbb{T}, D_{-, \jmath_{*}\mathfrak{q}}, f \circ \jmath^{-1})$ is exactly the inverse to this. So, with the above choice of $f$, the map $\Phi_{0} \circ \jmath_{*}: \Phi_{L(t, -), f_{t}} \circ \jmath_{*}: \overline{C}_{*}(\mathbb{T}, D_{-, \mathfrak{q}}, f) \to \overline{C}_{*}(\mathbb{T}, D_{-, \mathfrak{q}}, f)$ is the identity. \end{proof}

Our goal is now to show that $\Phi_{i} \circ \jmath_{*}$ is the zero map on homology for all $i > 0$. In order to do this, we will briefly enter the world of $\mathbb{Z}$ coefficients. We will make use of the following fact: the map $\overline{\iota}^{\mathbb{Z}} = \Phi_{D_{-, \mathfrak{q}_{t}}, f_{t}}^{\mathbb{Z}} \circ \jmath_{*}$ is a homotopy involution on $\overline{\CM}(Y, \mathfrak{s}, \mathfrak{p}; \mathbb{Z})$, the bar-flavor chain complex with \textit{coefficients in $\mathbb{Z}$}. The proof of this is the same as for the analogous result in the case of $\mathbb{Z}/2\mathbb{Z}$ coefficients (see e.g. \cite{MR3649355} for a proof in the Heegaard Floer case, which is formally identical). This fact relies on the existence of a \textit{transitive system} of continuation maps over $\mathbb{Z}$, which follows from the naturality of the monopole Floer homology groups.

Writing the $\mathbb{Z}$-coefficient continuation map $\Phi_{D_{-, \mathfrak{q}_{t}}, f_{t}}^{\mathbb{Z}}$ as:
\[\Phi_{D_{-, \mathfrak{q}_{t}}, f_{t}}^{\mathbb{Z}} = \Phi_{0}^{\mathbb{Z}} + U^{-1}\Phi_{2}^{\mathbb{Z}} + U^{-2}\Phi_{4}^{\mathbb{Z}} + \dots\]
we define homogeneous maps $\overline{\iota}_{i}^{\mathbb{Z}} := \Phi_{i}^{\mathbb{Z}} \circ \jmath_{*}$, so:
\[\overline{\iota}^{\mathbb{Z}} = \overline{\iota}_{0}^{\mathbb{Z}} + U^{-1}\overline{\iota}_{2}^{\mathbb{Z}} + U^{-2}\overline{\iota}_{4}^{\mathbb{Z}} + \dots\]

Fix $f$ and $f_{t}$ as in the proof of Lemma~\ref{iden}. It follows from Lemma~\ref{iden} that $\overline{\iota}_{0}^{\mathbb{Z}} = \text{Id}$ (the proof goes through for $\mathbb{Z}$ coefficients). 

We have the following result from \cite{Kronheimer_Mrowka_2007} (as a special case of Theorem 35.1.1; see the remark on manifolds with vanishing triple cup product in section 35.3 for enhancing the result to $\mathbb{Z}$ coefficients in the special case):
\begin{lemma}\label{difftriv} Assume $Y$ is a closed, oriented three-manifold with vanishing triple cup product, and fix $f$ as in the proof of Lemma~\ref{iden}. Then the coupled Morse differential $\overline{\partial}_{*}: \overline{C}_{*}(\mathbb{T}, D_{-, \mathfrak{q}}, f; \mathbb{Z}) \to \overline{C}_{*}(\mathbb{T}, D_{-, \mathfrak{q}}, f; \mathbb{Z})$ (with $\mathbb{Z}$ coefficients) is the zero map. Thus, we may identify the chain complex with its homology groups. \qed
\end{lemma}

We will assume henceforth that $f$ is chosen as in the proof of Lemma~\ref{iden}. Now, since $(\overline{\iota}^{\mathbb{Z}})^2 \sim \text{Id}$, it induces the identity on homology, which by Lemma~\ref{difftriv} implies that $(\overline{\iota}^{\mathbb{Z}})^2 = \text{Id}$ on the chain level. For the calculation which follows, we omit the $\mathbb{Z}$ label; assume we are using $\mathbb{Z}$ coefficients unless otherwise indicated. We have:
\[\text{Id} = (\overline{\iota})^2 = (\text{Id} + U^{-1}\overline{\iota}_{2} + U^{-2}\overline{\iota}_{4} + \dots)^{2}\]
\[= \text{Id} + U^{-1}(2\overline{\iota}_{2}) + U^{-2}(\overline{\iota}_{2}^{2} + 2\overline{\iota}_{4}) + \dots\]
\[ = \text{Id} + \sum_{i > 0}\sum_{\substack{(j_1, \dots, j_m)\\{j_1 + \dots + j_m = i}}} \left(\prod_{k = 1}^{m}\overline{\iota}_{2j_k}\right)U^{-i}\]
Since the coefficient of $U^{-i}$ in this expression is a homogeneous map of degree $-2i$, we can equate coefficients of $U^{-i}$ on either side of the equation, i.e. we must have that all of the coefficients vanish. In other words:
\begin{equation}\label{important} \sum_{\substack{(j_1, \dots, j_m)\\{j_1 + \dots + j_m = i}}} \left(\prod_{k = 1}^{m}\overline{\iota}_{2j_k}\right) = 0 \end{equation}
for all $i > 0$. We proceed by (strong) induction to show that $\overline{\iota}_{2j} = 0$ for all $j > 0$. Equation~\ref{important} in the case of $i = 1$ reads:
\[2\overline{\iota}_{2} = 0.\]
Since we are working on the chain level, and the chain complex $\overline{C}_{*}(\mathbb{T}, D_{-, \mathfrak{q}}, f; \mathbb{Z})$ has no 2-torsion, it follows that $\overline{\iota}_{2} = 0$.

Now, assuming $\iota_{2j} = 0$ for all $0 < j < i$, equation~\ref{important} reads:
\[\sum_{\substack{(j_1, \dots, j_m)\\{j_1 + \dots + j_m = i}}} \left(\prod_{k = 1}^{m}\overline{\iota}_{2j_k}\right) = 2\overline{\iota}_{2i} = 0\]
since every term in the sum other than $2\overline{\iota}_{2i}$ contains $\overline{\iota}_{2j}$ for some $0 < j < i$. We conclude that $\overline{\iota}^{\mathbb{Z}} = \text{Id}$, which completes the proof of the statement. \end{proof}

We remark that that above proof goes through in the more general case that $\overline{\HM}(Y, \mathfrak{s}; \mathbb{Z})$ has no 2-torsion. We may now prove Theorem~\ref{SWstabadjintro}.

\begingroup
\def\thetheorem{\ref{SWstabadjintro}}
\begin{theorem} Let $X$ be a smooth, closed four-manifold with $b_{2}^{+}(X) > 2$. Suppose that there exists a disjoint pair of smoothly embedded surfaces $S_1, S_2 \subset X\#(S^2 \times S^2)$ such that
\[[S_{1}]^2 > \max\{2g(S_{1}) - 2, 0\}\]
and
\[[S_{2}]^2 > \max\{2g(S_{2}) - 2, 0\}.\]
Then $SW_{X, \mathfrak{s}} = 0$ mod 2 for all spin structures $\mathfrak{s}$ on $X$.
\end{theorem}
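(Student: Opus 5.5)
Let $X' := X\#(S^2\times S^2)$ with spin structure $\mathfrak{s}' = \mathfrak{s}\#\mathfrak{s}_0$. Since $b_2^+(X')=b_2^+(X)+1\geq 4$, the invariants $\mathfrak{m}^I_{X',\mathfrak{s}'}$ and $SW^I_{X',\mathfrak{s}'}$ are defined and agree by Theorem~\ref{SWequiv1}; in particular they are independent of the involutively admissible cut. The plan has three steps. First, build a cut $N\subset X'$ from the two surfaces. Second, show that $\widehat{\HMI}_{\text{red}}(N,\mathfrak{s}'|_N)=0$, so that $\mathfrak{m}^I_{X',\mathfrak{s}'}=0$. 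Third, combine this with the stabilization formula Theorem~\ref{stabintroSW}, which gives $0=SW^I_{X',\mathfrak{s}'}=Q\cdot SW_{X,\mathfrak{s}}$ and hence $SW_{X,\mathfrak{s}}=0$ mod 2.

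For the cut, let $Q_i$ be the boundary of a tubular neighborhood of $S_i$; this is a circle bundle of Euler number $n_i=[S_i]^2>0$ over a genus $g_i$ surface. Join the two neighborhoods by a tube to get $X_1$ with $\partial X_1 = N := Q_1\#Q_2$, exactly as in the proof of Theorem~\ref{adjintro}.
\begin{itemize}
\item $H_2(X_1;\mathbb{Q})$ is spanned by $[S_1],[S_2]$, which are orthogonal with positive square, so $b_2^+(X_1)=2$.
\item The complement $X_2$ then has $b_2^+\geq 2$.
\item The condition $\delta H^1(N)=0$ in $H^2(X',\partial)$ should hold, as in the earlier constructions. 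The reason is that $H^1(Q_i)\cong H^1(\Sigma_{g_i})$ is pulled back from the surface, hence extends over the disk bundle.
\end{itemize}
So $N$ is an involutively admissible cut, and $\mathfrak{m}^I$ factors through $\widehat{\HMI}_{\text{red}}(N,\mathfrak{s}'|_N)$. It then suffices to show this group vanishes for the spin structure $\mathfrak{t}=\mathfrak{s}'|_N$.

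For the vanishing I will apply Proposition~\ref{vanishred}, which needs two inputs.
\begin{itemize}
\item $\HM_{\text{red}}(N,\mathfrak{t})=0$. Since $c_1(\mathfrak{t})$ is torsion, the adjunction-violating condition $n_i>\max\{2g_i-2,0\}$ puts us in the range where circle bundles of large Euler number have Floer homology computed by Mrowka--Ozsv\'ath--Yu and Kronheimer--Mrowka. In that range $\HM_{\text{red}}(Q_i,\mathfrak{t}_i)=0$ for the torsion spin$^c$ structures, i.e. the reducibles carry everything. The connected sum formula (or the Künneth-type behaviour of $\HM_{\text{red}}$) then gives $\HM_{\text{red}}(N,\mathfrak{t})=0$.
\item $\overline{\iota}_*=1$. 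I will check that $N$ has vanishing triple cup product and then invoke Proposition~\ref{tech}. For $Q_i$ with Euler number $n_i\neq 0$, the classes in $H^1(Q_i)$ are pulled back from $\Sigma_{g_i}$. Any triple cup product is therefore pulled back from $H^3(\Sigma_{g_i})=0$ and vanishes. The triple cup product of a connected sum is the sum of the triple cup products of the summands, so it vanishes on $N$.
\end{itemize}
Proposition~\ref{vanishred} then yields $\widehat{\HMI}_{\text{red}}(N,\mathfrak{t})=0$.

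I expect the main obstacle to be the input $\HM_{\text{red}}(N,\mathfrak{t})=0$. This requires carefully citing the computation for circle bundles with $n>2g-2$ in the self-conjugate spin$^c$ structures and checking that the reduced part vanishes exactly in the range $n>\max\{2g-2,0\}$. The case $g=0$ (lens spaces) is the easy one, since lens spaces are L-spaces. For $g\geq 1$ I would first try to see it directly from the coupled Morse description together with the absence of irreducible critical points. Irreducibles are excluded by a Weitzenböck/adjunction-type estimate on the bundle metric with $n>2g-2$, so the complex should reduce to $\overline{\CM}$. A secondary point is verifying $\delta H^1(N)=0$ when $g_i>0$. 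Finally, because $b_2^+(X')\geq 4$ is exactly what Theorem~\ref{SWequiv1} requires, no separate cut-independence argument is needed.
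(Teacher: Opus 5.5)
Your proposal is correct and follows essentially the same route as the paper. Both use the cut $N=Q_1\#Q_2$ and get $\HMI_{\text{red}}(N)=0$ from Proposition~\ref{vanishred}, with $\overline{\iota}_*=1$ supplied by Proposition~\ref{tech} via the vanishing triple cup product; the stabilization formula then finishes the argument. The one difference is the input $\HM_{\text{red}}(N)=0$, which you flagged as the main obstacle. The paper settles it without monopole-side computations or a Weitzenb\"ock argument: it uses Ozsv\'ath--Szab\'o's Heegaard Floer computation for circle bundles with $n>\max\{2g-2,0\}$, then the Heegaard Floer connected sum formula, then the Kutluhan--Lee--Taubes isomorphism.
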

\addtocounter{theorem}{-1}
\endgroup
\begin{proof} Let $Y_{1}$ and $Y_{2}$ be the boundaries of tubular neighborhoods of $S_{1}$ and $S_{2}$, respectively, and let $N$ denote their connected sum (obtained by taking the boundary of a thickened path from $Y_{1}$ to $Y_{2}$).  Note that $N$ is an involutively admissible cut of $X \#(S^2 \times S^2)$, so it suffices to show that $\HMI_{\text{red}}(N) = 0$. By work of Ozsváth-Szabó on the reduced Floer homology of circle bundles over surfaces (see \cite{MR2377279}, Theorem 5.6), we know that $\HF_{\text{red}}(Y_{i}) = 0$. By the connected sum formula for Heegaard Floer homology, it follows that $\HF_{\text{red}}(N) = 0$.  Furthermore, by work of Kutluhan-Lee-Taubes establishing isomorphisms between Heegaard Floer homology and monopole Floer homology \cite{MR4194305}, we have $\HM_{\text{red}}(N) \cong \HF_{\text{red}}(N) = 0$.

Now, we claim that the triple cup product of $N$ vanishes. Indeed, this follows from the fact that the triple cup product of $Y_{1}$ and $Y_{2}$ both vanish, seeing as the map on first cohomology $\pi_{i}^{*}: H^1(S_i; \mathbb{Z}) \to H^1(Y_{i}; \mathbb{Z})$ induced by the $S^1$-bundle projection map $\pi_{i}: Y_{i} \to S_{i}$ is surjective, and the triple cup product on $H^1(S_{i}; \mathbb{Z})$ is trivial since $S_{i}$ is a surface. Thus, Proposition~\ref{tech} implies that $\overline{\iota}_{*} = \text{Id}$. Combined with the fact that $\HM_{\text{red}}(N) = 0$, this implies that $\HMI_{\text{red}}(N) = 0$ by Proposition~\ref{vanishred}, completing the proof of the statement.\end{proof}

\bibliographystyle{plain}
\bibliography{references2}

\bigskip

\noindent{\small \textsc{Department of Mathematics, Harvard University \\ Cambridge, Massachusetts, 02138}\\
\textit{Email}: \texttt{obrass@math.harvard.edu}}
\end{document}